\newtheorem{thm}{Theorem}[section]
\newtheorem{prop}[thm]{Proposition}
\newtheorem{cor}[thm]{Corollary}
\newtheorem{lem}[thm]{Lemma}
\newtheorem{defi}[thm]{Definition}
\newtheorem{cond}[thm]{Condition}
\numberwithin{equation}{section}
\numberwithin{thm}{section}
\newcommand{\qr}{{\mathbb{R}_\theta^d}}
\def\qd{\,{\mathchar'26\mkern-12mu d}}
\crefname{hypothesis}{Hypothesis}{Hypotheses}
\crefname{fact}{Fact}{Facts}
\title{Spectral Asymptotics for Quantized Derivatives on Quantum Euclidean Spaces\thanks{Submitted to the editors on May 16th.
\funding{This work was funded by the Fundamental Research Funds for the Central Universities of Central South University, China (no. 2022ZZTS0605). }}}
\author{Yongqiang Tian\thanks{School of Mathematics and Statistics, Central South University, Changsha, Hunan 
  (\email{tianyongqiang@csu.edu.cn}.)}
}
\begin{document}

\maketitle

\begin{abstract}
We obtain spectral asymptotics for the quantized derivatives of elements from the first-order homogeneous Sobolev space on the quantum Euclidean space.
extending an earlier result of McDonald, Sukochev and Xiong (Commun. Math. Phys. 2020). Our approach is based on a noncommutative Wiener-Ikehara Tauberian theorem and a recently developed $C^\ast$-algebraic version of pseudo-differential operator theory.
\end{abstract}

\begin{keywords}
Quantum Euclidean space, Quantized derivative, Spectral asymptotic
\end{keywords}

\begin{MSCcodes}
46G05, 47L10, 58B34
\end{MSCcodes}

\raggedbottom

\section{Introduction}\label{sec-introduction}
The investigation of quantized derivatives ties together various mathematical fields, such as noncommutative geometry, operator theory, real and harmonic analysis. Extensive studies of quantized derivatives in various settings can be found in the literature,  
for Riemannian spin manifolds we refer to \cite{Connes1988}, 
for quantum tori to \cite{MSX2019, SXZ2023}, for commutative and quantum Euclidean spaces to \cite{Peller2003, JW1982, RS1989, CST1994,LMSZ2017, FSZ2023, MSX2020, MSX2023},
see also \cite{CMSZ2017, CSZ2017} for interesting works on Julia sets and quasi-Fushian groups. In this paper, our main goal is to determine spectral asymptotics for quantized derivatives on the quantum Euclidean spaces. Of independent interest, we also establish spectral asymptotics for a certain class of abstract pseudo-differential operators.

Quantized derivatives can be considered in the context of a spectral triple. In Connes' noncommutative geometry \cite{Connes-ncdg-1985, Connes1994}, a spectral triple $(\mathcal{A},H,D)$ consists of a $*$-algebra $\mathcal{A}$ of bounded linear operators on a separable Hilbert space $H$ and an unbounded self-adjoint operator $D:{\rm dom}(D)\to H,$  such that for all $a\in\mathcal{A}:$
\begin{enumerate}[(i)]
\item $a:{\rm dom}(D)\to{\rm dom}(D),$ and$[D,a]$ has bounded extension on $H,$ 
\item $a(D+{\rm i})^{-1}$ is a compact operator.
\end{enumerate}
The \emph{quantized derivative} of an element $a \in \mathcal{A}$ is defined as the commutator $\qd a =[\mathrm{sgn}(D),a],$
where $\mathrm{sgn}(D)=D|D|^{-1}$ is the sign of $D.$ Central task in this topic is to study spectral properties of quantized derivatives. More precisely, to find conditions on $a$ such that $\qd a$ is contained in a certain ideal of compact operators. 
In Connes' quantized calculus \cite{Connes1995}, compact operators play the role of ``infinitesimals''. The order of an infinitesimal $T$ is described by the rate of the decay of its singular values:
\begin{equation}\label{def-of-sv}
\mu(n,T)=\inf\{\|T-F\|_{\infty}:\quad \mathrm{rank}(F)\leq n\},
\end{equation}
where $\|\cdot\|_\infty$ denotes the operator norm. In particular, an infinitesimal $T$ is said to be of order $\alpha$ if $\mu(n,T)=O((n+1)^{-\alpha}).$ If $\alpha=\frac1p>0,$ that is to say $T$ belongs to the Schatten-Lorentz ideal $\mathcal{L}_{p,\infty}.$  Recall that a spectral triple $(\mathcal{A}, H,D)$ is called $\mathcal{L}_{p,\infty}$-\emph{summable}, if $a(D+{\rm i})^{-1}\in\mathcal{L}_{p,\infty},$ for all $a\in\mathcal{A}.$ 
In this setting, spectral properties of quantized derivatives usually can be reinterpreted in terms of Sobolev spaces, Besov spaces, etc. as well as their noncommutative analogues. Of significant importance is endpoint weak Schatten property, which draws more attention originated \cite{Connes1988}. For several well-developed spectral triples mentioned above, those elements $a$ such that $\qd a\in\mathcal{L}_{p,\infty}$ are often captured by a first-order homogeneous Sobolev space. Moreover, through the Dixmier trace (or even, asymptotic of singular values) of $|\qd a|^p,$ one can approximately recover the corresponding Sobolev norm of $a,$ that is the quantity of first-order differentials.

When going back to the classical Euclidean spaces $\mathbb{R}^d$ ($d\geq2$), we can discuss it more precisely. Let us begin with some required notations.
For every $1\leq j\leq d,$ we denote by $\partial_j$ the $j$-th partial derivation on $\mathbb{R}^d.$
Denote $N_d=2^{\lfloor\frac d2\rfloor}.$ Let $\{\gamma\}_{j=1}^d\subset M_{N_d}(\mathbb{C})$ be Pauli matrices: 
$$\gamma_j^{\ast}=\gamma_j,\quad \gamma_j^2=1,\quad \gamma_j\gamma_k=-\gamma_k\gamma_j,\quad 1\leq j\neq k\leq d.
$$
The particular choice of such Pauli matrices is really unimportant, so we always fix one choice.
The Dirac operator associated to $\mathbb{R}^d$ is then defined by
$D=-{\rm i}\sum_{j=1}^{d}\gamma_j\otimes\partial_j,$
acting on the Hilbert space $\mathbb{C}^{N_d}\otimes L_2(\mathbb{R}^d)$ as an unbounded self-adjoint operator. In the setting, the quantized derivative of a suitable function $f$ on $\mathbb{R}^d$ is formally defined as $\qd f=[\mathrm{sgn}(D),1\otimes M_f],$ where $M_f$ is the (possibly unbounded) operator on $L_2(\mathbb{R}^d)$ of pointwise multiplication by $f.$

It is a classical result \cite{CRW1976} that $\qd f$ extends to a bounded operator on $\mathbb{C}^{N_d}\otimes L_2(\mathbb{R}^d)$ if and only if $f$ belongs to $BMO(\mathbb{R}^d),$ the space of functions with bounded mean oscillation. Recall that $\dot{W}^1_d(\mathbb{R}^d)$ is defined as the set of locally
integrable function $f$ with its
distributional gradient $\nabla f=(\partial_1 f,\cdots,\partial_d f)$ belonging to $L_d(\mathbb{R}^d, \mathbb{C}^d).$ It is also important to note that by the Poincar\'e
inequality, $\dot{W}^1_d(\mathbb{R}^d)\subset BMO(\mathbb{R}^d).$

In 1994, Connes-Sullivan-Teleman \cite{CST1994} (see the appendix there) stated that for any locally integrable function $f$ on $\mathbb{R}^d,$ one has \begin{equation}\label{connes' statement}
\qd f\in\mathcal{L}_{d,\infty} \Longleftrightarrow f\in \dot{W}^1_d(\mathbb{R}^d).
\end{equation} 
In 2017, Lord et al. \cite{LMSZ2017} gave an alternative and complete proof to \eqref{connes' statement} under the additional restriction that $f\in L_\infty(\mathbb{R}^d).$ Meanwhile, they also obtained a trace formula for quantized derivatives $\qd f$ of real-valued functions $f\in L_\infty(\mathbb
R^d)\cap \dot{W}^1_d(\mathbb{R}^d):$
\begin{equation}\label{trace formula0}
\varphi(|\qd f|^d)=c_d\|\nabla f\|_{L_d(\mathbb{R}^d,\mathbb{C}^d)},
\end{equation}
where $c_d>0$ is a constant and $\varphi$ can be any continuous normalized trace on $\mathcal{L}_{1,\infty}.$ For (singular) trace theory, we refer to \cite{LSZ2012} for more details.
In 2023, Frank-Sukochev-Zanin \cite{FSZ2023} removed the boundedness restriction but instead assumed that $f\in BMO(\mathbb{R}^d),$ and then presented a rigorous proof to the implications \eqref{connes' statement}, Furthermore, they derived the following spectral asymptotics for real-valued functions $f\in\dot{W}^1_d(\mathbb{R}^d),$
\begin{equation}\label{asymptotic on Euclidean space}
\lim_{n\to\infty}n^{\frac1d}\mu(n,\qd f)=c_d\|\nabla f\|_{L_d(\mathbb{R}^d,\mathbb{C}^d)}.
\end{equation}
We remark here that the trace formula result means that the sequence
$$\Big\{\sum_{k=2^n}^{2^{n+1}}\mu(k,|\qd f|^d)\Big\}_{n\geq0}$$ is almost convergent
in the sense of G.G. Lorentz \cite{L1948}, see \cite{SSUZ2015} or \cite[Section 5]{P2023}
for discussions. Since spectral asymptotic \eqref{asymptotic on Euclidean space} implies the precise convergence, it is certainly a significant refinement of the trace formula \eqref{trace formula0}. 

Inspired by spectral asymptotic result on Euclidean spaces (similar result also holds on quantum tori \cite{SXZ2023}), it is natural to consider its analogues on quantum Euclidean spaces. 
Fix a real and anti-symemtric $d\times d$ matrix $\theta.$ The quantum Euclidean space $L_\infty(\qr)$ is the von Neumann algebra on $L_2(\mathbb{R}^d)$ generated by a family of strongly continuous unitary operators $\{U(t)\}_{t\in\mathbb{R}^d}$ on $L_2(\mathbb{R}^d)$ satisfying the Weyl relation
\begin{equation}\label{weyl relation}
U(t)U(s)=e^{\frac{\rm i}{2}\langle t,\theta s\rangle}U(t+s),\quad s,t\in\mathbb{R}^d.
\end{equation}
All unexplained notations which we use below will be discussed
in Section \ref{nc plane}.

The algebra $L_\infty(\qr)$ admits a normal semifinite faithful trace $\tau_\theta,$ and the $L_p$-spaces associated to $(L_\infty(\qr),\tau_\theta)$ are simply denoted by $L_p(\qr).$ In the literature, differential operators on $L_\infty(\qr)$ are defined in a parametric-independent way. Let $\{D_j\}_{j=1}^d$ be the pointwise multiplication operators on $L_2(\mathbb{R}^d)$ defined by
$(D_j\xi)(t):=t_j\xi(t),$ for $\xi\in L_2(\mathbb{R}^d).$
With a slight abuse of notation, we also denote by $\{\partial_j\}_{j=1}^d$ the partial derivations on $L_\infty(\qr),$ which are spatially generated: $$\partial_jx:=[D_j,x],\quad x\in L_{\infty}(\qr), \quad j=1,\cdots,d.$$
Let $\dot{W}_d^1(\qr)$ denote the noncommutative homogeneous Sobolev space
$$\dot{W}_d^1(\qr):=\big\{x\in \mathcal{S}'(\qr):\|x\|_{\dot{W}_d^1(\qr)}:=\sum_{j=1}^{d}\|\partial_jx\|_{L_d(\qr)}<\infty\big\}.$$
The Dirac operator $\mathcal{D}$ associated to $L_{\infty}(\qr)$ is defined by setting
$\mathcal{D}:=\sum_{j=1}^{d}\gamma_j\otimes D_j.$
In particular, the datum $(1\otimes W_1^{\infty}(\qr),\mathbb{C}^{N_d}\otimes L_2(\mathbb{R}^d), \mathcal{D})$ forms a $\mathcal{L}_{d,\infty}$-summable spectral triple, see for instance \cite{SZ2023}.
For $x\in L_\infty(\qr),$ its quantized derivative is defined by setting 
$\qd x=[\mathrm{sgn}(\mathcal{D}),1\otimes x],$
which is a bounded operator on the Hilbert space $\mathbb{C}^{N_d}\otimes L_2(\mathbb{R}^d).$

Key a \emph{priori} result for quantized derivatives on quantum Euclidean spaces is established by Mcdonald-Sukochev-Xiong \cite{MSX2020, MSX2023}.
In the earlier paper \cite{MSX2020}, they provided a sufficient condition for $\qd x\in \mathcal{L}_{d,\infty},$ which is
\begin{equation}\label{sufficient condition}
x\in L_p(\qr)\cap\dot{W}_d^1(\qr),\quad \mbox{for~some}~d\leq p<\infty.
\end{equation} While the necessity of the condition $x\in \dot{W}_d^1(\qr)$ for $\qd x\in\mathcal{L}_{d,\infty}$ is discussed in Theorem 1.5 there.
Under the assumption \eqref{sufficient condition}, they obtained the following trace formula
\begin{equation}\label{trace formula}
\varphi(|\qd x|^d)=c_d\int_{\mathbb{S}^{d-1}}\tau_\theta\Big(\sum_{j=1}^{d}\big|\partial_jx-s_j\sum_{k=1}^ds_k\partial_kx\big|^2\Big)^{\frac d2}ds.
\end{equation}
Here, $\varphi$ is any continuous normalized trace on $\mathcal{L}_{1,\infty},$ the integral over unit sphere $\mathbb{S}^{d-1}$ is taken with respect to the surface measure $ds,$ and $s=(s_1,\cdots,s_d).$    
Recently, they found in \cite{MSX2023} that the restriction for $x\in \dot{W}_d^1(\qr)$ belonging to some $L_p(\qr)$ space is redundant. In fact, a combination of Theorems 4.8 and 5.3 in \cite{MSX2023} yields the following implications for $x\in L_\infty(\qr):$
\begin{equation}\label{MSX's statement}
\qd x\in \mathcal{L}_{d,\infty}\Longleftrightarrow x\in \dot{W}^1_d(\qr).
\end{equation}

\subsection{Main results} Our first result provides spectral asymptotic for quantized derivatives. We need a new semi-norm on  $\dot{W}_d^1(\qr)$:
\begin{equation}\label{equivalet seminorm} 
		|||x|||_{\dot{W}_d^1\left(\qr\right)}:=\Big\|\sum_{j=1}^d \gamma_j \otimes\big(\partial_j x\otimes 1-\sum_{k=1}^d \partial_k x \otimes \textbf{s}_k \textbf{s}_j\big)\Big\|_{L_d(M_{N_d}(\mathbb{C}) \bar{\otimes} L_{\infty}\left(\qr\right) \bar{\otimes} L_{\infty}\left(\mathbb{S}^{d-1}\right))}
\end{equation}
where $\{\mathbf{s}_j\}_{j=1}^d$ are coordinate functions on $\mathbb{S}^{d-1}.$
We will show in Section \ref{equivalence} that semi-norms $|||\cdot|||_{\dot{W}_d^1\left(\qr\right)}$ and $\|\cdot\|_{\dot{W}_d^1\left(\qr\right)}$ are in fact equivalent.

\begin{thm}\label{main1}
Assume $\det(\theta)\neq0.$ For any $x\in \dot{W}_d^1(\qr),$ we have 
\begin{equation}\label{asymptotic limit for qdx}
\lim_{n\rightarrow\infty} n^{\frac{1}{d}}\mu(n,\qd x)=\kappa_d|||x| ||_{\dot{W}_d^1\left(\qr\right)},\quad \kappa_d=\left(\frac{\Gamma(\frac{d}{2})}{2^{\frac{d}{2}+1}d\pi^d}\right)^{\frac1d}.
\end{equation}
\end{thm}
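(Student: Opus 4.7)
The plan is to combine a pseudo-differential representation of $\qd x$ with a noncommutative Wiener--Ikehara Tauberian theorem. The McDonald--Sukochev--Xiong trace formula \eqref{trace formula} already identifies the Dixmier trace of $|\qd x|^d$ with a quantity that (after the seminorm equivalence proved in Section \ref{equivalence}) is a scalar multiple of $|||x|||_{\dot{W}_d^1(\qr)}^d$; this is the ``Ces\`aro--average'' version of the theorem, and the Tauberian input advertised in the abstract is what upgrades it to the sharper spectral asymptotic \eqref{asymptotic limit for qdx}.

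First I would carry out a density reduction. Using the MSX bound $\|\qd x\|_{\mathcal{L}_{d,\infty}}\lesssim\|x\|_{\dot{W}_d^1(\qr)}$ together with the quasi-triangle inequality $\mu(2n,A+B)\le\mu(n,A)+\mu(n,B)$, both $x\mapsto\limsup_n n^{1/d}\mu(n,\qd x)$ and $x\mapsto\liminf_n n^{1/d}\mu(n,\qd x)$ are continuous in the $\dot{W}_d^1(\qr)$ seminorm. A standard $\varepsilon/3$ argument then reduces the theorem to any conveniently dense subspace, for which I would take sufficiently regular Schwartz-type elements of $\qr$ so that the pseudo-differential calculus applies directly.

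For such smooth $x$, I would realize $\qd x=[\mathrm{sgn}(\mathcal{D}),1\otimes x]$ inside the C*-algebraic pseudo-differential calculus on $\qr$. The operator $\mathrm{sgn}(\mathcal{D})$ is an order-zero PDO with principal symbol $\sum_j\gamma_j\xi_j/|\xi|$; commuting it with the multiplication-type symbol $1\otimes x$ drops the order by one and, after a brief algebraic manipulation, produces the principal symbol
$$p(\xi)=\frac{1}{|\xi|}\sum_{j=1}^d\gamma_j\otimes\Big(\partial_j x-\frac{\xi_j}{|\xi|^2}\sum_{k=1}^d\xi_k\,\partial_k x\Big).$$
Restricting $\xi$ to the unit sphere $\mathbb{S}^{d-1}$ yields precisely the element of $M_{N_d}(\mathbb{C})\bar\otimes L_\infty(\qr)\bar\otimes L_\infty(\mathbb{S}^{d-1})$ whose $L_d$-norm is, by definition, $|||x|||_{\dot{W}_d^1(\qr)}$.

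Finally, I would invoke the noncommutative Wiener--Ikehara theorem for order-$(-1)$ operators in this calculus, which should have the shape
$$\lim_{n\to\infty}n^{1/d}\mu(n,T)=\kappa_d\,\big\|p|_{\mathbb{S}^{d-1}}\big\|_{L_d(M_{N_d}\bar\otimes L_\infty(\qr)\bar\otimes L_\infty(\mathbb{S}^{d-1}))},$$
with the explicit constant $\kappa_d$ emerging from the passage to polar coordinates and the volume of $\mathbb{S}^{d-1}$. The hardest part will be establishing this Tauberian step: it requires a suitable meromorphic structure of the spectral zeta $s\mapsto\mathrm{Tr}(|T|^{ds})$ near $s=1$ with positive residue given by the symbol integral, which in turn relies on ellipticity of $\mathcal{D}$ available precisely when $\det\theta\neq 0$. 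A secondary technical nuisance is the singularity of the symbol of $\mathrm{sgn}(\mathcal{D})$ at $\xi=0$, which must be localized away and shown to contribute a trace-class (hence asymptotically negligible) remainder to $\qd x$.
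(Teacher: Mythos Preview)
Your outline matches the paper's strategy: reduce to Schwartz $x$ by density, replace $\qd x$ by the explicit order-$(-1)$ operator $\mathcal{A}(1+\mathcal{D}^2)^{-1/2}$ modulo $(\mathcal{L}_{d,\infty})_0$ (this is the paper's Lemma~\ref{msx 63 lemma}, and your principal symbol computation is exactly the one appearing in \eqref{equivalet seminorm}), and then invoke the spectral asymptotic for such pseudo-differential operators (the paper's Theorem~\ref{key thm plane}).

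Two implementation points need correcting. First, your proposed zeta function $s\mapsto\mathrm{Tr}(|T|^{ds})$ is not what the paper analyzes, and for good reason: for a general order-$(-1)$ operator $T$ there is no direct handle on $|T|^z$. The paper instead computes $\mathrm{Tr}(A^zB^z)$ for the \emph{factored} positive pair $A=\pi_2(g)(1-\Delta_\theta)^{-1/2}$, $B\in M_n\otimes\mathcal{S}(\qr)$, which is explicitly evaluable via the mixed trace formula of Section~\ref{sec-mixed trace formula}; the passage from $\mathrm{Tr}(A^zB^z)$ to $\mathrm{Tr}((B^{1/2}AB^{1/2})^z)$ requires the commutator estimates packaged as Condition~\ref{asterisque condition}, and only then does the classical Wiener--Ikehara theorem apply. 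The extension from this special product form to the finite sums needed for $\qd x$ is a separate approximation argument (Theorem~\ref{key thm plane}). Second, the hypothesis $\det\theta\neq0$ has nothing to do with ellipticity of $\mathcal{D}$ --- $\mathcal{D}$ is the same multiplication operator for every $\theta$. Non-degeneracy is used algebraically: it gives $L_\infty(\qr)\simeq\mathcal{L}_\infty(L_2(\mathbb{R}^{d/2}))$, which in turn yields the factorization property of $\mathcal{S}(\qr)$ (Proposition~\ref{factorisable}), the stability of $\mathcal{S}(\qr)$ under fractional powers (Proposition~\ref{root lemma}, Lemma~\ref{root lemma matrix case}), and the Cwikel-type trace formula (Theorem~\ref{mixed trace thm}); all of these feed into verifying Condition~\ref{asterisque condition} and computing the residue.
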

\begin{remark}
By a structure theorem for $L_{\infty}(\qr)$ (see \cite[Section 6]{LSZ2020} or \cite{GV1988} for detailed expositions), we have the following $\ast$-isomorphism:
$$L_{\infty}(\qr)\simeq \mathcal{L}_{\infty}(L_2({\mathbb{R}^{{\rm rank}(\theta)/2}}))\bar{\otimes}L_{\infty}(\mathbb{R}^{{\rm dim}({\rm ker}(\theta))}).$$
Then the non-degeneracy assumption $\det(\theta)\neq0$ leads to $L_{\infty}(\qr)\simeq \mathcal{L}_{\infty}(L_2(\mathbb{R}^{d/2})),$
and this restricts ourselves to Moyal planes rather than the full range of quantum Euclidean spaces. This is due to many technical reasons which will be explained later.
\end{remark}

Furthermore, we shall establish spectral asymptotics for a certain class of pseudo-differential operators.  The motivation of this generalization arises from a heuristic observation (Section \ref{sec-proof of main results}, Lemma \ref{msx 63 lemma}): every quantized derivative $\qd x$ (say, $x\in\mathcal{S}(\qr)$) can be approximately written as a linear combination of matrix-valued "pseudo-differential operators". 

The notion of a pseudo-differential operator on quantum Euclidean spaces defined in the traditional way appears in \cite{GJP2021}. However, for the particular purpose of this paper, we will employ instead the $C^*$-algebraic approach developed in \cite{SZ2018, MSZ2019}. Spectral asymptotics for classical pseudo-differential operators may date back to the work of Birman and Solomyak \cite{BS1970, BS1972}, see also \cite{Shub2001} and references therein. Recently, in the $C^\ast$-algebraic framework of pseudo-differential operators, similar results have been established for Euclidean spaces \cite{FSZ2023} and even for quantum tori \cite{MSX2023}.

Following \cite{MSZ2019}, the analogue of the class of classical pseudo-differential operators (of order $0$) is the $C^*$-algebra $\Pi,$ given as follows. 
\begin{defi}\label{def of Pi}\rm{}
Let $\pi_1:L_\infty(\qr)\to\mathcal{L}_{\infty}(L_2(\mathbb{R}^d)),$ $\pi_2:C(\mathbb{S}^{d-1})\to\mathcal{L}_{\infty}(L_2(\mathbb{R}^d))$ be two representations defined by setting
$\pi_1(x):=x$ and $(\pi_2(g)\xi)(t):=g(\frac{t}{|t|})\xi(t),$ $\xi\in L_2(\mathbb{R}^d).$
Let
$\mathcal{A}_1=C_0(\qr)+\mathbb{C},$ $ \mathcal{A}_2=C(\mathbb{S}^{d-1}),$
and let $\Pi=\Pi(\mathcal{A}_1,\mathcal{A}_2)$ be the $C^\ast$-subalgebra of $\mathcal{L}_{\infty}(L_2(\mathbb{R}^d))$ generated by the algebras $\pi_1(\mathcal{A}_1)$ and $\pi_2(\mathcal{A}_2).$ 
\end{defi}
Recalling \cite[Theorem 3.3]{MSZ2019}, there exists a unique continuous $\ast$-homomorphism $\mathrm{sym}:\Pi\to\mathcal{A}_1\otimes_{\min}\mathcal{A}_2=C(\mathbb{S}^{d-1},C_0(\qr)+\mathbb{C})$ such that for all $x\in\mathcal{A}_1$ and $g\in\mathcal{A}_2:$
\begin{equation}\label{tensor seperating}
\mathrm{sym}(\pi_1(x))=x\otimes1,\quad \mathrm{sym}(\pi_2(g))=1\otimes g.
\end{equation}
Let $\mathcal{K}(L_2(\mathbb{R}^d))$ denote the algebra of compact operators on $L_2(\mathbb{R}^d).$ By its original construction, $\mathrm{sym},$ is a composition of the Calkin quotient map $q: \mathcal{L}_{\infty}(L_2(\mathbb{R}^d))\to\mathcal{L}_{\infty}(L_2(\mathbb{R}^d))/\mathcal{K}(L_2(\mathbb{R}^d))$ with a $C^\ast$-homomorphism, therefore, vanishes on compact operators. Namely,
\begin{equation}\label{compact vanishing}
\mathrm{sym}(T)=0,\quad \mbox{for~all~} T\in\Pi\cap\mathcal{K}(L_2(\mathbb{R}^d)).
\end{equation}

Our second result is Theorem \ref{main2} below, it can be regarded as a noncommutative version of \cite[Theorem 1.5]{FSZ2023}.
Recall that an operator $T\in \Pi$ is said to be \emph{compactly supported on the right}, if $T=T\pi_1(x)$ for some $x$ in the Schwartz space $\mathcal{S}(\qr).$ A fundamental example for such $T$ can be $T=\pi_2(g)\pi_1(x),$ where $g\in C(\mathbb{S}^{d-1})$ and $x\in\mathcal{S}(\qr)$ is an idempotent element. Quite unlike in classical Schwartz spaces, for $\det(\theta)\neq0$ there are plenty of non-trivial idempotent elements in $\mathcal{S}(\qr),$ see e.g. \cite{Gay2004}. 
In the following, $\Delta_{\theta}$ stands for the Laplacian associated to $L_{\infty}(\qr),$
$(\Delta_{\theta}\xi)(t):=-|t|^2\xi(t),$ $ \xi\in L_2(\mathbb{R}^d).$
Here, we use the subscript $\theta$ to distinguish it from the classical Laplacian $\Delta.$
\begin{thm}\label{main2}Assume $\det(\theta)\neq0.$
If $T\in\Pi$ is compactly supported on the right, then
$$\lim _{n \rightarrow \infty} n^{\frac{1}{d}} \mu(n, T(1-\Delta_{\theta})^{-\frac12})=\kappa_d\| \operatorname{sym}(T)\|_{L_d( L_{\infty}\left(\qr\right) \bar{\otimes} L_{\infty}\left(\mathbb{S}^{d-1}\right))}.$$
Here, the constant $\kappa_d$ is given as in Theorem \ref{main1}.
\end{thm}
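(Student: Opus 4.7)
The plan is to combine the noncommutative Wiener--Ikehara Tauberian theorem with a density argument that reduces the general statement to a class of elementary operators in $\Pi$.

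First, I would verify the formula in the base case $T=\pi_2(g)\pi_1(x)$ with $g\in C^{\infty}(\mathbb{S}^{d-1})$ and $x\in\mathcal{S}(\qr)$. By \eqref{tensor seperating}, $\operatorname{sym}(T)=x\otimes g$. Under the hypothesis $\det(\theta)\neq 0$, the Moyal-plane identification $L_{\infty}(\qr)\cong\mathcal{L}_{\infty}(L_{2}(\mathbb{R}^{d/2}))$ recasts $\pi_{1}(x)$ as an integral operator with rapidly decreasing kernel, while $\pi_{2}(g)$ is diagonalized in the angular Fourier variable. Combined with the Bessel-type factor $(1-\Delta_{\theta})^{-1/2}$, a Cwikel-type estimate together with a direct Weyl-law computation should yield the limit $\kappa_{d}\|x\otimes g\|_{L_{d}}$; the appearance of the constant $\kappa_{d}$ is dictated, as in the classical case, by integrating $|t|^{-d}$ over a unit ball in $\mathbb{R}^{d}$.

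Second, for a general compactly-supported-on-the-right $T\in\Pi$ I would introduce the functional
\[
\Phi(T):=\limsup_{n\to\infty}n^{\frac{1}{d}}\mu(n,T(1-\Delta_{\theta})^{-\frac{1}{2}}),
\]
and show that it is sub-additive on such $T$, and dominated by $\kappa_{d}\|\operatorname{sym}(T)\|_{L_{d}}$ via a uniform Cwikel-type bound $\|T(1-\Delta_{\theta})^{-1/2}\|_{d,\infty}\lesssim\|\operatorname{sym}(T)\|_{L_{\infty}}$. The vanishing property \eqref{compact vanishing} ensures that $\Phi$ depends only on $\operatorname{sym}(T)$, so that together with the base case $\Phi$ is pinned down on a dense subalgebra of the symbol algebra $C(\mathbb{S}^{d-1},C_{0}(\qr)+\mathbb{C})$. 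The remaining task is to upgrade $\limsup$ to a genuine limit; this is supplied by the noncommutative Wiener--Ikehara Tauberian theorem, applied once a suitable meromorphic continuation of the zeta function $s\mapsto\operatorname{Tr}(|T(1-\Delta_{\theta})^{-1/2}|^{ds})$ past $\operatorname{Re}s=1$ has been established, with the leading residue expressed through $\|\operatorname{sym}(T)\|_{L_{d}}^{d}$.

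The main obstacle, I expect, will be building the symbol calculus that makes $\Phi$ descend cleanly to $\operatorname{sym}(T)$. Controlling the cross-terms that arise when rearranging products $\pi_{1}(x)\pi_{2}(g)$ into the opposite order---equivalently, showing that commutators $[\pi_{1}(x),\pi_{2}(g)](1-\Delta_{\theta})^{-1/2}$ lie in a strictly smaller ideal than $\mathcal{L}_{d,\infty}$---is the noncommutative analogue of the principal-symbol calculus, and is what produces a well-defined functional on the tensor product $L_{\infty}(\qr)\bar{\otimes}L_{\infty}(\mathbb{S}^{d-1})$ rather than merely on monomials. Once this calculus is extracted from the $C^{\ast}$-algebraic pseudo-differential framework of \cite{SZ2018,MSZ2019}, Theorem \ref{main2} follows by combining the base case, continuity of $\Phi$ in the $L_{d}$-norm on symbols, and the Tauberian upgrade.
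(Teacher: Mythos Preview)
Your plan has the right ingredients at the two ends---the Tauberian step for an elementary product $\pi_2(g)\pi_1(x)(1-\Delta_\theta)^{-1/2}$ and a density/approximation argument for general $T\in\Pi$---but the bridge you propose between them does not hold.

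The functional $\Phi(T)=\limsup_n n^{1/d}\mu(n,T(1-\Delta_\theta)^{-1/2})$ is \emph{not} sub-additive: from $\mu(2n,S+T)\leq\mu(n,S)+\mu(n,T)$ you only get $\Phi(S+T)\leq 2^{1/d}(\Phi(S)+\Phi(T))$, and this quasi-sub-additivity is far too weak to ``pin down'' $\Phi$ from its values on monomials. Even if $\Phi$ were sub-additive, that alone would not determine $\Phi$ on a sum $\sum_j\pi_1(x_j)\pi_2(g_j)(1-\Delta_\theta)^{-1/2}$ from the summands. The paper handles this gap by an essentially different mechanism: for sums in which the $g_j$ are pairwise either equal or disjointly supported, the corresponding operators become pairwise \emph{orthogonal} (modulo $(\mathcal L_{d,\infty})_0$, via the commutator estimate $[\pi_1(x),\pi_2(g)](1-\Delta_\theta)^{-1/2}\in(\mathcal L_{d,\infty})_0$), and then a Ky-Fan type direct-sum lemma gives the exact asymptotic $(\sum_k a_k^d)^{1/d}$, which is precisely the $L_d$-norm of the symbol. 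General continuous $g_j$ are then reduced to this disjoint situation by a partition-of-unity splitting on $\mathbb S^{d-1}$, combined with a Birman--Solomyak approximation lemma (convergence of the asymptotic constants under $\mathcal L_{d,\infty}$-perturbations modulo $(\mathcal L_{d,\infty})_0$). Your proposal omits this orthogonality/splitting step, and without it there is no path from the base case to finite sums.

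A second issue is where you invoke the Tauberian theorem. Applying it to $s\mapsto\mathrm{Tr}(|T(1-\Delta_\theta)^{-1/2}|^{ds})$ for \emph{general} $T$ would require verifying the analytic-continuation and commutator hypotheses in that generality, which is not feasible: the verification in the paper leans on very special algebraic features of the elementary case (positivity of $g$, $B\in\mathcal S(\qr)$ so that $B^{1/2}\in\mathcal S(\qr)$, explicit factorization of $\mathrm{Tr}(A^zB^z)$ into a Beta-function times $\tau_\theta(B^z)\int g^z$). The Tauberian step is used \emph{only once}, at the bottom of the hierarchy, and everything else is built on top of it by the orthogonality and approximation lemmas just described; it is not re-applied at the level of general $T$.
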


Essential difficulty in studying spectral asymptotics on $L_{\infty}(\qr)$ comes mainly from the non-compactness of the (resolvent of the) Dirac operator $\mathcal{D}.$ Note that $(1+\mathcal{D}^2)^{-\frac12}=1\otimes(1-\Delta_{\theta})^{-\frac12}.$ The multiplication operator $(1-\Delta_{\theta})^{-\frac12}$ on $L_2(\mathbb{R}^d)$ with symbol $(1+|t|^2)^{-\frac12}$ is clearly non-compact. This leads to an unsatisfactory fact that one generally cannot determine spectral asymptotic of an operator $T(1-\Delta_{\theta})^{-\frac12}$ with $T\in\Pi$ through a direct computation of its singular values. 
The method in \cite{FSZ2023} deals with intervals in $\mathbb{R}^d,$ there seems no possibility in adjusting it to our present case. An available method (might be the only available one) in a noncommutative setting was given by Sukochev-Xiong-Zanin \cite{SXZ2023}. Where they computed the singularities of operator zeta functions of the form ${\rm Tr}(A^zB^z)$ and then applied a noncommutative Wiener-Ikehara Tauberian theorem \cite[Section 5.4]{SZ2023} to establish the fundamental spectral asymptotic. Our method for quantum Euclidean spaces resembles that of \cite{SXZ2023} for quantum tori, however, due to non-compactness of Dirac operators, this turns out to be a much involved task. Many nice algebraic and analysis properties for operators are required to overcome difficulties in dealing with the operator zeta function ${\rm Tr}(A^zB^z),$ this partially explains why we limit our consideration on Moyal planes. 

The rest of this paper is organized as follows. In Section \ref{sec-preliminary}, we review background material concerning operator ideals, asymptotics for singular values and
analysis on quantum Euclidean spaces. Section \ref{sec-commutator} is devoted to prove useful
commutator estimates. In Section \ref{sec-mixed trace formula}, we establish a trace formula for convolution-product nuclear operators. In Section \ref{sec-residue}, we use this trace formula to compute singularities of operator zeta functions. In Section \ref{sec-asymptotic special case}, we establish an fundamental spectral asymptotics, which contains the main difficulty. In the final section, we complete the proofs of our main results\textemdash Theorems \ref{main1} and \ref{main2}, additionally, we prove the equivalence of semi-norms $|||\cdot|||_{\dot{W}_d^1(\qr)}$ and $\|\cdot\|_{\dot{W}_d^1(\qr)}.$

\textbf{Notation:} In what follows, set $\mathbb{N}=\{1,2,3,\cdots\}$ and  $\mathbb{Z}_+=\{0,1,2,\cdots\}.$ For a multi-index $\alpha\in\mathbb{Z}_+^d,$ define its length as $|\alpha|_1:=\alpha_1+\alpha_2+\cdots+\alpha_d.$ By $\chi_{E}$ we denote the indicator function of a set $E.$ By $\mathrm{tr}$ we denote the standard matrix trace.
We write $A \lesssim_{\xi} B$ if $A \leq  c_{\xi} B$ for some constant $c_{\xi}$
only depending on $\xi.$

\section{Preliminaries}\label{sec-preliminary}
\subsection{Singular values and Schatten ideals}\label{operator notation subsection}
In this subsection, we overview some basic facts about singular values and operator ideals, see \cite{BS1987, LSZ2012} for more details.

Fix a complex separable Hilbert space $H.$
Let $\mathcal{L}_{\infty}(H)$ denote the algebra of bounded linear operators on $H,$ equipped with the operator norm $\|\cdot\|_\infty.$  The canonical trace on $\mathcal{L}_{\infty}(H)$ is denoted by $\mathrm{Tr}.$ Let $\mathcal{K}(H)$ denote the set of compact operators on $H.$ Given $T\in \mathcal{K}(H),$ recall that  by $\mu(T)=\{\mu(n,T)\}_{n=0}^\infty$ we denote the sequence of singular values determined by \eqref{def-of-sv}. Due to the min-max principle, $\mu(n,T)$ coincides with the $(n+1)$-th largest eigenvalues of $|T|=\sqrt{ T^\ast T}.$ 

For $T\in\mathcal{L}_{\infty}(H),$ it is very convenient to define the singular value function
\begin{equation*}
\mu(t,T)=\inf\{\|T-R\|_{\infty}:\quad \mathrm{rank}(R)\leq t\},\quad t\geq0.
\end{equation*}
Below, we record some key properties of singular values (see e.g. \cite[Section 2.3]{LSZ2012}):
$$\mu(t,UTV)\leq\|U\|_{\infty}\mu(t,T)\|V\|_{\infty},$$
$$
\mu(s+t,S+T)\leq\mu(s,S)+\mu(t,T),
\quad
\mu(s+t,ST)\leq \mu(s,S)\mu(t,T)
$$
and
$\mu(t,T)=\mu(t,T^*)=\mu(t,|T|^{\alpha})^{\frac1{\alpha}},$ for all $\alpha>0.$
For $0<p<\infty,$ we denote by $\mathcal{L}_p(H)$ the Schatten-von Neumann ideal  
$\mathcal{L}_p(H)=\{T\in\mathcal{K}(H):\mathrm{Tr}(|T|^p)<\infty\},$
equipped with the $p$-norm
$\|T\|_p= \big(\sum_{k=0}^\infty\mu(k,T)^p\big)^{1/p}.$
If $p\geq1,$ then $\mathcal{L}_p(H)$ is a Banach ideal. Similarly, for $0<p<\infty$ we denote by $\mathcal{L}_{p,\infty}(H)$ the Schatten-Lorentz ideal
$\mathcal{L}_{p,\infty}(H)=\{T\in \mathcal{K}(H):\sup_{t>0}t^{\frac1p}\mu(t,T)<\infty\},$
equipped with the quasi-norm
$\|T\|_{p,\infty}:=\sup_{t>0}t^{\frac1p}\mu(t,T).$
If $p>1,$ then $\mathcal{L}_{p,\infty}(H)$ is a quasi-Banach ideal.
We will abbreviate $\mathcal{L}_{p}(H)$ as $\mathcal{L}_p,$ $\mathcal{L}_{p,\infty}(H)$ as $\mathcal{L}_{p,\infty}$ and so on, when the Hilbert space is
clear from context.

We make frequent use of the following quasi-triangle inequality:
$
\|S+T\|_{p,\infty}\leq 2^{\frac1p}\big(\|S\|_{p,\infty}+\|T\|_{p,\infty}\big).
$
Similarly, for $\frac{1}{r}=\frac{1}{p}+\frac{1}{q}$ we have the H\"older inequality
$
\|ST\|_{r,\infty}\leq 2^{\frac1r}\|S\|_{p,\infty}\|T\|_{q,\infty}.
$
Recently, the best constants for H\"older inequalities were found by Sukochev-Zanin \cite{SZ2021}.

Let $(\mathcal{L}_{p,\infty})_0$ be the closure of all finite rank operators in $\mathcal{L}_{p,\infty}.$ Equivalently,
$(\mathcal{L}_{p,\infty})_0=\{V\in\mathcal{L}_{p,\infty}:\ \lim_{t\to\infty}t^{\frac1p}\mu(t,V)=0 \}.$
The ideal $(\mathcal{L}_{p,\infty})_0$ is also termed as the separable part of $\mathcal{L}_{p,\infty}.$
It is important to note that $\mathcal{L}_{p,\infty}\subset(\mathcal{L}_{q,\infty})_0$ whenever $p<q.$ Note also that,
by the H\"older inequality we have
$\mathcal{K}\cdot\mathcal{L}_{p,\infty}\subseteq(\mathcal{L}_{p,\infty})_0$
and
$ (\mathcal{L}_{p,\infty})_0\cdot\mathcal{L}_{q,\infty}\subseteq(\mathcal{L}_{r,\infty})_0,$ $ \frac1r=\frac1p+\frac1q.$ 
\subsection{Abstract limit lemmas}
Fix a parameter $p\in(0,\infty)$ in this subsection.  The following well-known result is due to Ky Fan, see e.g. \cite[Section 11.6]{BS1987}.

\begin{lem}\label{bs sep lemma}
Let $T\in\mathcal{L}_{p,\infty}$ and $V\in(\mathcal{L}_{p,\infty})_0.$ If $\lim_{t\to\infty}t^{\frac1p}\mu(t,T)$ exists, then
$$\lim_{t\to\infty}t^{\frac1p}\mu(t,T+V)=\lim_{t\to\infty}t^{\frac1p}\mu(t,T).$$
\end{lem}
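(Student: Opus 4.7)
The plan is to prove the equality by establishing matching upper and lower bounds on $\limsup_{t\to\infty} t^{1/p}\mu(t,T+V)$ and $\liminf_{t\to\infty} t^{1/p}\mu(t,T+V)$. The only tools required are the sub-additive Ky Fan inequality $\mu(s+t,A+B)\leq \mu(s,A)+\mu(t,B)$ recalled in Subsection~\ref{operator notation subsection}, together with the defining property of the separable part, namely $\lim_{t\to\infty}t^{1/p}\mu(t,V)=0$ for $V\in(\mathcal{L}_{p,\infty})_0$. Set $L:=\lim_{t\to\infty}t^{1/p}\mu(t,T)$ for brevity.

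For the upper bound, I fix $\varepsilon>0$ and apply Ky Fan to the splitting $T+V$, taking $s=t$ and $t'=\varepsilon t$, to obtain
\begin{equation*}
\mu((1+\varepsilon)t,T+V)\leq \mu(t,T)+\mu(\varepsilon t,V).
\end{equation*}
Multiplying by $((1+\varepsilon)t)^{1/p}$ and using that $(\varepsilon t)^{1/p}\mu(\varepsilon t,V)\to 0$, passage to $\limsup$ yields $\limsup_{u\to\infty}u^{1/p}\mu(u,T+V)\leq (1+\varepsilon)^{1/p}L$; sending $\varepsilon\to 0^+$ gives the desired one-sided bound. For the lower bound, I repeat the argument with the roles reversed, writing $T=(T+V)+(-V)$ and applying Ky Fan to this splitting:
\begin{equation*}
\mu((1+\varepsilon)t,T)\leq \mu(t,T+V)+\mu(\varepsilon t,V).
\end{equation*}
The left-hand side, after multiplication by $((1+\varepsilon)t)^{1/p}$, converges to $(1+\varepsilon)^{1/p}L$ since the full limit exists by hypothesis, while the final term vanishes asymptotically as before. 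Taking $\liminf$ gives $(1+\varepsilon)^{-1/p}L\leq \liminf_{u\to\infty}u^{1/p}\mu(u,T+V)$, and letting $\varepsilon\to 0^+$ completes the proof.

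There is no real obstacle here: the argument is standard and purely one-variable once the Ky Fan inequality is granted. The only point requiring a small amount of care is choosing the splitting $s+t'=(1+\varepsilon)t$ rather than the naive $s=t'=t/2$, which would introduce a spurious factor of $2^{1/p}$ and prevent the two sides from matching.
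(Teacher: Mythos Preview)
Your proof is correct. The paper does not actually supply a proof of this lemma; it is stated as a well-known result attributed to Ky Fan, with a reference to \cite[Section 11.6]{BS1987}. The argument you give---splitting at $(t,\varepsilon t)$ in the Ky Fan inequality, bounding the $\limsup$ from above and the $\liminf$ from below, and then sending $\varepsilon\to 0$---is precisely the standard one found in that reference, so there is nothing further to compare.
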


The next lemma quoted from \cite{SXZ2023} will be repeatedly used.  Recall that the operators $\{T_k\}_{k=1}^n\subset \mathcal{L}_{\infty}$ are called \emph{pairwise orthogonal} if 
$T_kT_l=T_k^\ast T_l=0,$ $k\neq l.$

\begin{lem}\label{bs dirsum lemma}
Let $\{T_k\}_{k=1}^n\subset\mathcal{L}_{p,\infty}$ be a sequence of pairwise orthogonal operators. Suppose that for all $1\leq k\leq n,$ there exist the limits
$\lim_{t\to\infty}t^{\frac1p}\mu(t,T_k)=a_k,$ then
$$\lim_{t\to\infty}t^{\frac1p}\mu(t,\sum_{k=1}^nT_k)=\big(\sum_{k=1}^na_k^p \big)^{\frac1p}.$$
\end{lem}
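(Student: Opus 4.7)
The plan is to exploit the pairwise orthogonality to realize $T=\sum_{k=1}^n T_k$ as an orthogonal direct sum and then extract the asymptotic from a counting-function argument.

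First I would establish the direct-sum decomposition. The hypothesis $T_k^*T_l=0$ for $k\neq l$ immediately gives $T^*T=\sum_{k=1}^n T_k^*T_k$, the right-hand side being a sum of positive operators. Combined with the consequence that the initial projections of the $T_k$ are mutually orthogonal (and, by the same token, so are the range projections), this shows that $T$ is unitarily equivalent to $\bigoplus_{k=1}^n T_k$ acting on the orthogonal sum of the initial spaces. In particular, the singular value sequence of $T$ is the decreasing rearrangement of the disjoint union $\bigsqcup_{k=1}^n\{\mu(m,T_k)\}_{m\in\mathbb{Z}_+}$.

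Next I would reformulate the asymptotic hypothesis in terms of distribution functions. For each $k$, the relation $\lim_{t\to\infty}t^{1/p}\mu(t,T_k)=a_k$ is equivalent to
\[
N_k(\lambda):=\#\{m\in\mathbb{Z}_+:\mu(m,T_k)>\lambda\}\sim a_k^p\lambda^{-p}\quad\text{as }\lambda\to 0^+.
\]
Since the index set is finite, the distribution function of the singular values of $T$ satisfies
\[
N_T(\lambda)=\sum_{k=1}^n N_k(\lambda)\sim\Big(\sum_{k=1}^n a_k^p\Big)\lambda^{-p},\quad\lambda\to 0^+.
\]
Inverting this asymptotic yields $\mu(t,T)\sim\big(\sum_k a_k^p\big)^{1/p}t^{-1/p}$ as $t\to\infty$, which is the desired claim.

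The only delicate point is the first step: one has to verify that the stated pairwise orthogonality really forces the initial and range projections of the $T_k$ to be mutually orthogonal, so that the direct-sum reduction is legitimate. Once that is settled, the counting-function manipulation is entirely routine, precisely because the sum is finite and so there is no issue in inverting the $\sim$-asymptotic term by term.
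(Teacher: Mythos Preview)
The paper does not give its own proof of this lemma; it simply quotes it from \cite{SXZ2023}. Your counting-function strategy is the standard one and would work perfectly \emph{if} the direct-sum reduction were valid. But the ``delicate point'' you flag is genuinely problematic: with the paper's stated definition of pairwise orthogonality, namely $T_kT_l=T_k^{\ast}T_l=0$ for $k\neq l$, the initial projections of the $T_k$ need \emph{not} be mutually orthogonal, so the singular values of $\sum_k T_k$ are \emph{not} in general the decreasing rearrangement of the union of the $\mu(\cdot,T_k)$.

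Here is a counterexample. On $H=\ell_2\oplus\ell_2\oplus\ell_2$ with orthonormal bases $\{e_n\},\{f_n\},\{g_n\}$, set
\[
T_1=\sum_{n\geq0}(n+1)^{-1/p}\,f_n\otimes e_n^{\ast},\qquad T_2=\sum_{n\geq0}(n+1)^{-1/p}\,g_n\otimes e_n^{\ast}.
\]
One checks directly that $T_1T_2=T_2T_1=T_1^{\ast}T_2=T_2^{\ast}T_1=0$, so the pair is pairwise orthogonal in the paper's sense. Yet $|T_1|^2=|T_2|^2=\sum_n(n+1)^{-2/p}e_n\otimes e_n^{\ast}$ have \emph{identical} (not orthogonal) supports, and $|T_1+T_2|^2=2\sum_n(n+1)^{-2/p}e_n\otimes e_n^{\ast}$. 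Hence $\mu(n,T_1+T_2)=\sqrt{2}\,(n+1)^{-1/p}$, giving
\[
\lim_{t\to\infty}t^{1/p}\mu(t,T_1+T_2)=\sqrt{2},\qquad\text{while}\qquad (a_1^p+a_2^p)^{1/p}=2^{1/p}.
\]
For $p\neq2$ these disagree, so not only your argument but the lemma itself fails under the definition as written.

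Two remarks resolve the situation. First, the intended hypothesis is almost certainly $T_kT_l^{\ast}=T_k^{\ast}T_l=0$ (a common convention for ``orthogonal'' operators); this \emph{does} force both the range and the initial projections to be pairwise orthogonal, and then your direct-sum plus counting-function argument goes through verbatim. Second, in the paper's sole application of the lemma (the proof of Lemma~\ref{key thm disjoint plane}) the operators $\{|A_kB_k|^2\}$ are positive, and for self-adjoint $T_k$ the condition $T_kT_l=0$ already gives $T_kT_l^{\ast}=0$, so the issue does not arise there. Your proof is correct once the hypothesis is read as $T_kT_l^{\ast}=T_k^{\ast}T_l=0$; you should state this explicitly rather than leave the verification open.
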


The following result is due to Birman-Solomyak \cite[Theorem 4.1, Remark 4.2]{BS1970}.
\begin{lem}\label{another limit lemma}
Let $T\in\mathcal{L}_{p,\infty}$ and let $\{T_n\}_{n=1}^{\infty}\subset\mathcal{L}_{p,\infty}.$ Suppose that
\begin{enumerate}[{\rm (i)}]
\item\label{aala} for every $n\geq 1$ we have
$\lim _{t\to\infty}t^{\frac1p}\mu(t,T_n)=a_n;$
\item\label{aalb} ${\rm dist}_{\mathcal{L}_{p,\infty}}\big(T_n-T,(\mathcal{L}_{p,\infty})_0\big)\to0$ as $n\to\infty.$
\end{enumerate}	
Under those assumptions, we have
$$\lim_{t\to\infty}t^{\frac1p}\mu(t,T)=\lim_{n\to\infty}c_n.$$
\end{lem}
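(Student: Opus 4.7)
The plan is to reduce the quotient-distance hypothesis (ii) to a concrete decomposition $T=S_n+R_n$, where $S_n$ differs from $T_n$ by an element of $(\mathcal{L}_{p,\infty})_0$ and $\|R_n\|_{p,\infty}\to 0$. Explicitly, for each $n$ I pick $V_n\in(\mathcal{L}_{p,\infty})_0$ with $\epsilon_n:=\|T_n-T-V_n\|_{p,\infty}\to 0$, set $S_n:=T_n-V_n$ and $R_n:=T-S_n$, so that $\|R_n\|_{p,\infty}=\epsilon_n$. Since $T_n=S_n+V_n$ with $V_n\in(\mathcal{L}_{p,\infty})_0$, Lemma \ref{bs sep lemma} combined with hypothesis (i) yields $\lim_{t\to\infty}t^{1/p}\mu(t,S_n)=a_n$.

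The central tool is the following quasi-convex estimate, immediate from the subadditivity $\mu(s+t,A+B)\le\mu(s,A)+\mu(t,B)$: for any $\alpha\in(0,1)$ and $A,B\in\mathcal{L}_{p,\infty}$,
\[
t^{1/p}\mu(t,A+B)\le (1-\alpha)^{-1/p}\bigl((1-\alpha)t\bigr)^{1/p}\mu\bigl((1-\alpha)t,A\bigr)+\alpha^{-1/p}\|B\|_{p,\infty}.
\]
Applied with $A=S_m$ and $B=S_n-S_m$ (whose quasinorm is at most $2^{1/p}(\epsilon_n+\epsilon_m)$), letting $t\to\infty$ yields $a_n\le(1-\alpha)^{-1/p}a_m+O_\alpha(\epsilon_n+\epsilon_m)$. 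Since $\|S_n\|_{p,\infty}\le 2^{1/p}(\|T\|_{p,\infty}+\epsilon_n)$ is bounded, so is $\{a_n\}$; choosing $\alpha$ small so that $(1-\alpha)^{-1/p}-1$ is arbitrarily close to zero then forces $\{a_n\}$ to be Cauchy. Write $a:=\lim_n a_n$.

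The conclusion now follows from a two-sided sandwich. Applying the key estimate with $A=S_n$, $B=R_n$ gives
\[
\limsup_{t\to\infty}t^{1/p}\mu(t,T)\le(1-\alpha)^{-1/p}a_n+\alpha^{-1/p}\epsilon_n;
\]
sending $n\to\infty$ and then $\alpha\to 0$ produces $\limsup_{t\to\infty}t^{1/p}\mu(t,T)\le a$. Conversely, applying the estimate with $A=T$ and $B=-R_n$ (so $A+B=S_n$) and passing $t\to\infty$ on the left gives $a_n\le (1-\alpha)^{-1/p}\liminf_{t\to\infty}t^{1/p}\mu(t,T)+\alpha^{-1/p}\epsilon_n$; sending $n\to\infty$ and then $\alpha\to 0$ yields $a\le\liminf_{t\to\infty}t^{1/p}\mu(t,T)$. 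Hence the limit exists and equals $a$.

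The only real subtlety is decoupling the two small parameters: one must take $n$ large first so that the error $\alpha^{-1/p}\epsilon_n$ is tamed, and only then send $\alpha\to 0$ to remove the prefactor $(1-\alpha)^{-1/p}$. The boundedness of $\{a_n\}$, which might initially look like a concern, comes for free from the boundedness of $\{\|S_n\|_{p,\infty}\}$, so there is no substantive obstacle beyond carefully organising the estimates.
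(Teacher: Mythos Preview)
Your proof is correct. The paper does not actually give its own proof of this lemma; it merely cites Birman--Solomyak \cite[Theorem~4.1, Remark~4.2]{BS1970}. Your argument is a clean, self-contained reconstruction: you reduce the quotient-distance hypothesis to a decomposition $T=S_n+R_n$ with $\|R_n\|_{p,\infty}\to 0$ and $S_n$ equivalent to $T_n$ modulo $(\mathcal{L}_{p,\infty})_0$, then use the elementary splitting inequality $\mu(t,A+B)\le\mu((1-\alpha)t,A)+\mu(\alpha t,B)$ together with Lemma~\ref{bs sep lemma} to run a two-sided sandwich. The Cauchy argument for $\{a_n\}$ and the order of limits ($n\to\infty$ before $\alpha\to 0$) are handled correctly, and the boundedness of $\{a_n\}$ follows, as you note, from $a_n\le\|S_n\|_{p,\infty}\le 2^{1/p}(\|T\|_{p,\infty}+\epsilon_n)$. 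This is essentially the standard route to such results and matches in spirit what one extracts from the Birman--Solomyak reference; there is nothing to add. (Incidentally, the ``$c_n$'' in the displayed conclusion of the lemma is a typo for ``$a_n$''; you have silently corrected it.)
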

We end this subsection by a useful distance formula in $\mathcal{L}_{p,\infty},$ see \cite[p. 267]{BS1987}.
\begin{equation}\label{distance formula}
\mathrm{dist}_{\mathcal{L}_{p,\infty}}(T,(\mathcal{L}_{p,\infty})_0)=\limsup_{t\to\infty}t^{\frac1p}\mu(t,T).
\end{equation}

\subsection{Quantum Euclidean space:  algebra}\label{nc plane}

There are many approaches in the literature to define the quantum Euclidean spaces, see  \cite{Gay2004, R1993} and \cite[Section 6]{LSZ2020}. 
For simplicity, we adopt the one suggested in \cite{LSZ2020}. 

Let us consider a concrete realization for the Weyl relation \eqref{weyl relation}, namely, we set
\begin{equation}\label{definition of unitary family}
(U_\theta(t)\xi)(u):=e^{-\frac{\rm i}{2}\langle t,\theta u\rangle}\xi(u-t),\quad\xi\in L_2(\mathbb{R}^d),\quad t,u\in\mathbb{R}^d.
\end{equation}
Since $\theta$ is anti-symmetric, we have
$ U_\theta(t)^\ast=U_\theta(t)^{-1}=U_\theta(-t),\quad t\in\mathbb{R}^d.$
\begin{defi}The von Neumann algebra on $L_2(\mathbb{R}^d)$ generated by $\{U_\theta(t)\}_{t\in\mathbb{R}^d}$ introduced in \eqref{definition of unitary family}, is called the \emph{quantum Euclidean space} and denoted by $L_\infty(\mathbb{R}^d_\theta).$
\end{defi}

Everywhere below, we focus exclusively on the non-degenerate case $\det(\theta)\neq0.$ 
Due to anti-symmetry and non-degeneracy of the matrix $\theta,$ we have
$\det(\theta)=\det(\theta^{\mathrm{\ast}})=\det(-\theta)=(-1)^d\det(\theta)\neq0.$
So, $d$ is automatically an even integer.
For every such $\theta,$ the algebra $L_\infty(\qr)$ is $\ast$-isomorphic to $\mathcal{L}_{\infty}(L_2(\mathbb{R}^{\frac d2})),$ we refer to \cite[Chapter 4]{LMSZ2023} for further details. 
Denote $r_\theta: L_\infty(\qr)\to\mathcal{L}_{\infty}(L_2(\mathbb{R}^{\frac d2}))$ for the corresponding  $\ast$-isomorphism, and let $\mathrm{Tr}$ be the standard trace on $\mathcal{L}_{\infty}(L_2(\mathbb{R}^{\frac{d}{2}})).$
Following \cite[Section 4.2]{LMSZ2023}, we can equip $L_\infty(\qr)$ with a n.s.f. trace
$\tau_\theta:=\big((2\pi)^{d}\det(\theta)\big)^{\frac12}\cdot\mathrm{Tr}\circ r_\theta.
$
\begin{defi}
The $L_p$-spaces on $\qr$ are denoted by $L_p(\qr),$ $0<p<\infty.$ Namely,
$L_p(\qr)=\{x\in L_\infty(\qr):\|x\|_{L_p(\qr)}:=\tau_\theta(|x|^p)^{\frac{1}{p}}<\infty\}.$
\end{defi}
Since $r_\theta$ is an $\ast$-isomorphism, it is immediate that $r_\theta(L_p(\qr))=\mathcal{L}_p(L_2(\mathbb{R}^{\frac d2})).$ This implies that the $L_p$-spaces on $\qr$ are nested, i.e., $L_p(\qr)\subset L_q(\qr)$ whenever $p<q.$ From the definition of $\tau_\theta,$ we indeed have
$\|x\|_q\leq\big((2\pi)^{d}\det(\theta)\big)^{\frac{1}{2q}-\frac{1}{2p}}\|x\|_p,$ for all $ x\in L_p(\qr).$
\begin{defi}
Let $C_0(\qr)$ be the $C^\ast$-algebra of all 
$\tau_\theta$-compact operators in $L_\infty(\qr).$ Equivalently, $C_0(\qr)=r^{-1}_\theta(\mathcal{K}(L_2(\mathbb{R}^{\frac d2})).$
\end{defi}

For $f\in\mathcal{S}(\mathbb{R}^d),$ denote by $U_\theta(f)\in L_\infty(\qr)$ the operator
\begin{equation*}
U_\theta(f)\xi:=\int_{\mathbb{R}^d}f(t) U_\theta(t)\xi dt,\quad \xi\in L_2(\mathbb{R}^d).
\end{equation*}
Here, the integral is the $L_2(\mathbb{R}^d)$-valued Bochner integral.
By \cite[Lemma 4.2.14]{LMSZ2023} the operator $U_\theta(f)$ is in $L_1(\qr)$ and its trace can be explicitly computed:
\begin{equation}\label{trace of schwartz function}
\tau_\theta(U_\theta(f))=(2\pi)^d f(0).
\end{equation}
\begin{defi}
The Schwartz space $\mathcal{S}(\qr)$ on the quantum Euclidean space is defined as the image of the classical Schwartz space under $U_\theta,$ that is,
$\mathcal{S}(\qr):=U_\theta(\mathcal{S}(\mathbb{R}^d)).$
\end{defi}

The Schwartz space $\mathcal{S}(\mathbb{R}^d
)$ can be reinterpreted as a twisted Fr\'echet algebra, which we now explain.
For $f,g\in\mathcal{S}(\mathbb{R}^d),$ define
$f^\ast:=\bar{f}(-t)$ as the \emph{involution} of $f,$ and set $$\quad (f\star_\theta g)(t):=\int_{\mathbb{R}^d}e^{\frac{\rm i}{2}\langle t,\theta s\rangle}f(t-s)g(s)ds=(U_\theta(f)g)(t).$$ 
The bilinear map $(f,g)\mapsto f\star_\theta g$ is called the \emph{twisted convolution} of the Schwartz functions $f$ and $g.$ It is obvious that $f^\ast\in\mathcal{S}(\mathbb{R}^d)$ and  $f\star_\theta g\in \mathcal{S}(\mathbb{R}^d).$
Consequently, the triple $(\mathcal{S}(\mathbb{R}^d),\ast, \star_\theta)$ under its original Fr\'echet topology becomes a Fr\'echet $\ast$-algebra.

Further, a direct computation shows
$$U_\theta(f+g)=U_\theta(f)+U_\theta(g),\quad U_\theta(f)^\ast=U_\theta(f^\ast) ,\quad U_\theta(f)U_\theta(g)=U_\theta(f\star_\theta g).$$
Having these properties at hand, we now infer that the surjection $U_\theta:\mathcal{S}(\mathbb{R}^d)\to\mathcal{S}(\qr)$ is in fact an algebraic $\ast$-isomorphism. This is done by knowing that $U_\theta$ is injective, see e.g. \cite[p 500]{MSX2020}.  

For any $f\in\mathcal{S}(\mathbb{R}^d),$ using \eqref{trace of schwartz function} we compute
$$\tau_\theta(U_\theta(f)^\ast U_\theta(f))=\tau_\theta(U_\theta(f^{\ast}\star_\theta f))=(2\pi)^d\cdot(f^{\ast}\star_\theta f)(0)=(2\pi)^d\|f\|_2^2.$$
Thus, $\|U_\theta(f)\|_2=(2\pi)^{\frac{d}{2}}\|f\|_2.$ Actually, the map $(2\pi)^{-\frac{d}{2}}U_\theta$ extends to an isometry from $L_2(\mathbb{R}^d)$ onto $L_2(\qr),$ see e.g. \cite[Section 4.2]{LMSZ2023}.
\begin{prop}\label{density}
$\mathcal{S}(\qr)$ is norm-dense in $C_0(\qr).$
\end{prop}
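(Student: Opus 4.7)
My plan is to identify $\overline{\mathcal{S}(\qr)}^{\|\cdot\|_\infty}$ as a $C^\ast$-subalgebra of $C_0(\qr)$ and then invoke the classical theorem that a $C^\ast$-subalgebra of $\mathcal{L}_\infty(H)$ acting irreducibly on $H$ and containing a nonzero compact operator must contain all of $\mathcal{K}(H)$ (see e.g.\ Dixmier). First, $\mathcal{S}(\qr)\subset L_2(\qr)\subset C_0(\qr)$ because $(2\pi)^{-d/2}U_\theta$ is an $L_2$-isometry, so the image of $\mathcal{S}(\qr)$ under $r_\theta$ consists of Hilbert--Schmidt operators. Moreover $\mathcal{S}(\qr)$ is a $\ast$-subalgebra thanks to $U_\theta(f)^\ast=U_\theta(f^\ast)$ and $U_\theta(f)U_\theta(g)=U_\theta(f\star_\theta g)$, so $\mathcal{A}:=\overline{\mathcal{S}(\qr)}^{\|\cdot\|_\infty}$ is a $C^\ast$-subalgebra of $C_0(\qr)$. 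It therefore suffices to show that $r_\theta(\mathcal{A})$ acts irreducibly on $L_2(\mathbb{R}^{d/2})$.

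The key step is to establish that irreducibility, which by the double commutant theorem is equivalent to $r_\theta(\mathcal{S}(\qr))$ being $\sigma$-weakly dense in $\mathcal{L}_\infty(L_2(\mathbb{R}^{d/2}))$. Since $r_\theta$ is a $\ast$-isomorphism of von Neumann algebras it is automatically normal and hence a $\sigma$-weak homeomorphism; transferring the problem, I need only show that $\mathcal{S}(\qr)$ is $\sigma$-weakly dense in $L_\infty(\qr)$. By definition $L_\infty(\qr)$ is generated as a von Neumann algebra by $\{U_\theta(t)\}_{t\in\mathbb{R}^d}$, so it suffices to place each $U_\theta(t)$ in the $\sigma$-weak closure of $\mathcal{S}(\qr)$. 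I would choose a sequence $\{f_n\}\subset\mathcal{S}(\mathbb{R}^d)$ approximating the Dirac mass at $t$ (with $f_n\geq 0$, $\int f_n =1$, and shrinking support); strong continuity of $s\mapsto U_\theta(s)$ together with dominated convergence gives $U_\theta(f_n)=\int f_n(s)U_\theta(s)\,ds \to U_\theta(t)$ in the strong operator topology on $L_2(\mathbb{R}^d)$, and the uniform bound $\|U_\theta(f_n)\|_\infty\leq \|f_n\|_1=1$ upgrades SOT convergence on this bounded sequence to $\sigma$-weak convergence.

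The main obstacle is the bookkeeping between two different representations: irreducibility and commutants depend on the ambient Hilbert space, whereas the von Neumann algebra structure transfers abstractly through $r_\theta$. Working with the $\sigma$-weak topology (which is intrinsic to the von Neumann algebra and hence preserved by the normal isomorphism $r_\theta$) rather than with SOT on $L_2(\mathbb{R}^d)$ avoids mixing the two representations: once $\sigma$-weak density of $\mathcal{S}(\qr)$ in $L_\infty(\qr)$ is proved, applying $r_\theta$ yields $\sigma$-weak density of $r_\theta(\mathcal{S}(\qr))$ in $\mathcal{L}_\infty(L_2(\mathbb{R}^{d/2}))$, and the double commutant theorem together with the classical irreducible-$C^\ast$-algebra result finishes the proof.
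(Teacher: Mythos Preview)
Your argument is correct, but it takes a considerably more elaborate route than the paper. The paper's proof is essentially two lines: since $\mathcal{S}(\mathbb{R}^d)$ is dense in $L_2(\mathbb{R}^d)$ and $(2\pi)^{-d/2}U_\theta$ is an $L_2$-isometry onto $L_2(\qr)$, the space $\mathcal{S}(\qr)$ is $L_2$-dense (hence operator-norm dense, as $\|\cdot\|_\infty\le c\|\cdot\|_{L_2}$ for Hilbert--Schmidt elements) in $L_2(\qr)$; and $L_2(\qr)$ is norm-dense in $C_0(\qr)$ because $\mathcal{L}_2$ is norm-dense in $\mathcal{K}$, transported through the $\ast$-isomorphism $r_\theta$. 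Chaining these two dense inclusions finishes the proof immediately.

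By contrast, you invoke the machinery of irreducible $C^\ast$-subalgebras containing a nonzero compact (Dixmier), the double commutant theorem, and normality of von Neumann algebra isomorphisms to transport $\sigma$-weak density through $r_\theta$. All of this is sound, and your handling of the delicate point---that $\sigma$-weak topology is intrinsic and hence preserved by $r_\theta$, whereas SOT is representation-dependent---is careful and correct. Your approach would be the natural one in a setting where no explicit Hilbert--Schmidt identification is available; here, however, that identification is already on the table (via $r_\theta$ and the $L_2$-isometry $U_\theta$), so the paper exploits it directly and avoids the abstract detour entirely.
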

\begin{proof}
Since $\mathcal{S}(\mathbb{R}^d)$ is dense in $L_2(\mathbb{R}^d),$ and $(2\pi)^{-\frac{d}{2}}U_\theta:L_2(\mathbb{R}^d)\to L_2(\qr)$ is an isomorphism, it follows that $\mathcal{S}(\qr)=U_\theta(\mathcal{S}(\mathbb{R}^d))$ is dense in $L_2(\qr).$
Moreover, since the Hilbert-Schmidt class $\mathcal{L}_2(L_2(\mathbb{R}^{\frac{d}{2}}))$ is norm-dense in $\mathcal{K}(L_2(\mathbb{R}^{\frac{d}{2}}))$ and so, through the $\ast$-isomorphism $ r^{-1}_\theta:\mathcal{L}_{\infty}(L_2(\mathbb{R}^{\frac{d}{2}}))\to L_\infty(\qr),$ we conclude that $L_2(\qr)$ is norm-dense in $C_0(\qr).$
Altogether, $\mathcal{S}(\qr)$ is norm-dense in $C_0(\qr).$
\end{proof}
\subsection{Noncommutative Euclidean space: calculus}In this subsection, we review basic properties of several functions spaces on $\qr.$ 
Recall that by $D_j,$ $j=1,\cdots,d$ we denote the multiplication operators 
$(D_j \xi)(t)=t_j \xi(t),$ $ \xi\in L_2(\mathbb{R}^d)$
defined on the domain $\mathrm{dom}(D_j)=\{\xi\in L_2(\mathbb{R}^d):\xi\in L_2(\mathbb{R}^d,t^2_jdt)\}.$ 
Clearly, $\mathcal{S}(\mathbb
R^d)$ is a common core of $\{D_j\}_{j=1}^d.$
Fix $j$ and $t\in\mathbb{R}^d.$ Note that the unitary operator $U_\theta(t)$ preserves $\mathrm{dom}(D_j),$ thus for $\xi\in \mathcal{S}(\mathbb{R}^d),$ we may compute 
$
[D_j,U_\theta(t)]\xi=t_jU_\theta(t)\xi.
$ It is readily that the commutator $[D_j,U_\theta(t)]$ extends to a bounded operators on $L_2(\mathbb{R}^d).$ There is a crucial fact \cite[Proposition 6.12]{LSZ2020}, which asserts that: if $x\in L_{\infty}(\qr)$ and  if $[D_j,x]$ extends to a bounded operator on $L_2(\mathbb{R}^d),$ then $[D_j,x]\in L_{\infty}(\qr).$
In this case, we will write $\partial_jx:=[D_j,x],$ and then call $\partial_jx$ the \emph{$j$-th partial derivative} of $x.$ 
Generally, for $\alpha\in\mathbb{Z}_+^d$ we consider the mixed partial derivation
$
\partial^\alpha:=\partial_1^{\alpha_1}\cdots\partial_d^{\alpha_d}$
with the domain 
$\mathrm{dom}(\partial^{\alpha}):=\{x\in L_\infty(\qr): \partial^{\alpha}x\in L_\infty(\qr)\}.$
As usual, $\partial^0x:=x.$

Let $f\in\mathcal{S}(\mathbb{R}^d).$ It is straightforward that
$$\partial_j(U_\theta(f))=\int_{\mathbb{R}^d}t_jf(t)U_\theta(t)dt\in\mathcal{S}(\qr),\quad 1\leq j\leq d.$$
Inductively, for every $\alpha\in\mathbb{Z}_+^d$ we have $\partial^{\alpha}(U_\theta(f))=U_\theta(f_\alpha)\in \mathcal{S}(\qr),$ $ f_\alpha(t):=t^{\alpha_1}_1\cdots t^{\alpha_d}_d f(t).$
Since $\alpha$ can be arbitrary, we have showed that 
$\partial^{\alpha}(\mathcal{S}(\qr))\subset\mathcal{S}(\qr),$ for all $\alpha.$
Next, we introduce the noncommutative Sobolev spaces on $\qr.$
\begin{defi}
Let $k\in\mathbb{Z}_+$ and let $1\leq p\leq\infty.$ The Sobolev space $W_p^k(\qr)$ is defined by setting
$W_p^k(\qr):=\{x\in L_p(\qr): \partial^{\alpha}x\in L_p(\qr), ~\forall~|\alpha|_1\leq k \},$
equipped with the norm
$$
\|x\|_{W^k_p(\qr)}=\sum_{|\alpha|_1 \leq k}\left\|\partial^\alpha x\right\|_p.
$$
With this norm $W^k_p(\qr)$ is a Banach space \cite{MSX2020}.
Similarly, the homogeneous Sobolev space $\dot{W}_p^k(\qr)$ is defined by setting 
$\dot{W}_p^k(\qr):=\{x\in \mathcal{S}'(\qr): \partial^{\alpha}x\in L_p(\qr), ~\forall~|\alpha|_1=k\},$
equipped with the semi-norm
$$
\|x\|_{\dot{W}_p^k(\qr)}=\sum_{|\alpha|_1=k}\left\|\partial^\alpha x\right\|_p.
$$
\end{defi}
Recalling that $U_\theta(f)\in L_1(\qr)$ for all $f\in\mathcal{S}(\mathbb{R}^d),$ it follows that $\mathcal{S}(\qr)$ is a common subspace of $W^k_p(\qr)$ for all $1\leq p\leq\infty.$
\subsection{The Schwartz space $\mathcal{S}(\qr)$ revisited}
In this subsection, we shall recall a matrix description of $\mathcal{S}(\qr),$ it is due to Gracia-Bond\'ia-V\'arilly \cite{GV1988}. 
\begin{defi}
Let $d\in2\mathbb{N}.$ A square-summable double sequence $a\in\ell_2(\mathbb{Z}_+^{\frac d2}\times \mathbb{Z}_+^{\frac d2})$ is said to be \emph{rapidly decreasing} if, for every $m\in\mathbb{Z}_+,$  
$$r_m(a):=\Big(\sum_{k,l\in\mathbb{Z}_+^{d/2}}(|k|+1)^{2m}(|l|+1)^{2m}|a_{k,l}|^2\Big)^{\frac12}<\infty.$$
Let $\mathbf{S}_d\subset\ell_2(\mathbb{Z}_+^{\frac d2}\times \mathbb{Z}_+^{\frac d2})$ denote the space of rapidly decreasing square-summable double sequences.  In
addition, we equip $\mathbf{S}_d$ with the involution
$$a^{\ast}=\left(\overline{ a_{k,j}}\right)_{j,k\in \mathbb{Z}_+^{d/2}},\quad a=(a_{j,k})_{j,k\in \mathbb{Z}_+^{d/2}}\in\mathbf{S}_d$$ and matrix product
$$a\cdot b:=\big(\sum_{k\in\mathbb{Z}_+^{d/2}}a_{j,k}b_{k,l}\big)_{j,l\in \mathbb{Z}_+^{d/2}},\quad a,b\in\mathbf{S}_d.$$
\end{defi}
Note that for any $j,l\in\mathbb{Z}_+^{d/2},$
$$ \big|\sum_{k\in\mathbb{Z}_+^{d/2}}a_{j,k}b_{k,l}\big|^2\leq \sum_{k\in\mathbb{Z}_+^{d/2}}|a_{j,k}|^2\cdot\sum_{k\in\mathbb{Z}_+^{d/2}}|b_{k,l}|^2.$$
Thus,
$$r_m(a\cdot b)^2\leq \sum_{k\in\mathbb{Z}_+^{d/2}}(|j|+1)^{2m}|a_{j,k}|^2\cdot\sum_{k\in\mathbb{Z}_+^{d/2}}(|l|+1)^{2m}|b_{k,l}|^2\leq  r_m(a)^2r_m(b)^2.$$
Equipped with the family of semi-norms $\{r_m\}_{m\in\mathbb{Z}_+},$ ${\bf S}_d$ forms a Fr\'echet space. Moreover, with above involution and matrix product, ${\bf S}_d$ is in fact a Fr\'echet $\ast$-algebra.
The following is due to
\cite[Proposition 2.5]{Gay2004}, see also \cite[Theorem 6]{GV1988}.
\begin{prop}\label{structure of schwartz spaces}
There exists a sequence $\{f^{\theta}_{k,l}\}_{k,l\in\mathbb{Z}_+^{d/2}}\subset\mathcal{S}(\mathbb{R}^d)$ of Schwartz functions  such that
\begin{enumerate}[{\rm (i)}]
\item $f^{\theta}_{k_1,l_1}\star_\theta f^{\theta}_{k_2,l_2}=\delta_{l_1,k_2}f^{\theta}_{k_1,l_2}$ and $\overline{f^{\theta}_{k,l}}=f^{\theta}_{l,k},$
\item $\langle f^{\theta}_{k_1,l_1},f^{\theta}_{k_2,l_2} \rangle =\delta_{k_1,k_2}\delta_{l_1,l_2}.$
\item the mapping $\rho_{\theta}:{\bf S}_d\to \mathcal{S}(\mathbb{R}^d)$ given by
$$\rho_{\theta}(a)= \sum_{k,l\in\mathbb{Z}_+^{d/2}}a_{k,l}f^{\theta}_{k,l},\quad a\in\mathbf{S}_d$$
is a $\ast$-isomorphism of Fr\'echet algebras. 
\end{enumerate}
Moreover, the sequence $\{f^{\theta}_{k,l}\}_{k,l\in\mathbb{Z}_+^{d/2}}$ forms an orthonormal basis for $L_2(\mathbb{R}^d).$
\end{prop}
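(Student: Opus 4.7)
The plan is to construct $\{f^\theta_{k,l}\}$ as the pre-images under $U_\theta$ of the rank-one matrix units $|h_k\rangle\langle h_l|$ built from a Hermite-type orthonormal basis, and then to upgrade the resulting algebraic bijection to a Fréchet $\ast$-isomorphism by matching both families of semi-norms via the harmonic oscillator. I would first reduce to canonical form: since $\theta$ is real, antisymmetric and non-degenerate (so $d$ is even), there exists an orthogonal $O$ with $O^T\theta O$ block-diagonal consisting of $2\times 2$ blocks $\lambda_j J$, $J=\begin{pmatrix}0&1\\-1&0\end{pmatrix}$, $\lambda_j\neq 0$. Pulling relations back through the induced unitary on $L_2(\mathbb{R}^d)$, the twisted convolution $\star_\theta$ factors as a tensor product of $d/2$ two-dimensional Moyal convolutions, so it suffices to exhibit the basis in each two-dimensional factor and then tensor, indexing the result by pairs $(k,l)\in\mathbb{Z}_+^{d/2}\times\mathbb{Z}_+^{d/2}$.

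For the Moyal plane $d=2$, $\theta=\lambda J$, the map $(2\pi)^{-1}U_\theta$ is an isometry from $L_2(\mathbb{R}^2)$ onto $L_2(\mathbb{R}^2_\theta)\simeq\mathcal{L}_2(L_2(\mathbb{R}))$ taking $\star_\theta$ to operator composition and $f^{\ast}$ to the adjoint; this is the standard content of the integrated Weyl quantisation. Let $\{h_k\}_{k\in\mathbb{Z}_+}$ be the Hermite orthonormal basis of $L_2(\mathbb{R})$, and define $f^\theta_{k,l}\in L_2(\mathbb{R}^2)$ to be the pre-image of $c_\theta\,|h_k\rangle\langle h_l|$, with $c_\theta$ chosen so that (ii) holds. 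Then (i) is a direct transcription of the matrix-unit identities $|h_{k_1}\rangle\langle h_{l_1}|\cdot|h_{k_2}\rangle\langle h_{l_2}|=\delta_{l_1,k_2}|h_{k_1}\rangle\langle h_{l_2}|$ and $(|h_k\rangle\langle h_l|)^{\ast}=|h_l\rangle\langle h_k|$, while (ii) is the Hilbert--Schmidt orthonormality of matrix units. Explicit Gaussian-times-Laguerre-polynomial formulas for $f^\theta_{k,l}$ confirm $f^\theta_{k,l}\in\mathcal{S}(\mathbb{R}^2)$, and the tensor product step produces Schwartz functions in $\mathcal{S}(\mathbb{R}^d)$ for general even $d$. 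The concluding basis statement then follows from completeness of matrix units in $\mathcal{L}_2(L_2(\mathbb{R}^{d/2}))$ and isometry of $(2\pi)^{-d/2}U_\theta$.

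The core of the proposition, and the main obstacle, is part (iii): proving that $\rho_\theta$ is a homeomorphism of Fréchet $\ast$-algebras, not merely an algebraic isomorphism. The strategy is to compare the semi-norms $\{r_m\}$ on $\mathbf{S}_d$ with an equivalent defining family on $\mathcal{S}(\mathbb{R}^d)$, namely $\|(1-\Delta+|t|^2)^m f\|_{L_2}$. Under the Weyl correspondence, the isotropic harmonic oscillator $H=-\Delta+|t|^2$ acts on the operator side as $H^{L}+H^{R}$, the sum of left and right multiplication by the number operator $N=\tfrac12(-\partial^2+x^2)$ on $L_2(\mathbb{R}^{d/2})$, up to constants depending on $\{\lambda_j\}$. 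Since $N$ is diagonalised by tensorised Hermite functions with eigenvalues linear in $|k|$, iterating yields
\[
\|H^m f^\theta_{k,l}\|_{L_2}^2 \;\asymp_{m,\theta}\; \bigl((|k|+1)(|l|+1)\bigr)^{2m}.
\]
Summing against $|a_{k,l}|^2$ and comparing with $r_m$ gives mutual domination of the two semi-norm families, so $\rho_\theta$ is a topological $\ast$-isomorphism. The most delicate book-keeping is precisely this identification of $H$ with $H^L+H^R$ on the operator side, together with the uniform-in-$m$ constants in the equivalence; once established, all remaining assertions are formal consequences of Hermite-function calculus.
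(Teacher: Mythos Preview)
Your sketch is essentially correct and follows the classical Gracia-Bond\'ia--V\'arilly construction. However, the paper does not give its own proof of this proposition at all: it is quoted as a known result, with the attribution ``The following is due to \cite[Proposition 2.5]{Gay2004}, see also \cite[Theorem 6]{GV1988}.'' So there is nothing to compare against; what you have written is an outline of exactly the argument in those references (orthogonal reduction to Darboux form, tensor factorisation into $d/2$ Moyal planes, Weyl correspondence taking $\star_\theta$ to composition of Hilbert--Schmidt operators, Hermite matrix units with explicit Laguerre--Gaussian symbols, and the harmonic-oscillator comparison of semi-norms).

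One small caution on the ``delicate book-keeping'' you flag: the precise statement is that under the Weyl map the Schwartz-side operator $-\Delta+|t|^2$ corresponds (up to $\theta$-dependent constants) to $X\mapsto NX+XN$ on the operator side, where $N$ is the number operator. This is correct but is an equality only after the symplectic normalisation of $\theta$; in the general block-diagonal form the constants depend on the eigenvalues $\lambda_j$, and the resulting equivalence of semi-norm families is of the form $r_m(a)\lesssim_\theta \|H^{m'}\rho_\theta(a)\|_2\lesssim_\theta r_{m''}(a)$ with a shift in the index rather than a strict $m\leftrightarrow m$ match. That is enough for Fr\'echet continuity in both directions, so your conclusion stands, but the phrase ``uniform-in-$m$ constants'' should be read as ``for each $m$ there exist $m',m''$ and constants''.
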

\begin{cor}\label{star algebra iso}
The mapping $U_{\theta}\circ \rho_{\theta}:{\bf S}_d\to\mathcal{S}(\qr)$ given by
$$
a\mapsto\sum_{k,l\in\mathbb{Z}_+^{d/2}}a_{k,l}U_\theta(f^{\theta}_{k,l})
$$
leads to an isomorphism between the $\ast$-algebras $\mathbf{S}_d$ and $\mathcal{S}(\qr).$ Here, the series converges in the sense of $L_1(\qr).$
\end{cor}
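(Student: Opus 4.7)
The proposal splits into an essentially algebraic observation and an analytic verification. Algebraically, Proposition \ref{structure of schwartz spaces}(iii) asserts that $\rho_\theta:\mathbf{S}_d\to\mathcal{S}(\mathbb{R}^d)$ is a Fr\'echet $\ast$-algebra isomorphism, and the computations preceding Proposition \ref{density} establish that $U_\theta:\mathcal{S}(\mathbb{R}^d)\to\mathcal{S}(\qr)$ is an algebraic $\ast$-isomorphism (it is a bijection intertwining the twisted convolution with operator multiplication and the involution $f\mapsto f^\ast$ with the operator adjoint). Their composition is therefore automatically an algebraic $\ast$-isomorphism $\mathbf{S}_d\to\mathcal{S}(\qr)$, and the only genuine content of the corollary is the claim that the series $\sum_{k,l}a_{k,l}U_\theta(f^\theta_{k,l})$ converges in $L_1(\qr)$ and represents $(U_\theta\circ\rho_\theta)(a)$.

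The key analytic input will be the uniform bound $\|U_\theta(f^\theta_{k,l})\|_{L_1(\qr)}=(2\pi)^d$ for every pair $(k,l)$. Using $U_\theta(f\star_\theta g)=U_\theta(f)U_\theta(g)$ together with item (i) of Proposition \ref{structure of schwartz spaces}, each $U_\theta(f^\theta_{k,k})$ is a self-adjoint idempotent and $U_\theta(f^\theta_{k,l})$ is a partial isometry with initial projection $U_\theta(f^\theta_{l,l})$ and final projection $U_\theta(f^\theta_{k,k})$; hence $|U_\theta(f^\theta_{k,l})|=U_\theta(f^\theta_{l,l})$, whose $L_1$-norm equals its $L_2$-norm squared. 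Combining with the $L_2$-isometry $(2\pi)^{-d/2}U_\theta:L_2(\mathbb{R}^d)\to L_2(\qr)$ and $\|f^\theta_{l,l}\|_{L_2(\mathbb{R}^d)}=1$ from item (ii) gives the claimed value $(2\pi)^d$.

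With this uniform estimate in hand, absolute convergence in $L_1(\qr)$ falls out of the rapid decay of $a$: for any integer $m$ with $2m>d/2$, a Cauchy--Schwarz split yields
$$\sum_{k,l}|a_{k,l}|\leq \Big(\sum_{k,l}(|k|+1)^{-2m}(|l|+1)^{-2m}\Big)^{\!1/2}\cdot r_m(a)<\infty,$$
so $\sum_{k,l}\|a_{k,l}U_\theta(f^\theta_{k,l})\|_{L_1(\qr)}=(2\pi)^d\sum_{k,l}|a_{k,l}|<\infty$. To identify the $L_1$-sum with $(U_\theta\circ\rho_\theta)(a)$, note that the partial sums of $\sum_{k,l}a_{k,l}f^\theta_{k,l}$ converge to $\rho_\theta(a)$ in $L_2(\mathbb{R}^d)$ by orthonormality and $a\in\ell_2$; the $L_2$-isometry of $(2\pi)^{-d/2}U_\theta$ transports this to $L_2(\qr)$-convergence of the image series, while the absolute estimate above delivers $L_1(\qr)$-convergence, and uniqueness of the common weak-operator limit forces the two to coincide.

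I do not anticipate any real obstacle; the only step requiring brief care is the matrix-unit identification of the operators $U_\theta(f^\theta_{k,l})$, which is immediate once one observes that $\rho_\theta$ sends the standard matrix units of $\mathbf{S}_d$ to the $f^\theta_{k,l}$ and is $\ast$-preserving.
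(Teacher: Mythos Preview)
Your argument is correct and is exactly the natural verification one would write down: the paper itself offers no explicit proof of this corollary, treating it as an immediate consequence of Proposition~\ref{structure of schwartz spaces} together with the fact (recorded just before Proposition~\ref{density}) that $U_\theta$ is an algebraic $\ast$-isomorphism. Your additional work---identifying each $U_\theta(f^\theta_{k,l})$ as a partial isometry between rank-one projections to obtain the uniform bound $\|U_\theta(f^\theta_{k,l})\|_{L_1(\qr)}=(2\pi)^d$, and then combining this with the $\ell_1$-summability of any $a\in\mathbf{S}_d$---cleanly justifies the $L_1$-convergence clause that the paper simply asserts.
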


\begin{prop}\label{root lemma}
If $0\leq x\in\mathcal{S}(\qr),$ then $x^p\in\mathcal{S}(\qr)$ for all $p>0.$
\end{prop}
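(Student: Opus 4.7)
The plan is to transfer the problem to the Fréchet $\ast$-algebra $\mathbf{S}_d$ via the isomorphism $U_{\theta}\circ\rho_{\theta}$ of Corollary \ref{star algebra iso}. Since $\mathcal{S}(\qr)$ is norm-dense in $C_0(\qr)$ (Proposition \ref{density}) and $\mathbf{S}_d$ is correspondingly dense in $\mathcal{K}(\ell_2(\mathbb{Z}_+^{d/2}))$, this isomorphism intertwines the ambient $C^\ast$-structures, hence preserves positivity and continuous functional calculus. Writing $A\in\mathbf{S}_d$ for the positive operator corresponding to $0\le x\in\mathcal{S}(\qr)$, and $\{g_k\}_{k\in\mathbb{Z}_+^{d/2}}$ for the canonical orthonormal basis of $\ell_2(\mathbb{Z}_+^{d/2})$, it suffices to prove $A^p\in\mathbf{S}_d$, i.e.\ $r_m(A^p)<\infty$ for every $m\ge 0$, where $r_m(B)=\|N^m B N^m\|_{\mathrm{HS}}$ and $N$ is the number operator $Ng_k=(|k|+1)g_k$.

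The first step is a diagonal Cauchy--Schwarz: since $A\ge 0$, $(A^p)_{k,l}=\langle A^{p/2}g_k,A^{p/2}g_l\rangle$, and therefore $|(A^p)_{k,l}|^2\le\|A^{p/2}g_k\|^2\|A^{p/2}g_l\|^2=(A^p)_{k,k}(A^p)_{l,l}$. Summing with weights $(|k|+1)^{2m}(|l|+1)^{2m}$ factorises, giving $r_m(A^p)\le\sum_k(|k|+1)^{2m}(A^p)_{k,k}$, so the task reduces to showing this one-dimensional weighted sum of the diagonal of $A^p$ is finite for every $m$.

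For the diagonal bound, write $(A^p)_{k,k}=\int t^p\,d\mu_k(t)$ with $\mu_k$ the probability spectral measure of $A$ at $g_k$. For $0<p\le 1$, concavity of $t\mapsto t^p$ and Jensen's inequality yield $(A^p)_{k,k}\le A_{k,k}^p$; for $p>1$, the elementary operator inequality $A^p\le\|A\|_{\infty}^{p-1}A$ (integrated pointwise on $\mathrm{spec}(A)$) gives $(A^p)_{k,k}\le\|A\|_{\infty}^{p-1}A_{k,k}$. In either case, convergence reduces to finiteness of $\sum_k(|k|+1)^{2m}A_{k,k}^{\min(p,1)}$.

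Finally, membership $A\in\mathbf{S}_d$ forces super-polynomial decay of the diagonal: for every $M$, the trivial lower bound $r_{2M}(A)^2\ge\sum_k(|k|+1)^{8M}A_{k,k}^2$ implies $A_{k,k}\le r_{2M}(A)(|k|+1)^{-4M}$. Inserting this into the estimate above and choosing $M$ sufficiently large (in the sub-unit case $0<p<1$ the relevant condition is $4Mp>2m+d/2$) makes the weighted sum absolutely convergent, completing the proof. The only delicate point I anticipate is the case $0<p<1$: Jensen only yields $A_{k,k}^p$ rather than a linear bound in $A_{k,k}$, so one genuinely needs super-polynomial (and not merely polynomial) decay of the diagonal, which is fortunately automatic from the Fréchet structure of $\mathbf{S}_d$, and no further machinery is required.
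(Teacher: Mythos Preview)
Your proof is correct and follows essentially the same route as the paper: transfer to $\mathbf{S}_d$ via Corollary~\ref{star algebra iso}, bound off-diagonal entries of the power by its diagonal via Cauchy--Schwarz, and then exploit the super-polynomial decay of the diagonal coming from finiteness of all the seminorms $r_m$. The paper only sketches the case $p=\tfrac12$ (citing \cite{PSVZ2019} for the rest), whereas your Jensen/operator-inequality split at $p=1$ handles all $p>0$ uniformly; otherwise the arguments coincide.
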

\begin{proof}
The result for $d=2$ is explicitly established in \cite{PSVZ2019} (see Lemma 6.9 and Corollary 6.13 there). The result for $d>2$ follows \emph{mutatis mutandis}. For the reader's convience, we sketch the proof for $p=\frac12$ along the lines of \cite[Lemma 6.9]{PSVZ2019}. By assumption, Proposition \ref{structure of schwartz spaces} and Corollary \ref{star algebra iso}, there exist a double sequence $0\leq a=(a_{k,l})\in{\bf S}_d$ such that $x=U_{\theta}(\rho_{\theta}(a)).$ So it suffices to show $b:=a^{\frac12}\in{\bf S}_d.$ Let $\{e_{k}\}_{k\in\mathbb{Z}_+^{d/2}}\subset\ell_2(\mathbb{Z}_+^{d/2})$ be standard basis vectors. We compute
$$\begin{aligned}
|b_{k,l}|^2&=|\langle b e_k,e_l\rangle|\cdot|\langle e_k,be_l\rangle|\\
&\leq\|be_k\|_{\ell_2}\|be_k\|_{\ell_2}\\
&=|\langle a e_k,e_k\rangle|^{\frac12}\cdot|\langle ae_l,e_l\rangle|^{\frac12}=|a_{k,k}|^{\frac12}\cdot |a_{l,l}|^{\frac12}.
\end{aligned} $$
Thus, for each $m\geq0,$ we have
$$\begin{aligned}
r_m(a^{\frac12})^2&=\sum_{k,l\in\mathbb{Z}_+^{d/2}}(|k|+1)^{2m}(|l|+1)^{2m}|b_{k,l}|^2\\
&\leq\sum_{k,l\in\mathbb{Z}_+^{d/2}}(|k|+1)^{2m}(|l|+1)^{2m}|a_{k,k}|^{\frac12} |a_{l,l}|^{\frac12}=\Big(\sum_{k\in\mathbb{Z}_+^{d/2}}(|k|+1)^{2m}|a_{k,k}|^{\frac12}\Big)^2.
\end{aligned}$$
By the H\"older inequality, we obtain
$$\begin{aligned}
\sum_{k\in\mathbb{Z}_+^{d/2}}(|k|+1)^{2m}|a_{k,k}|^{\frac12}&=\sum_{k\in\mathbb{Z}_+^{d/2}}(|k|+1)^{-\frac{d}{2}}\cdot(|k|+1)^{2m+\frac{d}{2}}|a_{k,k}|^{\frac12}\\&\leq \Big(\sum_{k\in\mathbb{Z}_+^{d/2}}(|k|+1)^{-\frac{2d}{3}}\Big)^{\frac34} \cdot\Big(\sum_{k\in\mathbb{Z}_+^{d/2}}(|k|+1)^{8m+2d}|a_{k,k}|^2\Big)^{\frac14}\\
&\lesssim \Big(\sum_{k,l\in\mathbb{Z}_+^{d/2}}(|k|+1)^{4m+d}(|l|+1)^{4m+d}|a_{k,l}|^2\Big)^{\frac14} \\
&\lesssim r_{2m+\frac{d}{2}}(a)^{\frac12}.
\end{aligned}$$
Hence, $r_m(a^{\frac12})^2\lesssim r_{2m+\frac{d}{2}}(a)<\infty,$ for all $m\geq0.$ This finishes the proof.
\end{proof}

We also need a matrix version of Proposition \ref{root lemma}. 
\begin{lem}\label{root lemma matrix case} Let $n\in \mathbb{N}.$ If $0\leq X\in M_n(\mathbb{C})\otimes\mathcal{S}(\qr),$ then $X^{\frac12}\in M_n(\mathbb{C})\otimes\mathcal{S}(\qr).$
\end{lem}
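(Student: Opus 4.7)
The plan is to rerun the argument of Proposition~\ref{root lemma} almost verbatim, with the matrix index absorbed into the basis of the auxiliary $\ell_{2}$-space. First I would extend the $\ast$-isomorphism $U_{\theta}\circ\rho_{\theta}$ of Corollary~\ref{star algebra iso} entrywise to identify $M_{n}(\mathbb{C})\otimes\mathcal{S}(\qr)$ with $M_{n}(\mathbf{S}_{d})$, so that the positive element $X$ corresponds to a positive matrix $A=(a^{(i,j)})_{i,j=1}^{n}$ with entries $a^{(i,j)}\in\mathbf{S}_{d}$. The positive square root taken in the ambient von Neumann algebra $M_{n}(\mathbb{C})\otimes L_{\infty}(\qr)$ then corresponds, by uniqueness of the $C^{\ast}$-functional calculus, to the operator-theoretic square root $B:=A^{1/2}$ acting on $\mathbb{C}^{n}\otimes\ell_{2}(\mathbb{Z}_{+}^{d/2})$, and the task reduces to showing that every matrix coefficient $b^{(i,j)}=(b^{(i,j)}_{k,l})_{k,l\in\mathbb{Z}_{+}^{d/2}}$ of $B$ lies in $\mathbf{S}_{d}$.

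Next I would reapply the Cauchy--Schwarz split from the scalar proof, but using the enlarged standard basis $\{e_{(i,k)}\}$ of $\mathbb{C}^{n}\otimes\ell_{2}(\mathbb{Z}_{+}^{d/2})$. Self-adjointness of $B$ together with $B^{2}=A$ gives the block-diagonal domination estimate
\[
|b^{(i,j)}_{k,l}|^{2}\leq \|Be_{(i,k)}\|\cdot\|Be_{(j,l)}\|=|a^{(i,i)}_{k,k}|^{1/2}\,|a^{(j,j)}_{l,l}|^{1/2}.
\]
Plugging this into the definition of $r_{m}(b^{(i,j)})^{2}$ separates the double sum into a product of two independent single sums in $k$ and $l$. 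Each factor is controlled by the scalar H\"older step already performed in Proposition~\ref{root lemma}, producing $r_{2m+d/2}(a^{(i,i)})^{1/2}$ and $r_{2m+d/2}(a^{(j,j)})^{1/2}$ respectively; since $a^{(i,i)},a^{(j,j)}\in\mathbf{S}_{d}$, both factors are finite.

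Therefore $r_{m}(b^{(i,j)})<\infty$ for all $m\in\mathbb{Z}_{+}$ and all $(i,j)$, which gives $B\in M_{n}(\mathbf{S}_{d})$; transporting back through the matrix extension of $U_{\theta}\circ\rho_{\theta}$ yields $X^{1/2}\in M_{n}(\mathbb{C})\otimes\mathcal{S}(\qr)$. The only delicate point I anticipate is confirming that the operator-theoretic square root of $A$ on $\mathbb{C}^{n}\otimes\ell_{2}(\mathbb{Z}_{+}^{d/2})$ matches $X^{1/2}$ under the isomorphism, and this is automatic once one observes that the matrix extension of the $\ast$-isomorphism intertwines the two representations on unitarily equivalent Hilbert spaces, whence uniqueness of positive square roots in a $C^{\ast}$-algebra closes the loop. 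Overall the argument is essentially bookkeeping on top of Proposition~\ref{root lemma}, so I do not foresee any genuine obstacle.
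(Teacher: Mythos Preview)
Your argument is correct and, in fact, more direct than the route taken in the paper. The paper does not run the scalar Cauchy--Schwarz argument on the enlarged Hilbert space $\mathbb{C}^{n}\otimes\ell_{2}(\mathbb{Z}_{+}^{d/2})$ as you propose. Instead, it proceeds block by block through three auxiliary lemmas: first, a ``summand'' lemma showing that if positive matrices $Y_{1},\dots,Y_{N}$ have $\sum_{j}Y_{j}\in\mathbf{S}_{d}$ then each $Y_{j}\in\mathbf{S}_{d}$; second, a ``bi-root'' lemma showing that $Y^{\ast}Y,\,YY^{\ast}\in\mathbf{S}_{d}$ forces $Y\in\mathbf{S}_{d}$; and third, a combination step that writes the diagonal blocks $X_{i,i}=\sum_{j}Y_{i,j}Y_{i,j}^{\ast}$ (from $X=(X^{1/2})^{2}$), applies the summand lemma to get $Y_{i,j}Y_{i,j}^{\ast}\in\mathbf{S}_{d}$, uses self-adjointness of $X^{1/2}$ to swap indices and obtain $Y_{i,j}^{\ast}Y_{i,j}\in\mathbf{S}_{d}$, and then invokes the bi-root lemma. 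Your approach bypasses this decomposition entirely by absorbing the $M_{n}$ index into the basis, which makes the matrix case literally an instance of the scalar computation; the paper's route, on the other hand, isolates two standalone facts about $\mathbf{S}_{d}$ that could in principle be reused elsewhere, at the cost of a longer proof.
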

We shall split it into several lemmas. In the following, we say a matrix in $\ell_2(\mathbb{Z}_+^{d/2}\times\mathbb{Z}_+^{d/2})$ is positive if it acts on the Hilbert space $\ell_2(\mathbb{Z}_+^{d/2})$ as a positive operator. 
\begin{lem}\label{rapidly decreasing summand lemma} If $\{Y_j\}_{j=1}^N\subset\ell_2(\mathbb{Z}_+^{d/2}\times\mathbb{Z}_+^{d/2})$ are positive matrices  such that $Y_1+\cdots+Y_N\in\mathbf{S}_d,$ then $Y_1,\cdots,Y_N\in\mathbf{S}_d.$
\end{lem}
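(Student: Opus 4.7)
The plan is to exploit positivity twice: once to bound each $Y_j$ pointwise by $a:=\sum_{i=1}^N Y_i$ on the diagonal, and once to control off-diagonal entries of a positive matrix by its diagonal entries.

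First, I would note the elementary fact that for any positive matrix $Y \geq 0$ on $\ell_2(\mathbb{Z}_+^{d/2})$ the Cauchy--Schwarz inequality $|\langle Ye_k, e_l\rangle|^2 \leq \langle Ye_k,e_k\rangle\langle Ye_l,e_l\rangle$ yields
$$
|Y_{k,l}|^2 \leq Y_{k,k}\,Y_{l,l},\qquad k,l\in\mathbb{Z}_+^{d/2}.
$$
Next, since each $Y_j$ is positive and $\sum_i Y_i = a$, I have the operator inequality $0\leq Y_j\leq a$, so comparing diagonal matrix elements gives $0\leq (Y_j)_{k,k}\leq a_{k,k}$ for every $k$. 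Combining the two displays,
$$
|(Y_j)_{k,l}|^2 \leq (Y_j)_{k,k}(Y_j)_{l,l}\leq a_{k,k}\,a_{l,l}.
$$

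With this pointwise domination in hand, controlling the Fr\'echet seminorms of $Y_j$ reduces to controlling a single diagonal sum. Indeed, for each $m\in\mathbb{Z}_+$,
$$
r_m(Y_j)^2 = \sum_{k,l\in\mathbb{Z}_+^{d/2}}(|k|+1)^{2m}(|l|+1)^{2m}|(Y_j)_{k,l}|^2 \leq \Bigl(\sum_{k\in\mathbb{Z}_+^{d/2}}(|k|+1)^{2m}\,a_{k,k}\Bigr)^2.
$$

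It remains to show the diagonal sum $\Sigma_m := \sum_k (|k|+1)^{2m}\,a_{k,k}$ is finite for every $m$, and this I would handle exactly as in the final step of Proposition \ref{root lemma}. By H\"older's inequality with conjugate exponents $2,2$,
$$
\Sigma_m = \sum_k (|k|+1)^{-d}\cdot(|k|+1)^{2m+d}\,a_{k,k} \leq \Bigl(\sum_k (|k|+1)^{-2d}\Bigr)^{\frac12}\Bigl(\sum_k (|k|+1)^{4m+2d}\,a_{k,k}^2\Bigr)^{\frac12}.
$$
The first factor is finite because $2d>d/2$ ensures convergence of the series over $\mathbb{Z}_+^{d/2}$, and the second factor is bounded by $r_{n}(a)$ for any integer $n\geq m+d/2$, since isolating the diagonal in the definition of $r_n$ gives $\sum_k (|k|+1)^{4n}\,a_{k,k}^2 \leq r_n(a)^2$. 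Thus $r_m(Y_j)\lesssim r_{\lceil m+d/2\rceil}(a)<\infty$ for every $m$, so $Y_j\in\mathbf{S}_d$.

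The whole argument is really just a routine bookkeeping exercise; the only slightly non-obvious point\textemdash which I expect to be the ``hard'' part\textemdash is noticing that positivity is exactly what allows one to reduce a question about the entire matrix of $Y_j$ to its diagonal, after which the dominating sequence $(a_{k,k})$ is already essentially as good as the sequence $a$ itself in the Schwartz scale.
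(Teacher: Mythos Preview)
Your proof is correct and follows essentially the same route as the paper's: bound off-diagonal entries of each $Y_j$ via the Cauchy--Schwarz inequality for positive operators, dominate the diagonals by those of the sum $a=\sum_i Y_i$, and then reduce $r_m(Y_j)$ to a weighted diagonal sum controlled by some higher seminorm $r_n(a)$ through a H\"older splitting. The only cosmetic difference is in the exponents chosen for the H\"older step (the paper splits off $(|k|+1)^{-2d}$ rather than $(|k|+1)^{-d}$, landing on $r_{m+d}(a)$ instead of $r_{\lceil m+d/2\rceil}(a)$), which is immaterial.
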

\begin{proof} Let $Y_j=(y_{k,l}^j)_{k,l\in \mathbb{Z}_+^{\frac d2}},$ $j=1,\cdots,N.$  Let $\{e_k\}_{k\in\mathbb{Z}_+^{d/2}}\subset \ell_2(\mathbb{Z}_+^{d/2})$ be standard basis vectors. Fix $1\leq j\leq N$ and let $k,l\in\mathbb{Z}_+^{d/2}.$ By positivity of $Y_j$ and by the Cauchy inequality,
$$\begin{aligned}
|y^j_{k,l}|&=|\langle Y_je_k, e_l\rangle|=|\langle Y_j^{\frac12}e_k, Y_j^{\frac12}e_l\rangle|\\
&\leq\|Y_j^{\frac12}e_k\|_{\ell_2}\|Y_j^{\frac12}e_l\|_{\ell_2}=\langle Y_je_k, e_k\rangle^{\frac12}\cdot\langle Y_je_l, e_l\rangle^{\frac12}= (y^j_{k,k})^{\frac12}\cdot (y^j_{l,l})^{\frac12}. 
\end{aligned}$$ 
If we set $\lambda:=\sum_{j=1}^NY_j,$ then by assumption $\lambda\in{\bf S}_d,$ and
$$|y_{k,l}^j|^2\leq\big(\sum_{i=1}^N y_{k,k}^i\big)\big(\sum_{i=1}^Ny_{l,l}^i\big)=\lambda_{k,k}\lambda_{l,l},\quad k,l\in\mathbb{Z}_+^{\frac d2}.$$
It follows from the H\"older inequality that
$$r_m(Y_j)^2\leq \Big(\sum_{k\in\mathbb{Z}_+^{d/2}}(|k|+1)^{2m}\lambda_{k,k}\Big)^2\lesssim \sum_{k\in\mathbb{Z}_+^{d/2}}(|k|+1)^{4m+4d}\lambda^2_{k,k}.$$
Therefore, $r_m(Y_j)^2\lesssim r_{m+d}(\lambda)^2<\infty,$ for all $m\geq0. $ 
\end{proof}

\begin{lem}\label{bi root lemma} Let $Y\in\ell_2(\mathbb{Z}_+^{\frac d2}\times \mathbb{Z}_+^{\frac d2})$ such that $Y^{\ast}Y,YY^{\ast}\in\mathbf{S}_d.$ We have $Y\in\mathbf{S}_d.$
\end{lem}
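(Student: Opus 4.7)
The plan is to dominate each entry of $Y$ by the geometric mean of the appropriate diagonals of $Y^*Y$ and $YY^*$, and then reuse the Hölder estimate carried out in the proof of Proposition \ref{root lemma}.

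First I would write $Y = (y_{k,l})_{k,l \in \mathbb{Z}_+^{d/2}}$ and use the identities $(Y^*Y)_{l,l} = \sum_{k}|y_{k,l}|^2$ and $(YY^*)_{k,k} = \sum_{l}|y_{k,l}|^2$ to obtain the entrywise bound
$$|y_{k,l}|^2 \;\leq\; \min\bigl\{(Y^*Y)_{l,l},\ (YY^*)_{k,k}\bigr\} \;\leq\; \bigl((Y^*Y)_{l,l}\bigr)^{1/2}\bigl((YY^*)_{k,k}\bigr)^{1/2}.$$
The point of the geometric mean form is that the row index $k$ and the column index $l$ decouple.

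Substituting this into the definition of $r_m(Y)^2$ and factoring by Fubini gives, for every $m \geq 0$,
$$r_m(Y)^2 \;\leq\; \Bigl(\sum_{k}(|k|+1)^{2m}\bigl((YY^*)_{k,k}\bigr)^{1/2}\Bigr)\Bigl(\sum_{l}(|l|+1)^{2m}\bigl((Y^*Y)_{l,l}\bigr)^{1/2}\Bigr).$$
Each factor is precisely the quantity estimated in the proof of Proposition \ref{root lemma} via a Hölder inequality with exponents $4/3$ and $4$: for any positive matrix $a \in \mathbf{S}_d$ one has $\sum_{k}(|k|+1)^{2m}(a_{k,k})^{1/2} \lesssim r_{2m+d/2}(a)^{1/2}$. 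Applying this to $a = YY^*$ and $a = Y^*Y$, both of which belong to $\mathbf{S}_d$ by hypothesis, yields
$$r_m(Y)^2 \;\lesssim\; r_{2m+d/2}(YY^*)^{1/2}\,r_{2m+d/2}(Y^*Y)^{1/2} \;<\; \infty.$$
Since this holds for every $m \geq 0$, we conclude that $Y \in \mathbf{S}_d$.

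I do not anticipate a serious obstacle here: the only real insight is the decoupling bound $|y_{k,l}|^2 \leq ((Y^*Y)_{l,l})^{1/2}((YY^*)_{k,k})^{1/2}$, and once this is in place the Hölder step of Proposition \ref{root lemma} can be reused essentially verbatim, with no need to revisit the convergence of the auxiliary series $\sum_k (|k|+1)^{-2d/3}$.
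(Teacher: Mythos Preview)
Your proof is correct and follows essentially the same route as the paper: both arguments establish the decoupled bound $|y_{k,l}|^2 \leq ((Y^*Y)_{l,l})^{1/2}((YY^*)_{k,k})^{1/2}$ and then invoke the H\"older estimate from Proposition~\ref{root lemma}. The only cosmetic difference is that the paper extracts the bounds $|y_{k,l}|^2 \leq (YY^*)_{k,k}$ and $|y_{k,l}|^2 \leq (Y^*Y)_{l,l}$ via matrix units and Hilbert--Schmidt norms rather than directly from the formulas for the diagonal entries.
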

\begin{proof} Let $Y=(y_{k,l})_{k,l\in \mathbb{Z}_+^{\frac d2}},$ $Y^{\ast}Y=(a_{k,l})_{k,l\in \mathbb{Z}_+^{\frac d2}}$ and $YY^{\ast}=(b_{k,l})_{k,l\in \mathbb{Z}_+^{\frac d2}}.$ 
Let $\{e_{k,l}\}_{k,l\in\mathbb{Z}_+^{\frac d2}}\subset\ell_2(\mathbb{Z}_+^{\frac d2}\times \mathbb{Z}_+^{\frac d2})$ be the standard matrix units.
We have
$e_{k,k}Ye_{l,l}=y_{k,l}e_{k,l},$ for all $k,l\in \mathbb{Z}_+^{\frac d2}.$
Hence,
$|y_{k,l}|^2=\|e_{k,k}Ye_{l,l}\|_{\mathcal{L}_2}^2\leq\|e_{k,k}Y\|_{\mathcal{L}_2}^2={\rm Tr}(e_{k,k}YY^{\ast}e_{k,k})=b_{k,k},$ similarly,
$|y_{k,l}|^2\leq a_{l,l}.$
Consequently,
$|y_{k,l}|^2\leq a_{l,l}^{\frac12}b_{k,k}^{\frac12}.$
The result follows now from the H\"older inequality.
\end{proof}

\begin{lem}\label{matrix root lemma} If $0\leq X\in M_n(\mathbb{C})\otimes \mathbf{S}_d,$ then also $X^{\frac12}\in M_n(\mathbb{C})\otimes\mathbf{S}_d.$
\end{lem}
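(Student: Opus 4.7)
The plan is to identify $Y := X^{1/2}$ with its block-matrix form $(Y_{ij})_{i,j=1}^n$, where each $Y_{ij}$ is an operator on $\ell_2(\mathbb{Z}_+^{d/2})$, and to verify entry by entry that $Y_{ij} \in \mathbf{S}_d$. Self-adjointness of $Y$ gives $Y_{ji} = Y_{ij}^*$, so the identity $X = Y^2$ yields on the diagonal
\[
X_{ii} \;=\; \sum_{k=1}^n Y_{ik} Y_{ki} \;=\; \sum_{k=1}^n Y_{ik} Y_{ik}^{\ast},
\quad\text{and equivalently}\quad
X_{ii} \;=\; \sum_{k=1}^n Y_{ki}^{\ast} Y_{ki},
\]
just using $Y_{ji}=Y_{ij}^{\ast}$ in two different ways. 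Both decompositions express the element $X_{ii}\in\mathbf{S}_d$ as a finite sum of positive operators on $\ell_2(\mathbb{Z}_+^{d/2})$.

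The first thing to verify is that each summand on the right-hand sides actually belongs to $\ell_2(\mathbb{Z}_+^{d/2}\times\mathbb{Z}_+^{d/2})$, so that Lemma \ref{rapidly decreasing summand lemma} can be applied. This follows from the operator inequality $0\leq Y_{ik}Y_{ik}^{\ast}\leq X_{ii}$ (and similarly $0\leq Y_{ki}^{\ast}Y_{ki}\leq X_{ii}$), together with the elementary fact that for positive operators $0\leq A\leq B$ one has $\mathrm{Tr}(A^2)\leq \mathrm{Tr}(AB)\leq \mathrm{Tr}(B^2)$; since $X_{ii}\in\mathbf{S}_d\subset\mathcal{L}_2$, each summand is Hilbert--Schmidt, i.e., lies in $\ell_2(\mathbb{Z}_+^{d/2}\times\mathbb{Z}_+^{d/2})$.

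Applying Lemma \ref{rapidly decreasing summand lemma} to both decompositions then gives $Y_{ik}Y_{ik}^{\ast}\in\mathbf{S}_d$ and $Y_{ik}^{\ast}Y_{ik}\in\mathbf{S}_d$ for every pair $(i,k)$. With both $Y_{ij}^{\ast}Y_{ij}$ and $Y_{ij}Y_{ij}^{\ast}$ in $\mathbf{S}_d$, Lemma \ref{bi root lemma} concludes $Y_{ij}\in\mathbf{S}_d$ for all $i,j$, whence $X^{1/2}=Y\in M_n(\mathbb{C})\otimes\mathbf{S}_d$.

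The only delicate point is the Hilbert--Schmidt verification for the individual summands; once that is in place, the argument is a direct assembly of the preceding two lemmas, mirroring in the matrix-valued setting the strategy used to prove Proposition \ref{root lemma}.
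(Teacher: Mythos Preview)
Your proof is correct and follows essentially the same route as the paper: decompose $X^{1/2}$ into blocks $Y_{ij}$, use self-adjointness to write each diagonal block $X_{ii}$ as a sum of positives $Y_{ik}Y_{ik}^{\ast}$ (and, symmetrically, $Y_{ki}^{\ast}Y_{ki}$), invoke Lemma~\ref{rapidly decreasing summand lemma} to place each summand in $\mathbf{S}_d$, and finish with Lemma~\ref{bi root lemma}. Your explicit verification that each $Y_{ik}Y_{ik}^{\ast}$ lies in $\ell_2(\mathbb{Z}_+^{d/2}\times\mathbb{Z}_+^{d/2})$ is a nice touch that the paper leaves implicit when applying Lemma~\ref{rapidly decreasing summand lemma}.
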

\begin{proof} Let $\{E_{i,j}\}_{i,j=1}^n\subset M_n(\mathbb{C})$ be matrix units.
We may write
$$X^{\frac12}=\sum_{i,j=1}^nE_{i,j}\otimes Y_{i,j},\quad X=\sum_{i,j=1}^nE_{i,j}\otimes X_{i,j},\quad X_{i,j}\in\mathbf{S}_d.$$
Since $X^{\frac12}$ is self-adjoint, it follows that
$$X=\big(\sum_{i,j=1}^nE_{i,j}\otimes Y_{i,j}\big)\cdot (\sum_{i,j=1}^n E_{j,i}\otimes Y_{i,j}^{\ast})=\sum_{i_1,i_2=1}^n E_{i_1,i_2}\otimes\big(\sum_{j=1}^nY_{i_1,j}Y_{i_2,j}^{\ast}\big).$$
Thus,
$X_{i_1,i_2}=\sum_{j=1}^nY_{i_1,j}Y_{i_2,j}^{\ast},$ for $1\leq i_1,i_2\leq n.$
In particular,
$X_{i,i}=\sum_{j=1}^nY_{i,j}Y_{i,j}^{\ast},$ for $1\leq i\leq n.$
By Lemma \ref{rapidly decreasing summand lemma}, $Y_{i,j}Y_{i,j}^{\ast}\in\mathbf{S}_d$ for $1\leq i,j\leq n.$ Since $X^{\frac12}$ is self-adjoint, it follows that $Y_{i,j}=Y_{j,i}^{\ast}$ for $1\leq i,j\leq n.$ Thus, $Y_{j,i}^{\ast}Y_{j,i}\in\mathbf{S}_d$ for $1\leq i,j\leq n.$ Swapping $i$ and $j,$ we infer that $Y_{i,j}^{\ast}Y_{i,j}\in\mathbf{S}_d$ for $1\leq i,j\leq n.$ Since also $Y_{i,j}Y_{i,j}^{\ast}\in\mathbf{S}_d$ for $1\leq i,j\leq n,$ it follows from Lemma \ref{bi root lemma} that $Y_{i,j}\in\mathbf{S}_d$ for $1\leq i,j\leq n.$ This completes the proof.
\end{proof}

\begin{proof}[Proof of Lemma \ref{root lemma matrix case}] Due to Corollary \ref{star algebra iso}, we see that 
${\rm id}\otimes U_{\theta}\circ\rho_{\theta}:M_n(\mathbb{C})\otimes {\bf S}_d\to M_n\otimes \mathcal{S}(\qr)$ is an isomorphism between $\ast$-algebras.	
The result now follows from Lemma \ref{matrix root lemma}.
\end{proof}
Another feature of $\mathcal{S}(\qr)$ is factorization property \cite[Propostion 2.7]{Gay2004}:
\begin{prop}\label{factorisable}
For $x\in\mathcal{S}(\qr),$ there are $x_1,x_2\in\mathcal{S}(\qr)$ such that $x=x_1x_2.$ 
\end{prop}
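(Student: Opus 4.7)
The plan is to apply polar decomposition and then use Lemma \ref{bi root lemma} to read off rapid decay from two one-sided products. Via the $\ast$-isomorphism $U_\theta \circ \rho_\theta : \mathbf{S}_d \to \mathcal{S}(\qr)$ of Corollary \ref{star algebra iso}, it is enough to show that every $a \in \mathbf{S}_d$ factors as $a = a_1 a_2$ with $a_1, a_2 \in \mathbf{S}_d$, so I would work entirely in the matrix picture.

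Given $a \in \mathbf{S}_d$, viewed as a compact operator on $\ell_2(\mathbb{Z}_+^{d/2})$, I would take its polar decomposition $a = u|a|$, where $u$ is a partial isometry with $u^*u$ the projection onto $\overline{\operatorname{range}(|a|)}$, and set
\[
a_1 := u|a|^{1/2}, \qquad a_2 := |a|^{1/2},
\]
so that $a = a_1 a_2$. Since $a^*a \in \mathbf{S}_d$, the argument of Proposition \ref{root lemma} (applied in $\mathbf{S}_d$, which is what its proof actually verifies) gives $a_2 = |a|^{1/2} \in \mathbf{S}_d$, and the same reasoning applied to $aa^*$ yields $|a^*| = (aa^*)^{1/2} \in \mathbf{S}_d$.

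To place $a_1$ in $\mathbf{S}_d$ I would invoke Lemma \ref{bi root lemma}. First, $a_1$ is Hilbert--Schmidt and hence lies in $\ell_2(\mathbb{Z}_+^{d/2}\times\mathbb{Z}_+^{d/2})$, because $a_2 \in \mathbf{S}_d \subset \ell_2$ and $u$ is bounded. Using that $u^*u$ fixes the range of $|a|^{1/2}$, a direct computation gives
\[
a_1^* a_1 = |a|^{1/2} u^* u |a|^{1/2} = |a| \in \mathbf{S}_d, \qquad a_1 a_1^* = u|a| u^* = |a^*| \in \mathbf{S}_d.
\]
Lemma \ref{bi root lemma} then delivers $a_1 \in \mathbf{S}_d$, and pulling the factorization back through $U_\theta \circ \rho_\theta$ gives $x_1, x_2 \in \mathcal{S}(\qr)$ with $x = x_1 x_2$.

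The only real subtlety I anticipate is that the partial isometry $u$ is \emph{not} itself in $\mathbf{S}_d$ (it is not even compact in general), so $a_1 = u a_2 \in \mathbf{S}_d$ cannot be concluded directly from $a_2 \in \mathbf{S}_d$ by any naive ideal property. The purpose of Lemma \ref{bi root lemma} is exactly to bridge this gap: rapid entrywise decay of $a_1$ is certified by the combination of row and column decay encoded in $a_1 a_1^*$ and $a_1^* a_1$, which in this construction fortuitously coincide with $|a^*|$ and $|a|$, both already known to lie in $\mathbf{S}_d$.
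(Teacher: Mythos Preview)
Your argument is correct. The paper does not supply its own proof of this proposition but simply cites \cite[Proposition~2.7]{Gay2004}; the standard proof there proceeds exactly as you do, via polar decomposition in the matrix model $\mathbf{S}_d$, so your approach matches the intended one and has the added merit of being self-contained using only the tools (Proposition~\ref{root lemma} and Lemma~\ref{bi root lemma}) already developed in the paper.
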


\section{Commutator estimates}\label{sec-commutator}
This section is devoted to prove the following commutator estimates. Throughout we assume that $\theta\in M_d(\mathbb{R})$ is non-degenerate and anti-symmetric. In addition, we use the convention $\mathcal{L}_{\infty,\infty}=\mathcal{L}_{\infty}.$ 
\begin{thm}\label{commutator-PSD}
Let $\alpha,\beta \in \mathbb{R},$ and let $x\in \mathcal{S}(\qr),$ $g\in C^{\infty}(\mathbb{S}^{d-1}).$ If $\alpha\leq\beta+1,$ then
$$[\pi_1(x),\pi_2(g)(1-\Delta_{\theta})^{\frac{\alpha}{2}}](1-\Delta_{\theta})^{-\frac{\beta}{2}}\in \mathcal{L}_{\frac{d}{\beta-\alpha +1},\infty}.$$
\end{thm}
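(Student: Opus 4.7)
The plan is to reduce the commutator, modulo operators in strictly smaller Schatten ideals, to a finite linear combination of products of a multiplication operator on $L_2(\mathbb{R}^d)$ with an element of $\pi_1(\mathcal{S}(\qr))$, and then invoke a Cwikel-type estimate on the Moyal plane. Writing $x=U_\theta(f)$ with $f\in\mathcal{S}(\mathbb{R}^d)$ (via Corollary \ref{star algebra iso}) and setting $F(u):=g(u/|u|)(1+|u|^2)^{\alpha/2}$ so that $\pi_2(g)(1-\Delta_\theta)^{\alpha/2}$ is the (possibly unbounded) multiplication operator $M_F$, the conjugation identity $U_\theta(s)M_F U_\theta(-s)=M_{F(\cdot-s)}$ (a direct consequence of the Weyl relation and the anti-symmetry of $\theta$) yields the integral representation
\begin{equation*}
[\pi_1(x),M_F](1-\Delta_\theta)^{-\beta/2}=\int_{\mathbb{R}^d} f(s)\,M_{(F(\cdot - s)-F)(1+|\cdot - s|^2)^{-\beta/2}}\,U_\theta(s)\,ds.
\end{equation*}

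Fix an integer $N>\beta-\alpha+1$ and jointly Taylor-expand both $F(u-s)-F(u)$ and $(1+|u-s|^2)^{-\beta/2}$ in $s$ around $s=0$ up to total order $N$. Since $s^{\nu+\mu}$ factors out of the integrand with respect to $u$, the identity $\int s^\nu f(s)U_\theta(s)\,ds=\partial^\nu x$ (which follows from $[D_j,U_\theta(s)]=s_j U_\theta(s)$) converts each main Taylor term into an operator of the form
\begin{equation*}
c_{\nu,\mu}\,M_{(\partial^\nu F)\cdot\partial^\mu[(1+|\cdot|^2)^{-\beta/2}]}\,\pi_1(\partial^{\nu+\mu}x),\qquad 1\leq |\nu|_1+|\mu|_1<N,\ |\nu|_1\geq 1.
\end{equation*}
Because $g\in C^\infty(\mathbb{S}^{d-1})$, the symbol $(\partial^\nu F)(u)\cdot\partial^\mu[(1+|u|^2)^{-\beta/2}]$ grows like $|u|^{\alpha-|\nu|_1-\beta-|\mu|_1}$ at infinity and is dominated by $|u|^{-|\nu|_1}$ near the origin (the weight $(1+|u|^2)^{-\beta/2}$ being smooth at $u=0$); a smooth dyadic cutoff places it in $L_{d/(\beta-\alpha+|\nu|_1+|\mu|_1),\infty}(\mathbb{R}^d)$. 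An appropriate Cwikel-type bound of the form $\|M_H\,\pi_1(y)\|_{\mathcal{L}_{p,\infty}}\lesssim \|H\|_{L_{p,\infty}(\mathbb{R}^d)}\|y\|_{L_p(\qr)}$, applied with $y=\partial^{\nu+\mu}x\in\bigcap_p L_p(\qr)$, then places each main term in $\mathcal{L}_{d/(\beta-\alpha+|\nu|_1+|\mu|_1),\infty}$; the worst class arises at the minimal total order $|\nu|_1+|\mu|_1=1$ (with $|\mu|_1=0$) and equals exactly the target $\mathcal{L}_{d/(\beta-\alpha+1),\infty}$.

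The Taylor remainder is a superposition in $s$ of multiplication operators whose symbols are controlled, uniformly in $s$, by $|s|^N(1+|u|)^{\alpha-N-\beta}$ (after evaluating higher derivatives at intermediate points), composed with the unitary $U_\theta(s)$; combined with the Cwikel estimate and $|s|^N f(s)\in L_1(\mathbb{R}^d)$ this contribution lies in $\mathcal{L}_{d/(\beta-\alpha+N),\infty}\subset\mathcal{L}_{d/(\beta-\alpha+1),\infty}$. The principal obstacle will be verifying the Cwikel-type estimate in precisely the form required, together with the uniformity in $s$ needed to commute operator norm and integration, and controlling the near-origin singularity of the derivatives of the homogeneous-of-degree-zero factor $g(u/|u|)$: the latter can be handled by a smooth cutoff, absorbing the compactly supported singular part using the factorization $\mathcal{S}(\qr)=\mathcal{S}(\qr)\cdot\mathcal{S}(\qr)$ supplied by Proposition \ref{factorisable}, with one Schwartz factor taming the singular weight and the other supplying the needed $L_p$ smallness. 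The endpoint case $\alpha=\beta+1$, where the target is $\mathcal{L}_{\infty,\infty}=\mathcal{L}_\infty$, reduces to a boundedness statement of Calder\'on-commutator type and is treated along the same lines.
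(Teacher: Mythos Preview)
Your approach is genuinely different from the paper's and, while the overall philosophy (reduce to Cwikel-type products $M_H\pi_1(y)$) is reasonable, there are real obstacles that your sketch does not overcome. The paper never differentiates the homogeneous symbol $g(u/|u|)$: it writes $\pi_2(g)=h(R_1,\dots,R_d)$ via functional calculus, reduces to commutators with the Riesz transforms $R_k$ (Lemma~\ref{commutator-PSD alfa0 small beta rk}), and then bootstraps the range of $\beta$ by induction using the factorization $x=x_1x_2$ (Lemma~\ref{commutator-PSD alfa0}); the general $\alpha$ is obtained by splitting off the known commutator with $(1-\Delta_\theta)^{\alpha/2}$ from \eqref{commutator without g}. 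This route avoids the origin singularity entirely.

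Your Taylor-expansion scheme, by contrast, forces you to control $\partial^\nu[g(u/|u|)]\sim |u|^{-|\nu|_1}$ near $u=0$. For the main terms this is already delicate once $|\nu|_1\geq d/2$ (the Hilbert--Schmidt Cwikel identity no longer applies, and the cited $\ell_1(L_\infty)$ bound does not help with a non-integrable local singularity), and your proposed fix via a cutoff and the factorization of Proposition~\ref{factorisable} is not spelled out in a way that makes clear how one Schwartz factor ``tames the singular weight.'' The situation is worse for the Taylor remainder: the $N$-th derivatives of $F$ are evaluated at intermediate points $u-\tau s$, so the singularity sits on the moving segment $\{u=\tau s:\tau\in[0,1]\}$ and the claimed uniform bound $|s|^N(1+|u|)^{\alpha-N-\beta}$ simply fails there; one cannot exchange the $s$-integration with the $\mathcal{L}_{p,\infty}$ norm without an honest uniform estimate. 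A secondary issue is that the Cwikel bound $\|M_H\pi_1(y)\|_{\mathcal{L}_{p,\infty}}\lesssim\|H\|_{L_{p,\infty}}\|y\|_{L_p}$ is, in the Moyal setting as in the commutative one, only available for $p>2$; but the theorem must cover all $\alpha\le\beta+1$, including $\beta-\alpha+1\ge d/2$ where the target index satisfies $p\le 2$. If you want to salvage the pseudo-differential route, a cleaner path is to first reduce to $\alpha=0$ exactly as the paper does (via the Leibniz splitting in the final proof of Theorem~\ref{commutator-PSD}), so that the symbol $g(u/|u|)$ is bounded and no positive powers of $(1-\Delta_\theta)$ enter; you will still need a replacement for the Riesz-transform step that does not differentiate $g$ at the origin.
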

When $g=1,$ Theorem \ref{commutator-PSD} recovers an earlier result of Mcdonald-Sukochev-Xiao \cite[Theorem 1.6]{MSX2020}. That is,
\begin{equation}\label{commutator without g}
[\pi_1(x),(1-\Delta_{\theta})^{\frac{\alpha}{2}}](1-\Delta_{\theta})^{-\frac{\beta}{2}}\in \mathcal{L}_{\frac{d}{\beta-\alpha +1},\infty},\quad x\in\mathcal{S}(\qr),\quad \alpha\leq \beta+1.
\end{equation} 

The proving of Theorem \ref{commutator-PSD} will be accomplished by a series of lemmas. The following priori result \cite[Lemma 5.4]{MSX2020} is an essential ingredient.

\begin{lem}\label{arbitrary index for Bessel}
Let $x\in\mathcal{S}(\qr).$ Then for all $\beta>0$ we have
$$\pi_1(x)(1-\Delta_{\theta})^{-\frac{\beta}{2}},~[\pi_1(x),(1-\Delta_{\theta})^{\frac12}](1-\Delta_{\theta})^{-\frac{\beta}{2}}\in\mathcal{L}_{\frac{d}{\beta},\infty}.$$
\end{lem}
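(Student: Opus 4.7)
My plan is to reduce both claims to a noncommutative Cwikel-type estimate on $\qr$: for $2\leq p<\infty$, $y\in L_p(\qr)$, and a Borel function $g$ on $\mathbb{R}^d$,
$$\|\pi_1(y)M_g\|_{\mathcal{L}_{p,\infty}} \lesssim_p \|y\|_{L_p(\qr)}\,\|g\|_{L_{p,\infty}(\mathbb{R}^d)},$$
where $M_g$ denotes multiplication by $g$ on $L_2(\mathbb{R}^d)$. The base case $p=2$ follows from a direct Hilbert--Schmidt kernel computation: for $y=U_\theta(f)$ with $f\in\mathcal{S}(\mathbb{R}^d)$, the integral kernel of $\pi_1(y)M_g$ on $L_2(\mathbb{R}^d)$ is $k(u,s)=f(u-s)e^{-\frac{\rm i}{2}\langle u-s,\theta u\rangle}g(s)$, whence $\|\pi_1(y)M_g\|_{\mathcal{L}_2}=\|f\|_2\|g\|_2$, and the isometry $(2\pi)^{-d/2}U_\theta:L_2(\mathbb{R}^d)\to L_2(\qr)$ converts this to the required $\mathcal{L}_2$-bound. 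The range $p>2$ is obtained by Marcinkiewicz interpolation against the trivial bound $\|\pi_1(y)M_g\|_\infty\leq\|y\|_\infty\|g\|_\infty$.

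For the first assertion, since $\mathcal{S}(\qr)\subset L_q(\qr)$ for all $q\in[1,\infty]$ and $(1+|t|^2)^{-\beta/2}\in L_{d/\beta,\infty}(\mathbb{R}^d)$ for every $\beta>0$, the Cwikel bound directly yields $\pi_1(x)(1-\Delta_\theta)^{-\beta/2}\in\mathcal{L}_{d/\beta,\infty}$ in the range $\beta\leq d/2$ (where $p=d/\beta\geq2$). For $\beta>d/2$ I would proceed by induction, using Proposition~\ref{factorisable} to write $x=x_1x_2$ with $x_1,x_2\in\mathcal{S}(\qr)$ and splitting $\beta=\beta_1+\beta_2$ with $\beta_1\in(0,d/2]$ and $\beta_2<\beta$. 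One has the identity
$$\pi_1(x)(1-\Delta_\theta)^{-\beta/2}=\pi_1(x_1)(1-\Delta_\theta)^{-\beta_1/2}\cdot\pi_1(x_2)(1-\Delta_\theta)^{-\beta_2/2}+\pi_1(x_1)\bigl[(1-\Delta_\theta)^{-\beta_1/2},\pi_1(x_2)\bigr](1-\Delta_\theta)^{-\beta_2/2},$$
whose leading term lies in $\mathcal{L}_{d/\beta,\infty}$ by H\"older applied to the two factors (the first by the base case, the second by the inductive hypothesis). The remainder is handled via the Laplace-type representation $(1-\Delta_\theta)^{-\beta_1/2}=\Gamma(\beta_1/2)^{-1}\int_0^\infty t^{\beta_1/2-1}e^{-t(1-\Delta_\theta)}dt$, the Duhamel expansion $[e^{-tA},\pi_1(x_2)]=-\int_0^t e^{-(t-s)A}[\pi_1(x_2),A]e^{-sA}ds$, and the Leibniz identity $[\pi_1(x_2),1-\Delta_\theta]=-\sum_j(D_j\pi_1(\partial_jx_2)+\pi_1(\partial_jx_2)D_j)$. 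Each resulting piece gains an additional fractional resolvent factor and, by the Cwikel bound applied to $\partial_jx_2\in\mathcal{S}(\qr)$, lies in a weak Schatten ideal strictly smaller than $\mathcal{L}_{d/\beta,\infty}$, so the induction closes.

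The second assertion admits a parallel treatment based on the subordination identity $(1-\Delta_\theta)^{1/2}=\pi^{-1}\int_0^\infty\lambda^{-1/2}(1-\Delta_\theta)(\lambda+1-\Delta_\theta)^{-1}d\lambda$ together with the Leibniz rule above, which yields
$$[\pi_1(x),(1-\Delta_\theta)^{1/2}](1-\Delta_\theta)^{-\beta/2}=\frac{1}{\pi}\sum_{j=1}^{d}\int_0^\infty\lambda^{1/2}(\lambda+1-\Delta_\theta)^{-1}\bigl(D_j\pi_1(\partial_jx)+\pi_1(\partial_jx)D_j\bigr)(\lambda+1-\Delta_\theta)^{-1}(1-\Delta_\theta)^{-\beta/2}d\lambda.$$
Since $D_j$ commutes with $\Delta_\theta$ and $\|D_j(\lambda+1-\Delta_\theta)^{-1/2}\|_\infty\leq 1$, each integrand can be rearranged and estimated in $\mathcal{L}_{d/\beta,\infty}$ by the first assertion applied to $\partial_jx\in\mathcal{S}(\qr)$; the prefactor $\lambda^{1/2}$ is dominated by the two resolvent norms $(\lambda+1)^{-1}$ so that the integral converges absolutely in $\mathcal{L}_{d/\beta,\infty}$. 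The main obstacle is the remainder bookkeeping in the first assertion: ensuring quantitatively that every commutator term generated by moving $\pi_1(x_2)$ past a fractional resolvent lands in a strictly smaller weak ideal, and that the norms are controlled uniformly over the inductive steps, is the single substantial technical ingredient of the proof.
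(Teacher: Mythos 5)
The first thing to note is that the paper does not prove Lemma \ref{arbitrary index for Bessel} at all: it is imported verbatim as an a priori result from \cite[Lemma 5.4]{MSX2020}, so you are attempting more than the paper does, and the comparison is really with that reference. Your sketch, however, has two concrete gaps. The key input, the weak Cwikel bound $\|\pi_1(y)M_g\|_{p,\infty}\lesssim_p\|y\|_{L_p(\qr)}\|g\|_{L_{p,\infty}(\mathbb{R}^d)}$ for $p>2$, does not follow ``by Marcinkiewicz interpolation'' between the Hilbert--Schmidt identity and the trivial $L_\infty\times L_\infty$ bound: the map is bilinear, the target is a weak Schatten ideal, and the corresponding weak estimate at the endpoint $p=2$ is false, which is precisely why Cwikel's theorem needed a genuinely new decomposition argument and why its noncommutative version is a theorem (see \cite{LSZ2020}) rather than a formal interpolation. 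The estimate you need is true and citable, but the derivation you give for it would fail, so at this point you are essentially assuming the main analytic input.

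Second, the convergence claim in your treatment of $[\pi_1(x),(1-\Delta_{\theta})^{\frac12}](1-\Delta_{\theta})^{-\frac{\beta}{2}}$ is quantitatively wrong. Each integrand in the subordination formula contains two resolvents and one factor $D_j$; since $\|D_j(\lambda+1-\Delta_{\theta})^{-1}\|_\infty\asymp(\lambda+1)^{-\frac12}$, once one resolvent is used to tame $D_j$ the bookkeeping you describe yields an $\mathcal{L}_{\frac d\beta,\infty}$ bound of order $\lambda^{\frac12}(\lambda+1)^{-1}(\lambda+1)^{-\frac12}=O(\lambda^{-1})$ at infinity, so the $\lambda$-integral diverges logarithmically rather than converging absolutely; the two resolvents cannot both contribute a full power $(\lambda+1)^{-1}$ as you claim. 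Closing this requires an additional idea (splitting the $\lambda$-integral and borrowing decay from the factor $(1-\Delta_{\theta})^{-\frac{\beta}{2}}$, a different commutator identity, or the machinery actually used in \cite{MSX2020}), and similarly the inductive ``remainder bookkeeping'' for the first assertion, which you yourself identify as the substantial technical ingredient, is only sketched. As it stands the argument does not close; the honest alternatives are to cite \cite[Lemma 5.4]{MSX2020} as the paper does, or to supply these missing estimates in full.
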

As an application of Theorem \ref{commutator-PSD}, we obtain the following result.
\begin{cor}\label{coro1}
Let $x\in \mathcal{S}(\qr)$ and $g\in C(\mathbb{S}^{d-1}).$ We have
$$[\pi_1(x),\pi_2(g)](1-\Delta_{\theta})^{-\frac12}, [\pi_1(x),\pi_2(g)(1-\Delta_{\theta})^{-\frac12}]\in(\mathcal{L}_{d,\infty})_0.$$
\end{cor}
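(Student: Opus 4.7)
\textbf{Proof plan for Corollary \ref{coro1}.}
The plan is to first handle the case $g\in C^{\infty}(\mathbb{S}^{d-1})$ by direct invocation of Theorem \ref{commutator-PSD} with tailored exponents, and then pass to $g\in C(\mathbb{S}^{d-1})$ by a density argument in the quasi-norm of $\mathcal{L}_{d,\infty}$, exploiting that $(\mathcal{L}_{d,\infty})_0$ is closed in $\mathcal{L}_{d,\infty}.$

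\emph{Step 1 (smooth case).} For $g\in C^{\infty}(\mathbb{S}^{d-1})$ we feed Theorem \ref{commutator-PSD} with $(\alpha,\beta)=(0,1)$ for the first operator and with $(\alpha,\beta)=(-1,0)$ for the second. Both choices satisfy $\alpha\leq\beta+1$ and produce $\beta-\alpha+1=2,$ so that
\[
[\pi_1(x),\pi_2(g)](1-\Delta_\theta)^{-\frac12},\ [\pi_1(x),\pi_2(g)(1-\Delta_\theta)^{-\frac12}]\in\mathcal{L}_{\frac{d}{2},\infty}\subset(\mathcal{L}_{d,\infty})_0,
\]
since the paper records the inclusion $\mathcal{L}_{p,\infty}\subset(\mathcal{L}_{q,\infty})_0$ whenever $p<q.$

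\emph{Step 2 (key algebraic observation).} Since $\pi_2(g)$ is multiplication by $g(t/|t|)$ and $(1-\Delta_\theta)^{-1/2}$ is multiplication by $(1+|t|^2)^{-1/2},$ both are multiplication operators on $L_2(\mathbb{R}^d)$ and therefore commute. Thus, for any $h\in C(\mathbb{S}^{d-1}),$
\[
\pi_1(x)\pi_2(h)(1-\Delta_\theta)^{-\frac12}=\pi_1(x)(1-\Delta_\theta)^{-\frac12}\pi_2(h),
\]
and similarly the second factor can be freely moved past $(1-\Delta_\theta)^{-1/2}.$ Combined with Lemma \ref{arbitrary index for Bessel} (with $\beta=1$) this yields the crucial bound
\[
\bigl\|\pi_1(x)(1-\Delta_\theta)^{-\frac12}\bigr\|_{d,\infty}<\infty,
\]
and likewise for $\pi_1(x^\ast).$

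\emph{Step 3 (approximation).} Given $g\in C(\mathbb{S}^{d-1}),$ pick $g_n\in C^{\infty}(\mathbb{S}^{d-1})$ with $\|g_n-g\|_{\infty}\to0$ (e.g.\ by mollification on the sphere). Writing, for each of the two operators, the difference as a commutator with $\pi_2(g_n-g)$ or $\pi_2(g_n-g)(1-\Delta_\theta)^{-1/2},$ using the commutation in Step 2 to push the Bessel factor onto $\pi_1(x)$ (or its adjoint), and applying the H\"older--quasi-triangle estimate in $\mathcal{L}_{d,\infty},$ I obtain
\[
\bigl\|[\pi_1(x),\pi_2(g_n-g)](1-\Delta_\theta)^{-\frac12}\bigr\|_{d,\infty}\lesssim\|g_n-g\|_{\infty}\,\bigl\|\pi_1(x)(1-\Delta_\theta)^{-\frac12}\bigr\|_{d,\infty}\longrightarrow0,
\]
and the analogous bound for the second commutator (using both $\pi_1(x)(1-\Delta_\theta)^{-1/2}$ and $\pi_1(x^\ast)(1-\Delta_\theta)^{-1/2}$ after adjoining).

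\emph{Step 4 (conclusion).} Hence the smooth-case operators from Step 1 converge to the desired operators in the $\mathcal{L}_{d,\infty}$ quasi-norm. Since $(\mathcal{L}_{d,\infty})_0$ is closed in $\mathcal{L}_{d,\infty},$ the limits remain in $(\mathcal{L}_{d,\infty})_0,$ finishing the proof. The main (mild) obstacle is the fact pointed out in the paper that $(1-\Delta_\theta)^{-1/2}$ itself is \emph{not} compact, so one cannot naively estimate differences via $\|(1-\Delta_\theta)^{-1/2}\|_{d,\infty};$ instead, one has to absorb the Bessel factor into $\pi_1(x)$ via Lemma \ref{arbitrary index for Bessel}, which is made possible by the commutativity in Step 2.
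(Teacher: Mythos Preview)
Your proof is correct and follows essentially the same route as the paper: the smooth case via Theorem \ref{commutator-PSD}, then a density argument using that $(\mathcal{L}_{d,\infty})_0$ is closed in $\mathcal{L}_{d,\infty}$ together with the identity $[\pi_1(x),\pi_2(g)](1-\Delta_\theta)^{-1/2}=[\pi_1(x)(1-\Delta_\theta)^{-1/2},\pi_2(g)]$ and Lemma \ref{arbitrary index for Bessel}. The only cosmetic difference is that the paper first reduces the second commutator to the first via the Leibniz decomposition $[\pi_1(x),\pi_2(g)(1-\Delta_\theta)^{-1/2}]=[\pi_1(x),\pi_2(g)](1-\Delta_\theta)^{-1/2}+\pi_2(g)[\pi_1(x),(1-\Delta_\theta)^{-1/2}]$ (the remainder lying in $\mathcal{L}_{d/2,\infty}$ by \eqref{commutator without g}), whereas you invoke Theorem \ref{commutator-PSD} directly with $(\alpha,\beta)=(-1,0)$ and then run the approximation for both commutators separately.
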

\begin{proof}
We write
$$[\pi_1(x),\pi_2(g)(1-\Delta_{\theta})^{-\frac12}]=[\pi_1(x),\pi_2(g)](1-\Delta_{\theta})^{-\frac12}+\pi_2(g)[\pi_1(x),(1-\Delta_{\theta})^{-\frac12}].$$ 
By taking $\alpha=-1$ and $\beta=0$ in \eqref{commutator without g}, the second summand is in $\mathcal{L}_{\frac{d}{2},\infty}\subset(\mathcal{L}_{d,\infty})_0.$ So we only need to deal with the first summand. 
To this end, let $x$ be fixed. Note that if $g\in C^{\infty}(\mathbb{S}^{d-1}),$
then the first assertion follows immediately by taking $\alpha=0,\beta=1$ in Theorem \ref{commutator-PSD}. Furthermore, note that $C^\infty(\mathbb{S}^{d-1})$ is dense in $C(\mathbb{S}^{d-1}),$ and $(\mathcal{L}_{d,\infty})_0$ is closed in $\mathcal{L}_{d,\infty}.$ Hence, it suffices to show the continuity of the following mapping from $C(\mathbb{S}^{d-1})$ to $\mathcal{L}_{d,\infty}:$
$g\mapsto[\pi_1(x),\pi_2(g)](1-\Delta_{\theta})^{-\frac12}.$
We write 
$[\pi_1(x),\pi_2(g)](1-\Delta_{\theta})^{-\frac12}=[\pi_1(x)(1-\Delta_{\theta})^{-\frac12},\pi_2(g)].$
Thus, using the quasi-triangle inequality, we obtain
$$
\|[\pi_1(x),\pi_2(g)](1-\Delta_{\theta})^{-\frac12}\|_{d,\infty}
\leq 2^{1+\frac1d}\|\pi_2(g)\|_{\infty} \|\pi_1(x)(1-\Delta_{\theta})^{-\frac12}\|_{d,\infty}.
$$
Clearly, $\pi_2(g)\leq\|g\|_\infty,$ and by Lemma \ref{arbitrary index for Bessel} the second factor is finite. This implies the desired continuity and therefore, completes the proof. 
\end{proof}

\begin{defi}\rm{}
With a slight abuse of notation, we denote by $\nabla_{\theta}$ the gradient operator associated with $L_\infty(\qr)$:
$$
\nabla_{\theta}:=\left(D_1, D_2,\ldots,D_d\right):\bigcap_{k=1}^d\mathrm{dom}(D_k)\to L_2(\mathbb{R}^d)^d.
$$
For any $g\in L_{\infty}(\mathbb{R}^d),$ let $g(\nabla_{\theta})$ be the multiplication operator on $L_2(\mathbb{R}^d)$
defined by
$
(g(\nabla_{\theta})\xi)(t):=g(t)\xi(t),$ $t\in\mathbb{R}^d.
$
Clearly, $g(\nabla_\theta)$ is bounded on $L_2(\mathbb{R}^d).$

For $1\leq k\leq d,$ set $h_k(t)=\frac{t_k}{|t|},$ $0\neq t\in\mathbb{R}^d.$ In particular, the operator  $R_k:=h_k(\nabla_{\theta})=D_k(-\Delta_{\theta})^{-\frac12}$ is called the $k$-th Riesz transform associated with $L_\infty(\qr).$   
\end{defi}

\begin{lem}\label{commutator-PSD alfa0 small beta rk}
Let $\beta\geq-1$ and $x\in \mathcal{S}(\qr).$ We have
$$[\pi_1(x),R_k](1-\Delta_{\theta})^{-\frac{\beta}{2}}\in\mathcal{L}_{\frac{d}{\beta+1},\infty},\quad 1\leq k\leq d.$$
\end{lem}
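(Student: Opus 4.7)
The plan is to recognize the Riesz transform $R_k$ as a specific instance of the representation $\pi_2$, and then apply Theorem \ref{commutator-PSD} directly. Specifically, let $g_k \in C^\infty(\mathbb{S}^{d-1})$ be the $k$-th coordinate function on the unit sphere, that is $g_k(s) = s_k$ for $s \in \mathbb{S}^{d-1}$. Since $g_k$ extends smoothly to a neighborhood of the sphere (it is the restriction of a polynomial), we have $g_k \in C^\infty(\mathbb{S}^{d-1})$.

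Next I would unpack the definitions: by the definition of $\pi_2$ in Definition \ref{def of Pi}, the operator $\pi_2(g_k)$ acts on $\xi \in L_2(\mathbb{R}^d)$ by
\[
(\pi_2(g_k)\xi)(t) = g_k\!\left(\frac{t}{|t|}\right)\xi(t) = \frac{t_k}{|t|}\xi(t).
\]
On the other hand, $R_k = D_k(-\Delta_\theta)^{-\frac12}$ is by construction the multiplication operator on $L_2(\mathbb{R}^d)$ with symbol $t \mapsto t_k/|t|$. Thus $R_k = \pi_2(g_k)$ as bounded operators on $L_2(\mathbb{R}^d)$.

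With this identification in hand, the lemma reduces to the assertion
\[
[\pi_1(x), \pi_2(g_k)](1 - \Delta_\theta)^{-\frac{\beta}{2}} \in \mathcal{L}_{\frac{d}{\beta+1}, \infty}, \qquad \beta \geq -1,
\]
which is the specialization of Theorem \ref{commutator-PSD} to the parameter choice $\alpha = 0$, $g = g_k$. Indeed, $\alpha = 0 \leq \beta + 1$ is exactly the constraint $\beta \geq -1$, and the Schatten–Lorentz index $d/(\beta - \alpha + 1)$ becomes $d/(\beta + 1)$, matching the conclusion of the lemma.

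Since there are no hidden obstacles here once $R_k$ has been recast in terms of $\pi_2$, the only subtlety to verify is the regularity claim $g_k \in C^\infty(\mathbb{S}^{d-1})$, which is immediate. The substantive analytic work is entirely contained in Theorem \ref{commutator-PSD}; the present lemma is merely its application to a specific smooth symbol on the sphere.
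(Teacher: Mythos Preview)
Your identification $R_k = \pi_2(g_k)$ with $g_k(s)=s_k$ is correct, but the argument is circular. In the paper, Lemma~\ref{commutator-PSD alfa0 small beta rk} is one of the foundational steps on which Theorem~\ref{commutator-PSD} is built: it is used to prove Lemma~\ref{commutator-PSD alfa0 small beta}, which in turn is the base case of the induction in Lemma~\ref{commutator-PSD alfa0}, which is then combined with \eqref{commutator without g} to yield Theorem~\ref{commutator-PSD}. Invoking Theorem~\ref{commutator-PSD} to establish Lemma~\ref{commutator-PSD alfa0 small beta rk} therefore assumes what you are trying to prove.

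The paper's proof of Lemma~\ref{commutator-PSD alfa0 small beta rk} avoids this by working directly with tools available at that stage: it writes $R_k = g_k(\nabla_\theta) + D_k(1-\Delta_\theta)^{-1/2}$, where $g_k(t)=t_k/|t|-t_k/(1+|t|^2)^{1/2}$ satisfies $g_k(t)=O((1+|t|^2)^{-1/2})$, and then estimates the two resulting commutators using only Lemma~\ref{arbitrary index for Bessel} and the prior result \eqref{commutator without g}. That is the missing idea: a direct argument, independent of Theorem~\ref{commutator-PSD}, that handles the specific Riesz symbol via this decomposition.
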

\begin{proof}

Fix $k$ and set
$$g_k(t)=\frac{t_k}{|t|}-\frac{t_k}{(1+|t|^2)^{\frac12}},\quad 0\neq t=(t_1,\cdots,t_d)\in\mathbb{R}^d.$$
It is clear that $R_k=g_k(\nabla_{\theta})+D_k(1-\Delta_{\theta})^{-\frac12}.$ We have
$$
\begin{aligned}
[\pi_1(x),R_k](1-\Delta_{\theta})^{-\frac{\beta}{2}}&=[\pi_1(x),g_k(\nabla_{\theta})](1-\Delta_{\theta})^{-\frac{\beta}{2}}\\&+[\pi_1(x),D_k(1-\Delta_{\theta})^{-\frac12}](1-\Delta_{\theta})^{-\frac{\beta}{2}}
\overset{def}{=}I_{\beta}+II_{\beta}.
\end{aligned}
$$
Let us treat these two summand separately. First, we split $I_\beta$ into:
\small{$$I_{\beta}=[\pi_1(x)(1-\Delta_{\theta})^{-\frac{\beta+1}{2}},(1-\Delta_{\theta})^{\frac12}g_k(\nabla_{\theta})]+g_{k}(\nabla_{\theta})\cdot[(1-\Delta_{\theta})^{\frac12},\pi_1(x)](1-\Delta_{\theta})^{-\frac{\beta+1}{2}}.
$$}
By construction, we have $g_k(t)=O((1+|t|^2)^{-\frac12}).$
So $g_k(\nabla_{\theta})$ and $(1-\Delta_{\theta})^{\frac12}g_k(\nabla_{\theta})$ are both bounded operators. 
It then follows from Lemma \ref{arbitrary index for Bessel} that for $\beta>-1,$ $$I_{\beta}\in[\mathcal{L}_{\frac{d}{\beta+1},\infty},\mathcal{L}_{\infty}]+\mathcal{L}_{\infty}\cdot\mathcal{L}_{\frac{d}{\beta+1},\infty}\subset\mathcal{L}_{\frac{d}{\beta+1},\infty}.$$
For $\beta=-1,$ note that
$
I_{-1}=[\pi_1(x),(1-\Delta_{\theta})^{\frac12}g_k(\nabla_{\theta})]+g_{k}(\nabla_{\theta})\cdot[(1-\Delta_{\theta})^{\frac12},\pi_1(x)].
$
By \eqref{commutator without g}, $ [(1-\Delta_{\theta})^{\frac12},x]$ has a bounded extension in $\mathcal{L_\infty}.$ Thus, $I_{-1}\in\mathcal{L}_{\infty}.$ So far, we  have proved that $I_\beta\in\mathcal{L}_{\frac{d}{\beta+1},\infty}$ for all $\beta\geq-1.$ It remains to handle the second summand $II_{\beta}.$
We write
$$
\begin{aligned}
II_{\beta}&=[\pi_1(x),D_k(1-\Delta_{\theta})^{-\frac12}](1-\Delta_{\theta})^{-\frac{\beta}{2}}\\
&=[\pi_1(x),D_k](1-\Delta_{\theta})^{-\frac{\beta+1}{2}}+D_k[\pi_1(x), (1-\Delta_{\theta})^{-\frac12}](1-\Delta_{\theta})^{-\frac{\beta}{2}}\\
&=-\pi_1(\partial_kx) \cdot(1-\Delta_{\theta})^{-\frac{\beta+1}{2}}+D_k(1-\Delta_{\theta})^{-\frac12}\cdot[(1-\Delta_{\theta})^{\frac12},\pi_1(x)](1-\Delta_{\theta})^{-\frac{\beta+1}{2}}.
\end{aligned}
$$
By assumption, $\partial_kx\in\mathcal{S}(\qr).$ By definition, the multiplication operator $D_k(1-\Delta_{\theta})^{-\frac12}$ induced by the function $t\mapsto t_k(1+|t|^2)^{-\frac12}$ is clearly bounded. Now, similarly, using Lemma \ref{arbitrary index for Bessel} for $\beta>-1,$ and the fact $[(1-\Delta_{\theta})^{\frac12},\pi_1(x)]\in\mathcal{L}_\infty$ for $\beta=-1,$ we conclude that
$II_{\beta}\in\mathcal{L}_{\frac{d}{\beta+1},\infty}$ for all $\beta\geq-1.$ The proof is therefore complete.
\end{proof}

\begin{lem}\label{commutator-PSD alfa0 small beta}
Let $-1\leq\beta<1$ and let $x\in \mathcal{S}(\qr),$ $g\in C^{\infty}(\mathbb{S}^{d-1}).$ We have
$$[\pi_1(x),\pi_2(g)](1-\Delta_{\theta})^{-\frac{\beta}{2}}\in \mathcal{L}_{\frac{d}{\beta+1},\infty}.$$
\end{lem}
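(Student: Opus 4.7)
The plan is to reduce Lemma~\ref{commutator-PSD alfa0 small beta} to the already-proven Riesz-transform version (Lemma~\ref{commutator-PSD alfa0 small beta rk}) by polynomial approximation of the symbol $g$. The key structural observation is that $\pi_2:C(\mathbb{S}^{d-1})\to\mathcal{L}_\infty(L_2(\mathbb{R}^d))$ is a $\ast$-homomorphism with $\pi_2(\mathbf{s}_k)=R_k$, so for every polynomial $p(\mathbf{s}_1,\dots,\mathbf{s}_d)$ one has $\pi_2(p)=p(R_1,\dots,R_d)$.

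\textbf{Step 1 (Polynomial case).} I first handle $g=p(\mathbf{s}_1,\dots,\mathbf{s}_d)$. Applying the Leibniz rule to $[\pi_1(x),p(R_1,\dots,R_d)]$ expands it as a finite sum of terms of the form $R^{\alpha'}[\pi_1(x),R_j]R^{\alpha''}$. Since each $R_k$ is a bounded multiplication operator on $L_2(\mathbb{R}^d)$ with $\|R_k\|_\infty\leq1$, and all $R_k$ commute with $(1-\Delta_\theta)^{-\beta/2}$, right-multiplying the sum by $(1-\Delta_\theta)^{-\beta/2}$ and sliding the trailing $R^{\alpha''}$ past it yields summands
\[
R^{\alpha'}[\pi_1(x),R_j](1-\Delta_\theta)^{-\beta/2}R^{\alpha''}\in\mathcal{L}_\infty\cdot\mathcal{L}_{d/(\beta+1),\infty}\cdot\mathcal{L}_\infty\subset\mathcal{L}_{d/(\beta+1),\infty}
\]
by Lemma~\ref{commutator-PSD alfa0 small beta rk} and the H\"older inequality. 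Adding the finitely many terms settles the polynomial case and gives a quantitative bound in terms of the degree and coefficients of $p$.

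\textbf{Step 2 (Density).} For general $g\in C^\infty(\mathbb{S}^{d-1})$ I decompose into spherical harmonics $g=\sum_{k\geq0}Y_k$, where $Y_k$ is a polynomial of degree $k$ in $\mathbf{s}_1,\dots,\mathbf{s}_d$. Smoothness of $g$ forces $\|Y_k\|_{L^\infty(\mathbb{S}^{d-1})}$ to decay faster than any polynomial in $k$. Step~1 then yields a bound of the form
\[
\big\|[\pi_1(x),\pi_2(Y_k)](1-\Delta_\theta)^{-\beta/2}\big\|_{d/(\beta+1),\infty}\lesssim_x k^M\|Y_k\|_{L^\infty(\mathbb{S}^{d-1})}
\]
for some $M=M(d)$, where the polynomial factor $k^M$ absorbs (i) the number of monomials of degree~$k$, (ii) the Leibniz count, and (iii) the classical estimate comparing spherical-harmonic coefficients to the $L^\infty$ norm. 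Because $-1\leq\beta<1$ and $d\geq2$, we have $d/(\beta+1)>d/2\geq1$, so $\mathcal{L}_{d/(\beta+1),\infty}$ admits an equivalent Banach norm; rapid decay of $\|Y_k\|_{L^\infty}$ then makes $\sum_k[\pi_1(x),\pi_2(Y_k)](1-\Delta_\theta)^{-\beta/2}$ absolutely convergent in that ideal. Since $\sum_k\pi_2(Y_k)\to\pi_2(g)$ in operator norm and $\pi_1(x),(1-\Delta_\theta)^{-\beta/2}$ are bounded, an operator-norm continuity argument identifies the limit with $[\pi_1(x),\pi_2(g)](1-\Delta_\theta)^{-\beta/2}$, completing the proof.

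\textbf{Expected main obstacle.} The delicate step is producing the polynomial-in-$k$ bound in Step~2: one must carefully combine the Leibniz bookkeeping from Step~1 with classical estimates on how monomial coefficients of a degree-$k$ spherical harmonic scale with its $L^\infty$ norm. The upside is that $C^\infty$ data decays faster than any polynomial in $k$, so any polynomial bound suffices. The endpoint $\beta=-1$, where the target ideal degenerates to $\mathcal{L}_\infty$, is handled by the same scheme with minor adjustments using the boundedness statement in \eqref{commutator without g}.
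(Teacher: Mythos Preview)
Your Step 1 is fine, but Step 2 contains a genuine gap: the estimate you label (iii) is not polynomial but exponential in $k$. On $\mathbb{S}^1$ the degree-$k$ spherical harmonic $Y_k=e^{ik\theta}=(\mathbf{s}_1+i\mathbf{s}_2)^k$ has $\|Y_k\|_{L^\infty}=1$, yet the $\ell_1$ norm of its monomial coefficients is $\sum_j\binom{k}{j}=2^k$; the same phenomenon persists in higher dimensions (e.g.\ $(\mathbf{s}_1+i\mathbf{s}_2)^k$ is harmonic on $\mathbb{R}^d$ for every $d$). Hence the Leibniz bookkeeping from Step 1 only gives
\[
\big\|[\pi_1(x),\pi_2(Y_k)](1-\Delta_\theta)^{-\beta/2}\big\|_{d/(\beta+1),\infty}\lesssim_x k\cdot 2^k,
\]
not $k^M\|Y_k\|_{L^\infty}$. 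Since a merely smooth $g$ can have components with $\|Y_k\|_{L^\infty}\asymp e^{-\sqrt{k}}$ (faster than any polynomial but not exponentially decaying), your series $\sum_k[\pi_1(x),\pi_2(Y_k)](1-\Delta_\theta)^{-\beta/2}$ need not converge in $\mathcal{L}_{d/(\beta+1),\infty}$, and the density argument collapses. Your scheme would go through for real-analytic $g$, but not for general $C^\infty$ symbols.

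The paper avoids polynomial approximation altogether. It extends $g$ to a Schwartz function $h$ on $\mathbb{R}^d$ with $h|_{\mathbb{S}^{d-1}}=g$, writes $\pi_2(g)=h(R_1,\dots,R_d)$, and invokes a commutator inequality for smooth functional calculus of commuting self-adjoint operators (\cite[Lemma 3.5]{SXZ2023}):
\[
\big\|[T,h(R_1,\dots,R_d)]\big\|_{p,\infty}\leq c_{p,h}\max_{1\leq j\leq d}\|[T,R_j]\|_{p,\infty},
\]
applied with $T=\pi_1(x)(1-\Delta_\theta)^{-\beta/2}$ (which commutes with every $R_j$, so the commutator can be moved inside). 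Lemma \ref{commutator-PSD alfa0 small beta rk} then finishes the job in one line. The constant $c_{p,h}$ is controlled by $\int_{\mathbb{R}^d}|\xi|\,|\hat h(\xi)|\,d\xi$ via a Duhamel formula for $[T,e^{i\langle\xi,R\rangle}]$; this Fourier-side integrability is precisely what replaces your exponentially growing coefficient sums.
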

\begin{proof}Denote $p=\frac{d}{\beta+1}.$ Clearly, $p>1.$ For $g\in C^{\infty}(\mathbb{S}^{d-1}),$ choose $h\in\mathcal{S}(\mathbb{R}^d)$ such that $h|_{\mathbb{S}^{d-1}}=g$ (see e.g. \cite[Lemma~4.8]{FSZ2023}). Note that by the functional calculus, 
$\pi_{2}(g)=h(R_1,\cdots,R_d).$ Thus by means of \cite[Lemma 3.5]{SXZ2023} (similar assertion holds trivially for the case $p=\infty$), there exists a constant $c_{p,h}>0$ such that 
\begin{equation*}
\begin{split}
	\|[\pi_1(x),\pi_2(g)](1-\Delta_{\theta})^{-\frac{\beta}{2}}\|_{p,\infty}&=\|[\pi_1(x)(1-\Delta_{\theta})^{-\frac{\beta}{2}},\pi_2(g)]\|_{p,\infty}\\
	&=\|[\pi_1(x)(1-\Delta_{\theta})^{-\frac{\beta}{2}},h(R_1,\cdots,R_d)]\|_{p,\infty}\\
	&\leq c_{p,h}\max_{1\leq k\leq d }\|[\pi_1(x)(1-\Delta_{\theta})^{-\frac{\beta}{2}},R_k]\|_{p,\infty}\\
	&=c_{p,h}\max_{1\leq k\leq d }\|[\pi_1(x),R_k](1-\Delta_{\theta})^{-\frac{\beta}{2}}\|_{p,\infty}.
\end{split}
\end{equation*}
The result now follows from Lemma \ref{commutator-PSD alfa0 small beta rk}.
\end{proof}

Next, we extend the range of $\beta$ from $[-1,1)$ to $[-1,\infty).$
\begin{lem}\label{commutator-PSD alfa0}
Let $\beta\geq-1,$ and let $x\in \mathcal{S}(\qr),$ $g\in C^{\infty}(\mathbb{S}^{d-1}).$ We have
$$[\pi_1(x),\pi_2(g)](1-\Delta_{\theta})^{-\frac{\beta}{2}}\in \mathcal{L}_{\frac{d}{\beta+1},\infty}.$$
\end{lem}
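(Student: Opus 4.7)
My plan is to establish the inclusion directly for the full range $\beta \geq -1$, by reducing commutators with $\pi_2(g)$ to commutators with the individual Riesz transforms $R_k$, for which the target ideal membership is already supplied by Lemma \ref{commutator-PSD alfa0 small beta rk}. The novelty compared to the previous Lemma \ref{commutator-PSD alfa0 small beta} is that this reduction must now work for all exponents $p = d/(\beta+1) > 0$, including the quasi-Banach range $p \leq 1$. I will therefore perform the reduction via an explicit Fourier-analytic expansion rather than appealing to \cite[Lemma 3.5]{SXZ2023}, whose proof is restricted to $p > 1$.

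First I choose an extension $h \in \mathcal{S}(\mathbb{R}^d)$ with $h|_{\mathbb{S}^{d-1}} = g$, so that $\pi_2(g) = h(R_1,\ldots,R_d)$ by the functional calculus, and I use that $\pi_2(g)$ and $(1-\Delta_\theta)^{-\beta/2}$ commute (both are functions of $\nabla_\theta$) to write
\[
[\pi_1(x),\pi_2(g)](1-\Delta_\theta)^{-\beta/2} = [\pi_1(x)(1-\Delta_\theta)^{-\beta/2},\,h(R_1,\ldots,R_d)].
\]
Inverting $h$ in Fourier yields $h(R_1,\ldots,R_d) = (2\pi)^{-d}\int_{\mathbb{R}^d}\hat h(\xi)\,e^{i\xi\cdot R}\,d\xi$, where $\xi\cdot R := \sum_{k=1}^d \xi_k R_k$ is a bounded self-adjoint multiplication operator on $L_2(\mathbb{R}^d)$. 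Applying Duhamel's formula
\[
[A, e^{i\xi\cdot R}] = i\sum_{k=1}^d \xi_k \int_0^1 e^{i(1-s)\xi\cdot R}\,[A,R_k]\,e^{is\xi\cdot R}\,ds
\]
with $A = \pi_1(x)(1-\Delta_\theta)^{-\beta/2}$, and noting that $R_k$ commutes with $(1-\Delta_\theta)^{-\beta/2}$ so that $[A,R_k] = [\pi_1(x),R_k](1-\Delta_\theta)^{-\beta/2}$, produces the representation
\[
[\pi_1(x),\pi_2(g)](1-\Delta_\theta)^{-\beta/2} = \frac{i}{(2\pi)^d}\sum_{k=1}^d \int_{\mathbb{R}^d}\hat h(\xi)\,\xi_k\!\int_0^1 e^{i(1-s)\xi\cdot R}[\pi_1(x),R_k](1-\Delta_\theta)^{-\beta/2}e^{is\xi\cdot R}\,ds\,d\xi.
\]

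By Lemma \ref{commutator-PSD alfa0 small beta rk}, each factor $[\pi_1(x),R_k](1-\Delta_\theta)^{-\beta/2}$ lies in $\mathcal{L}_{d/(\beta+1),\infty}$; since conjugation by the unitaries $e^{is\xi\cdot R}$ preserves the quasi-norm, the integrand is pointwise bounded in $\mathcal{L}_{d/(\beta+1),\infty}$ by $|\xi\,\hat h(\xi)|$ times a constant independent of $\xi$ and $s$.

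The last step, which is the main obstacle, is to pass the quasi-norm $\|\cdot\|_{d/(\beta+1),\infty}$ inside the Bochner integral. When $\beta < d-1$ the ideal is Banach and this is routine. When $\beta \geq d-1$ the target ideal is only quasi-Banach, and I would invoke its $r$-convexity (via the Aoki--Rolewicz theorem) to obtain an estimate of the form $\bigl\|\int f(\xi)\,d\xi\bigr\|^r \leq c\int\|f(\xi)\|^r\,d\xi$ for a suitable $r \in (0,\,d/(\beta+1)]$. Since $\hat h \in \mathcal{S}(\mathbb{R}^d)$ makes $\xi \mapsto |\xi\,\hat h(\xi)|$ an element of $L^q(\mathbb{R}^d)$ for every $q > 0$, the integral converges absolutely in $\mathcal{L}_{d/(\beta+1),\infty}$, yielding the desired inclusion.
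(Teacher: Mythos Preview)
Your Fourier--Duhamel reduction to Lemma \ref{commutator-PSD alfa0 small beta rk} is sound, and for $\beta<d-1$ (i.e.\ $p=d/(\beta+1)>1$) it yields a direct proof: $\mathcal{L}_{p,\infty}$ is normable in that range, so the Bochner inequality $\|\int f\|\le\int\|f\|$ applies with the equivalent norm and $\xi\,\hat h(\xi)\in L_1(\mathbb{R}^d)$ finishes the argument. This bypasses the paper's induction and is a clean alternative on that sub-range.

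The gap is the final step for $\beta\ge d-1$. The estimate
\[
\Bigl\|\int f(\xi)\,d\xi\Bigr\|^{r}\le c\int\|f(\xi)\|^{r}\,d\xi
\]
is \emph{not} a consequence of the Aoki--Rolewicz theorem and is in fact false in $\mathcal{L}_{p,\infty}$ for $p\le 1$. Aoki--Rolewicz only gives $\|\sum_{i=1}^n x_i\|^r\le\sum_{i=1}^n\|x_i\|^r$ for \emph{finite} sums; feeding a Riemann sum of mesh $1/n$ into this produces an uncontrolled factor $n^{1/r-1}$. Concretely, for $p=\tfrac12$ let $f:[0,1)\to\mathcal{L}_{1/2,\infty}$ take the value $P_k$ (the rank-one projection onto $e_k$) on $[(k-1)/N,k/N)$, $1\le k\le N$. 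Then $\int_0^1\|f\|^r=1$ for every $r>0$, whereas $\int_0^1 f=N^{-1}\sum_{k=1}^N P_k$ has $\|\int f\|_{1/2,\infty}=\sup_{0\le n<N}(n+1)^2/N=N$. So neither the claimed inequality nor absolute convergence of your integral in $\mathcal{L}_{p,\infty}$ can be obtained this way.

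The paper avoids this obstacle by induction on $\lfloor 2\beta\rfloor$: after peeling off a factor $(1-\Delta_\theta)^{-1/4}$ it invokes the factorisation $x=x_1x_2$ of Proposition \ref{factorisable} to write the remaining commutator as a \emph{product} of two operators, one controlled by the inductive hypothesis and one by Lemma \ref{arbitrary index for Bessel}, and then recombines them via the H\"older inequality in weak Schatten classes, which is valid for all positive exponents. Replacing a continuous superposition by a single product is precisely what makes the quasi-Banach range tractable.
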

\begin{proof}Let $n=\lfloor 2\beta\rfloor.$ We proceed the proof by induction on $n.$ Base of induction (i.e., $n=1$ or, equivalently, $\beta\in [\frac12,1)$) is established in Lemma \ref{commutator-PSD alfa0 small beta}. We now concentrate on the step of induction.

Suppose now the result holds for $n,$ we then need to consider the case $\beta\in [\frac{n+1}{2},\frac{n}{2}+1).$ Setting $\alpha=\beta-\frac12,$ it is obvious that $n=\lfloor 2\alpha\rfloor.$
Noting that $\pi_2(g)$ commutes with $(1-\Delta_{\theta})^{-1},$ we may write
$$
\begin{aligned}
[\pi_1(x),\pi_2(g)](1-\Delta_{\theta})^{-\frac{\beta}{2}}&=[\pi_1(x),\pi_2(g)](1-\Delta_{\theta})^{-\frac{\alpha+\frac12}{2}}\\
&=[[\pi_1(x),(1-\Delta_{\theta})^{-\frac14}](1-\Delta_{\theta})^{-\frac{\alpha}{2}},\pi_2(g)]+\\
&(1-\Delta_{\theta})^{-\frac14}[\pi_1(x),\pi_2(g)](1-\Delta_{\theta})^{-\frac{\alpha}{2}}.
\end{aligned}
$$
Since $\pi_2(g)$ is bounded, then by \eqref{commutator without g} the first summand belongs to $\mathcal{L}_{\frac{d}{\beta+1},\infty}.$ 

It remains to deal with the second summand, which we denote by $II.$ Recall that by Proposition \ref{factorisable} we may write $x=x_1x_2,$ with $x_1,x_2\in\mathcal{S}(\qr).$
Thus,
$$\begin{aligned}
II&=(1-\Delta_{\theta})^{-\frac14}[\pi_1(x_1),\pi_2(g)]\cdot \pi_1(x_2)(1-\Delta_{\theta})^{-\frac{\alpha}{2}}\\
&+(1-\Delta_{\theta})^{-\frac14}\pi_1(x_1)\cdot [\pi_1(x_2),\pi_2(g)](1-\Delta_{\theta})^{-\frac{\alpha}{2}}.
\end{aligned}
$$
Using Lemma \ref{commutator-PSD alfa0 small beta}, Lemma \ref{arbitrary index for Bessel} and the inductive assumption, we obtain
$$II\in\mathcal{L}_{\frac{2d}{3},\infty}\cdot\mathcal{L}_{\frac{d}{\alpha},\infty}+\mathcal{L}_{2d,\infty}\cdot \mathcal{L}_{\frac{d}{\alpha+1},\infty}\subset \mathcal{L}_{\frac{d}{\beta+1},\infty}.$$
This establishes the step of induction and hence completes the proof.
\end{proof}

We are now in position to prove our main result on commutator estimates.

\begin{proof}[Proof of Theorem \ref{commutator-PSD}] Recall by assumption $\alpha\leq \beta+1.$ We write
\begin{equation*}
\begin{split}
	[\pi_1(x),\pi_2(g)(1-\Delta_{\theta})^{\frac{\alpha}{2}}](1-\Delta_{\theta})^{-\frac{\beta}{2}}
	&=[\pi_1(x),\pi_2(g)](1-\Delta_{\theta})^{\frac{\alpha-\beta}{2}}\\&+\pi_2(g)\cdot[\pi_1(x), (1-\Delta_{\theta})^{\frac{\alpha}{2}}](1-\Delta_{\theta})^{-\frac{\beta}{2}}
\end{split}
\end{equation*}
The result now follows from Lemma \ref{commutator-PSD alfa0} and \eqref{commutator without g}.
\end{proof}

\section{Mixed trace formula}\label{sec-mixed trace formula}
In this section, we establish a trace formula for convolution-product type operators of the form $\pi_1(x)g(\nabla_{\theta}).$ Under certain conditions, say $x=U_\theta(f)$ for some $f\in L_2(\mathbb{R}^d)$ and $g\in L_2(\mathbb{
R}^d),$ $\pi_1(x)g(\nabla_{\theta})$ turns out to be an integral operator on $\mathbb{R}^d$ with the kernel $(t,s)\mapsto 
e^{-\frac{i}{2}\langle t,\theta s\rangle}f(t-s)g(s).$

Recall the space $\ell_1(L_\infty)(\mathbb{R}^d)$  is defined as the set of functions $g\in L_\infty(\mathbb{R}^d)$ such that
$$\|g\|_{\ell_1(L_\infty)(\mathbb{R}^d)}:=\sum_{\alpha\in\mathbb{Z}^d}\left\|g\chi_{[0,1)^d+\alpha}\right\|_{\infty}<\infty.$$
For every $\beta>d,$ one can easily check that the function
$t\to(1+|t|^2)^{-\beta/2}$
belongs to $\ell_1(L_{\infty})(\mathbb{R}^d).$

In \cite{LSZ2020}, the following Cwikel estimates were established (see Lemma 7.1 and Theorem 7.7 there). 
\begin{thm}\label{cwikel-type estimates}Assume $\det(\theta)\neq0.$ 
\begin{enumerate}[(i)]
\item\label{H-S norm} If $x\in L_2(\qr)$ and if $g\in L_2(\mathbb{R}^d),$ then $\pi_1(x)g(\nabla_{\theta})\in\mathcal{L}_2(L_2(\mathbb{R}^d))$ and
$$\|\pi_1(x)g(\nabla_{\theta})\|_{\mathcal{L}_2(L_2(\mathbb{R}^d))}=(2\pi)^{-\frac{d}{2}}\|x\|_{L_2(\qr)}\|g\|_{L_2(\mathbb{R}^d)}.$$
\item\label{Sobolev norm} If $x \in W_1^{d}(\mathbb{R}_{\theta}^{d})$ and if $g\in$ $\ell_{1}(L_{\infty})(\mathbb{R}^{d}),$ then $\pi_1(x)g(\nabla_{\theta})\in \mathcal{L}_1(L_{2}(\mathbb{R}^{d}))$ and
$$\left\|\pi_1(x) g\left( \nabla_{\theta}\right)\right\|_{\mathcal{L}_1\left(L_{2}\left(\mathbb{R}^{d}\right)\right)} \lesssim\|x\|_{W_1^{d}\left(\mathbb{R}_{\theta}^{d}\right)}\|g\|_{\ell_{1}\left(L_{\infty}\right)(\mathbb{R}^d)}.$$
\end{enumerate}
\end{thm}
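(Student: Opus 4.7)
\textbf{Part (i)} is a direct Hilbert-Schmidt kernel computation. Writing $x = U_\theta(f)$ for some $f \in L_2(\mathbb{R}^d)$ (first assumed in $\mathcal{S}(\mathbb{R}^d)$ and then extended by density, as $(2\pi)^{-d/2}U_\theta$ is an isometry onto $L_2(\qr)$), unwinding the definition \eqref{definition of unitary family} of $U_\theta$ and using the antisymmetry of $\theta$ shows that $\pi_1(x) g(\nabla_\theta)$ acts on $L_2(\mathbb{R}^d)$ as the integral operator with kernel
\begin{equation*}
K(t,s) = e^{-\tfrac{i}{2}\langle t,\theta s\rangle}\, f(t-s)\, g(s).
\end{equation*}
Taking the Hilbert-Schmidt norm and applying Fubini yields $\|K\|_{L_2(\mathbb{R}^d\times\mathbb{R}^d)}^2 = \|f\|_2^2 \|g\|_2^2$, and the identity $\|U_\theta(f)\|_{L_2(\qr)} = (2\pi)^{d/2}\|f\|_2$ recorded in Section \ref{nc plane} converts this into the asserted formula.

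For \textbf{part (ii)} the plan is to follow the Birman-Solomyak cube decomposition strategy adapted to the quantum Euclidean setting. Decompose $g = \sum_{\alpha \in \mathbb{Z}^d} g_\alpha$ with $g_\alpha := g \chi_{\alpha + [0,1)^d}$, so that $\sum_\alpha \|g_\alpha\|_\infty = \|g\|_{\ell_1(L_\infty)(\mathbb{R}^d)}$. By the triangle inequality in $\mathcal{L}_1$, the theorem reduces to establishing the uniform single-cube estimate
\begin{equation*}
\|\pi_1(x) g_\alpha(\nabla_\theta)\|_{\mathcal{L}_1} \lesssim \|x\|_{W_1^d(\qr)}\, \|g_\alpha\|_\infty, \qquad \alpha \in \mathbb{Z}^d,
\end{equation*}
with constant independent of $\alpha$. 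To unpack each $g_\alpha$, I would expand it in a Fourier series on $\alpha + [0,1)^d$; each exponential mode $e^{ik\cdot t}$ composed with $\pi_1(x)$ transfers, via the Weyl relations, to a (twisted) translate of $x$, whose Sobolev norms are controlled by $\|x\|_{W_1^d(\qr)}$ with polynomial weights in $k$ that are summable against the rapid decay of the Fourier coefficients.

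The main obstacle is the single-cube trace-class bound for $\pi_1(x) h(\nabla_\theta)$ with $h$ supported in the unit cube. My plan is to factor
\begin{equation*}
\pi_1(x)\, h(\nabla_\theta) = \bigl[\pi_1(x)(1-\Delta_\theta)^{-d/2}\bigr] \cdot \bigl[(1-\Delta_\theta)^{d/2}\,h(\nabla_\theta)\bigr],
\end{equation*}
where the second factor is a bounded multiplication operator thanks to the compact frequency support of $h$, and the first factor must be shown to lie in $\mathcal{L}_1$ with norm bounded by $\|x\|_{W_1^d(\qr)}$. This last step is a noncommutative endpoint Sobolev embedding; it can be obtained by splitting $x = x_1\,x_2$ via Proposition \ref{factorisable}, distributing $d$ partial derivatives across the two factors via the Leibniz rule, and using part (i) on each Hilbert-Schmidt piece. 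The delicate point, and the reason the argument succeeds precisely at Sobolev order $d$ matching the dimension, is that $(1+|t|^2)^{-d/2}$ sits exactly at the endpoint of $L_1(\mathbb{R}^d)$-weighted integrability, so that pairing it against the $W_1^d(\qr)$-norm of $x$ produces a trace-class\textemdash rather than merely Hilbert-Schmidt\textemdash bound.
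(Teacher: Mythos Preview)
First, note that the paper does not prove this statement: it is quoted verbatim from \cite{LSZ2020} (Lemma~7.1 and Theorem~7.7 there), so there is no ``paper's own proof'' to compare with beyond the citation.

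Your part~(i) is correct and is exactly the integral-kernel computation one would expect; it matches the kernel formula the paper itself records just before Theorem~\ref{mixed trace thm}.

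Your part~(ii) has the right overall architecture (Birman--Solomyak cube decomposition, reduction to a single cube), but the execution of the single-cube step breaks down. The central claim---that $\pi_1(x)(1-\Delta_\theta)^{-d/2}\in\mathcal{L}_1$ with norm controlled by $\|x\|_{W_1^d}$---cannot be obtained the way you describe, for three reasons. First, the ``two Hilbert--Schmidt pieces via part~(i)'' idea fails at precisely this exponent: to invoke part~(i) on each factor you would need $(1+|t|^2)^{-d/4}\in L_2(\mathbb{R}^d)$, i.e.\ $\int(1+|t|^2)^{-d/2}\,dt<\infty$, and this integral diverges logarithmically. Second, Proposition~\ref{factorisable} only factors elements of $\mathcal{S}(\qr)$, not of $W_1^d(\qr)$, and even then gives no control of the Sobolev norms of the factors in terms of $\|x\|_{W_1^d}$, so the resulting bound cannot be extended by density. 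Third, and most tellingly, the symbol $(1+|t|^2)^{-d/2}$ does \emph{not} belong to $\ell_1(L_\infty)(\mathbb{R}^d)$---the paper explicitly notes that $(1+|t|^2)^{-\beta/2}\in\ell_1(L_\infty)$ only for $\beta>d$---so the operator you are trying to place in $\mathcal{L}_1$ is not even covered by the theorem you are proving; it sits exactly at the excluded endpoint and cannot serve as the pivot of the argument.

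The Fourier-series step is also unclear as written: the multiplication operators $e^{ik\cdot\nabla_\theta}$ commute with $\pi_1(x)$ up to a modulation automorphism of $L_\infty(\qr)$, not a translation, and in any case the indicator $\chi_{Q_\alpha}$ persists and blocks a clean reduction. The actual proof in \cite{LSZ2020} does use the cube decomposition together with the translation unitaries $V(t)$ of Lemma~\ref{Vt} to reduce to a fixed cube, but the single-cube $\mathcal{L}_1$ estimate requires a genuinely different mechanism to exploit the $W_1^d$ regularity; you should consult that reference for the missing ingredient.
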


\begin{lem}(\cite[Lemma 3.4]{SZ2018-cmp})\label{sftinv}
If $\phi$ is a continuous linear functional on $W_{1}^{d}\left(\mathbb{R}_{\theta}^{d}\right)$ such that
\[\phi(x)=\phi(U_\theta(-t)xU_\theta(t)), \quad x\in W_{1}^{d}(\mathbb{R}_{\theta}^{d}), \quad t \in\mathbb{R}^{d},\]
then $\phi=\tau_{\theta}$ (up to a constant factor).
\end{lem}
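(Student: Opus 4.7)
The plan is to translate the invariance condition into an invariance on the symbol side through the $U_\theta$-map, classify the resulting tempered distributions by Fourier analysis, and then pass from Schwartz elements to the full Sobolev space by density. The first key observation is that iterating the Weyl relation \eqref{weyl relation} and using the antisymmetry of $\theta$ (so that $\langle t,\theta t\rangle=0$ and $\langle s,\theta t\rangle=-\langle t,\theta s\rangle$) yields
$$U_\theta(-t)\,U_\theta(s)\,U_\theta(t) \;=\; e^{-i\langle t,\theta s\rangle}\,U_\theta(s),$$
and integrating against $f\in\mathcal{S}(\mathbb{R}^d)$ gives
$$U_\theta(-t)\,U_\theta(f)\,U_\theta(t) \;=\; U_\theta(m_t f),\qquad m_t(s):=e^{-i\langle t,\theta s\rangle}.$$
Thus, on the symbol side, the conjugation action on $\mathcal{S}(\qr)=U_\theta(\mathcal{S}(\mathbb{R}^d))$ is multiplication by the character $m_t$.

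Since $\partial^\alpha U_\theta(f)=U_\theta(t^\alpha f)$ and $\|U_\theta(g)\|_{L_1(\qr)}$ is controlled by finitely many Schwartz seminorms of $g$ (via the matrix-model description of Proposition \ref{structure of schwartz spaces} and Corollary \ref{star algebra iso}, which makes $U_\theta$ continuous from $\mathcal{S}(\mathbb{R}^d)$ into $L_1(\qr)$ and hence into $W_1^d(\qr)$), the pullback
$$\Phi(f):=\phi\bigl(U_\theta(f)\bigr),\qquad f\in\mathcal{S}(\mathbb{R}^d),$$
is a tempered distribution on $\mathbb{R}^d$, and the hypothesis becomes $\Phi(m_t f)=\Phi(f)$ for every $t\in\mathbb{R}^d$. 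Because $\det\theta\neq 0$, the family $\{m_t\}_{t\in\mathbb{R}^d}$ exhausts all characters $s\mapsto e^{-i\langle u,s\rangle}$, $u\in\mathbb{R}^d$. Under the Fourier transform, multiplication by a character becomes translation, so $\widehat{\Phi}$ is a translation-invariant tempered distribution on $\mathbb{R}^d$, hence a constant distribution, and therefore $\Phi=c\,\delta_0$ for some $c\in\mathbb{C}$. Combined with the trace formula \eqref{trace of schwartz function}, this reads
$$\phi\bigl(U_\theta(f)\bigr)=c\,f(0)=\frac{c}{(2\pi)^d}\,\tau_\theta\bigl(U_\theta(f)\bigr),\qquad f\in\mathcal{S}(\mathbb{R}^d),$$
which identifies $\phi$ with $\tfrac{c}{(2\pi)^d}\tau_\theta$ on $\mathcal{S}(\qr)$.

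Both $\phi$ and $\tau_\theta$ are continuous linear functionals on $W_1^d(\qr)$ (the latter because $W_1^d(\qr)\subset L_1(\qr)$ and $|\tau_\theta(x)|\le\|x\|_{L_1(\qr)}$). Granted that $\mathcal{S}(\qr)$ is dense in $W_1^d(\qr)$ with respect to the Sobolev norm, the identity extends from Schwartz elements to the entire Sobolev space, completing the argument. The substantive obstacle is precisely this density: the natural route is a mollification argument in which $x\in W_1^d(\qr)$ is approximated by $U_\theta(\eta_n)\,x\,U_\theta(\eta_n')$ for suitable approximate units $\eta_n,\eta_n'\in\mathcal{S}(\mathbb{R}^d)$, using that the partial derivations intertwine with such convolution-type smoothings up to a controlled error and combining, if needed, with a compactly-supported symbol cutoff in frequency. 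Once that density is in hand, everything else reduces to classical Fourier analysis combined with the translation of the problem to the symbol side via $U_\theta$.
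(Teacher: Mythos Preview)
The paper does not prove this lemma; it is quoted verbatim from \cite[Lemma~3.4]{SZ2018-cmp}, so there is no in-paper argument to compare against. Your approach is the natural one and is essentially how the cited reference proceeds: pull the functional back to $\mathcal{S}(\mathbb{R}^d)$ via $U_\theta$, observe that the conjugation $x\mapsto U_\theta(-t)xU_\theta(t)$ becomes multiplication by the character $m_t(s)=e^{-i\langle t,\theta s\rangle}$ on the symbol side (your computation from the Weyl relation is correct), use nondegeneracy of $\theta$ to conclude that the pulled-back tempered distribution has translation-invariant Fourier transform and is therefore a multiple of $\delta_0$, and finally invoke \eqref{trace of schwartz function}.

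The only point you leave genuinely open is the density of $\mathcal{S}(\qr)$ in $W_1^d(\qr)$, which you correctly identify as the remaining obstacle. This is a standard (if not entirely trivial) fact; one clean route is to smooth in the ``frequency'' direction by convolving the symbol with an approximate identity (equivalently, applying heat or Bessel semigroups $e^{t\Delta_\theta}$ or $(1-\epsilon\Delta_\theta)^{-N}$, which commute with the $\partial_j$ and act as Schwartz multipliers), and then truncate using the factorisation/matrix-model machinery of Proposition~\ref{structure of schwartz spaces}. Your sketch with two-sided approximate units is in the right spirit but would need care to ensure simultaneous convergence of all $\partial^\alpha$ in $L_1$. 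Once density is granted, the rest of your argument is complete.
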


\begin{lem}\label{single trace lemma}
For every $g\in \ell_1 (L_\infty )(\mathbb{R}^d),$ there exists a constant $c_g\in\mathbb{C}$ depending on $g$ such that
\begin{equation}\label{mtt eq0}
\mathrm{Tr}(\pi_1(x)g(\nabla_{\theta}))=c_g\tau_{\theta}(x),\quad x\in W_1^d(\mathbb{R}_\theta^d).
\end{equation} 
In particular, $g\mapsto c_g$ is a linear functional.
\end{lem}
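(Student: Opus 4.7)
The plan is to apply Lemma \ref{sftinv} to the continuous linear functional $\phi_g(x) := \mathrm{Tr}(\pi_1(x) g(\nabla_\theta))$ on $W_1^d(\mathbb{R}_\theta^d)$. Continuity follows immediately from Theorem \ref{cwikel-type estimates}\eqref{Sobolev norm}, which gives $|\phi_g(x)| \lesssim \|x\|_{W_1^d(\mathbb{R}_\theta^d)}\|g\|_{\ell_1(L_\infty)(\mathbb{R}^d)}$. For the invariance condition, cyclicity of the trace (valid since $\pi_1(x)g(\nabla_\theta) \in \mathcal{L}_1$ and $U_\theta(\pm t)$ are bounded unitaries) gives
\[
\phi_g(U_\theta(-t) x U_\theta(t)) = \mathrm{Tr}\bigl(\pi_1(x)\, U_\theta(t) g(\nabla_\theta) U_\theta(-t)\bigr).
\]
A short direct computation from \eqref{definition of unitary family}, using the anti-symmetry identity $\langle t, \theta t\rangle = 0$, yields $U_\theta(t) g(\nabla_\theta) U_\theta(-t) = g_t(\nabla_\theta)$ where $g_t(u) := g(u-t)$. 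Thus the required invariance reduces to showing $\mathrm{Tr}(\pi_1(x)(g - g_t)(\nabla_\theta)) = 0$ for all $x \in W_1^d(\mathbb{R}_\theta^d)$ and $t \in \mathbb{R}^d$.

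To establish this identity I compute the trace explicitly on the dense subspace $\mathcal{S}(\mathbb{R}_\theta^d)$. For $x = U_\theta(f)$ with $f \in \mathcal{S}(\mathbb{R}^d)$, a direct unraveling of definitions shows that $\pi_1(x)g(\nabla_\theta)$ acts on $L_2(\mathbb{R}^d)$ as the integral operator with kernel $K(u,v) = e^{-\frac{i}{2}\langle u, \theta v\rangle}f(u-v)g(v)$, whose diagonal reduces to $K(u,u) = f(0)g(u)$. The trivial embedding $\ell_1(L_\infty)(\mathbb{R}^d) \hookrightarrow L_1(\mathbb{R}^d)$ (unit cubes have measure one) places $K(u,u)$ in $L_1(\mathbb{R}^d)$. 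Since the operator is trace class (Theorem \ref{cwikel-type estimates}\eqref{Sobolev norm}) and $K$ is continuous in $u$ for each fixed $v$ (as $f \in \mathcal{S}(\mathbb{R}^d)$), a standard trace-kernel identity such as Brislawn's theorem yields
\[
\mathrm{Tr}(\pi_1(x)g(\nabla_\theta)) = \int_{\mathbb{R}^d} K(u,u)\, du = f(0)\int_{\mathbb{R}^d} g(u)\, du.
\]
Translation invariance of the Lebesgue integral then gives $\mathrm{Tr}(\pi_1(x)(g - g_t)(\nabla_\theta)) = 0$ for $x \in \mathcal{S}(\mathbb{R}_\theta^d)$. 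Density of $\mathcal{S}(\mathbb{R}_\theta^d)$ in $W_1^d(\mathbb{R}_\theta^d)$, combined with continuity of $x \mapsto \mathrm{Tr}(\pi_1(x)(g-g_t)(\nabla_\theta))$ on $W_1^d(\mathbb{R}_\theta^d)$, extends the identity to all $x \in W_1^d(\mathbb{R}_\theta^d)$.

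Invariance established, Lemma \ref{sftinv} gives $\phi_g = c_g \tau_\theta$ for some constant $c_g \in \mathbb{C}$. Linearity of $g \mapsto c_g$ follows by fixing any $x_0 \in W_1^d(\mathbb{R}_\theta^d)$ with $\tau_\theta(x_0) \neq 0$ and writing $c_g = \phi_g(x_0)/\tau_\theta(x_0)$, which is linear in $g$ through $\phi_g$; indeed, comparing the trace computation with \eqref{trace of schwartz function} identifies $c_g$ explicitly as $(2\pi)^{-d}\int_{\mathbb{R}^d} g$. The main technical subtlety lies in the trace-kernel identity used above, which I handle by invoking Brislawn's theorem for trace class integral operators whose kernel is continuous in one variable.
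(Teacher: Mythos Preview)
Your overall plan—verifying the invariance hypothesis of Lemma~\ref{sftinv}—matches the paper's, but the route to invariance is genuinely different. The paper conjugates by the multiplication unitaries $e^{\pm {\rm i}\langle\nabla_\theta,\theta t\rangle}$: these commute with $g(\nabla_\theta)$, and a short computation on the generators $U_\theta(s)$ shows $e^{{\rm i}\langle\nabla_\theta,\theta t\rangle}\pi_1(x)e^{-{\rm i}\langle\nabla_\theta,\theta t\rangle}=\pi_1(U_\theta(-t)xU_\theta(t))$, giving $\phi_g(x)=\phi_g(U_\theta(-t)xU_\theta(t))$ directly with no trace computation at all. You instead conjugate by $U_\theta(\pm t)$, which translates $g$ rather than fixing it, so you must then prove $\phi_g=\phi_{g_t}$; you do this via an explicit kernel-diagonal formula. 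But that formula already yields $\phi_g(U_\theta(f))=f(0)\int g=(2\pi)^{-d}\tau_\theta(U_\theta(f))\int g$, which is the full content of Theorem~\ref{mixed trace thm} on the Schwartz class—so by density and the continuity of both $\phi_g$ and $\tau_\theta$ on $W_1^d(\mathbb{R}_\theta^d)$ you have in fact proved the lemma (and Theorem~\ref{mixed trace thm}) outright, making your appeal to Lemma~\ref{sftinv} redundant. The paper's conjugation trick is cleaner precisely because it decouples the lemma from any explicit trace formula.

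There is, however, a technical gap in your justification of the kernel-trace step. The condition ``$K(u,v)$ continuous in $u$ for each fixed $v$'' is \emph{not} a recognised sufficient hypothesis for $\mathrm{Tr}(T)=\int K(u,u)\,du$; Brislawn's theorem gives the trace as the diagonal integral of the symmetrically averaged kernel $\tilde K$, and recovering $K$ from $\tilde K$ on the diagonal requires joint regularity or a Lebesgue-point argument. For your kernel with $g\in\ell_1(L_\infty)(\mathbb{R}^d)$ this is repairable—the factor $e^{-\frac{{\rm i}}{2}\langle u,\theta v\rangle}f(u-v)$ is jointly continuous, so the symmetric average near the diagonal reduces to $f(0)$ times the Hardy--Littlewood average of $g$, and Lebesgue differentiation gives $\tilde K(u,u)=f(0)g(u)$ a.e.—but that argument must actually be made. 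The paper avoids the issue entirely: its abstract proof of the lemma needs no kernel identity, and Brislawn is invoked only later (in the proof of Theorem~\ref{mixed trace thm}) for a single pair $f,g\in\mathcal{S}(\mathbb{R}^d)$ whose kernel is genuinely Schwartz.
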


\begin{proof}
Fix $g\in\ell_1(L_\infty)(\mathbb{R}^d),$ and set $$\phi_g(x)=\mathrm{Tr}(\pi_1(x)g(\nabla_{\theta})),\quad x\in W^d_1(\qr).$$ 
By Theorem \ref{cwikel-type estimates} \eqref{Sobolev norm}, $\phi_g$ is a continuous linear functional on $W^d_1(\qr).$

Next, fix $t\in\mathbb{R}^d$ and consider the following two unitary operators defined by
$$ (e^{\pm {\rm i}\langle\nabla_{\theta},\theta t\rangle}\xi)(u):=e^{\pm {\rm i}\langle u,\theta t\rangle}\xi(u),\quad\xi\in L_2(\mathbb{R}^d).$$
By the unitary invariance of $\mathrm{Tr},$ we write
$$\phi_g(x)=\mathrm{Tr}\big(e^{{\rm i}\langle\nabla_{\theta},\theta t\rangle}\pi_1(x)g(\nabla_{\theta})e^{-{\rm i}\langle\nabla_{\theta},\theta t\rangle}\big),$$
Noting that $g(\nabla_{\theta})$ commutes with $e^{-{\rm i}\langle\nabla_{\theta},\theta t\rangle},$ it follows that
$$\phi_g(x)=\mathrm{Tr}\big(e^{{\rm i}\langle\nabla_{\theta},\theta t\rangle}\pi_1(x)e^{-{\rm i}\langle\nabla_{\theta},\theta t\rangle}\cdot g(\nabla_{\theta})\big).$$
We now claim that 
\begin{equation*}
e^{{\rm i}\langle\nabla_{\theta},\theta t\rangle}\pi_1(x)e^{-{\rm i}\langle\nabla_{\theta},\theta t\rangle}=\pi_1\big(U_\theta(-t)xU_\theta(t)\big),\quad x\in L_\infty(\qr).
\end{equation*}
Since $\{U_\theta(s)\}_{s\in\mathbb{R}^d}$ generates the algebra $L_\infty(\qr),$ it suffices to show this claim for $x=U_{\theta}(s),$ $s\in\mathbb{R}^d.$  
Indeed, for every $\xi\in L_2(\mathbb{R}^d)$ we have
$$\begin{aligned}
\big(e^{{\rm i}\langle\nabla_{\theta},\theta t\rangle} U_\theta(s) e^{-{\rm i}\langle\nabla_{\theta},\theta t\rangle}\xi\big)(u)&=e^{{\rm i}\langle u,\theta t\rangle}\cdot \big(U_\theta(s) e^{-{\rm i}\langle\nabla_{\theta},\theta t\rangle}\xi\big)(u)\\&=e^{{\rm i}\langle u,\theta t\rangle}\cdot e^{-\frac{\rm i}{2}\langle s,\theta u\rangle}\cdot(e^{-{\rm i}\langle\nabla_{\theta},\theta t\rangle}\xi)(u-s)\\&=e^{{\rm i}\langle u,\theta t\rangle}\cdot e^{-\frac{\rm i}{2}\langle s,\theta u\rangle}\cdot e^{-{\rm i}\langle u-s,\theta t\rangle}\cdot\xi(u-s)\\&=e^{{\rm i}\langle s,\theta t\rangle}\cdot (U_{\theta}(s)\xi)(u)\\&\overset{\eqref{weyl relation}}{=}(U_\theta(-t) U_\theta(s) U_\theta(t)\xi)(u).
\end{aligned}
$$
This yields the claim. 
Combining the preceding paragraphs, we arrive at $$\phi_g(x)=\mathrm{Tr}\big(\pi_1(U_\theta(-t)xU_\theta(t))\cdot g(\nabla_{\theta})\big)=\phi_g(U_\theta(-t)xU_\theta(t)),\quad x\in W^d_1(\qr).$$
The result now follows from Lemma \ref{sftinv}.
\end{proof}

We need the following translation invariance property \cite[Lemma 7.5]{LSZ2020}.
\begin{lem}\label{Vt}
For every $t\in\mathbb{R}^d,$ there exists a unitary operator $V(t)$ on $L_{2}(\mathbb{R}^d)$ such that
\begin{enumerate}[(i)]
\item\label{commutativity} $[V(t), x]=0,\quad\forall x\in L_{\infty}(\qr).$
\item \label{translation} $V(t)e^{{\rm i}sD_j}V(t)^\ast=e^{{\rm i}s(D_j+t_j)},\quad s\in\mathbb{R},\quad 1\leq j\leq d.$
\end{enumerate}
\end{lem}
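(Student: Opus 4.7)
My plan is to construct $V(t)$ explicitly as an \emph{opposite} Weyl translation, combining a translation by $-t$ with a compensating phase tailored so that commutativity with $L_{\infty}(\qr)$ holds. Concretely, I would set
$$(V(t)\xi)(u) := e^{-\frac{\rm i}{2}\langle t,\theta u\rangle}\,\xi(u+t),\qquad \xi\in L_2(\mathbb{R}^d),\ u\in\mathbb{R}^d.$$
As a composition of the translation $\xi(\cdot)\mapsto\xi(\cdot+t)$ with multiplication by the unimodular factor $e^{-\frac{\rm i}{2}\langle t,\theta\,\cdot\rangle},$ the operator $V(t)$ is unitary on $L_2(\mathbb{R}^d);$ a one-line computation (using $\langle t,\theta t\rangle=0$ for the cross phase) gives $V(t)^{\ast}=V(-t).$

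For property (i), since $\{U_{\theta}(s)\}_{s\in\mathbb{R}^d}$ generates $L_{\infty}(\qr)$ as a von Neumann algebra, it suffices to check $V(t)U_{\theta}(s)=U_{\theta}(s)V(t)$ for every $s\in\mathbb{R}^d.$ Evaluating both sides on a generic $\xi\in L_2(\mathbb{R}^d)$ at the point $u$ produces on either side the same function value $\xi(u+t-s)$ weighted by a product of three Gaussian phases; the difference of the two phase exponents collapses to $-\frac{\rm i}{2}\bigl(\langle s,\theta t\rangle+\langle t,\theta s\rangle\bigr),$ and anti-symmetry of $\theta$ forces $\langle t,\theta s\rangle=-\langle s,\theta t\rangle,$ so this difference vanishes. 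Hence $V(t)$ lies in the commutant $L_{\infty}(\qr)'.$

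For property (ii), I would first establish the infinitesimal statement $V(t)D_jV(t)^{\ast}=D_j+t_j$ on the Schwartz core $\mathcal{S}(\mathbb{R}^d),$ which is a common core for every $D_j.$ Substituting $V(t)^{\ast}=V(-t),$ a direct computation gives $(V(t)D_jV(-t)\xi)(u)=(u_j+t_j)\xi(u),$ the two $\theta$-dependent phases cancelling exactly. Taking self-adjoint closures and exponentiating then yields the desired unitary equivalence $V(t)e^{{\rm i}sD_j}V(t)^{\ast}=e^{{\rm i}s(D_j+t_j)}$ for all $s\in\mathbb{R}$ and $1\leq j\leq d.$

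The only genuinely non-routine step is the \emph{discovery} of the correct $V(t):$ a plain translation satisfies (ii) but not (i), while a plain phase multiplier satisfies (i) but not (ii). The compensating phase $e^{-\frac{\rm i}{2}\langle t,\theta u\rangle}$ is exactly what realises $V(t)$ inside the opposite Weyl system which, under the non-degeneracy hypothesis $\det(\theta)\neq0,$ generates the commutant of $L_{\infty}(\qr);$ once it is written down, both (i) and (ii) reduce to the two elementary identities $\langle s,\theta t\rangle+\langle t,\theta s\rangle=0$ and $\langle t,\theta t\rangle=0$ afforded by anti-symmetry.
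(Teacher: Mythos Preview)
Your construction and verifications are correct. The paper does not give its own proof of this lemma; it simply quotes the result from \cite[Lemma~7.5]{LSZ2020}. Your explicit choice $(V(t)\xi)(u)=e^{-\frac{\rm i}{2}\langle t,\theta u\rangle}\xi(u+t)$ is precisely the ``opposite'' Weyl unitary (compare with $U_\theta(t)$ in \eqref{definition of unitary family}, which has $\xi(u-t)$), and the two checks you outline go through exactly as you describe: the phase discrepancy in (i) is $-\frac{\rm i}{2}\bigl(\langle s,\theta t\rangle+\langle t,\theta s\rangle\bigr)=0$ by anti-symmetry, and in (ii) the $\theta$-phases cancel to give $V(t)D_jV(t)^{\ast}=D_j+t_j$ on the Schwartz core, whence the exponentiated identity follows by functional calculus (or, equally simply, by replacing $u_j$ with $e^{{\rm i}su_j}$ in the same computation and checking the bounded identity directly). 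Note also that your proof nowhere uses the non-degeneracy hypothesis $\det(\theta)\neq0$; you invoke it only as motivation, and indeed the lemma holds for arbitrary anti-symmetric $\theta.$
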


In what follows, we denote by $T_t,$ $t\in\mathbb{R}^d$ the translation operators acting on an arbitrary function $\xi$ on $\mathbb{R}^d$ by the formula $(T_t\xi)(u):=\xi(u+t),$ $u\in\mathbb{R}^d.$
The Fourier transform $\hat{f}$ of an integrable function $f\in L_1(\mathbb{R}^d)$ is defined by
$$\hat{f}(t):=\frac{1}{(2\pi)^d}\int_{\mathbb{R}^d}f(s)e^{-{\rm i}\langle t,s\rangle}ds.$$
In this setting, we have
$e^{{\rm i}\langle t,s \rangle}\hat{f}(s)=\widehat{T_tf}(s),$
and the corresponding Fourier inversion now becomes:
$$f(t)=\int_{\mathbb{R}^d}\hat{f}(s)e^{{\rm i}\langle t,s\rangle}ds,\quad \hat{f}\in L_1(\mathbb{R}^d).$$

\begin{lem}\label{translation invariant lemma}
For any $g\in \mathcal{S}(\mathbb{R}^d)$ and $t\in\mathbb{R},$ we have
$V(t)g(\nabla_{\theta})V(t)^{\ast}=(T_tg)(\nabla_{\theta}).$
\end{lem}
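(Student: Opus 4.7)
The plan is to reduce the identity to the translation property of $V(t)$ on each exponential $e^{\mathrm{i} s D_j}$ given by Lemma \ref{Vt}\eqref{translation}, by writing $g(\nabla_\theta)$ as a Bochner integral via Fourier inversion. Concretely, since $g\in\mathcal S(\mathbb R^d)$, its Fourier transform $\hat g$ is again Schwartz; hence $\hat g \in L_1(\mathbb R^d)$ and
\[
g(\nabla_\theta)=\int_{\mathbb R^d}\hat g(s)\,e^{\mathrm{i}\langle \nabla_\theta,s\rangle}\,ds
=\int_{\mathbb R^d}\hat g(s)\prod_{j=1}^d e^{\mathrm{i} s_j D_j}\,ds,
\]
where the integral converges absolutely as a Bochner integral in $\mathcal L_\infty(L_2(\mathbb R^d))$ and represents $g(\nabla_\theta)$ as a multiplication operator by $g$ on $L_2(\mathbb R^d)$. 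The fact that the pairwise commuting $D_j$'s admit a joint functional calculus in which this representation is valid is simply Fourier inversion applied pointwise.

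Next, I conjugate by $V(t)$ inside the integral. Since $V(t)$ is unitary and $\hat g\in L_1(\mathbb R^d)$, the conjugation commutes with the Bochner integral, giving
\[
V(t)g(\nabla_\theta)V(t)^\ast
=\int_{\mathbb R^d}\hat g(s)\,V(t)e^{\mathrm{i}\langle \nabla_\theta,s\rangle}V(t)^\ast\,ds.
\]
By Lemma \ref{Vt}\eqref{translation} and the pairwise commutativity of the $D_j$'s,
\[
V(t)e^{\mathrm{i}\langle \nabla_\theta,s\rangle}V(t)^\ast
=\prod_{j=1}^d e^{\mathrm{i} s_j(D_j+t_j)}
=e^{\mathrm{i}\langle s,t\rangle}\,e^{\mathrm{i}\langle \nabla_\theta,s\rangle}.
\]

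Finally, using the Fourier convention of the paper, $e^{\mathrm{i}\langle s,t\rangle}\hat g(s)=\widehat{T_t g}(s)$, and one more application of Fourier inversion yields
\[
V(t)g(\nabla_\theta)V(t)^\ast
=\int_{\mathbb R^d}\widehat{T_t g}(s)\,e^{\mathrm{i}\langle \nabla_\theta,s\rangle}\,ds
=(T_tg)(\nabla_\theta),
\]
as desired. The only step requiring any care is the justification of the Bochner-integral representation of $g(\nabla_\theta)$ and the interchange of $V(t)\cdot V(t)^\ast$ with the integral; both are routine given that $\hat g\in L_1(\mathbb R^d)$ and $V(t)$ is unitary, so I do not anticipate a real obstacle.
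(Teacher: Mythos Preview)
Your proof is correct and follows essentially the same approach as the paper's own proof: write $g(\nabla_\theta)$ via Fourier inversion as $\int_{\mathbb R^d}\hat g(s)e^{\mathrm{i}\langle\nabla_\theta,s\rangle}\,ds$, conjugate by $V(t)$ inside the integral, apply Lemma~\ref{Vt}\eqref{translation} to obtain $e^{\mathrm{i}\langle\nabla_\theta+t,s\rangle}$, and recognize the result as $(T_tg)(\nabla_\theta)$ via $\widehat{T_tg}(s)=e^{\mathrm{i}\langle t,s\rangle}\hat g(s)$. Your version merely makes the Bochner-integral justification and the interchange with conjugation slightly more explicit.
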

\begin{proof}
By Fourier inversion and functional calculus, we may write
$$g(\nabla_{\theta})=\int_{\mathbb{R}^d}\hat{g}(s)e^{{\rm i}\langle \nabla_{\theta},s\rangle}ds.$$
Thus, it follows from Lemma \ref{Vt} \eqref{translation} that
$$
\begin{aligned}
V(t)g(\nabla_{\theta})V(t)^{\ast}&=\int_{\mathbb{R}^d}\widehat{g}(s)V(t)e^{{\rm i}\langle \nabla_{\theta},s\rangle}V(t)^\ast ds\\
&=\int_{\mathbb{R}^d}\widehat{g}(s)e^{{\rm i}\langle \nabla_{\theta}+t,s\rangle}ds\\&=\int_{\mathbb{R}^d}\widehat{T_tg}(s)e^{{\rm i}\langle \nabla_{\theta},s\rangle}ds=(T_tg)(\nabla_{\theta}).
\end{aligned}$$
\end{proof}

The following result should be compared with Theorem 1.1 in \cite{SZ2018-cmp}, where a similar trace formula was established in terms of singular traces on $\mathcal{L}_{1,\infty}.$
\begin{thm}\label{mixed trace thm}
For $x\in W_1^d(\mathbb{R}_\theta^d)$ and $g\in \ell_1 (L_\infty)(\mathbb{R}^d),$ we have
$$\mathrm{Tr}(\pi_1(x)g(\nabla_{\theta}))=\frac{1}{(2\pi)^d}\tau_{\theta}(x)\int_{\mathbb{R}^d} g.$$
\end{thm}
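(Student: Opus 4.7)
The plan is to identify the constant $c_g$ from Lemma \ref{single trace lemma} by a direct kernel computation for Schwartz data, and then extend the resulting formula to all of $\ell_1(L_\infty)(\mathbb{R}^d)$ by a translation-invariance argument.

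For the first step, I would take $f \in \mathcal{S}(\mathbb{R}^d)$ with $f(0) \neq 0,$ set $x = U_\theta(f) \in \mathcal{S}(\qr) \subset W_1^d(\qr),$ and unwind the definition of $U_\theta(f)$ together with a change of variables to show that for any $g \in \mathcal{S}(\mathbb{R}^d)$ the operator $\pi_1(x) g(\nabla_{\theta})$ is the integral operator on $L_2(\mathbb{R}^d)$ with Schwartz kernel
\[
K(u, v) = e^{-\frac{{\rm i}}{2}\langle u-v,\, \theta u\rangle}\, f(u-v)\, g(v).
\]
The operator is trace class by Theorem \ref{cwikel-type estimates}\eqref{Sobolev norm}, so the classical trace-kernel identity for trace-class integral operators with continuous kernels yields $\mathrm{Tr}(\pi_1(x) g(\nabla_{\theta})) = \int_{\mathbb{R}^d} K(u,u)\, du = f(0) \int_{\mathbb{R}^d} g.$ Combining with $\tau_\theta(U_\theta(f)) = (2\pi)^d f(0)$ from \eqref{trace of schwartz function} and $\mathrm{Tr}(\pi_1(x) g(\nabla_{\theta})) = c_g \tau_\theta(x)$ from Lemma \ref{single trace lemma}, this forces $c_g = (2\pi)^{-d}\int g$ for every $g \in \mathcal{S}(\mathbb{R}^d).$

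To extend the identification to $g \in \ell_1(L_\infty)(\mathbb{R}^d),$ I would use translation invariance. The identity $V(t) g(\nabla_{\theta}) V(t)^* = (T_t g)(\nabla_{\theta})$ of Lemma \ref{translation invariant lemma} extends from $\mathcal{S}(\mathbb{R}^d)$ to bounded Borel $g$ via the joint functional calculus of $(D_1, \ldots, D_d)$ together with Lemma \ref{Vt}\eqref{translation}; combined with $[V(t), \pi_1(x)] = 0$ from Lemma \ref{Vt}\eqref{commutativity} and unitary invariance of $\mathrm{Tr},$ this gives $c_{T_t g} = c_g$ for all $t \in \mathbb{R}^d.$ For a compactly supported $g \in \ell_1(L_\infty),$ I would set $g_\epsilon := g * \rho_\epsilon \in C_c^\infty \subset \mathcal{S}(\mathbb{R}^d)$ and rewrite $g_\epsilon(\nabla_{\theta}) = \int \rho_\epsilon(s)(T_{-s}g)(\nabla_{\theta})\, ds$ as a Bochner integral (with uniform $\mathcal{L}_1$-bound on the integrand supplied by Theorem \ref{cwikel-type estimates}\eqref{Sobolev norm}), so translation invariance yields $\mathrm{Tr}(\pi_1(x) g_\epsilon(\nabla_{\theta})) = \mathrm{Tr}(\pi_1(x) g(\nabla_{\theta}));$ the Schwartz case applied to $g_\epsilon$ then settles the compactly-supported case, and truncation $g^N := g\chi_{[-N,N]^d} \to g$ in $\ell_1(L_\infty)$ finishes the proof via continuity of both sides in the $\ell_1(L_\infty)$ norm.

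The hard part, I expect, is the careful justification of the trace-kernel identity at the Schwartz level and of the Bochner-Fubini interchange in the extension step; the translation-invariance ingredient itself is essentially immediate from the lemmas already established in this section.
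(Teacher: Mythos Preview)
Your proposal is correct, but the route differs from the paper's. The paper does not extend from Schwartz $g$ to $\ell_1(L_\infty)$ by mollification; instead it fixes $x\in W_1^d(\qr)$, considers $l_x(g):=\mathrm{Tr}(\pi_1(x^\ast x)g(\nabla_\theta))$, and uses the Hilbert--Schmidt Cwikel identity (Theorem~\ref{cwikel-type estimates}\eqref{H-S norm}) via the factorisation $\pi_1(x^\ast x)g(\nabla_\theta)=\pi_1(x)|g|^{1/2}(\nabla_\theta)\cdot\tfrac{g}{|g|}(\nabla_\theta)\cdot(\pi_1(x)|g|^{1/2}(\nabla_\theta))^\ast$ to get $|l_x(g)|\lesssim\|x\|_2^2\|g\|_{L_1}$. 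This extends $l_x$ to a continuous functional on $L_1(\mathbb{R}^d)$, where translation invariance (from Lemmas~\ref{Vt} and~\ref{translation invariant lemma}) forces $l_x(g)=c_x\int g$ directly; comparison with Lemma~\ref{single trace lemma} then pins down $c_g=c\int g$, and a single kernel computation fixes $c=(2\pi)^{-d}$. This avoids any mollification or Bochner--Fubini step altogether.

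Your approach works too, but note that the Bochner step needs a bit more than the uniform $\mathcal{L}_1$-bound you cite: you need $s\mapsto\pi_1(x)(T_{-s}g)(\nabla_\theta)$ to be $\mathcal{L}_1$-continuous (a uniform bound gives neither measurability nor continuity). The clean way to see this is to observe that $\pi_1(x)(T_{-s}g)(\nabla_\theta)=V(-s)\,\pi_1(x)g(\nabla_\theta)\,V(-s)^\ast$ by Lemma~\ref{Vt}, and that conjugation of a fixed trace-class operator by a strongly continuous unitary group is $\mathcal{L}_1$-continuous. With that in hand your argument goes through. The paper's $L_1$-extension trick is shorter and sidesteps this issue entirely; your argument has the minor advantage of not invoking the $\mathcal{L}_2$ Cwikel identity.
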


\begin{proof}
Fix $x\in W_1^d(\qr)$ and
consider the functional
$$l_x(g):=\mathrm{Tr}(\pi_1(x^{\ast}x)g(\nabla_{\theta})),\quad g\in\ell_1 (L_\infty)(\mathbb{R}^d).$$
By Theorem \ref{cwikel-type estimates}\eqref{Sobolev norm}, $\pi_1(x)g(\nabla_{\theta})\in\mathcal{L}_1,$ and $l_x$ is a well-defined continuous linear functional. By the tracial property, we may write
$l_x(g)=\mathrm{Tr}(\pi_1(x)g(\nabla_{\theta})\pi_1(x)^{\ast}).$
Using the identity
$$\pi_1(x)g(\nabla)\pi_1(x)^{\ast}=\pi_1(x)|g|^{\frac12}(\nabla_{\theta})\cdot\frac{g}{|g|}(\nabla_{\theta})\cdot\big(\pi_1(x)|g|^{\frac12}(\nabla_{\theta})\big)^{\ast}$$
and the Cauchy-Schwartz inequality,
we obtain
$ |l_x(g)|\leq\|\pi_1(x)|g|^{\frac12}(\nabla_{\theta})\|_{\mathcal{L}_2\left(L_{2}\left(\mathbb{R}^{d}\right)\right)}^2.$
Thus, by Theorem \ref{cwikel-type estimates}\eqref{H-S norm}, we have
$$|l_x(g)|\lesssim\|x\|_{L_2(\qr)}^2\||g|^{\frac12}\|_{L_2(\mathbb{R}^d)}^2\lesssim\|x\|_{L_2(\qr)}^2\|g\|_{L_1(\mathbb{R}^d)}.$$
Hence, $l_x$ extends to a continuous linear functional on $L_1(\mathbb{R}^d)$.

For a Schwartz function $g\in\mathcal{S}(\mathbb{R}^d),$ by the unitary invariance of $\mathrm{Tr},$ Lemma \ref{Vt} \eqref{commutativity} and Lemma \ref{translation invariant lemma}, we have
$$\begin{aligned}
l_x(g)&=\mathrm{Tr}\big(V(t)\cdot \pi_1(x^\ast x)g(\nabla_{\theta})\cdot V(t)^{\ast}\big)\\&=\mathrm{Tr}\big(\pi_1(x^\ast x)\cdot V(t) g(\nabla_{\theta})V(t)^{\ast}\big)\\&=\mathrm{Tr}(\pi_1(x^\ast x)(T_tg)(\nabla_{\theta}))
\end{aligned}.$$
Therefore,
$l_x(g)=l_x(T_tg),$ $ g\in\mathcal{S}(\mathbb{R}^d).$
Note that $T_t:L_1(\mathbb{R}^d)\to L_1(\mathbb{R}^d)$ is an isometry, and $\mathcal{S}(\mathbb{R}^d)$ is dense in $L_1(\mathbb{R}^d).$ Thus,
$l_x(g)=l_x(T_tg),$ for all $g\in L_1(\mathbb{R}^d).$
Since $t\in\mathbb{R}^d$ can be arbitrary, it follows that $l_x$ is a translation-invariant continuous linear functional on $L_1(\mathbb{R}^d).$   
Then there exists a constant $c_x\in\mathbb{C}$ such that
$$l_x(g)=c_x\int_{\mathbb{R}^d} g,\quad g\in L_1(\mathbb{R}^d).$$
In particular,
\begin{equation}\label{mtt eq1}
\mathrm{Tr}(\pi_1(x^{\ast}x)g(\nabla_{\theta}))=c_x\int_{\mathbb{R}^d} g,\quad g\in \ell_1 (L_\infty )(\mathbb{R}^d).
\end{equation}	
Furthermore, it follows from Lemma \ref{single trace lemma} that
\begin{equation}\label{mtt eq2}
\mathrm{Tr}(\pi_1(x^{\ast}x)g(\nabla_{\theta}))=c_g\tau_{\theta}(x^{\ast}x),\quad x\in W_1^d(\qr).
\end{equation}
Comparing \eqref{mtt eq1} with \eqref{mtt eq2}, we deduce that
$$c_g=c\int_{\mathbb{R}^d} g,\quad g\in \ell_1 (L_\infty)(\mathbb{R}^d),$$
for some constant $c\in\mathbb{C}.$
Substituting this back to \eqref{mtt eq0}, we obtain
\begin{equation}\label{final constant}
\mathrm{Tr}(\pi_1(x)g(\nabla_{\theta}))=c\tau_\theta(x)\int_{\mathbb{R}^d} g,\quad x\in W^d_1(\qr),\quad g\in \ell_1(L_\infty)(\mathbb{R}^d).
\end{equation}

To determine the constant $c,$ let $f,g\in\mathcal{S}(\mathbb{R}^d)$ be such that $f(0)=(2\pi)^{-d}$ and $\int g=1.$ In this case, $\tau_{\theta}(U_\theta(f))=1.$  
It turns out that $\pi_1(U_{\theta}(f))g(\nabla_{\theta})\in\mathcal{L}_1$ is an integral operator on $L_2(\mathbb{R}^d)$ with the Schwartz class kernel
$ (t,s)\mapsto e^{-\frac{\rm i}{2}\langle t,\theta s\rangle}f(t-s)g(s).$
By Corollary 3.2 in \cite{Brislawn1991}, the classical trace of $\pi_1(U_{\theta}(f))g(\nabla_{\theta})$ is precisely the integral of its kernel on the diagonal, namely,
$$\mathrm{Tr}(\pi_1(U_{\theta}(f))g(\nabla_{\theta}))=\int_{\mathbb{R}^d}f(s-s)g(s)e^{-\frac{\rm i}{2}\langle s,\theta s\rangle}ds=f(0)\cdot\int_{\mathbb{R}^d} g=(2\pi)^{-d}.$$
Hence, from \eqref{final constant} we conclude that
$c=(2\pi)^{-d}.$ This completes the proof.
\end{proof}

The following version of
Theorem \ref{mixed trace thm} concerning matrix tensor factors is also useful.
\begin{cor}\label{mixed trace thm-matrix case}
Let $X\in M_n(\mathbb{C})\otimes W_1^d(\mathbb{R}_\theta^d)$ and $g\in \ell_1 (L_\infty )(\mathbb{R}^d).$ We have
$$(\mathrm{tr}\otimes \mathrm{Tr})\Big(({\rm id}\otimes \pi_1)(X)\cdot(\mathrm{id}\otimes g(\nabla_{\theta}))\Big)=\frac{1}{(2\pi)^d}(\mathrm{tr}\otimes\tau_{\theta})(X)\cdot\int_{\mathbb{R}^d} g.$$
\end{cor}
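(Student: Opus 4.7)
The plan is to reduce the matrix-valued identity to the scalar identity already established in Theorem \ref{mixed trace thm}, by expanding $X$ over the standard matrix units. Concretely, let $\{E_{ij}\}_{i,j=1}^n \subset M_n(\mathbb{C})$ denote the matrix units and write
$$X = \sum_{i,j=1}^n E_{ij} \otimes x_{ij}, \qquad x_{ij} \in W_1^d(\mathbb{R}_\theta^d).$$
Since the sum is finite and both sides of the asserted identity are linear in each entry $x_{ij}$ (with $g$ fixed), it is enough to verify the equality summand by summand.

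For a fixed pair $(i,j)$, applying $(\mathrm{id} \otimes \pi_1)$ to $E_{ij} \otimes x_{ij}$ and then multiplying on the right by $\mathrm{id} \otimes g(\nabla_\theta)$ produces $E_{ij} \otimes \pi_1(x_{ij}) g(\nabla_\theta)$. By Theorem \ref{cwikel-type estimates}(ii), each operator $\pi_1(x_{ij}) g(\nabla_\theta)$ is trace class on $L_2(\mathbb{R}^d)$, so $(\mathrm{tr} \otimes \mathrm{Tr})$ makes sense and factorises across the tensor product. Using $\mathrm{tr}(E_{ij}) = \delta_{ij}$, the left-hand side of the claim becomes
$$
(\mathrm{tr} \otimes \mathrm{Tr})\Bigl(\sum_{i,j} E_{ij} \otimes \pi_1(x_{ij}) g(\nabla_\theta)\Bigr) = \sum_{i=1}^n \mathrm{Tr}\bigl(\pi_1(x_{ii}) g(\nabla_\theta)\bigr).
$$

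Now I apply Theorem \ref{mixed trace thm} term by term to get
$$
\sum_{i=1}^n \mathrm{Tr}\bigl(\pi_1(x_{ii}) g(\nabla_\theta)\bigr) = \frac{1}{(2\pi)^d} \Bigl(\int_{\mathbb{R}^d} g\Bigr) \sum_{i=1}^n \tau_\theta(x_{ii}).
$$
The same matrix-unit computation applied to $\mathrm{tr} \otimes \tau_\theta$ identifies $\sum_{i=1}^n \tau_\theta(x_{ii})$ with $(\mathrm{tr} \otimes \tau_\theta)(X)$, which yields exactly the right-hand side of the claimed formula.

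This is a routine reduction and I do not anticipate any real obstacle: the tensored trace $\mathrm{tr} \otimes \mathrm{Tr}$ only sees the diagonal matrix entries, so the corollary follows from $n$ applications of the scalar formula. The only technical point worth mentioning is the trace-class hypothesis, which is supplied by Theorem \ref{cwikel-type estimates}(ii) and ensures that no convergence issue arises when separating the tensor factors.
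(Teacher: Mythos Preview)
Your proof is correct and follows essentially the same approach as the paper: expand $X$ in matrix units, observe that $\mathrm{tr}\otimes\mathrm{Tr}$ picks out the diagonal entries, and apply Theorem~\ref{mixed trace thm} to each $x_{ii}$. The paper's version is slightly terser (it omits the explicit justification via Theorem~\ref{cwikel-type estimates}(ii)), but the argument is the same.
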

\begin{proof}
We write $X=(X_{k,l})_{1\leq k,l\leq n},$ with $X_{k,l}\in W^d_1(\qr).$
A direct computation yields
$(\mathrm{tr}\otimes \mathrm{Tr})\Big(({\rm id}\otimes \pi_1)(X)\cdot(\mathrm{id}\otimes g(\nabla_{\theta}))\Big)=\sum_{k=1}^{n}\mathrm{Tr}(X_{k,k}g(\nabla_{\theta}))$ and $ (\mathrm{tr}\otimes\tau_{\theta})(X)\cdot\int_{\mathbb{R}^d} g=\sum_{k=1}^{n}\tau_{\theta}(X_{k,k})\int_{\mathbb{R}^d} g.$
Applying Theorem \ref{mixed trace thm} finishes the proof.
\end{proof}

\section{An explicit expression for operator zeta function}\label{sec-residue}
In this section, we compute rigorously the operator zeta function ${\rm Tr}(A^zB^z)$ defined for suitable operators $A$ and $B,$ so that its singularities can be determined.

In the next two lemmas, we recall criteria for functions having integrable Fourier transform. For the reader's convenience, we include its proofs.
\begin{lem}\label{first zigmund lemma} If $f\in L_1(\mathbb{R})$ is such that
$\|f-T_hf\|_2^2=O(|h|^{1+\alpha}),~h\in\mathbb{R},$ for some $\alpha>0,$ then $\hat{f}\in L_1(\mathbb{R}).$
\end{lem}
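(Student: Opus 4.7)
The plan is to use Plancherel's identity on $f - T_h f$ to convert the $L^2$-modulus of continuity hypothesis into $L^2$-decay of $\hat f$ on dyadic frequency annuli, and then pass from $L^2$ to $L^1$ control on each annulus via Cauchy--Schwarz.

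First, since $f\in L_1(\mathbb{R})$, the Fourier transform $\hat f$ is a bounded continuous function, and the hypothesis implicitly ensures $f - T_h f\in L_1\cap L_2$ for small $|h|$. Because $(f - T_h f)^{\wedge}(\xi) = (1 - e^{-ih\xi})\hat f(\xi)$, Plancherel's theorem (up to a $2\pi$ factor) yields
\begin{equation*}
\int_{\mathbb{R}} |1 - e^{-ih\xi}|^2 \, |\hat f(\xi)|^2\, d\xi \;=\; 2\pi\,\|f - T_h f\|_2^2 \;=\; O(|h|^{1+\alpha}).
\end{equation*}

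Next, I perform a dyadic decomposition: for each $n\geq 0$, set $h_n = 2^{-n}$ and $A_n := \{\xi\in\mathbb{R} : 2^n \leq |\xi| \leq 2^{n+1}\}$. On $A_n$ we have $|h_n\xi|\in[1,2]$, so
$|1-e^{-ih_n\xi}|^2 = 4\sin^2(h_n\xi/2) \geq 4\sin^2(1/2) > 0$. Plugging $h = h_n$ into the Plancherel bound gives
\begin{equation*}
\int_{A_n} |\hat f(\xi)|^2\, d\xi \;\lesssim\; |h_n|^{1+\alpha} \;=\; 2^{-n(1+\alpha)}.
\end{equation*}
By Cauchy--Schwarz and $|A_n|\lesssim 2^n$,
\begin{equation*}
\int_{A_n} |\hat f(\xi)|\, d\xi \;\leq\; |A_n|^{1/2}\Big(\int_{A_n}|\hat f|^2\Big)^{1/2} \;\lesssim\; 2^{n/2}\cdot 2^{-n(1+\alpha)/2} \;=\; 2^{-n\alpha/2}.
\end{equation*}
Since $\alpha > 0$, summing the geometric series over $n\geq 0$ gives $\int_{|\xi|\geq 1}|\hat f|\,d\xi < \infty$. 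Combined with the trivial estimate $|\hat f(\xi)| \leq (2\pi)^{-1}\|f\|_1$ on $|\xi|\leq 1$, we conclude $\hat f\in L_1(\mathbb{R})$.

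There is no serious obstacle here; this is a standard Bernstein--Zygmund type argument. The structural point to emphasize is that the exponent $1+\alpha$ in the hypothesis is sharp: the excess $\alpha>0$ is exactly what compensates the $|A_n|^{1/2} \sim 2^{n/2}$ factor produced by Cauchy--Schwarz, turning an $L^2$-decay estimate into summable $L^1$-mass on dyadic annuli.
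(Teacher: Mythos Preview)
Your proof is correct and essentially identical to the paper's: both apply Plancherel to $f-T_hf$, restrict to dyadic annuli $\{2^n\le|\xi|\le 2^{n+1}\}$ with $h=2^{-n}$ to get $\int_{A_n}|\hat f|^2\lesssim 2^{-n(1+\alpha)}$, pass to $L^1$ via Cauchy--Schwarz, and handle $|\xi|\le 1$ using $\hat f\in L_\infty$ from $f\in L_1$. The only cosmetic differences are that the paper treats positive and negative frequencies separately and writes the lower bound as $2-2\cos(1)$ rather than $4\sin^2(1/2)$.
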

\begin{proof} Fix $m\in\mathbb{Z}_+$ and set $h=2^{-m}.$ By Plancherel’s identity, we have
$$
\frac{1}{2\pi}\|f-T_hf\|_2^2=\|\hat{f}-\widehat{T_hf}\|_2^2
=\int_{\mathbb{R}}|1-e^{{\rm i}ht}|^2|\hat{f}(t)|^2dt\geq$$
$$\geq\int_{2^m}^{2^{m+1}}|1-e^{{\rm i}ht}|^2|\hat{f}(t)|^2dt\geq (2-2\cos(1))\int_{2^m}^{2^{m+1}}|\hat{f}(t)|^2dt.
$$

Thus,
$$\int_{2^m}^{2^{m+1}}|\hat{f}(t)|^2dt=O(2^{-m(1+\alpha)}),\quad m\in\mathbb{Z}_+.$$
By Cauchy inequality,
$\int_{2^m}^{2^{m+1}}|\hat{f}(t)|dt=O(2^{-\frac{m\alpha}{2}}),$ for $m\in\mathbb{Z}_+.$
Hence,
$\int_1^{\infty}|\hat{f}(t)|dt<\infty.$
Similarly,
$\int_{-\infty}^{-1}|\hat{f}(t)|dt<\infty.$
Since $f\in L_1(\mathbb{R}),$ it follows that $\hat{f}\in L_{\infty}(\mathbb{R}).$ In particular,
$\int_{-1}^1|\hat{f}(t)|dt<\infty.$
Combining all these inequalities gives the result.
\end{proof}

\begin{lem}\label{second zigmund lemma} If $f\in L_1(\mathbb{R})$ is absolutely continuous and $f'\in (L_1\cap L_p)(\mathbb{R})$ for some $p>1,$ then $\hat{f}\in L_1(\mathbb{R}).$ 	
\end{lem}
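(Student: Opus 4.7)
The plan is to reduce to Lemma~\ref{first zigmund lemma} by establishing the modulus of continuity bound $\|f-T_hf\|_2^2 = O(|h|^{1+\alpha})$ for some $\alpha>0$. First I would observe that $f$ is automatically bounded: since $f'\in L_1(\mathbb{R})$ and $f$ is absolutely continuous with $f\in L_1(\mathbb{R})$, the limits of $f$ at $\pm\infty$ exist and must both vanish (otherwise $f\notin L_1$), so the fundamental theorem of calculus yields $\|f\|_\infty\le\|f'\|_1$.

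The next ingredient is a direct application of Minkowski's integral inequality to the pointwise identity $(T_hf-f)(t)=\int_0^h f'(t+s)\,ds$, which is valid thanks to absolute continuity and $f'\in L_1$. This gives, for every Lebesgue exponent $q$ with $f'\in L_q(\mathbb{R})$,
\[
\|T_hf-f\|_q \;\le\; |h|\,\|f'\|_q,
\]
so in particular $\|T_hf-f\|_p\le|h|\|f'\|_p$. At the other endpoint the trivial bound $\|T_hf-f\|_\infty\le 2\|f\|_\infty$ is available by the previous paragraph.

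The main obstacle is that $f'$ need not belong to $L_2(\mathbb{R})$ when $1<p<2$, so Minkowski alone does not directly deliver an $L_2$-modulus of continuity. I would circumvent this by interpolating the two estimates above. If $p\ge 2$ then $L_1\cap L_p\subset L_2$ by Riesz--Thorin, and the case $q=2$ above already produces $\|T_hf-f\|_2^2=O(|h|^2)$. For $1<p<2$, H\"older's inequality with exponent $\theta=p/2\in(0,1)$ gives
\[
\|T_hf-f\|_2 \;\le\; \|T_hf-f\|_p^{p/2}\,\|T_hf-f\|_\infty^{1-p/2} \;\le\; (|h|\,\|f'\|_p)^{p/2}(2\|f\|_\infty)^{1-p/2},
\]
so $\|T_hf-f\|_2^2 = O(|h|^p)$. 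In either case the hypothesis of Lemma~\ref{first zigmund lemma} is satisfied with $\alpha=\min(1,p-1)>0$, and the conclusion $\hat f\in L_1(\mathbb{R})$ follows at once.
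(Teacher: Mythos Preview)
Your proof is correct and follows the same overall strategy as the paper: reduce to Lemma~\ref{first zigmund lemma} by bounding $\|T_hf-f\|_2^2$ via interpolation. The details of the interpolation differ. The paper uses the endpoint inequality $\|T_hf-f\|_2^2\le\|T_hf-f\|_1\,\|T_hf-f\|_\infty$, bounds the $L_1$ factor by $h\|f'\|_1$ via Minkowski, and obtains the sharper $L_\infty$ bound $\|T_hf-f\|_\infty\le h^{1-1/p}\|f'\|_p$ by applying H\"older directly to $f(t+h)-f(t)=\int_t^{t+h}f'$; this yields $\|T_hf-f\|_2^2=O(h^{2-1/p})$ uniformly in $p>1$, with no case split and no need to argue that $f$ is bounded. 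Your route instead establishes $f\in L_\infty$ first, uses the crude bound $\|T_hf-f\|_\infty\le 2\|f\|_\infty$, and compensates by interpolating $L_2$ between $L_p$ and $L_\infty$ (with a separate, simpler argument when $p\ge2$). Both approaches produce an exponent strictly larger than $1$, which is all Lemma~\ref{first zigmund lemma} requires; the paper's version is slightly more streamlined, while yours makes the boundedness of $f$ explicit.
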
	

\begin{proof} Suppose without loss of generality that $h>0.$ We have
$\|T_hf-f\|_2^2\leq \|T_hf-f\|_1\|T_hf-f\|_\infty.$	
As $f$ is absolutely continuous, we have
$T_hf-f=\int_0^hT_sf'ds.$
Hence,
$\|T_hf-f\|_1\leq \int_0^h\|T_sf'\|_1ds=h\|f'\|_1.$
Next,
\small{$|f(t+h)-f(t)|=\big|\int_t^{t+h}f'(s)ds\big|\leq\big(\int_t^{t+h}|f'|^p(s)ds\big)^{\frac1p}\cdot\big(\int_t^{t+h}ds\big)^{1-\frac1p}.$}
Hence,
$\|T_hf-f\|_\infty\leq \|f'\|_ph^{1-\frac1p}.$
This means
$\|T_hf-f\|_2^2\leq h^{2-\frac1p}\|f'\|_1\|f'\|_p.$
The result follows now from Lemma \ref{first zigmund lemma}.
\end{proof}

The next result is essentially a variant of classical Duhamel formula.

\begin{prop}\label{abstract Duhamel formula}
Let $B=B^{\ast}$ be a densely-defined operator on the Hilbert space $H.$ If $C$ is a bounded operator  such that $[B,C]$ has bounded extension on $H,$ then
$[B,e^{{\rm i}C}]$ also has bounded extension, moreover,
$$[B,e^{{\rm i}C}]={\rm i}\int_0^1e^{{\rm i}(1-t)C}[B,C]e^{{\rm i}tC}dt.$$
\end{prop}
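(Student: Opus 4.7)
The plan is to reduce the statement to a genuine, norm-convergent Duhamel-type computation on the dense subspace $\mathrm{dom}(B)$, and then observe that the right-hand side is manifestly bounded so that the commutator extends from $\mathrm{dom}(B)$ to all of $H$.

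First I would unpack the hypothesis that $[B,C]$ has bounded extension in the standard way: $C$ maps $\mathrm{dom}(B)$ into $\mathrm{dom}(B)$ and the operator $\xi\mapsto BC\xi-CB\xi$ on $\mathrm{dom}(B)$ admits a bounded extension, which I will denote by $T\in\mathcal{L}_\infty(H)$. From this and the identity $[B,C^n]=\sum_{k=0}^{n-1}C^k[B,C]C^{n-1-k}$ (proved by induction), one obtains
\begin{equation*}
\|BC^n\xi\|\leq \|C\|^n\|B\xi\|+n\|C\|^{n-1}\|T\|\|\xi\|,\quad \xi\in\mathrm{dom}(B).
\end{equation*}
Hence $\sum_{n\ge 0}\|B(\mathrm{i}C)^n\xi\|/n!<\infty$, so that the partial sums of the exponential series for $e^{\mathrm{i}C}\xi$ lie in $\mathrm{dom}(B)$ and their images under $B$ converge. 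Closedness of $B=B^*$ then gives $e^{\mathrm{i}C}\mathrm{dom}(B)\subseteq\mathrm{dom}(B)$; the same argument applies to $e^{\mathrm{i}tC}$ for every $t\in\mathbb{R}$.

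Next, fix $\xi\in\mathrm{dom}(B)$ and define the $H$-valued function
\begin{equation*}
f(t):=e^{\mathrm{i}(1-t)C}\,B\,e^{\mathrm{i}tC}\xi,\qquad t\in[0,1].
\end{equation*}
Since $C$ is bounded, the maps $t\mapsto e^{\mathrm{i}tC}$ and $t\mapsto e^{\mathrm{i}(1-t)C}$ are norm-differentiable with derivatives $\mathrm{i}Ce^{\mathrm{i}tC}$ and $-\mathrm{i}Ce^{\mathrm{i}(1-t)C}$, respectively, and each $e^{\mathrm{i}sC}$ commutes with $C$. Using $e^{\mathrm{i}tC}\xi\in\mathrm{dom}(B)$ and the closedness of $B$, the product rule yields
\begin{equation*}
f'(t)=-\mathrm{i}\,e^{\mathrm{i}(1-t)C}\,CB\,e^{\mathrm{i}tC}\xi+\mathrm{i}\,e^{\mathrm{i}(1-t)C}\,BC\,e^{\mathrm{i}tC}\xi=\mathrm{i}\,e^{\mathrm{i}(1-t)C}\,[B,C]\,e^{\mathrm{i}tC}\xi.
\end{equation*}
Here the $C$ appearing from the differentiation is absorbed into $e^{\mathrm{i}(1-t)C}$ using the commutation just noted. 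The right-hand side is norm-continuous in $t$, so by the fundamental theorem of calculus
\begin{equation*}
Be^{\mathrm{i}C}\xi-e^{\mathrm{i}C}B\xi=f(1)-f(0)=\mathrm{i}\int_0^1 e^{\mathrm{i}(1-t)C}[B,C]e^{\mathrm{i}tC}\xi\,dt.
\end{equation*}

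Finally, the right-hand side is bounded in $\xi\in\mathrm{dom}(B)$, with norm at most $\|e^{\mathrm{i}C}\|^{2}\|T\|\|\xi\|$, so the commutator $[B,e^{\mathrm{i}C}]$ defined on $\mathrm{dom}(B)$ has bounded extension to $H$, equal to the Bochner integral displayed above.

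The only delicate point is the interchange of $B$ with limits in the exponential series, which is the content of the preliminary domain step; the remainder is a clean product-rule computation made possible by the boundedness of $C$ (so that no unbounded-operator semigroup technicalities intrude on the differentiation of $e^{\mathrm{i}tC}$). I do not anticipate any further obstacle.
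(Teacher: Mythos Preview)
Your argument is correct and constitutes a genuinely different proof from the one in the paper. The paper avoids the domain question entirely by truncating $B$: it introduces the spectral projections $p_n=\chi_{(-n,n)}(B)$, differentiates the bounded-operator-valued function $F_n(t)=p_ne^{{\rm i}(1-t)C}p_nBe^{{\rm i}tC}p_n$, integrates to obtain $p_n[B,e^{{\rm i}C}]p_n$ in the desired integral form, and then passes to the limit $n\to\infty$ weakly against pairs $\xi,\eta\in\mathrm{dom}(B)$. Your route instead invests effort up front in showing that $e^{{\rm i}tC}$ preserves $\mathrm{dom}(B)$ (via the exponential series bound and closedness of $B$), after which the Duhamel identity follows from a direct product-rule computation on $\mathrm{dom}(B)$ with no truncation or limiting step. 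Your approach is arguably more self-contained and does not call on the spectral theorem; the paper's approach sidesteps the domain-preservation lemma but pays for it with an approximation argument at the end.

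Two minor remarks. First, the claim that the hypothesis forces $C:\mathrm{dom}(B)\to\mathrm{dom}(B)$ is correct for self-adjoint $B$ (via $\mathrm{dom}(B^*)=\mathrm{dom}(B)$), but deserves a one-line justification rather than being declared ``standard.'' Second, your final bound $\|e^{{\rm i}C}\|^2\|T\|\|\xi\|$ is not quite right, since $\|e^{{\rm i}(1-t)C}\|\,\|e^{{\rm i}tC}\|$ need not be dominated by $\|e^{{\rm i}C}\|^2$; replace it by $e^{\|C\|}\|T\|\|\xi\|$. Neither point affects the validity of the argument.
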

\begin{proof}
Define the projections $p_n=\chi_{(-n,n)}(B),$ $n\in\mathbb{N}.$ Define the function
$F_n(t)=p_ne^{{\rm i}(1-t)C}p_nBe^{{\rm i}tC}p_n,$ for $t\in [0,1].$
Clearly, each $F_n$ is a continuously differentiable $\mathcal{L}_{\infty}(H)$-valued function.
By Leibniz rule, we have
$$\begin{aligned}
\frac{dF_n(t)}{dt}&=-{\rm i}p_ne^{{\rm i}(1-t)C}Cp_nBe^{{\rm i}tC}p_n+{\rm i}p_ne^{{\rm i}(1-t)C}p_nBCe^{{\rm i}tC}p_n\\&={\rm i}p_ne^{{\rm i}(1-t)C}[p_nB,C]e^{{\rm i}tC}p_n.
\end{aligned} $$
Thus, 
$$p_n[B,e^{{\rm i}C}]p_n=F_n(1)-F_n(0)=\int_0^1\frac{dF_n(t)}{dt}dt={\rm i}\int_0^1p_ne^{{\rm i}(1-t)C}[p_nB,C]e^{{\rm i}tC}p_ndt.$$
Here, the integral is understood in the Bochner sense.
It is clear that $p_n\rightarrow1$ as $n\rightarrow\infty$ in the strong operator topology. Let $\xi,\eta\in {\rm dom}(B)$ be arbitrary, we then have
$\langle p_ne^{{\rm i}(1-t)C}[p_nB,C]e^{{\rm i}tC}p_n\xi,\eta\rangle\to\langle e^{{\rm i}(1-t)C}[B,C]e^{{\rm i}tC}\xi,\eta\rangle,$ as $n\to\infty.$
Therefore,
$$\langle p_n[B,e^{{\rm i}C}]p_n\xi,\eta\rangle\to\langle {\rm i}\int_0^1e^{{\rm i}(1-t)C}[B,C]e^{{\rm i}tC}dt\cdot \xi,\eta\rangle,\quad n\to\infty. $$
Note also that $\langle p_n[B,e^{{\rm i}C}]p_n\xi,\eta\rangle\to\langle[B,e^{{\rm i}C}]\xi,\eta\rangle,$ as $n\to\infty.$
We arrive at
$$\langle[B,e^{{\rm i}C}]\xi,\eta\rangle=\langle {\rm i}\int_0^1e^{{\rm i}(1-t)C}[B,C]e^{{\rm i}tC}dt\cdot \xi,\eta\rangle,\quad \xi,\eta\in{\rm dom}(B).$$
Since by assumption $B$ is densely-defined, we are done.
\end{proof}

As a direct application (taken with $B=D_j$ and $C=x$), we get a useful result:
\begin{prop}\label{exp partial diff lemma} Let $1\leq j\leq d$ and let $x\in L_\infty(\qr).$ If $\partial_jx\in L_\infty(\qr),$ then 
$$\partial_j(e^{{\rm i}x})={\rm i}\int_0^1e^{{\rm i}(1-t)x}(\partial_jx)e^{{\rm i}tx}dt.$$
In particular, $\partial_j(e^{{\rm i}x})\in L_\infty(\qr).$
\end{prop}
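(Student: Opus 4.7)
The plan is to invoke Proposition \ref{abstract Duhamel formula} directly, with $B = D_j$ and $C = x$, interpreting $x\in L_\infty(\qr)\subset\mathcal{L}_\infty(L_2(\mathbb{R}^d))$ as a bounded operator. To set the stage, I first verify the three hypotheses: $D_j$ is self-adjoint and densely defined on $L_2(\mathbb{R}^d)$ (with $\mathcal{S}(\mathbb{R}^d)$ as a core); $x$ is a bounded operator on $L_2(\mathbb{R}^d)$; and the commutator $[D_j,x]$ has a bounded extension by the standing assumption $\partial_j x\in L_\infty(\qr)$, with that extension equal to $\partial_j x$ by definition.

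With the hypotheses in place, Proposition \ref{abstract Duhamel formula} immediately yields that $[D_j,e^{{\rm i}x}]$ has a bounded extension and
\begin{equation*}
[D_j,e^{{\rm i}x}]={\rm i}\int_0^1 e^{{\rm i}(1-t)x}[D_j,x]e^{{\rm i}tx}\,dt={\rm i}\int_0^1 e^{{\rm i}(1-t)x}(\partial_j x)e^{{\rm i}tx}\,dt.
\end{equation*}
The Bochner integral on the right-hand side is well-defined because the integrand is norm-continuous in $t\in[0,1]$ and uniformly bounded by $e^{\|x\|_\infty}\|\partial_j x\|_\infty$.

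Next, I observe that $e^{{\rm i}x}\in L_\infty(\qr)$: indeed, $L_\infty(\qr)$ is a von Neumann algebra, hence closed under the holomorphic (and in particular entire) functional calculus applied to its bounded elements, so the norm-convergent series $e^{{\rm i}x}=\sum_{n\geq0}({\rm i}x)^n/n!$ lies in $L_\infty(\qr)$. Once this is known, I apply the fact recalled in Section \ref{nc plane} (from \cite[Proposition 6.12]{LSZ2020}): if $y\in L_\infty(\qr)$ and $[D_j,y]$ extends to a bounded operator on $L_2(\mathbb{R}^d)$, then that extension lies in $L_\infty(\qr)$ and is by definition $\partial_j y$. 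Applying this with $y=e^{{\rm i}x}$ gives both the desired identity and the ``in particular'' statement $\partial_j(e^{{\rm i}x})\in L_\infty(\qr)$.

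There is no substantive obstacle here, since the proposition is explicitly flagged as a direct application of the abstract Duhamel formula. The only items needing a line of attention are the two routine observations above: that $e^{{\rm i}x}$ belongs to $L_\infty(\qr)$ (von Neumann algebra closure under bounded functional calculus) and that the Bochner integral representing the commutator is indeed well-defined and bounded; neither requires any additional machinery beyond what has already been assembled in Sections \ref{sec-preliminary} and \ref{sec-commutator}.
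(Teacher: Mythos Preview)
Your proposal is correct and follows exactly the approach the paper indicates: the paper simply states that Proposition \ref{exp partial diff lemma} is ``a direct application (taken with $B=D_j$ and $C=x$)'' of Proposition \ref{abstract Duhamel formula}, and you have carefully supplied the routine verifications (self-adjointness of $D_j$, boundedness of $x$ and $[D_j,x]$, membership of $e^{{\rm i}x}$ in $L_\infty(\qr)$, and well-definedness of the Bochner integral) that the paper leaves implicit.
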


Given $n\in\mathbb{N}.$ Consider the $n$-simplex 
$\Delta_n=\{\mathbf{s}=(s_0,,\cdots,s_n)\in\mathbb{R}_+^{n+1}:\sum_{j=0}^{n}s_j=1\},$
and set
$R_n=\{(s_0,\cdots,s_{n-1})\in\mathbb{R}_+^n:\sum_{j=0}^{n-1}s_j\leq1\}.$
Let $\sigma_n$ denote the standard measure on $\Delta_n$ determined by 
$$\int_{\Delta_n}f(s_0,\cdots,s_n)d\sigma_n(\mathbf{s})=\int_{R_n}f(s_0,\cdots,s_{n-1},1-\sum_{j=0}^{n-1}s_j)ds_0\cdots ds_{n-1},$$
for every continuous complex-valued function $f$ on $\mathbb{R}^{n+1}.$ 
\begin{lem}\label{fxbeta partial diff lemma} Assume $n,N\in\mathbb{N}.$ Let $\beta=(\beta_1,\cdots,\beta_n)\in(\mathbb{Z}_+^d)^n$ be such that $|\beta_k|_1\leq N$ for all $1\leq k\leq n.$ Let $x\in W^{N+1}_{\infty}(\qr)$ and set
$$ F_{x,\beta}({\bf s}):=F_{x;\beta_1,\cdots,\beta_n}({\bf s}):=e^{{\rm i}s_0x}\cdot\Big(\prod_{k=1}^{n} \partial^{\beta_k}x\cdot e^{{\rm i}s_kx}\Big), \quad {\bf s}=(s_0,\cdots,s_n)\in\Delta_n.$$
Then for every $1\leq j\leq d,$ we have
$$\begin{aligned}
\partial_j(F_{x,\beta}(
\mathbf{s}))&=\sum_{k=1}^nF_{x;\beta_1,\cdots,\beta_{k-1},\beta_k+e_j,\beta_{k+1},\cdots,\beta_n}({\bf s})\\&+{\rm i}\sum_{k=0}^n\int_0^{s_k}F_{x;\beta_1,\cdots,\beta_k,e_j,\beta_{k+1},\cdots,\beta_n}(s_0,\cdots,s_{k-1},s,s_k-s,s_{k+1},\cdots,s_n)ds.
\end{aligned}
$$
Here, $e_j\in\mathbb{Z}_+^d$ with $1$ in the $j$-th component and zeros elsewhere. 
\end{lem}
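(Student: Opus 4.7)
The plan is to expand $\partial_j$ across the $(2n{+}1)$-factor ordered product
\[
F_{x,\beta}(\mathbf{s})=e^{is_0 x}\cdot\prod_{k=1}^n \partial^{\beta_k} x \cdot e^{is_k x}
\]
by the Leibniz rule, which is legitimate because $\partial_j$ is a derivation on $L_\infty(\qr)$ and the hypothesis $x\in W^{N+1}_\infty(\qr)$ with $|\beta_k|_1\le N$ places every factor, together with its $\partial_j$-image, in $L_\infty(\qr)$: for the polynomial factors via $\partial_j\partial^{\beta_k}x=\partial^{\beta_k+e_j}x$, and for the exponentials via Proposition \ref{exp partial diff lemma}. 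This yields $2n+1$ terms, which I split into two groups according to whether $\partial_j$ acts on a polynomial factor $\partial^{\beta_k}x$ or on an exponential $e^{is_k x}$.

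For the $n$ terms of the first kind, the only change in the product is the replacement of $\partial^{\beta_k}x$ by $\partial^{\beta_k+e_j}x$; the resulting product is precisely $F_{x;\beta_1,\dots,\beta_{k-1},\beta_k+e_j,\beta_{k+1},\dots,\beta_n}(\mathbf{s})$, which reproduces the first sum in the statement.

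For the $n+1$ terms of the second kind, I invoke Proposition \ref{exp partial diff lemma} on the bounded element $s_k x$ (with partial derivative $s_k\,\partial_j x\in L_\infty(\qr)$) to write
\[
\partial_j(e^{is_k x})=is_k\int_0^1 e^{i(1-t)s_k x}(\partial_j x)e^{its_k x}\,dt
= i\int_0^{s_k} e^{isx}(\partial_j x)e^{i(s_k-s)x}\,ds,
\]
where the second equality follows from the substitution $s=(1-t)s_k$. Substituting this identity into the otherwise unchanged surrounding product and recognising the integrand as $F$ with the multi-index $e_j$ inserted between $\beta_k$ and $\beta_{k+1}$ and the simplex coordinates $(s_0,\dots,s_{k-1},s,s_k-s,s_{k+1},\dots,s_n)$ reproduces the $k$-th term of the second sum.

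The only non-routine point is the bookkeeping: one must verify that the convention $(\tilde s_k,\tilde s_{k+1})=(s,s_k-s)$ adopted in the statement indeed matches the ordering $e^{isx}(\partial_j x)e^{i(s_k-s)x}$ produced by the change of variables, and that the endpoint cases $k=0$ and $k=n$, in which the product to the left (resp.\ right) of the inserted $e_j$ is empty, align with the formula. This index-tracking is the main obstacle; once it is verified, the identity is just Leibniz combined with the Duhamel-type formula of Proposition \ref{exp partial diff lemma}.
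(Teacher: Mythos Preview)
Your proposal is correct and follows essentially the same approach as the paper's proof: apply the Leibniz rule to the $(2n+1)$-factor product, use $\partial_j\partial^{\beta_k}x=\partial^{\beta_k+e_j}x$ for the polynomial factors, and invoke Proposition~\ref{exp partial diff lemma} together with the change of variable $s=(1-t)s_k$ (the paper uses the equivalent $s\mapsto s s_k$ after relabelling) for the exponential factors. The bookkeeping you flag is exactly what the paper leaves implicit in its ``direct calculation.''
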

\begin{proof} Fix $j.$ A direct calculation gives
$$\partial_j(F_{x,\beta}({\bf s}))=F_{x;\beta_1,\cdots,\beta_{k-1},\beta_k+e_j,\beta_{k+1},\cdots,\beta_n}({\bf s})+\sum_{k=0}^nG_{k,x;\beta_1,\cdots,\beta_n}({\bf s}),$$
where, for every $0\leq k\leq n,$ $G_{k,x,\beta}(\mathbf{s})$ is defined by replacing the factor $e^{{\rm i}s_kx}$ in $F_{x,\beta}(\mathbf{s})$ with $\partial_j(e^{{\rm i}s_kx}).$
Since by Proposition \ref{exp partial diff lemma},
$$\partial_j(e^{{\rm i}s_kx})={\rm i}s_k\int_0^{1}e^{{\rm i}ss_kx}(\partial_jx) e^{{\rm i}(1-s)s_kx}ds={\rm i}\int_0^{s_k}e^{{\rm i}sx}( \partial_jx) e^{{\rm i}(s_k-s)x}ds,$$
it follows that
$$
G_{k,x,\beta}({\bf s})
={\rm i}\int_0^{s_k}F_{x;\beta_1,\cdots,\beta_k,e_j,\beta_{k+1},\cdots,\beta_n}(s_0,\cdots,s_{k-1},s,s_k-s,s_{k+1},\cdots,s_n)ds.
$$
The proof is complete.
\end{proof}

\begin{lem}\label{exp repeated diff lemma} Let $N\in\mathbb{N}$ and let $x\in W_\infty^N(\qr).$ Then for any $\alpha\in\mathbb{Z}_+^d$ with $|\alpha|_1\leq N,$ we have
$$\partial^{\alpha}(e^{{\rm i}x})=\sum_{m=1}^{|\alpha|_1}\sum_{\substack{\beta\in(\mathbb{Z}^d_+)^m\\ |\beta|_1=|\alpha|_1}}c_{\beta}\int_{\Delta_m}F_{x,\beta}({\bf s})d\sigma_m({\bf s}).$$
\end{lem}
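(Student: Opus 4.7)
The plan is to prove the identity by induction on $|\alpha|_1$.

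\textbf{Base case.} When $|\alpha|_1=1$, we have $\alpha = e_j$ for some $j$, and Proposition \ref{exp partial diff lemma} gives
$$\partial_j(e^{{\rm i}x})={\rm i}\int_0^1 e^{{\rm i}(1-t)x}(\partial_j x)e^{{\rm i}tx}\,dt ={\rm i}\int_{\Delta_1} F_{x;e_j}({\bf s})\,d\sigma_1({\bf s}),$$
which is the desired form with $m=1$, $\beta=(e_j)$, and $c_\beta = {\rm i}$.

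\textbf{Inductive step.} Assume the formula holds for all multi-indices of length $\leq N$. Take $\alpha'$ with $|\alpha'|_1 = N+1$, write $\alpha' = \alpha + e_j$ with $|\alpha|_1 = N$, and compute $\partial^{\alpha'}(e^{{\rm i}x}) = \partial_j\partial^\alpha(e^{{\rm i}x})$. By the inductive hypothesis the inner derivative is a finite linear combination of terms $\int_{\Delta_m} F_{x,\beta}({\bf s})\,d\sigma_m({\bf s})$ with $|\beta|_1 = N$. Pushing $\partial_j$ under the integral sign and applying Lemma \ref{fxbeta partial diff lemma} produces two classes of terms: (a) \emph{"raising" terms} $\int_{\Delta_m} F_{x;\beta_1,\ldots,\beta_k+e_j,\ldots,\beta_m}({\bf s})\,d\sigma_m({\bf s})$, which already have the required form at level $m$ with new weight $|\beta|_1 + 1 = N+1$, and (b) \emph{"inserting" terms} containing a nested integral $\int_{\Delta_m}\int_0^{s_k} F_{x;\beta_1,\ldots,\beta_k,e_j,\beta_{k+1},\ldots,\beta_m}(s_0,\ldots,s_{k-1},s,s_k-s,s_{k+1},\ldots,s_m)\,ds\,d\sigma_m({\bf s})$.

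\textbf{Reducing the inserting terms.} The key step is to recognize that the double integral in (b) is exactly an integral over $\Delta_{m+1}$. Indeed, the map $\Delta_m \times [0,s_k] \to \Delta_{m+1}$ given by
$$(s_0,\ldots,s_m;s)\longmapsto (s_0,\ldots,s_{k-1},s,s_k-s,s_{k+1},\ldots,s_m)$$
is a measure-preserving bijection (its Jacobian is $1$ and the sum of the image coordinates is still $1$). After this change of variables each inserting term becomes $\int_{\Delta_{m+1}} F_{x,\beta'}({\bf s}')\,d\sigma_{m+1}({\bf s}')$ for a new tuple $\beta'$ of length $m+1$ with $|\beta'|_1 = |\beta|_1 + |e_j|_1 = N+1$. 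Both classes of resulting terms therefore have exactly the shape asserted in the lemma with $|\beta|_1 = |\alpha'|_1$, and summing across all terms from the inductive hypothesis yields the claim.

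\textbf{Main obstacle.} The only point requiring care is bookkeeping: verifying that the change of variables above is indeed volume-preserving with respect to $d\sigma_m \otimes ds$ vs.\ $d\sigma_{m+1}$, and checking that the norm-continuity of $s\mapsto e^{{\rm i}sx}$ in $L_\infty(\qr)$ together with $\partial^{\beta_k}x \in L_\infty(\qr)$ (which follows from $|\beta_k|_1\leq |\beta|_1 = N$ and $x\in W^N_\infty(\qr)$ at each stage, noting $|\beta_k|_1 \leq N$ is preserved through the induction whenever $|\beta|_1 \leq N+1$) guarantees that $F_{x,\beta}$ is Bochner-integrable, so that Fubini and differentiation under the integral sign are legitimate. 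Once these technical points are in place, the proof reduces to the inductive bookkeeping sketched above.
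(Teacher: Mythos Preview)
Your proposal is correct and follows essentially the same route as the paper: induction on $|\alpha|_1$, base case from Proposition~\ref{exp partial diff lemma}, inductive step via Lemma~\ref{fxbeta partial diff lemma}, and the key identification of the nested integral with $\int_{\Delta_{m+1}}F_{x,\gamma}\,d\sigma_{m+1}$ via the volume-preserving change of variables you describe (which the paper simply records as an ``obvious equality''). Your additional remarks on Bochner integrability and differentiation under the integral sign are justified cautions that the paper leaves implicit.
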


\begin{proof} We shall prove this result by induction on $N.$ Base of induction (i.e., the case $N=1$) is established in Lemma \ref{exp partial diff lemma}. 
Let us now establish the step of induction.	Fix $1\leq j\leq d$ such that $\alpha\geq e_j$ and suppose the result is valid for $\alpha-e_j.$ It is clear that $\partial^\alpha(e^{{\rm i}x})=\partial_j(\partial^{\alpha-e_j}(e^{{\rm i}x})).$
By the inductive assumption,
$$\partial^\alpha(e^{{\rm i}x})=\sum_{m=1}^{|\alpha|_1-1}\sum_{\substack{\beta\in(\mathbb{Z}^d_+)^m\\ |\beta|_1=|\alpha|_1-1}}c_{\beta} \int_{\Delta_m}\partial_j(F_{x,\beta}({\bf s}))d\sigma_m({\bf s}).$$
Using obvious equality for $\gamma\in(\mathbb{Z}_+^d)^{m+1},$
\small{$$
	\int_{\Delta_m}\big(\int_0^{s_k}F_{x,\gamma}(s_0,\cdots,s_{k-1},s,s_k-s,s_{k+1},\cdots,s_m)ds\big)d\sigma_m({\bf s})=\int_{\Delta_{m+1}}F_{x,\gamma}({\bf s})d\sigma_{m+1}({\bf s})$$}
and Lemma \ref{fxbeta partial diff lemma} yields the desired result. 

\end{proof}
\begin{defi}
For every $m\in\mathbb{Z}_+,$ let $\mathcal{W}^m(\mathbb{R})$ denote the Wiener space consisting of $m$ times continuously differentiable functions $f$ on $\mathbb{R}$ such that
$$\|f\|_{\mathcal{W}^m(\mathbb{R})}:=\int_{\mathbb{R}}|\widehat{f^{(m)}}(t)|dt=\int_{\mathbb{R}}|t|^m|\hat{f}(t)|dt<\infty.$$
In particular, if $f\in\mathcal{W}^m(\mathbb{R})$ is compactly supported, then 
$f\in\mathcal{W}^k(\mathbb{R}),$ for all $ 0\leq k\leq m.$ This follows from elementary computations; the details are omitted.
\end{defi}

In what follows, fix $n\in\mathbb{N}$ and let $\{E_{k,l}\}^n_{k,l=1}\subset M_n(\mathbb{C})$ be matrix units. Then for $X\in M_n(\mathbb{C})\otimes L_\infty(\qr),$ we may write $X=\sum_{k,l=1}^nE_{k,l}\otimes X_{k,l},$ with $X_{k,l}\in L_\infty(\qr).$  
The following inequalities are immediate
$$
\max_{1\leq k,l\leq n}\|X_{k,l}\|_{L_p(\qr)}\leq\|X\|_{L_p(M_n(\mathbb{C})\bar{\otimes }L_\infty(\qr))}\leq\sum_{k,l=1}^n\|X_{k,l}\|_{L_p(\qr)},\quad 1\leq p\leq\infty.
$$
Furthermore, for any $\alpha\in\mathbb{Z}_+^d$ we have
\begin{equation}\label{L_p norm estimate}
({\rm id}\otimes\partial^\alpha)(X)\in L_p(M_n(\mathbb{C})\bar{\otimes}L_\infty(\qr))\Longleftrightarrow \partial^\alpha X_{k,l}\in L_p(\qr),\quad \forall 1\leq k,l\leq n.
\end{equation}

\begin{lem}\label{standard wiener lemma} Let $X=X^{\ast}\in M_n(\mathbb{C})\otimes W_1^d(\mathbb{R}_{\theta}^d).$ If $f\in \mathcal{W}^d(\mathbb{R})$ is compactly supported, then $f(X)\in M_n(\mathbb{C})\otimes W_1^d(\mathbb{R}_{\theta}^d).$
\end{lem}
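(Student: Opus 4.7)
The plan is to use Fourier inversion to write $f(X)=\int_{\mathbb{R}}\hat f(s)\,e^{isX}\,ds$ and then control each partial derivative $(\mathrm{id}\otimes\partial^\alpha)(f(X))$ of order $|\alpha|_1\leq d$ in the $L_1$-norm using a matrix-valued version of Lemma \ref{exp repeated diff lemma}. First, by the nesting $L_1(\qr)\subset L_\infty(\qr)$, we have $M_n(\mathbb{C})\otimes W_1^d(\qr)\subset M_n(\mathbb{C})\otimes W_\infty^d(\qr)$, so $X$ and its partials up to order $d$ are bounded. The proofs of Proposition \ref{abstract Duhamel formula}, Proposition \ref{exp partial diff lemma}, and Lemmas \ref{fxbeta partial diff lemma}--\ref{exp repeated diff lemma} go through verbatim when $\partial_j$ is replaced by the densely defined derivation $\mathrm{id}\otimes\partial_j$ on $M_n(\mathbb{C})\otimes L_\infty(\qr)$.

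Applying this matrix version of Lemma \ref{exp repeated diff lemma} with $Y=sX$ gives, for $1\leq |\alpha|_1=n\leq d$,
$$(\mathrm{id}\otimes\partial^\alpha)(e^{isX})=\sum_{m=1}^{n}\sum_{\substack{\beta\in(\mathbb{Z}_+^d)^m\\|\beta|_1=n}}c_\beta\,s^m\int_{\Delta_m}F^{(s)}_{X,\beta}(\mathbf{s})\,d\sigma_m(\mathbf{s}),$$
where $F^{(s)}_{X,\beta}(\mathbf{s})$ is a product of $m+1$ unitary factors $e^{iss_jX}$ interspersed with $m$ derivative factors $(\mathrm{id}\otimes\partial^{\beta_k})(X)$.

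Next I would take $L_1$-norms, applying H\"older's inequality with exponents $p_k=n/|\beta_k|_1$ (so $\sum_k 1/p_k=1$). Using $\|e^{iss_jX}\|_\infty=1$ together with the nesting $L_1(\qr)\subset L_{p_k}(\qr)$ to bound $\|(\mathrm{id}\otimes\partial^{\beta_k})(X)\|_{L_{p_k}}$ by a constant multiple of $\|X\|_{M_n(\mathbb{C})\otimes W_1^d(\qr)}$, one obtains
$$\|(\mathrm{id}\otimes\partial^\alpha)(e^{isX})\|_{L_1}\lesssim_{X,\alpha}\sum_{m=1}^{n}|s|^m.$$
Multiplying by $\hat f(s)$, integrating and commuting $(\mathrm{id}\otimes\partial^\alpha)$ with the Bochner integral (justified by the above uniform bound) gives
$$\|(\mathrm{id}\otimes\partial^\alpha)(f(X))\|_{L_1}\lesssim\sum_{m=1}^{n}\|X\|_{W_1^d}^m\int_{\mathbb{R}}|s|^m\,|\hat f(s)|\,ds,$$
and each integral is finite since $f$, being compactly supported in $\mathcal{W}^d(\mathbb{R})$, lies in $\mathcal{W}^m(\mathbb{R})$ for every $0\leq m\leq d$ by the remark in the definition of Wiener spaces. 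The $\alpha=0$ case is handled separately by writing $f(X)-f(0)\mathbf{1}=\int\hat f(s)(e^{isX}-\mathbf{1})\,ds$ and using the Duhamel bound $\|e^{isX}-\mathbf{1}\|_{L_1}\leq|s|\,\|X\|_{L_1}$ together with $f\in\mathcal{W}^1(\mathbb{R})$.

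The main obstacle will be the careful extension of Lemma \ref{exp repeated diff lemma} to the matrix setting, and the Fubini-type justification for pulling the closed derivations $\mathrm{id}\otimes\partial^\alpha$ through the Bochner integral over $s$; both are routine once the uniform-in-$s$ $L_1$ estimate above is in place, but they need verification that $\mathrm{id}\otimes\partial^\alpha$ acts as a closed operator on $M_n(\mathbb{C})\otimes L_1(\qr)$ consistent with the formulae derived in $L_\infty$.
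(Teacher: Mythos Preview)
Your proposal is correct and follows essentially the same route as the paper: Fourier inversion, the expansion of $(\mathrm{id}\otimes\partial^\alpha)(e^{isX})$ from Lemma~\ref{exp repeated diff lemma}, and termwise $L_1$-bounds integrated against $|s|^m|\hat f(s)|$. The only notable difference is the H\"older split: the paper estimates each $F_{sX,\beta}$ by putting one derivative factor in $L_1$ and the remaining ones in $L_\infty$ (using $W_1^d\subset W_\infty^d$), which is simpler than your balanced choice $p_k=|\alpha|_1/|\beta_k|_1$; also the paper treats the scalar case $n=1$ first and then passes to matrices via \eqref{L_p norm estimate}, whereas you work directly in the matrix setting.
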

\begin{proof}Let $\alpha\in\mathbb{Z}_+^d$ be such that $|\alpha|_1\leq d.$ 
We start with the case $n=1.$ Now, $X=X^\ast\in W_1^d(\qr).$ 	
By Fourier inversion and functional calculus,
$$\partial^{\alpha}(f(X))=\partial^{\alpha}\Big(\int_{\mathbb{R}}\hat{f}(t)e^{{\rm i}tX}dt\Big)=\int_{\mathbb{R}}\hat{f}(t)\partial^{\alpha}(e^{{\rm i}tX})dt.$$
By Lemma \ref{exp repeated diff lemma}, 
$$\partial^{\alpha}(e^{{\rm i}tX})=\sum_{m=1}^{|\alpha|_1}\sum_{\substack{\beta\in(\mathbb{Z}^d_+)^m\\ |\beta|_1=|\alpha|_1}}c_{\beta}\int_{\Delta_m}F_{tX,\beta}({\bf s})d\sigma_m({\bf s}),\quad t\in\mathbb{R}.$$
Obviously, for every $\beta\in(\mathbb{Z}_+^d)^m,$
$$\|F_{tX,\beta}({\bf s})\|_1\leq |t|^m\|\partial^{\beta_1}X\|_1\|\partial^{\beta_2}X\|_{\infty}\cdots\|\partial^{\beta_m}X\|_{\infty}\lesssim_{\theta} |t|^m\|X\|_{W_1^d(\mathbb{R}_{\theta}^d)}^m.$$
Thus,
$\|\partial^{\alpha}(e^{itX})\|_1\lesssim_{\theta,\alpha} \sum_{m=1}^{|\alpha|_1} |t|^m\|X\|_{W_1^d(\mathbb{R}_{\theta}^d)}^m.$
Consequently,
$$
\begin{aligned}
\|\partial^{\alpha}(f(X))\|_1&\leq\int_{\mathbb{R}}|\hat{f}(t)|\cdot\|\partial^{\alpha}(e^{{\rm i}tX})\|_1dt\\
&\lesssim\max_{1\leq m\leq|\alpha|_1}\int_{\mathbb{R}}|t|^m|\hat{f}(t)|dt\cdot \sum_{m=1}^{|\alpha|_1}\|X\|_{W_1^d(\mathbb{R}_{\theta}^d)}^m
\end{aligned}
$$
By assumptions on $f$ and $X,$ the right hand side is finite.
This proves the result for $n=1.$
Suppose now $n>1.$ 
Clearly, $M_n(\mathbb{C})\otimes L_\infty(\qr)$ is a $C^\ast$-algebra.
Thus,
$f(X)\in M_n(\mathbb{C})\otimes L_\infty(\qr).$
Repeating the previous argument, we obtain
$$\|({\rm id}\otimes\partial^{\alpha})(f(X))\|_1\lesssim\sum_{m=1}^{|\alpha|_1}\sum_{\substack{\beta\in(\mathbb{Z}^d_+)^m\\ |\beta|_1=|\alpha|_1}} \|\partial^{\beta_1}X\|_1\|\partial^{\beta_2}X\|_{\infty}\cdots\|\partial^{\beta_m}X\|_{\infty}.$$
General result now follows from \eqref{L_p norm estimate}. 
\end{proof}

\begin{thm}\label{residue-matrix case}
Let $n\in \mathbb{N}.$	Let $A=\mathrm{id}\otimes\pi_{2}(g) (1-\Delta_{\theta})^{-\frac12}$ with $0\leq g \in C^{\infty}(\mathbb{S}^{d-1}),$ and let $0\leq B\in M_n(\mathbb{C})\otimes W_1^d(\mathbb{R}_{\theta}^{d}).$ Then the function 
$$z\mapsto (\mathrm{tr}\otimes\mathrm{Tr})\left(A^{z} B^{z}\right),\quad\Re(z)>d$$ 
allows meromorphic extension to the half-plane $\{\Re(z)>1\}$ with poles at $2,4,\cdots,d.$ The residue at $z=d$ is explicitly computable as follows:
$$\underset{z=d}{\rm Res}~(\mathrm{tr}\otimes\mathrm{Tr})(A^zB^z)=d\big(\kappa_d\|B\|_d\|g\|_d\big)^d.$$
\end{thm}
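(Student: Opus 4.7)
The plan is to reduce everything to the mixed trace formula of Corollary~\ref{mixed trace thm-matrix case}. The key observation is that $A^z$ is purely a multiplication operator on the $\nabla_{\theta}$-side: writing $\varphi_z(t):=g(t/|t|)^z(1+|t|^2)^{-z/2},$ one has $A^z=\mathrm{id}\otimes\varphi_z(\nabla_{\theta}).$ For $\Re(z)>d$ the operator $A^zB^z$ lies in $\mathcal{L}_1$ (and so does $B^zA^z,$ via the Cwikel estimate Theorem~\ref{cwikel-type estimates}), so by cyclicity
$$(\mathrm{tr}\otimes\mathrm{Tr})(A^zB^z)=(\mathrm{tr}\otimes\mathrm{Tr})\bigl((\mathrm{id}\otimes\pi_1)(B^z)\cdot(\mathrm{id}\otimes\varphi_z(\nabla_{\theta}))\bigr),$$
which is precisely the shape handled by Corollary~\ref{mixed trace thm-matrix case}.

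Before invoking the formula I must check: (i) $\varphi_z\in\ell_1(L_\infty)(\mathbb{R}^d),$ and (ii) $B^z\in M_n(\mathbb{C})\otimes W_1^d(\qr).$ Item (i) follows from $|\varphi_z(t)|\leq\|g\|_\infty^{\Re(z)}(1+|t|^2)^{-\Re(z)/2}$ together with the standard fact that $(1+|t|^2)^{-s/2}\in\ell_1(L_\infty)(\mathbb{R}^d)$ for $s>d.$ Item (ii) is the real technical step, supplied by Lemma~\ref{standard wiener lemma}. Since $B\geq 0$ is self-adjoint and bounded (recall $W_1^d(\qr)\subset L_1(\qr)\subset L_\infty(\qr)$ under $\det(\theta)\neq 0$), I choose a smooth compactly supported $f_z$ agreeing with $t\mapsto t^z$ on the spectrum $[0,\|B\|].$ Its $d$-th derivative behaves like $t^{\Re(z)-d}$ near the origin and accordingly lies in $L_1\cap L_p$ for a suitable $p>1;$ by Lemma~\ref{second zigmund lemma} this places $f_z$ in the Wiener class $\mathcal{W}^d(\mathbb{R}),$ and then Lemma~\ref{standard wiener lemma} gives $B^z=f_z(B)\in M_n(\mathbb{C})\otimes W_1^d(\qr).$

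With both inputs secured, Corollary~\ref{mixed trace thm-matrix case} produces, for $\Re(z)>d,$
$$(\mathrm{tr}\otimes\mathrm{Tr})(A^zB^z)=\frac{1}{(2\pi)^d}(\mathrm{tr}\otimes\tau_{\theta})(B^z)\int_{\mathbb{R}^d}\varphi_z(t)\,dt,$$
and polar coordinates evaluate the spatial integral to a beta function:
$$\int_{\mathbb{R}^d}\varphi_z(t)\,dt=\Bigl(\int_{\mathbb{S}^{d-1}}g^z\,d\sigma\Bigr)\cdot\frac{\Gamma(d/2)\,\Gamma\bigl((z-d)/2\bigr)}{2\,\Gamma(z/2)}.$$
On $\{\Re(z)>1\}$ the spherical integral is entire in $z,$ the factor $(\mathrm{tr}\otimes\tau_{\theta})(B^z)=\sum_j\lambda_j(B)^z$ is analytic (because $B\geq 0$ is trace class), and $\Gamma(z/2)$ is analytic and non-vanishing; the only poles come from $\Gamma\bigl((z-d)/2\bigr),$ appearing exactly at $z=d,d-2,\ldots,2.$ This yields the asserted meromorphic extension with the stated pole set.

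Finally, the residue at $z=d$ follows from $\mathrm{Res}_{z=d}\Gamma\bigl((z-d)/2\bigr)=2$ together with $(\mathrm{tr}\otimes\tau_{\theta})(B^d)=\|B\|_d^d$ and $\int_{\mathbb{S}^{d-1}}g^d\,d\sigma=\|g\|_d^d;$ unwinding the definition of $\kappa_d$ repackages the result as $d(\kappa_d\|B\|_d\|g\|_d)^d.$ The genuinely delicate point is step~(ii): one must track the Wiener-space norm of $f_z$ as $z$ varies over a complex strip containing $\Re(z)=d,$ so that the identity first established in the region $\Re(z)>d$ admits a legitimate analytic continuation down to $z=d$ and beyond. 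This is where the Wiener-lemma machinery of Section~\ref{sec-residue} does the decisive work.
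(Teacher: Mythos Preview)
Your proposal is correct and follows essentially the same route as the paper: both arguments identify $A^z$ as $\mathrm{id}\otimes h_z(\nabla_\theta)$, use Lemma~\ref{second zigmund lemma} and Lemma~\ref{standard wiener lemma} to place $B^z=f_z(B)$ in $M_n(\mathbb{C})\otimes W_1^d(\qr)$, apply the mixed trace formula (Corollary~\ref{mixed trace thm-matrix case}), and then evaluate $\int_{\mathbb{R}^d}h_z$ via polar coordinates and the Beta function. The only cosmetic difference is that the paper rewrites the Beta function as the explicit rational expression $2^{d/2}\Gamma(d/2)/[(z-2)(z-4)\cdots(z-d)]$ (using that $d$ is even) rather than leaving it as $\Gamma(d/2)\Gamma((z-d)/2)/\Gamma(z/2)$; both forms exhibit the same poles at $z=2,4,\ldots,d$.
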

\begin{proof} Fix $\phi\in C^{\infty}_c(\mathbb{R})$ such that $\phi=1$ on $[0,\|x\|_{\infty}].$ For every $z\in\mathbb{C}$ with $\Re(z)>d,$ define the function $f_z$ on $\mathbb{R}$ by setting $f_z(t)=|t|^z\phi(t),$ $ t\in\mathbb{R}.$ By the Leibniz rule and Lemma \ref{second zigmund lemma}, $\widehat{f^{(d)}_z}\in L_1(\mathbb{R}).$ In other words, $f_z\in\mathcal{W}^d(\mathbb{R}).$ Since also $f_z$ is compactly supported, it follows from Lemma \ref{standard wiener lemma} that $B^z=f_z(B)\in M_n(\mathbb{C})\otimes W_1^d(\mathbb{R}_\theta^d).$
Define
$$h_z(t)=g(\frac{t}{|t|})^z(1+|t|^2)^{-\frac{z}{2}},\quad  t\in\mathbb{R}^d.$$
Clearly, if $\Re(z)>d,$ then $h_z\in\ell_1 (L_\infty)(\mathbb{R}^d).$ By tracial property and by Corollary \ref{mixed trace thm-matrix case},
\begin{equation}\label{splitting  into products}
\begin{split}
	({\rm tr}\otimes\mathrm{Tr})(A^z B^z)&=({\rm tr}\otimes\mathrm{Tr})(B^zA^z)\\&=({\rm tr}\otimes\mathrm{Tr})(B^z \big({\rm id}\otimes h_z(\nabla_{\theta}))\big)\\&=\frac{1}{(2\pi)^d}({\rm tr}\otimes\tau_{\theta})\left(B^z\right)\int_{\mathbb{R}^d} h_z (t)dt.
\end{split}
\end{equation}
Passing to the polar coordinates, we obtain
$$\int_{\mathbb{R}^d} h_z(t)dt=\int_{\mathbb{S}^{d-1}}g^{z}(s)ds\cdot\int_{0}^{\infty}(1+r^2)^{-\frac{z}{2}}r^{d-1}dr.$$
A standard computation yields that
$$
\begin{aligned}
\int_0^{\infty}(1+r^2)^{-\frac{z}{2}}r^{d-1}dr&\stackrel{r=u^{\frac12}}{=}\frac12\int_0^{\infty}(1+u)^{-\frac{z}{2}}u^{\frac{d}{2}-1}du\\
&\stackrel{u=\frac{v}{1-v}}{=}\frac12\int_0^1v^{\frac{d}{2}-1}(1-v)^{\frac{z-d}{2}-1}dv=\frac12B(\frac{d}{2},\frac{z-d}{2}),
\end{aligned}$$
where $B(\cdot,\cdot)$ is the Beta function.
Further, we have
$$B(\frac{d}{2},\frac{z-d}{2})=\frac{2^{\frac{d}{2}}\Gamma(\frac{d}{2})}{(z-2)(z-4)\cdots(z-d)}.$$
Consequently,
\begin{equation}\label{h_z computation}
\int_{\mathbb{R}^d} h_z(t)dt=\frac{2^{\frac{d}{2}-1}\Gamma(\frac{d}{2})}{(z-2)(z-4)\cdots(z-d)}\cdot\int_{\mathbb{S}^{d-1}}g^{z}(s)ds.
\end{equation}
Combining equations \eqref{splitting  into products} and \eqref{h_z computation}, we claim that
$$ ({\rm tr}\otimes\mathrm{Tr})(A^z B^z)=\frac{d\kappa_d^d}{(z-2)(z-4)\cdots(z-d)}\cdot({\rm tr}\otimes\tau_{\theta})\left(B^z\right)\int_{\mathbb{S}^{d-1}}g^{z}(s)ds.$$
Since $B\in M_n(\mathbb{C})\otimes W_1^d(\qr)\subset M_n(\mathbb{C})\otimes L_1(\qr),$ we have $({\rm id}\otimes r_\theta)(B)\in M_n(\mathbb{C})\otimes\mathcal{L}_1(L_2(\mathbb{R}^{\frac{d}{2}})),$ it turns out the function $z\mapsto({\rm tr}\otimes\tau_{\theta})(B^z)$
is holomorphic in the half-plane $ \{\Re(z)>1\}.$ In addition, the function 
$z\mapsto\int_{\mathbb{S}^{d-1}} g^z(s)ds$
is holomorphic in the half-plane $\{\Re(z)>0\}.$ The proof is complete.
\end{proof}

\section{Spectral asymptotics\textemdash a special case}\label{sec-asymptotic special case}Our main task in this section is to prove the following result.
\begin{thm}\label{key thm}
Let $0\leq g\in C(\mathbb{S}^{d-1})$ and let $T\in M_{n}(\mathbb{C})\otimes \mathcal{S}(\qr).$ We have
$$\lim_{t\to\infty}t^{\frac1d}\mu\big(t,T\cdot (\mathrm{id}\otimes \pi_2(g)(1-\Delta_{\theta})^{-\frac12})\big)= \kappa_d\left\|T\right\|_{L_d(M_{n}(\mathbb{C})\bar{\otimes} L_{\infty}(\qr))}\|g\|_{L_d(\mathbb{S}^{d-1})}.$$
Here, the constant $\kappa_d$ is given in Theorem \ref{main1}.
\end{thm}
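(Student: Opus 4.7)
The plan is to establish this limit via the noncommutative Wiener--Ikehara Tauberian theorem of \cite[Section 5.4]{SZ2023}, fed by the residue computation of Theorem \ref{residue-matrix case}. Write $A:=\mathrm{id}\otimes \pi_2(g)(1-\Delta_\theta)^{-1/2}$; since the symbol $(1+|t|^2)^{-1/2}$ of the multiplication operator $(1-\Delta_\theta)^{-1/2}$ lies in $\mathcal{L}_{d,\infty},$ so does $A,$ and therefore $TA\in\mathcal{L}_{d,\infty}.$

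Two reductions come first. Polar decomposition $T=V|T|$ in $M_n(\mathbb{C})\bar\otimes L_\infty(\qr)$ satisfies $V^\ast V|T|=|T|,$ and hence $|TA|^2=A|T|V^\ast V|T|A=A|T|^2A=||T|A|^2;$ thus $\mu(t,TA)=\mu(t,|T|A).$ By Lemma \ref{root lemma matrix case}, $|T|\in M_n(\mathbb{C})\otimes\mathcal{S}(\qr)$ with $\||T|\|_d=\|T\|_d,$ so we may assume $T\geq 0.$ Next, writing $A_k:=\mathrm{id}\otimes\pi_2(g_k)(1-\Delta_\theta)^{-1/2}$ for smooth $g_k\geq 0$ with $g_k\to g$ uniformly, one has $\|g_k\|_d\to\|g\|_d$ and
$$\|TA-TA_k\|_{d,\infty}\lesssim \|T\|_\infty\|g-g_k\|_\infty\|(1-\Delta_\theta)^{-1/2}\|_{d,\infty}\longrightarrow 0;$$
by the distance formula \eqref{distance formula}, Lemma \ref{another limit lemma} reduces the claim to $g\in C^\infty(\mathbb{S}^{d-1}).$

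Under these reductions, Theorem \ref{residue-matrix case} applied with $B=T$ yields the meromorphic extension of
$$\zeta(z) := (\mathrm{tr}\otimes\mathrm{Tr})(A^z T^z)$$
to $\{\Re z>1\}$ with a simple pole at $z=d$ of residue $d(\kappa_d\|T\|_d\|g\|_d)^d.$ The singular-value zeta function of $TA$ is, however,
$$\eta(z) := (\mathrm{tr}\otimes\mathrm{Tr})((AT^2A)^{z/2})=\sum_{k\geq 0}\mu(k,TA)^z,$$
since $|TA|^2=AT^2A.$ The crux of the proof is the claim that $\zeta-\eta$ continues holomorphically across $z=d,$ whence $\eta$ carries the same residue at $z=d$; the noncommutative Wiener--Ikehara theorem then converts this residue into $\lim_t t^{1/d}\mu(t,TA)=(d(\kappa_d\|T\|_d\|g\|_d)^d/d)^{1/d}=\kappa_d\|T\|_d\|g\|_d,$ the desired conclusion.

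The principal obstacle is thus showing that $\zeta-\eta$ is holomorphic in a neighbourhood of $z=d.$ By cyclicity, $\zeta(z)=(\mathrm{tr}\otimes\mathrm{Tr})(T^{z/2}A^z T^{z/2}),$ so the difference vanishes identically when $T$ and $A$ commute and otherwise encodes their non-commutation. My plan is to expand it, through integral representations of the fractional powers $T^{z/2}$ and $(AT^2A)^{z/2}$ (for example a Helffer--Sj\"ostrand or contour-integral formula over their resolvents), into a finite sum of traces of operator products, each carrying at least one commutator factor of the form $[\pi_1(x),\pi_2(g)(1-\Delta_\theta)^{\alpha/2}](1-\Delta_\theta)^{-\beta/2}.$ The commutator estimates of Theorem \ref{commutator-PSD} (together with Corollary \ref{coro1}), combined with H\"older's inequality in $\mathcal{L}_{p,\infty}$ and the fact that positive elements of $\mathcal{S}(\qr)$ retain their fractional powers (Propositions \ref{root lemma}, \ref{root lemma matrix case}), should then place every summand in a Schatten ideal $\mathcal{L}_p$ with $p<d,$ forcing its trace to be holomorphic beyond $z=d.$ The non-degeneracy assumption $\det\theta\neq 0$ is crucial throughout: it underpins both the Cwikel-type estimates of Theorem \ref{cwikel-type estimates} and the matrix-coefficient structure of $\mathcal{S}(\qr)$ (Proposition \ref{structure of schwartz spaces}) that drives the reduction to $T\geq 0.$
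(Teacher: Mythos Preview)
Your overall strategy coincides with the paper's: reduce to $T\geq0$ and $g\in C^\infty(\mathbb{S}^{d-1})$, compute the residue of $\zeta(z)=(\mathrm{tr}\otimes\mathrm{Tr})(A^zT^z)$ via Theorem \ref{residue-matrix case}, and feed it into a Wiener--Ikehara type theorem. One small slip: you write that $A\in\mathcal{L}_{d,\infty}$ because its symbol lies in $L_{d,\infty}(\mathbb{R}^d)$, but $(1-\Delta_\theta)^{-1/2}$ is a nonzero multiplication operator and hence not even compact. What is true (and what you need) is $TA\in\mathcal{L}_{d,\infty}$, which follows from the Cwikel-type estimate of Lemma \ref{arbitrary index for Bessel}.

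The genuine gap is the step you flag as ``the crux'': the holomorphic continuation of $\zeta-\eta$ across $\Re z=d$. Your plan---expand via integral representations, collect commutator factors, and invoke Theorem \ref{commutator-PSD}---is only a heuristic; carrying it out rigorously is exactly the hard content. The paper does not attempt this directly. Instead it verifies the abstract Condition \ref{asterisque condition} for the pair $(A,T)$ (Lemma \ref{verification of the asterisque conditions matrix case}) using the commutator bounds, and then invokes \cite[Theorem 5.4.2]{SZ2023}, which provides the analytic continuation of $z\mapsto(\mathrm{tr}\otimes\mathrm{Tr})(A^zT^z)-(\mathrm{tr}\otimes\mathrm{Tr})((T^{1/2}AT^{1/2})^z)$ to $\{\Re z>d-1\}$ as a black box. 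Wiener--Ikehara (Theorem \ref{wiener-ikehara thm}) then yields the asymptotic of $\mu(t,T^{1/2}AT^{1/2})$, and the passage to $\mu(t,TA)$ is a final $(\mathcal{L}_{d,\infty})_0$ perturbation via $TA-T^{1/2}AT^{1/2}=[T^{1/2},A]T^{1/2}$. Note that the paper compares $\zeta$ with $\mathrm{Tr}((T^{1/2}AT^{1/2})^z)$, not with your $\eta(z)=\mathrm{Tr}((AT^2A)^{z/2})$; these differ, and the black box in \cite{SZ2023} is stated for the former.

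You also overlook the case $d=2$. Condition \ref{asterisque condition} and the machinery of \cite[Section 5.4]{SZ2023} require $p>2$, so for $d=2$ the paper passes to square roots: it sets $X=A$, $Y=T$, verifies Condition \ref{asterisque condition} for $(X^{1/2},Y^{1/2})$ with $p=4$ (Lemma \ref{verification of the asterisque conditions matrix case d=2}), and uses the modified Tauberian Corollary \ref{nc tauberian corollary}. This requires in particular $g^{1/2}\in C^\infty(\mathbb{S}^1)$, which forces a further approximation by strictly positive $g_m$. Without this detour your argument does not cover $d=2$.
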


The key device at hands is a Wiener-Ikehara Tauberian theorem, see Section III.4 of \cite{K2004}.

\begin{thm}\label{wiener-ikehara thm} Let $p>0$ and let $0\leq V\in\mathcal{L}_{p,\infty}.$ If, for some $c_V>0,$ the function 
$$z\to {\rm Tr}(V^z)-\frac{c_V}{z-p},\quad \Re(z)>p,$$
extends continuously to the closed half-plane $\{\Re(z)\geq p\},$ then
$$\lim_{t\to\infty}t^{\frac1p}\mu(t,V)=(\frac{c_V}{p})^{\frac1p}.$$
\end{thm}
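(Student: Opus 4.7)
The plan is to reduce the claim to the classical Wiener--Ikehara Tauberian theorem by reinterpreting $\mathrm{Tr}(V^z)$ as a Laplace--Stieltjes transform of a nondecreasing function. Write $\lambda_n := \mu(n,V)$; since $V\geq 0$ is compact these are the eigenvalues of $V$ in decreasing order, and $\lambda_n \searrow 0$. The harmless rescaling $V \mapsto V/\|V\|_\infty$ multiplies $c_V$ by $\|V\|_\infty^{-p}$ and preserves the continuous-extension hypothesis (because $z\mapsto \|V\|_\infty^{-z}$ is entire and nonvanishing), so I may assume $\|V\|_\infty\leq 1$, which guarantees $-\log\lambda_n \geq 0$ for every $n$.

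Next I introduce the nondecreasing, right-continuous counting function
\begin{equation*}
N(T) := \#\{n\geq 0 : \lambda_n \geq e^{-T}\}, \qquad T\geq 0.
\end{equation*}
The hypothesis $V\in\mathcal{L}_{p,\infty}$ forces $N(T) = O(e^{pT})$, so for $\Re(z)>p$ the Laplace--Stieltjes identity $\mathrm{Tr}(V^z) = \sum_n \lambda_n^z = \int_0^\infty e^{-zT}\,dN(T)$ holds. The change of variables $w = z/p$ and $M(T) := N(T/p)$ rewrites this as $\mathrm{Tr}(V^{pw}) = \int_0^\infty e^{-wT}\,dM(T)$, and the partial fraction $c_V/(pw-p) = (c_V/p)/(w-1)$ turns the hypothesis into the statement that
\begin{equation*}
\int_0^\infty e^{-wT}\,dM(T) - \frac{c_V/p}{w-1}
\end{equation*}
extends continuously to $\{\Re(w)\geq 1\}$.

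At this point the classical Wiener--Ikehara theorem (Korevaar, Thm.~III.4.1 of \cite{K2004}) applies directly to the nondecreasing $M$ and yields $M(T)e^{-T} \to c_V/p$ as $T\to\infty$, equivalently $N(T)e^{-pT} \to c_V/p$. Setting $\lambda = e^{-T}$ this says $\#\{n : \lambda_n\geq\lambda\} \sim (c_V/p)\lambda^{-p}$ as $\lambda\to 0^+$. A routine sandwich argument inverts the counting asymptotic: for any $\varepsilon>0$ the bounds $(c_V/p - \varepsilon)\lambda^{-p} \leq \#\{n:\lambda_n\geq\lambda\} \leq (c_V/p + \varepsilon)\lambda^{-p}$ (valid for small $\lambda$), applied at $\lambda = \lambda_n$ and at $\lambda = \lambda_n^-$, force $\lambda_n$ into the interval $[(c_V/p - \varepsilon)^{1/p} n^{-1/p},\ (c_V/p + \varepsilon)^{1/p} n^{-1/p}]$ for $n$ large, hence $n^{1/p}\lambda_n \to (c_V/p)^{1/p}$. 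Since $\mu(t,V)$ is piecewise constant equal to $\lambda_{\lfloor t\rfloor}$ on $[n,n+1)$ and $(t/\lfloor t\rfloor)^{1/p}\to 1$, the discrete limit upgrades automatically to the stated continuous limit $t^{1/p}\mu(t,V)\to (c_V/p)^{1/p}$. The only substantive input is the classical Wiener--Ikehara theorem itself; the sole nontrivial bookkeeping is verifying that the dilation $z \mapsto pw$ transports both the absolute convergence of the Laplace--Stieltjes transform and the boundary continuous-extension hypothesis cleanly, which it does since it is a biholomorphism of closed right half-planes.
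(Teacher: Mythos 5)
Your proof is correct. The paper does not prove this statement at all---it simply quotes it from Section III.4 of \cite{K2004}---and your argument (passing to the eigenvalue counting function $N(T)=\#\{n:\mu(n,V)\geq e^{-T}\}$, rescaling $z=pw$ so that $\mathrm{Tr}(V^z)$ becomes a Laplace--Stieltjes transform with the pole moved to $w=1$, applying the classical Wiener--Ikehara theorem, and inverting the counting asymptotics by the standard sandwich argument) is exactly the routine reduction that justifies that citation, so it is essentially the same route with the operator-to-scalar bookkeeping made explicit.
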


The condition below first appeared in \cite{SZ2023} (see Condition 5.4.1 there).

\begin{cond}\label{asterisque condition} Let $p>2$ and let $0\leq A,B\in \mathcal{L}_{\infty}$ satisfy the following conditions:
\begin{enumerate}[{\rm (i)}]
\item\label{aca} $A^pB\in\mathcal{L}_{1,\infty},$
\item\label{acb} $A^{q-2}[A,B]\in\mathcal{L}_1$ for every $q>p,$
\item\label{acc} $B^{\frac12}AB^{\frac12}\in \mathcal{L}_{p,\infty},$ 
\item\label{acd} $[A,B^{\frac12}]\in \mathcal{L}_{\frac p 2,\infty}.$
\end{enumerate}
\end{cond}

\begin{thm}\label{nc tauberian theorem} Let $p>2$ and let $0\leq A,B\in \mathcal{L}_{\infty}$ satisfy Condition \ref{asterisque condition}. If there is a constant $c>0$ such that the function
\begin{equation*}
z\mapsto {\rm Tr}( A^z B^z)-\frac{c}{z-p} , \quad z\in\mathbb{C},\quad \Re(z)>p,
\end{equation*}
admits a continuous extension to the closed half plane $\{z\in \mathbb{C}:\Re(z)\geq p \},$ then
\begin{equation*}
\lim_{t\rightarrow\infty}t^{\frac1p}\mu(t,AB)=\big(\frac{c}{p}\big)^{\frac1p}.
\end{equation*}
\end{thm}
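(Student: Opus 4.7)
The plan is to apply the noncommutative Wiener--Ikehara Tauberian theorem (Theorem \ref{nc tauberian theorem}) to a pair of positive operators that carries the singular-value asymptotics of $T\cdot A$, where $A:=\mathrm{id}\otimes\pi_2(g)(1-\Delta_\theta)^{-1/2}$. First I set $B:=(T^*T)^{1/2}$; by Lemma \ref{root lemma matrix case} one has $B\in M_n(\mathbb{C})\otimes\mathcal{S}(\qr)$, with $\|B\|_{L_d}=\|T\|_{L_d}$. Using $A=A^*$ and $B=B^*$, a direct computation yields
\[
|TA|^2=AT^*TA=AB^2A=(AB)(BA)=(AB)(AB)^*,
\]
so $\mu(t,TA)=\mu(t,AB)$. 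The task therefore reduces to establishing $\lim_{t\to\infty}t^{1/d}\mu(t,AB)=\kappa_d\|B\|_{L_d}\|g\|_{L_d(\mathbb{S}^{d-1})}$ for the two positive operators $A,B$.

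Assume first that $g\in C^\infty(\mathbb{S}^{d-1})$ and $g\geq 0$, so that $A\geq 0$ as well. Theorem \ref{residue-matrix case} then gives that $z\mapsto(\mathrm{tr}\otimes\mathrm{Tr})(A^zB^z)$ extends meromorphically to $\{\Re z>1\}$ with a simple pole at $z=d$ of residue $d\bigl(\kappa_d\|B\|_d\|g\|_d\bigr)^d$, so in particular it admits a continuous extension to $\{\Re z\geq d\}$ after subtracting the principal part. Given a successful verification of Condition \ref{asterisque condition} for $(A,B)$ with $p=d$, Theorem \ref{nc tauberian theorem} delivers $\lim_{t\to\infty}t^{1/d}\mu(t,AB)=\kappa_d\|B\|_d\|g\|_d=\kappa_d\|T\|_d\|g\|_d$. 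For general $0\leq g\in C(\mathbb{S}^{d-1})$, I approximate by smooth $g_k\geq 0$ with $\|g-g_k\|_\infty\to 0$. Because $\pi_2(g-g_k)$ commutes with $(1-\Delta_\theta)^{-1/2}$,
\[
\bigl\|T\cdot\bigl(\mathrm{id}\otimes\pi_2(g-g_k)(1-\Delta_\theta)^{-1/2}\bigr)\bigr\|_{d,\infty}\leq\bigl\|T\cdot(\mathrm{id}\otimes(1-\Delta_\theta)^{-1/2})\bigr\|_{d,\infty}\|g-g_k\|_\infty\to 0,
\]
the first factor being finite by Lemma \ref{arbitrary index for Bessel}. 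Combining the distance formula \eqref{distance formula} with Lemma \ref{another limit lemma} then passes the asymptotic from the $g_k$ case to $g$.

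The main obstacle is the verification of Condition \ref{asterisque condition} in the Moyal setting. Concretely, I need $A^dB\in\mathcal{L}_{1,\infty}$, $A^{q-2}[A,B]\in\mathcal{L}_1$ for every $q>d$, $B^{1/2}AB^{1/2}\in\mathcal{L}_{d,\infty}$, and $[A,B^{1/2}]\in\mathcal{L}_{d/2,\infty}$. The difficulty is that $(1-\Delta_\theta)^{-1/2}$ is not compact on the Moyal plane, in contrast with the quantum-torus Dirac resolvent used in \cite{SXZ2023}; hence $A^{q-2}$ cannot be placed in any Schatten ideal and the borderline $\mathcal{L}_1$ condition (ii) is not accessible through a naive H\"older estimate. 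I intend to trade non-compactness of $A$ against the Schwartz decay of $B$: factorize $B=B_1B_2$ with $B_1,B_2\in M_n(\mathbb{C})\otimes\mathcal{S}(\qr)$ using Proposition \ref{factorisable}, extract $B^{1/2}\in M_n(\mathbb{C})\otimes\mathcal{S}(\qr)$ via Lemma \ref{root lemma matrix case}, and then interlace the Cwikel $\mathcal{L}_1$-estimate of Theorem \ref{cwikel-type estimates}(ii) with the commutator estimates of Theorem \ref{commutator-PSD}, Corollary \ref{coro1} and Lemma \ref{arbitrary index for Bessel} so that each resulting Schatten--Lorentz index strictly improves past the borderline. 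Conditions (i), (iii) and (iv) are then simpler variants of the same mechanism.
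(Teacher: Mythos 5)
Your proposal does not prove the statement at hand; it assumes it. The statement to be proved is the abstract Tauberian result itself: for \emph{arbitrary} positive bounded $A,B$ satisfying Condition~\ref{asterisque condition}, the continuous extension of $z\mapsto{\rm Tr}(A^zB^z)-\frac{c}{z-p}$ to $\{\Re(z)\geq p\}$ forces $\lim_{t\to\infty}t^{1/p}\mu(t,AB)=(c/p)^{1/p}$. Your text instead takes Theorem~\ref{nc tauberian theorem} as a black box and sketches how to apply it to the concrete operators $A=\mathrm{id}\otimes\pi_2(g)(1-\Delta_\theta)^{-1/2}$ and $B=(T^\ast T)^{1/2}$ on the Moyal plane, together with a plan to verify Condition~\ref{asterisque condition} for them. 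That is (essentially) the content of Theorem~\ref{key thm}, Lemma~\ref{section 6 main lemma} and the verification Lemma~\ref{verification of the asterisque conditions matrix case}, not of Theorem~\ref{nc tauberian theorem}; as an answer to the assigned statement it is circular, and nowhere do you give an argument connecting the hypotheses (Condition~\ref{asterisque condition} plus the analytic hypothesis on ${\rm Tr}(A^zB^z)$) to the conclusion about $\mu(t,AB)$.

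What is actually needed, and what the paper does, is the following chain. Under Condition~\ref{asterisque condition}, \cite[Theorem 5.4.2]{SZ2023} gives that $z\mapsto{\rm Tr}(A^zB^z)-{\rm Tr}((B^{1/2}AB^{1/2})^z)$ continues analytically to $\{\Re(z)>p-1\}$; combined with the hypothesis, $z\mapsto{\rm Tr}((B^{1/2}AB^{1/2})^z)-\frac{c}{z-p}$ extends continuously to $\{\Re(z)\geq p\}$. Since $0\leq B^{1/2}AB^{1/2}\in\mathcal{L}_{p,\infty}$ by Condition~\ref{asterisque condition}\,(iii), the scalar Wiener--Ikehara theorem (Theorem~\ref{wiener-ikehara thm}) yields $\lim_{t\to\infty}t^{1/p}\mu(t,B^{1/2}AB^{1/2})=(c/p)^{1/p}$. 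Finally, $AB-B^{1/2}AB^{1/2}=[A,B^{1/2}]B^{1/2}\in\mathcal{L}_{p/2,\infty}\cdot\mathcal{L}_\infty\subset(\mathcal{L}_{p,\infty})_0$ by Condition~\ref{asterisque condition}\,(iv), so Lemma~\ref{bs sep lemma} transfers the limit to $AB$. None of these steps (the reduction to the symmetrized operator, the use of the scalar Tauberian theorem, and the Ky Fan perturbation argument) appears in your proposal, so there is a genuine gap: the assigned theorem remains unproved.
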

\begin{proof} Under Condition \ref{asterisque condition} above, Theorem 5.4.2 in \cite{SZ2023} asserts that, the function
$$z\to {\rm Tr}(A^zB^z)-{\rm Tr}((B^{\frac12}AB^{\frac12})^z),\quad \Re(z)>p,$$
admits an analytic continuation to the half-plane $\{\Re(z)>p-1\}.$ This fact together with the current assumptions implies, the function
$$z\mapsto\mathrm{Tr}((B^{\frac12}AB^{\frac12})^z)-\frac{c}{z-p},\quad \Re(z)>p,$$ 
has a continuous continuation to the half-plane $\{\Re(z)\geq p\}.$ Appealing to Theorem \ref{wiener-ikehara thm}, we have
$$\lim_{t\to\infty}t^{\frac1p}\mu(t,B^{\frac12}AB^{\frac12})=(\frac{c}{p})^{\frac{1}{p}}.$$
Note that, by Condition \ref{asterisque condition} \eqref{acd}, $AB-B^{\frac12}AB^{\frac12}=[A,B^{\frac12}]B^{\frac12}\in(\mathcal{L}_{p,\infty})_0.$ The result now follows from Lemma \ref{bs sep lemma}. 
\end{proof}

\begin{cor}\label{nc tauberian corollary} Let $0\leq X,Y\in \mathcal{L}_{\infty}$ be such that $[X,Y^{\frac12}]\in(\mathcal{L}_{2,\infty})_0.$ Let $A=X^{\frac12}$ and $B=Y^{\frac12}$ satisfy Condition \ref{asterisque condition} for $p=4.$ If, for some $c>0,$ the function
$$z\mapsto{\rm Tr}(X^zY^z)-\frac{c}{z-2},\quad \Re(z)>2,$$
admits a continuous extension to the closed half-plane $\{z\in \mathbb{C}:\Re(z)\geq 2\}$, then 
\begin{equation}\label{Taub cor}
\lim _{t\rightarrow \infty}t^{\frac12}\mu (t,XY)=\big(\frac{c}{2}\big)^{\frac12}.
\end{equation}
\end{cor}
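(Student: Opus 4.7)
The plan is to deduce this corollary from Theorem \ref{nc tauberian theorem} applied to $A=X^{1/2}$, $B=Y^{1/2}$ with $p=4$. The bridge between the two settings is the identity $|AB|^2=(AB)^{\ast}(AB)=BA^{2}B=Y^{1/2}XY^{1/2}$, which allows us to extract information about $XY$ from information about $AB$, provided we can replace $XY$ by $Y^{1/2}XY^{1/2}$ modulo a negligible error.

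First I would translate the zeta-function hypothesis from $(X,Y)$ to $(A,B)$ by the substitution $w=2z$. Since $X^{z}Y^{z}=A^{2z}B^{2z}$, we obtain
\[
\mathrm{Tr}(A^{w}B^{w})-\frac{2c}{w-4}=\mathrm{Tr}(X^{w/2}Y^{w/2})-\frac{c}{w/2-2},
\]
which, by assumption, extends continuously to $\{\Re(w)\geq 4\}$. Combined with the hypothesis that $A,B$ satisfy Condition \ref{asterisque condition} for $p=4$, Theorem \ref{nc tauberian theorem} yields
\[
\lim_{t\to\infty}t^{1/4}\mu(t,AB)=\Bigl(\frac{2c}{4}\Bigr)^{1/4}=\Bigl(\frac{c}{2}\Bigr)^{1/4}.
\]

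Next I would transfer this to $\mu(t,Y^{1/2}XY^{1/2})$. Since $|AB|$ is positive and compact with $\mu(t,|AB|^{2})=\mu(t,|AB|)^{2}=\mu(t,AB)^{2}$, the previous limit gives
\[
\lim_{t\to\infty}t^{1/2}\mu(t,Y^{1/2}XY^{1/2})=\lim_{t\to\infty}\bigl(t^{1/4}\mu(t,AB)\bigr)^{2}=\Bigl(\frac{c}{2}\Bigr)^{1/2},
\]
and in particular $Y^{1/2}XY^{1/2}\in\mathcal{L}_{2,\infty}$ with the prescribed asymptotic.

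Finally, to recover $\mu(t,XY)$, I would invoke the standing hypothesis $[X,Y^{1/2}]\in(\mathcal{L}_{2,\infty})_{0}$. Writing
\[
XY-Y^{1/2}XY^{1/2}=(XY^{1/2}-Y^{1/2}X)Y^{1/2}=[X,Y^{1/2}]Y^{1/2},
\]
and using that $(\mathcal{L}_{2,\infty})_{0}$ is a two-sided ideal together with $Y^{1/2}\in\mathcal{L}_{\infty}$, the difference lies in $(\mathcal{L}_{2,\infty})_{0}$. Lemma \ref{bs sep lemma} then delivers \eqref{Taub cor}:
\[
\lim_{t\to\infty}t^{1/2}\mu(t,XY)=\lim_{t\to\infty}t^{1/2}\mu(t,Y^{1/2}XY^{1/2})=\Bigl(\frac{c}{2}\Bigr)^{1/2}.
\]
The main obstacle is really just bookkeeping: correctly tracking the residue constant under the rescaling $z\mapsto 2z$, and checking that Condition \ref{asterisque condition} at $p=4$ plus the separable commutator hypothesis are genuinely what is needed to glue together the Wiener--Ikehara output for $AB$ with the asymptotic for $XY$; the analytic heavy lifting has already been done inside Theorem \ref{nc tauberian theorem}.
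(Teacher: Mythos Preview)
Your proposal is correct and follows essentially the same approach as the paper: substitute $w=2z$ to convert the zeta hypothesis into one for $A,B$ with residue $2c$ at $w=4$, apply Theorem \ref{nc tauberian theorem} with $p=4$, pass from $\mu(t,AB)$ to $\mu(t,Y^{1/2}XY^{1/2})$ via $|AB|^2=Y^{1/2}XY^{1/2}$, and finish with $XY-Y^{1/2}XY^{1/2}=[X,Y^{1/2}]Y^{1/2}\in(\mathcal{L}_{2,\infty})_0$ and Lemma \ref{bs sep lemma}.
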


\begin{proof} By assumption, the function 
$$z\mapsto{\rm Tr}(A^zB^z)-\frac{2c}{z-4},\quad \Re(z)>4,$$
admits a continuous continuation to the closed half-plane $\{z\in \mathbb{C}:\Re(z)\geq 4\}.$

Hence, the operators $A$ and $B$ satisfy the assumption in Theorem \ref{nc tauberian theorem} with $p=4$, it follows that
$\lim_{t\rightarrow \infty}t^{\frac14}\mu (t,X^{\frac12}Y^{\frac12})=\lim_{t\rightarrow \infty}t^{\frac14}\mu (t,AB)=\big(\frac{2c}{4}\big)^{\frac14}=\big(\frac{c}{2}\big)^{\frac14}.$
Observing that $ \mu(Y^{\frac12}XY^{\frac12})=\mu((X^{\frac12}Y^{\frac12})^{\ast}(X^{\frac12}Y^{\frac12}))=\mu(X^{\frac12}Y^{\frac12})^2,$ we obtain
$$ \lim_{t\rightarrow \infty}t^{\frac12}\mu (t,Y^{\frac12}XY^{\frac12})=\big(\frac{c}{2}\big)^{\frac12}.$$
Since by assumption, $XY-Y^{\frac12}XY^{\frac12}=[X,Y^{\frac12}]\cdot Y^{\frac12}\in(\mathcal{L}_{2,\infty})_0,$
the result follows immediately from Lemma \ref{bs sep lemma}.
\end{proof}

The next result \cite[Lemma 2.3]{LSZ2020} allows us to ignore matrix tensor factors in weak Schatten class estimates.
\begin{lem}\label{matrix factor ignored}
If $x\in M_n(\mathbb{C})$ and $y\in\mathcal{L}_{p,\infty}(H),$ then $x\otimes y\in\mathcal{L}_{p,\infty}(\mathbb{C}^n\otimes H)$ and
$$\|x\otimes y\|_{p,\infty}\leq \|x\|_p\|y\|_{p,\infty},\quad p>0.$$ 
\end{lem}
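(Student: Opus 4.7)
The idea is to reduce to the case where $x$ is diagonal by means of the singular value decomposition, and then to exploit the fact that a tensor product with a diagonal matrix is a block-diagonal operator whose singular values are obtained simply by multiplying and rearranging.

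Write the SVD $x = U\,\Sigma\, V^{\ast}$, where $U,V\in M_n(\mathbb{C})$ are unitary and $\Sigma = \mathrm{diag}(s_1,\ldots,s_n)$ with $s_1\geq\cdots\geq s_n\geq 0$. Since $U\otimes 1_H$ and $V^{\ast}\otimes 1_H$ are unitaries on $\mathbb{C}^n\otimes H$, the unitary invariance of singular values yields
\begin{equation*}
\mu(t, x\otimes y) \;=\; \mu\bigl(t,\,\Sigma\otimes y\bigr),\qquad t\geq 0.
\end{equation*}
Under the identification $\mathbb{C}^n\otimes H \cong \bigoplus_{i=1}^n H$, the operator $\Sigma\otimes y$ is the block diagonal operator $\bigoplus_{i=1}^n s_i y$, so its eigenvalues (for $|\Sigma\otimes y|$) are the multiset
\begin{equation*}
\bigl\{\, s_i\,\mu(k,y)\;:\; 1\leq i\leq n,\ k\geq 0\,\bigr\},
\end{equation*}
arranged in decreasing order.

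Next I would estimate the distribution function. For $\lambda>0$, set $N(\lambda):=|\{(i,k): s_i\mu(k,y)>\lambda\}|$. Summing the single-operator bound $|\{k\geq 0:\mu(k,y)>\lambda/s_i\}|\leq (s_i/\lambda)^{p}\|y\|_{p,\infty}^{p}$, which is the standard reformulation of the weak-$L^{p}$ quasi-norm, one obtains
\begin{equation*}
N(\lambda) \;=\; \sum_{i=1}^{n}\bigl|\{k:\mu(k,y)>\lambda/s_i\}\bigr| \;\leq\; \frac{\|y\|_{p,\infty}^{p}}{\lambda^{p}}\sum_{i=1}^{n}s_i^{p} \;=\; \frac{\|x\|_{p}^{p}\,\|y\|_{p,\infty}^{p}}{\lambda^{p}}.
\end{equation*}
Since $\mu(t,x\otimes y)>\lambda$ forces $N(\lambda)>t$, this translates to $t\,\mu(t,x\otimes y)^{p}\leq \|x\|_p^{p}\|y\|_{p,\infty}^{p}$, i.e.
\begin{equation*}
t^{1/p}\mu(t, x\otimes y) \;\leq\; \|x\|_{p}\,\|y\|_{p,\infty}, \qquad t>0.
\end{equation*}
Taking the supremum over $t>0$ gives $x\otimes y\in\mathcal{L}_{p,\infty}(\mathbb{C}^n\otimes H)$ together with the stated inequality.

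There is no real obstacle here; the only mild subtlety is keeping the counting argument consistent with the chosen normalization of the singular value function (so that $\mu(t,y)>\lambda$ is correctly equivalent to $t<d_y(\lambda)$). One could equivalently phrase the proof by noting that the rank-one summands in the SVD expansion $x\otimes y = \sum_{i}s_i (u_iv_i^{\ast}\otimes y)$ are pairwise orthogonal in the sense of Lemma~\ref{bs dirsum lemma}, and then use the superadditivity $\|\sum_i T_i\|_{p,\infty}^{p}\leq \sum_i \|T_i\|_{p,\infty}^{p}$ for pairwise orthogonal operators, which follows from the same distribution-function additivity used above.
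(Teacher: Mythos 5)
Your argument is correct and complete. Note, though, that the paper does not prove this lemma at all: it is imported verbatim as \cite[Lemma 2.3]{LSZ2020}, so there is no internal proof to compare against. What you supply is an elementary, self-contained verification: reduce to $\Sigma\otimes y$ by unitary invariance via the SVD, identify the singular values of the block-diagonal operator with the multiset $\{s_i\mu(k,y)\}$, and then run the distribution-function count $N(\lambda)\leq \lambda^{-p}\|y\|_{p,\infty}^{p}\sum_i s_i^p$, which converts into the pointwise bound $t^{1/p}\mu(t,x\otimes y)\leq\|x\|_p\|y\|_{p,\infty}$. This is exactly the kind of argument behind the cited result, and it has the virtue of making the constant $\|x\|_p$ (rather than, say, $n^{1/p}\|x\|_\infty$) transparent; the paper's citation buys only brevity. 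One small caveat on your closing remark: the summands $s_i(u_iv_i^{\ast}\otimes y)$ satisfy $T_k^{\ast}T_l=0$ and $T_kT_l^{\ast}=0$ for $k\neq l$, which is what the distribution-function additivity needs, but they need not satisfy $T_kT_l=0$, which is part of the paper's stated definition of pairwise orthogonality preceding Lemma \ref{bs dirsum lemma} (since $v_k^{\ast}u_l$ can be nonzero). So the alternative phrasing is sound in substance but should invoke orthogonality of ranges and co-ranges rather than that lemma's hypotheses as literally stated; your main counting argument is unaffected.
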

In the next two lemmas, we verify the conditions in Theorem \ref{nc tauberian theorem} and Corollary \ref{nc tauberian corollary} for sufficiently good functions and operators. We start with the case $d>2.$ 
\begin{lem}\label{verification of the asterisque conditions matrix case} Assume $d>2.$ Let $A=\mathrm{id}\otimes\pi_2(g)(1-\Delta_{\theta})^{-\frac12}$ with $0\leq g\in C^{\infty}(\mathbb{S}^{d-1}),$ and let $0\leq B\in M_n(\mathbb{C})\otimes\mathcal{S}(\qr).$  Then the operators $A$ and $B$ satisfy Condition \ref{asterisque condition} for $p=d.$
\end{lem}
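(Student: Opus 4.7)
The plan is to verify the four clauses of Condition 4.1 (with $p=d$) in order, exploiting that $B^{1/2}$ still lies in $M_n(\mathbb{C})\otimes\mathcal{S}(\qr)$ by Lemma \ref{root lemma matrix case} and that matrix tensor factors may be absorbed by means of Lemma \ref{matrix factor ignored}. Throughout, I work with a single entry $B_{ij}$ or $(B^{1/2})_{ij}$ in $\mathcal{S}(\qr)$ and then reassemble.

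For clause (i), write $A^d=\mathrm{id}\otimes\pi_2(g^d)(1-\Delta_{\theta})^{-d/2}$, so each entry of $A^dB$ is $\pi_2(g^d)(1-\Delta_{\theta})^{-d/2}\pi_1(B_{ij})$. Since $\pi_2(g^d)$ is bounded and commutes with the resolvent, passing to the adjoint reduces the question to $\pi_1(B_{ij}^*)(1-\Delta_{\theta})^{-d/2}\in\mathcal{L}_{1,\infty}$, which is Lemma \ref{arbitrary index for Bessel} with $\beta=d$.

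For clause (iii), it suffices to show $B^{1/2}A\in\mathcal{L}_{d,\infty}$, since right multiplication by the bounded operator $B^{1/2}$ preserves this ideal. The entries have the form $\pi_1((B^{1/2})_{ij})\pi_2(g)(1-\Delta_{\theta})^{-1/2}$; splitting off a commutator,
\begin{equation*}
\pi_1((B^{1/2})_{ij})\pi_2(g)(1-\Delta_{\theta})^{-1/2}=[\pi_1((B^{1/2})_{ij}),\pi_2(g)](1-\Delta_{\theta})^{-1/2}+\pi_2(g)\pi_1((B^{1/2})_{ij})(1-\Delta_{\theta})^{-1/2}.
\end{equation*}
The first summand lies in $(\mathcal{L}_{d,\infty})_0$ by Corollary \ref{coro1}, and the second is a bounded operator times an element of $\mathcal{L}_{d,\infty}$ provided by Lemma \ref{arbitrary index for Bessel} with $\beta=1$. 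For clause (iv), I apply Theorem \ref{commutator-PSD} entrywise with $x=(B^{1/2})_{ij}$, $\alpha=-1$ and $\beta=0$ (so $\alpha\leq\beta+1$ reads $-1\leq 1$); each entry of $[A,B^{1/2}]$ sits in $\mathcal{L}_{d/2,\infty}$, and Lemma \ref{matrix factor ignored} yields $[A,B^{1/2}]\in\mathcal{L}_{d/2,\infty}$.

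Clause (ii) is the main obstacle: the position of $A^{q-2}$ does not fit Theorem \ref{commutator-PSD} directly, and the hypothesis $q>d$ enters in an essential way. The key move is to pass to the adjoint so that $A^{q-2}[A,B]$ and $-[A,B]A^{q-2}$ share singular values; this puts the resolvent factor on the right. On each entry I commute the bounded multiplier $\pi_2(g^{q-2})$ past $(1-\Delta_{\theta})^{-(q-2)/2}$ to reduce matters to
\begin{equation*}
[\pi_1(B_{ij}),\pi_2(g)(1-\Delta_{\theta})^{-1/2}](1-\Delta_{\theta})^{-(q-2)/2}.
\end{equation*}
Theorem \ref{commutator-PSD} with $\alpha=-1$ and $\beta=q-2$ places this in $\mathcal{L}_{d/q,\infty}$, and the Schatten--Lorentz inclusion $\mathcal{L}_{d/q,\infty}\subset\mathcal{L}_1$ (valid precisely because $d/q<1$, i.e.\ $q>d$) supplies the required trace-class membership, completing the verification.
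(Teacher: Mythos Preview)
Your proof is correct and follows essentially the same route as the paper: clauses (i) and (iv) are handled identically, and for clause (ii) you make explicit the adjoint step that the paper leaves implicit when it places $(1-\Delta_\theta)^{1-q/2}$ to the left of the commutator yet invokes Theorem~\ref{commutator-PSD}. The only minor point is that your treatment of (iii) is slightly more elaborate than needed: since $\pi_2(g)$ commutes with $(1-\Delta_\theta)^{-1/2}$, one may write $\pi_1((B^{1/2})_{ij})\pi_2(g)(1-\Delta_\theta)^{-1/2}=\pi_1((B^{1/2})_{ij})(1-\Delta_\theta)^{-1/2}\pi_2(g)$ and apply Lemma~\ref{arbitrary index for Bessel} directly, avoiding the commutator decomposition; the paper proceeds in this simpler fashion (working with $B^{1/2}AB^{1/2}$ rather than $B^{1/2}A$).
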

\begin{proof} Let $\{E_{i,j}\}_{i,j=1}^n\subset M_n(\mathbb{C})$ be matrix units. We write 
$$A^d=\mathrm{id}\otimes\pi_2(g^d)(1-\Delta_{\theta})^{-\frac{d}{2}},\quad B=\sum_{i,j} E_{i,j}\otimes B_{i,j},\quad B_{i,j}\in\mathcal{S}(\qr).$$
Since $\pi_2(g)$ commutes with $(1-\Delta_{\theta})^{-\frac12},$ we have	
$$A^dB=\sum_{i,j }E_{i,j}\otimes\pi_2(g^d) (1-\Delta_{\theta})^{-\frac{d}{2}}B_{i,j}.$$
By taking $\beta=d$ in Lemma \ref{arbitrary index for Bessel}, we deduce that for  all $1\leq i,j\leq n,$
$$\pi_2(g^d)\cdot  (1-\Delta_{\theta})^{-\frac{d}{2}}B_{i,j}\in\mathcal{L}_{1,\infty}.$$
In view of Lemma \ref{matrix factor ignored},
we have $A^dB\in\mathcal{L}_{1,\infty}.$ This verifies the Condition \ref{asterisque condition} \eqref{aca}.

For every $q>d,$ we have
$$A^{q-2}[A,B]=
\sum_{i,j} E_{i,j}\otimes\pi_2(g^{q-2}) (1-\Delta_{\theta})^{1-\frac{q}{2}}[\pi_2(g)(1-\Delta_{\theta})^{-\frac12},B_{i,j}].$$
Taking $\beta=q-2$ and $\alpha=-1$ in Theorem \ref{commutator-PSD}, we conclude that
$$\pi_2(g^{q-2})\cdot (1-\Delta_{\theta})^{1-\frac{q}{2}}[\pi_2(g)(1-\Delta_{\theta})^{-\frac12},B_{i,j}]\in\mathcal{L}_{\infty}\cdot\mathcal{L}_{\frac{d}{q},\infty}\subset\mathcal{L}_1.$$ 
Thus, $A^{q-2}[A,B]\subset\mathcal{L}_1.$ This verifies the Condition \ref{asterisque condition} \eqref{acb}.

By Lemma \ref{root lemma matrix case}, we have $B^{\frac12}\in M_n(\mathbb{C})\otimes\mathcal{S}(\qr).$ Write $B^{\frac12}=\sum_{i,j }E_{i,j}\otimes(B^{\frac12})_{i,j},$ with $(B^{\frac12})_{i,j}\in\mathcal{S}(\qr).$ By Lemma \ref{arbitrary index for Bessel},
$(1-\Delta_{\theta})^{-\frac12} (B^{\frac12})_{i,j}\in\mathcal{L}_{d,\infty}$ for $ 1\leq i,j\leq n.$
Hence,
$$B^{\frac12}AB^{\frac12}=\sum_{i,j,k,l} E_{i,j}E_{k,l}\otimes(B^{\frac12})_{i,j}\pi_2(g) (1-\Delta_{\theta})^{-\frac12} (B^{\frac12})_{k,l}\in\mathcal{L}_{d,\infty}.$$
This verifies the Condition \ref{asterisque condition} \eqref{acc}. Further, we write
$$[A,B^{\frac12}]=\sum_{i,j} E_{i,j}\otimes[\pi_2(g)(1-\Delta_{\theta})^{-\frac12},(B^{\frac12})_{i,j}].$$
Taking $\alpha=-1$ and $\beta=0$ in Theorem \ref{commutator-PSD}, we obtain
$$[\pi_2(g)(1-\Delta_{\theta})^{-\frac12},(B^{\frac12})_{i,j}]\in\mathcal{L}_{\frac{d}{2},\infty},\quad 1\leq i,j\leq n.$$
It follows that $[A,B^{\frac12}]\in\mathcal{L}_{\frac{d}{2},\infty}.$
This verifies the Condition \ref{asterisque condition} \eqref{acd}.
\end{proof}

The case $d=2$ is handled as follows:
\begin{lem}\label{verification of the asterisque conditions matrix case d=2} Let $g\in C^{\infty}(\mathbb{S}^1)$ be strictly positive. Let $X=\mathrm{id}\otimes\pi_2(g)(1-\Delta_{\theta})^{-\frac12}$ and $0\leq Y\in M_n(\mathbb{C})\otimes\mathcal{S}(\mathbb{R}^2_\theta).$ Then $[X,Y^{\frac12}]\in(\mathcal{L}_{2,\infty})_0,$ and the operators $A:=X^{\frac12}$ and $B:=Y^{\frac12}$ satisfy Condition \ref{asterisque condition} for $p=4.$ 
\end{lem}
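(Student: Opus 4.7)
The plan is to transcribe the proof of Lemma~\ref{verification of the asterisque conditions matrix case} with the parameter choices $d=2$ and $p=4$, using the square roots $A=X^{1/2}$ and $B=Y^{1/2}$ to compensate for $p=d=2$ being below the threshold $p>2$ imposed in Condition~\ref{asterisque condition}. This is exactly the setup demanded by Corollary~\ref{nc tauberian corollary}, which is why the lemma is stated with these specific $A,B$.

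As preparation, note that since $g\in C^\infty(\mathbb{S}^1)$ is strictly positive, $g^{1/2}\in C^\infty(\mathbb{S}^1)$, and since $\pi_2(g)$ and $(1-\Delta_{\theta})^{-1/2}$ commute (both act as multiplication operators on $L_2(\mathbb{R}^2)$), the functional calculus gives $A=\mathrm{id}\otimes\pi_2(g^{1/2})(1-\Delta_{\theta})^{-1/4}$. Applying Lemma~\ref{root lemma matrix case} twice in succession shows that both $B=Y^{1/2}$ and $B^{1/2}=Y^{1/4}$ lie in $M_n(\mathbb{C})\otimes\mathcal{S}(\mathbb{R}^2_\theta)$. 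Denote the matrix components (with respect to the matrix units $E_{i,j}$) by $(Y^{1/2})_{i,j},(Y^{1/4})_{i,j}\in\mathcal{S}(\mathbb{R}^2_\theta)$.

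For the auxiliary condition $[X,Y^{1/2}]\in(\mathcal{L}_{2,\infty})_0$, decompose the commutator as a finite sum $\sum_{i,j}E_{i,j}\otimes[\pi_2(g)(1-\Delta_{\theta})^{-1/2},(Y^{1/2})_{i,j}]$; each summand lies in $(\mathcal{L}_{2,\infty})_0$ by Corollary~\ref{coro1} (with $d=2$), and since $(\mathcal{L}_{2,\infty})_0$ is a closed subspace the sum also belongs there. The four parts of Condition~\ref{asterisque condition} for $p=4$ are then verified separately by computing components and invoking the core estimates of Section~\ref{sec-commutator}. For (i), $A^4 B=X^2 B$ has components $\pi_2(g^2)(1-\Delta_{\theta})^{-1}(Y^{1/2})_{i,j}$; after taking adjoints and applying Lemma~\ref{arbitrary index for Bessel} with $\beta=2$ these sit in $\mathcal{L}_{1,\infty}$. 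For (ii), the components of $A^{q-2}[A,B]$ read $\pi_2(g^{(q-2)/2})(1-\Delta_{\theta})^{-(q-2)/4}\,[\pi_2(g^{1/2})(1-\Delta_{\theta})^{-1/4},(Y^{1/2})_{i,j}]$, and Theorem~\ref{commutator-PSD} with $\alpha=-1/2$, $\beta=(q-2)/2$ places them in $\mathcal{L}_{4/q,\infty}\subset\mathcal{L}_1$ for $q>4$. For (iii), each summand of $B^{1/2}AB^{1/2}$ is controlled by a factor $\pi_1((B^{1/2})_{k,l})(1-\Delta_{\theta})^{-1/4}\in\mathcal{L}_{4,\infty}$ coming from Lemma~\ref{arbitrary index for Bessel} with $\beta=1/2$. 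For (iv), components of $[A,B^{1/2}]$ are handled by Theorem~\ref{commutator-PSD} with $\alpha=-1/2$, $\beta=0$, giving membership in $\mathcal{L}_{4/3,\infty}\subset\mathcal{L}_{2,\infty}$.

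No genuine obstacle arises; the argument is a parallel of Lemma~\ref{verification of the asterisque conditions matrix case} with the exponents shifted by the square-root trick. The only point worth flagging is the role of strict positivity of $g$: it ensures that all fractional powers $g^{1/2}$ and $g^{(q-2)/2}$ are smooth bounded functions on $\mathbb{S}^1$, so that the multiplication operators $\pi_2(g^{1/2})$ and $\pi_2(g^{(q-2)/2})$ remain bounded and keep the iterated commutator estimates within the scope of Theorem~\ref{commutator-PSD}.
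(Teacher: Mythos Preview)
Your proof is correct and follows essentially the same route as the paper: the paper omits parts \eqref{aca} and \eqref{acc} as ``identical to the $d>2$ case'', handles \eqref{acb} and \eqref{acd} via Theorem~\ref{commutator-PSD} with exactly your parameter choices, and derives $[X,Y^{1/2}]\in(\mathcal{L}_{2,\infty})_0$ directly from Theorem~\ref{commutator-PSD} (with $\alpha=-1$, $\beta=0$, landing in $\mathcal{L}_{1,\infty}$) rather than via Corollary~\ref{coro1} as you do---an immaterial difference. One harmless slip: in part \eqref{acb} your parameters $\alpha=-1/2$, $\beta=(q-2)/2$ yield the index $\frac{2}{\beta-\alpha+1}=\frac{4}{q+1}$, not $\frac{4}{q}$; since both are $<1$ for $q>4$, the inclusion in $\mathcal{L}_1$ is unaffected.
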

\begin{proof} Verification of Condition \ref{asterisque condition} \eqref{aca}, \eqref{acc} is identical to the one for $d>2$ (see the proof of Lemma \ref{verification of the asterisque conditions matrix case} above) and is, therefore, omitted.

By Lemma \ref{root lemma matrix case}, $B\in M_n(\mathbb{C})\otimes \mathcal{S}(\mathbb{R}^2_\theta).$ We may write $B=\sum_{i,j} E_{i,j}\otimes B_{i,j},$ with $B_{i,j}\in\mathcal{S}(\mathbb{R}^2_\theta)$ for all $i,j=1,\cdots,n.$ Then for every $q>4,$ we have
\begin{equation*}
A^{q-2}[A,B]=\sum_{i,j} E_{i,j}\otimes\pi_2(g^{\frac{q}{2}-1}) (1-\Delta_{\theta})^{\frac12-\frac{q}{4}}[\pi_2(g^{\frac12})(1-\Delta_{\theta})^{-\frac14},B_{i,j}].
\end{equation*}
Since $g\in C^{\infty}(\mathbb{S}^1)$ is strictly positive, it follows that $g^{\frac12}\in C^{\infty}(\mathbb{S}^1).$  
Taking $\alpha=-\frac12,$ $\beta=\frac{q}{2}-1$ in Theorem \ref{commutator-PSD}, we infer that for all $1\leq i,j\leq n,$
$$\pi_2(g^{\frac{q}{2}-1})\cdot (1-\Delta_{\theta})^{\frac12-\frac{q}{4}}[\pi_2(g^{\frac12})(1-\Delta_{\theta})^{-\frac14},B_{i,j}]\in\mathcal{L}_{\infty}\cdot\mathcal{L}_{\frac{4}{q+1}}\subset\mathcal{L}_1.$$
Thus, $A^{q-2}[A,B]\in\mathcal{L}_1.$
This verifies the Condition \ref{asterisque condition} \eqref{acb}.

Again by Lemma \ref{root lemma matrix case}, $B^{\frac12}\in M_n(\mathbb{C})\otimes\mathcal{S}(\mathbb{R}^2_\theta).$
Taking $\alpha=-\frac12$ and $\beta=0$ in Theorem \ref{commutator-PSD}, we obtain
\begin{equation*}
[A,B^{\frac12}]=\sum_{i,j} E_{i,j}\otimes[\pi_2(g^{\frac12})(1-\Delta_{\theta})^{-\frac14},(B^{\frac12})_{i,j}]\in\mathcal{L}_{\frac43,\infty}\subset\mathcal{L}_{2,\infty}.
\end{equation*}
This verifies the Condition \ref{asterisque condition} \eqref{acd}.
The result $[X,Y^{\frac12}]\in(\mathcal{L}_{2,\infty})_0$ follows directly from Theorem \ref{commutator-PSD} by taking $\alpha=-1$ and $\beta=0$ there.
\end{proof}

Having done these verifications, 
we are now able to apply the Tauberian theorem to establish the following spectral asymptotics.
\begin{lem}\label{section 6 main lemma} Let $d\geq2.$ If $0\leq T\in M_n(\mathbb{C})\otimes\mathcal{S}(\qr)$ and $0\leq g\in C^{\infty}(\mathbb{S}^{d-1}),$ then 
$$\lim_{t\to\infty}t^{\frac1d}\mu\big(t,T\big(\mathrm{id}\otimes\pi_2(g)(1-\Delta_{\theta})^{-\frac12}\big)\big)=\kappa_d \left\|T\right\|_{L_d(M_n(\mathbb{C})\bar{\otimes} L_\infty(\qr))}\|g\|_{L_d(\mathbb{S}^{d-1})}.$$
\end{lem}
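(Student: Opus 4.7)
The plan is to deduce this fundamental spectral asymptotic from the noncommutative Wiener--Ikehara machinery (Theorem \ref{nc tauberian theorem} for $d>2,$ Corollary \ref{nc tauberian corollary} for $d=2$), combined with the explicit zeta-function residue computed in Theorem \ref{residue-matrix case}. Setting aside the trivial case when $\|T\|_d=0$ or $\|g\|_d=0$ (where $TA=0$ and the limit is zero), we may assume the residue below is positive. For $d>2,$ take $A=\mathrm{id}\otimes\pi_2(g)(1-\Delta_\theta)^{-1/2}$ and $B=T,$ both positive. Lemma \ref{verification of the asterisque conditions matrix case} guarantees that $(A,B)$ satisfies Condition \ref{asterisque condition} with $p=d.$ By Theorem \ref{residue-matrix case}, the zeta function $z\mapsto(\mathrm{tr}\otimes\mathrm{Tr})(A^zB^z)$ is meromorphic on $\{\Re(z)>1\}$ with poles only at $2,4,\dots,d,$ and the residue at $z=d$ equals $d\kappa_d^d\|T\|_d^d\|g\|_d^d;$ subtracting this principal part yields a function continuous on the closed half-plane $\{\Re(z)\geq d\}.$ Theorem \ref{nc tauberian theorem} then produces $\lim_{t\to\infty} t^{1/d}\mu(t,AB)=\kappa_d\|T\|_d\|g\|_d,$ and since $T,A$ are self-adjoint one has $\mu(t,TA)=\mu(t,AT),$ giving the desired identity.

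For the case $d=2,$ I would first assume $g$ strictly positive. Setting $X=\mathrm{id}\otimes\pi_2(g)(1-\Delta_\theta)^{-1/2}$ and $Y=T,$ strict positivity ensures $g^{1/2}\in C^\infty(\mathbb{S}^1),$ and Lemma \ref{verification of the asterisque conditions matrix case d=2} then provides both $[X,Y^{1/2}]\in(\mathcal{L}_{2,\infty})_0$ and Condition \ref{asterisque condition} for $A=X^{1/2},B=Y^{1/2}$ at $p=4.$ Theorem \ref{residue-matrix case} applied with $d=2$ yields a continuous extension of $\mathrm{Tr}(X^zY^z)$ across $\{\Re(z)=2\}$ after subtracting the principal part with residue $2\kappa_2^2\|T\|_2^2\|g\|_2^2$ at $z=2.$ Corollary \ref{nc tauberian corollary} then delivers $\lim_{t\to\infty}t^{1/2}\mu(t,XY)=\kappa_2\|T\|_2\|g\|_2,$ which coincides with $\lim_{t\to\infty}t^{1/2}\mu(t,TA)$ by self-adjointness.

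To remove the strict positivity for $d=2,$ I would approximate by $g_\epsilon:=g+\epsilon\in C^\infty(\mathbb{S}^1),$ and set $T_\epsilon:=T\cdot(\mathrm{id}\otimes\pi_2(g_\epsilon)(1-\Delta_\theta)^{-1/2}).$ The previous paragraph, applied to $g_\epsilon,$ yields $\lim_{t\to\infty} t^{1/2}\mu(t,T_\epsilon)=\kappa_2\|T\|_2\|g_\epsilon\|_{L_2(\mathbb{S}^1)}.$ The difference $T_\epsilon-T_0=\epsilon\,T\cdot(\mathrm{id}\otimes(1-\Delta_\theta)^{-1/2})$ lies in $\mathcal{L}_{2,\infty}$ with quasi-norm $O(\epsilon),$ as can be seen by writing $T=\sum_{i,j}E_{i,j}\otimes T_{i,j}$ with $T_{i,j}\in\mathcal{S}(\mathbb{R}^2_\theta)$ and combining Lemma \ref{arbitrary index for Bessel} (with $\beta=1$) with Lemma \ref{matrix factor ignored}. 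Consequently $\mathrm{dist}_{\mathcal{L}_{2,\infty}}(T_\epsilon-T_0,(\mathcal{L}_{2,\infty})_0)\to 0,$ and Lemma \ref{another limit lemma}, together with $\|g_\epsilon\|_{L_2(\mathbb{S}^1)}\to\|g\|_{L_2(\mathbb{S}^1)}$ by dominated convergence, transfers the asymptotic to $T_0.$ The principal obstacle is precisely this $d=2$ subtlety: Theorem \ref{nc tauberian theorem} requires $p>2,$ forcing the route through square roots and the strict positivity assumption on $g$ that must subsequently be lifted by the above approximation argument.
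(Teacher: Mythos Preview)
Your proof is correct and follows essentially the same route as the paper: for $d>2$ you combine Lemma \ref{verification of the asterisque conditions matrix case}, Theorem \ref{residue-matrix case} and Theorem \ref{nc tauberian theorem}; for $d=2$ you first treat strictly positive $g$ via Lemma \ref{verification of the asterisque conditions matrix case d=2} and Corollary \ref{nc tauberian corollary}, then approximate $g$ by $g+\epsilon$ and pass to the limit with Lemma \ref{another limit lemma}. The one cosmetic difference is that, to pass from $\mu(t,AB)$ to $\mu(t,BA)=\mu(t,TA)$, the paper invokes the commutator estimate $[A,B]\in(\mathcal{L}_{d/2,\infty})\subset(\mathcal{L}_{d,\infty})_0$ together with Lemma \ref{bs sep lemma}, whereas you use the elementary observation that $T$ and $A$ are both self-adjoint so $(TA)^\ast=AT$ and hence $\mu(t,TA)=\mu(t,AT)$; your shortcut is valid here and slightly cleaner.
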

\begin{proof}
In the case $d>2,$ we take $A=1\otimes\pi_2(g)(1-\Delta_{\theta})^{-\frac12}$ and $B=T.$ By Lemma \ref{verification of the asterisque conditions matrix case} and Corollary \ref{residue-matrix case}, the operators $A$ and $B$ satisfy the assumption of Theorem \ref{nc tauberian theorem} with $p=d$ and $c=d\kappa_d^d.$ It follows that
$$\lim_{t\to\infty}t^{\frac1d}\mu(AB)=\kappa_d \left\|T\right\|_{L_d(M_n(\mathbb{C})\bar{\otimes} L_\infty(\qr))}\|g\|_{L_d(\mathbb{S}^{d-1})}.$$
Note that by Theorem \ref{commutator-PSD} we have $AB-BA\in\mathcal{L}_{\frac{d}{2},\infty}\subset(\mathcal{L}_{d,\infty})_0.$
Applying Lemma \ref{bs sep lemma} proves the statement for $d>2.$
It remains to deal with the case $d=2.$ 
For this purpose, we set
$X:=\mathrm{id}\otimes\pi_2(g)(1-\Delta_{\theta})^{-\frac12},$ $Y:=T,$
$g_m:=g+\frac{1}{m},~m\in\mathbb{N},$ and $ X_m:=\mathrm{id}\otimes\pi_2(g_m)(1-\Delta_{\theta})^{-\frac12}.$
By Lemma \ref{verification of the asterisque conditions matrix case d=2} 
and Corollary \ref{residue-matrix case}, the operators $X_m$ and $Y$ with $c=2(\kappa_2\|T\|_2\|g_m\|_2)^2,$ satisfy the assumption in Corollary \ref{nc tauberian corollary}. It then follows that
$$
\lim_{t\to\infty}t^{\frac12}\mu(t,YX_m)=(\frac{c}{2})^{\frac12}=\kappa_2\|T\|_2\|g_m\|_2 .
$$
Write $Y=\sum_{i,j}E_{i,j}\otimes T_{i,j}$ with $T_{i,j}\in\mathcal{S}(\qr)$ for all $1\leq i,j\leq n.$ By the quasi-triangle inequality, we have
$
\|YX_m-YX\|_{2,\infty}\lesssim \frac{1}{m} \max_{i,j}\|T_{i,j}(1-\Delta_{\theta})^{-\frac12}\|_{2,\infty}\to0,$ as $m\to\infty.$
Meanwhile, note that by construction $\|g_m\|_2\to\|g\|_2$ as $m\to\infty.$
Combining these facts with Lemma \ref{another limit lemma}, we conclude that
$$\lim_{t\to\infty}t^{\frac12}\mu(t,YX)=\kappa_2\|T\|_2\|g\|_2.$$
Note that by Theorem \ref{commutator-PSD} we have $YX-XY\in\mathcal{L}_{1,\infty}\subset(\mathcal{L}_{2,\infty})_0.$ Applying Lemma \ref{bs sep lemma} proves the statement for $d=2.$
\end{proof}

Finally, let us remove the smoothness restriction on functions.
\begin{proof}[Proof of Theorem \ref{key thm}] Recall that $0\leq g\in C(\mathbb{S}^{d-1})$ and $T\in M_{n}(\mathbb{C})\otimes \mathcal{S}(\qr).$ Choose $0\leq g_m \in C^{\infty}(\mathbb{S}^{d-1}),$ $m\in\mathbb{N}$ such that $g_m \rightarrow g $ in $C(\mathbb{S}^{d-1}).$ 
Set
$$ A_m=\mathrm{id}\otimes\pi_2(g_m)(1-\Delta_{\theta})^{-\frac12},\quad A=\mathrm{id}\otimes\pi_2(g)(1-\Delta_{\theta})^{-\frac12}.$$
Note that $|T|^2=T^{\ast}T\in M_n\otimes\mathcal{S}(\qr).$ Then by Lemma \ref{root lemma matrix case}, $|T|\in M_n\otimes\mathcal{S}(\qr).$ 
By Lemma \ref{section 6 main lemma}, for every $m\geq1$ we have
\begin{equation}\label{pre-asymptotic}
\lim_{t\to0}t^{\frac1d}\mu(t,TA_{m})=\lim_{t\to0}t^{\frac1d}\mu(t,|T|A_m)=\kappa_d\|T\|_d\|g_m\|_d.
\end{equation}
Writing $T=(T_{i,j})_{1\leq i,j\leq n},$
with $T_{i,j}\in\mathcal{S}(\qr),$
we have 
$$\begin{aligned}
\|TA_m-TA\|_{d,\infty}&=\big\|\big(T_{i,j}(1-\Delta_{\theta})^{-\frac12}\cdot\pi_2(g_m-g)\big)_{1\leq i,j\leq n}\big\|_{d,\infty}\\
&\lesssim \max_{i,j}\left\|T_{i,j}(1-\Delta_{\theta})^{-\frac12}\right\|_{d,\infty}\left\|g_m-g\right\|_{\infty}.
\end{aligned}$$
It follows from Lemma \ref{arbitrary index for Bessel} that $TA_m\to TA$ in $\mathcal{L}_{d,\infty}.$
Clearly, $\|g_m\|_d\to \|g\|_d$ as $m\to\infty.$
Combining these facts with equation \eqref{pre-asymptotic} and Lemma \ref{another limit lemma} gives
$$\lim_{t\to\infty}t^{\frac{1}{d}}\mu(t,TA)=\kappa_d\|T\|_{d}\|g\|_{d}.$$
\end{proof}

\section{Spectral asympotics\textemdash general case}\label{sec-proof of main results}
In this section, we generalize Theorem \ref{key thm} to the case involving multiple continuous functions, and then complete the proof of our main results \textemdash Theorem \ref{main1} and Theorem \ref{main2}.

\begin{thm}\label{key thm plane}
Let $\{\Gamma_j\}_{j=1}^J\subset M_n(\mathbb{C}),$ $\{x_j\}_{j=1}^J\subset \mathcal{S}(\qr)$ and $\{g_j\}_{j=1}^J\subset C(\mathbb{S}^{d-1}).$
We have
$$\lim_{t\to\infty}t^{\frac1d}\mu\Big(t, \sum_{j=1}^J\Gamma_j\otimes\pi_1(x_j)\pi_2(g_j) (1-\Delta_{\theta})^{-\frac12}\Big)=\kappa_d\Big\| \sum_{j=1}^J\Gamma_j\otimes x_j\otimes g_j \Big\|_d,$$
where the $d$-norm is taken over $(M_n(\mathbb{C})\bar{\otimes}L_\infty(\qr)\bar{\otimes}L_\infty(\mathbb{S}^{d-1}
),\mathrm{tr}\otimes \tau_{\theta}\otimes\int).$
\end{thm}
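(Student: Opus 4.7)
The plan is to reduce to the single-symbol statement of Theorem \ref{key thm} by partitioning $\mathbb{S}^{d-1}$ into small angular cells, combined with an orthogonalization trick that keeps every angular symbol continuous so that Corollary \ref{coro1} suffices to absorb all commutator errors modulo $(\mathcal{L}_{d,\infty})_0.$

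Given $\varepsilon>0,$ choose pairwise disjoint open balls $\{B_k\}_{k=1}^K\subset\mathbb{S}^{d-1}$ of diameter less than $\varepsilon$ whose union misses surface measure less than $\varepsilon,$ with sample points $s_k\in B_k.$ For each $k,$ pick continuous nonnegative bumps $\phi_k\leq\phi_k'$ with $\phi_k'\equiv 1$ on $\mathrm{supp}\,\phi_k,$ $\mathrm{supp}\,\phi_k'\subset B_k,$ $\phi_k$ close to $\chi_{B_k}$ in $L_d(\mathbb{S}^{d-1});$ in particular both families have pairwise disjoint supports and $(1-\phi_k')\phi_k=0.$ Set $T_k:=\sum_j g_j(s_k)\Gamma_j\otimes x_j\in M_n(\mathbb{C})\otimes\mathcal{S}(\qr)$ and
\[W_k:=(\mathrm{id}\otimes\pi_1)(T_k)\cdot(\mathrm{id}\otimes\pi_2(\phi_k)(1-\Delta_{\theta})^{-\frac12}),\qquad \widetilde{W}_k:=(\mathrm{id}\otimes\pi_2(\phi_k'))\,W_k.\]
Writing $V-\sum_k W_k=\sum_j\Gamma_j\otimes\pi_1(x_j)\pi_2\bigl(g_j-\sum_k g_j(s_k)\phi_k\bigr)(1-\Delta_{\theta})^{-\frac12}$ and invoking Theorem \ref{key thm} term by term (after decomposing the real and imaginary, positive and negative parts of the bracketed function), one gets $\limsup_t t^{1/d}\mu(t,V-\sum_k W_k)=o_\varepsilon(1),$ because $\bigl\|g_j-\sum_k g_j(s_k)\phi_k\bigr\|_{L_d(\mathbb{S}^{d-1})}\to 0$ as $\varepsilon\to 0.$ Since $(1-\phi_k')\phi_k=0,$ the difference $W_k-\widetilde{W}_k$ collapses to the single commutator $[\mathrm{id}\otimes\pi_2(1-\phi_k'),(\mathrm{id}\otimes\pi_1)(T_k)]$ multiplied on the right by $(\mathrm{id}\otimes\pi_2(\phi_k)(1-\Delta_{\theta})^{-\frac12}),$ which lies in $(\mathcal{L}_{d,\infty})_0$ by Corollary \ref{coro1} applied to the continuous symbol $1-\phi_k'.$

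Pairwise disjointness of the supports of the $\phi_k$ and of the $\phi_k',$ together with the fact that every $\pi_2$-multiplication commutes with $(1-\Delta_{\theta})^{-\frac12},$ yields the \emph{exact} orthogonality $\widetilde{W}_k\widetilde{W}_l^{\,*}=\widetilde{W}_k^{\,*}\widetilde{W}_l=0$ for $k\neq l.$ Lemma \ref{bs dirsum lemma} therefore applies to $\sum_k\widetilde{W}_k,$ and each individual asymptotic is computed via Theorem \ref{key thm} (absorbing the correction $\widetilde{W}_k-W_k\in(\mathcal{L}_{d,\infty})_0$ by Lemma \ref{bs sep lemma}), giving
\[\lim_{t\to\infty}t^{1/d}\mu(t,\widetilde{W}_k)=\kappa_d\,\|T_k\|_{L_d(M_n(\mathbb{C})\bar{\otimes}L_\infty(\qr))}\,\|\phi_k\|_{L_d(\mathbb{S}^{d-1})}.\]
Summing $d$-th powers produces the Riemann sum $\kappa_d^d\sum_k\|T_k\|_d^d\int_{\mathbb{S}^{d-1}}\phi_k^d,$ which converges as $\varepsilon\to 0$ to $\kappa_d^d\int_{\mathbb{S}^{d-1}}\bigl\|\sum_j g_j(s)\Gamma_j\otimes x_j\bigr\|_d^d\,ds=\kappa_d^d\bigl\|\sum_j\Gamma_j\otimes x_j\otimes g_j\bigr\|_{L_d}^d;$ a final application of Lemma \ref{another limit lemma} transfers the limit from $\sum_k\widetilde{W}_k$ (together with the $o_\varepsilon(1)$ perturbation) to $V$ itself.

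The principal difficulty is the orthogonalization step: Corollary \ref{coro1} is stated only for $g\in C(\mathbb{S}^{d-1}),$ so sharp cutoffs $\pi_2(\chi_{B_k})$ cannot be used directly to achieve two-sided orthogonality. Introducing the auxiliary bump $\phi_k'$ surrounding $\phi_k$ is precisely what keeps the single commutator correction within the continuous-symbol regime of Corollary \ref{coro1} while still producing \emph{exactly} pairwise orthogonal summands $\widetilde{W}_k$ to which Lemma \ref{bs dirsum lemma} applies.
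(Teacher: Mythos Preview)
Your argument is correct and follows the same overall strategy as the paper—freeze the angular symbols on small cells, achieve orthogonality among the cell pieces, and pass to the limit via Lemma~\ref{another limit lemma}—but you organize the orthogonality step differently.

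The paper first isolates an intermediate Lemma~\ref{key thm disjoint plane} treating the case where the $g_j$ are already pairwise either equal or disjointly supported.  There the summands $A_kB_k$ are only one-sidedly orthogonal (through $B_kB_l^{\ast}=0$), so the paper passes to the squares $|A_kB_k|^2$, invokes Lemma~\ref{bs dirsum lemma} at $p=d/2$, and then undoes the squaring using $|A_kB_k|^2-|B_kA_k|^2\in(\mathcal{L}_{d/2,\infty})_0$ and the identity $|\sum_k B_kA_k|^2=\sum_k|B_kA_k|^2$.  In the subsequent reduction, the paper splits the approximation error into a uniformly small piece $T(\mathrm{id}\otimes\pi_2(\phi_K))-T_K$ (controlled in $\|\cdot\|_{d,\infty}$ via $\|\cdot\|_\infty$) and a piece $T(\mathrm{id}\otimes\pi_2(\phi_K))-T$ whose distance to $(\mathcal{L}_{d,\infty})_0$ is computed exactly through Theorem~\ref{key thm} and the distance formula~\eqref{distance formula}.

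Your double-bump device $\phi_k\le\phi_k'$ with $(1-\phi_k')\phi_k=0$ replaces that entire squared-operator detour: the modified pieces $\widetilde{W}_k$ are \emph{exactly} two-sidedly orthogonal, so Lemma~\ref{bs dirsum lemma} applies directly at $p=d$, and the correction $W_k-\widetilde{W}_k$ is a single commutator covered by Corollary~\ref{coro1}.  You also merge the paper's two approximation estimates into one, controlling $V-\sum_kW_k$ in distance to $(\mathcal{L}_{d,\infty})_0$ via Theorem~\ref{key thm} applied to the nonnegative parts of $g_j-\sum_k g_j(s_k)\phi_k$ (note that $\|(\mathrm{Re}\,h)_\pm\|_d\le\|h\|_d$, so the decomposition preserves smallness).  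The cost is only that the Riemann-sum convergence $\sum_k\|T_k\|_d^d\|\phi_k\|_d^d\to\int_{\mathbb{S}^{d-1}}\|\sum_j g_j(s)\Gamma_j\otimes x_j\|_d^d\,ds$ needs you to quantify the approximation $\phi_k\approx\chi_{B_k}$ slightly more carefully (e.g.\ $\sum_k(|B_k|-\|\phi_k\|_1)<\varepsilon$ suffices).  In exchange you bypass Lemma~\ref{key thm disjoint plane} altogether and never leave the exponent $d$.
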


Similarly to \cite{MSX2023}, we first deal with a special case where those continuous functions are pairwise disjointly supported. 
\begin{lem}\label{key thm disjoint plane} Let $\{\Gamma_j\}_{j=1}^J\subset M_n(\mathbb{C})$ and let $\{x_j\}_{j=1}^J\subset \mathcal{S}(\qr).$ Let $\{g_j\}_{j=1}^J\subset C(\mathbb{S}^{d-1})$ be non-negative functions. Assume that, for every $1\leq j_1,j_2\leq J$, either $g_{j_1}g_{j_2}=0$ or $g_{j_1}=g_{j_2}$. Under those assumptions, we have
$$\lim_{t\to\infty}t^{\frac1d}\mu\big(t, \sum_{j=1}^J\Gamma_j\otimes\pi_1(x_j)\pi_2(g_j) (1-\Delta_{\theta})^{-\frac12}\big)=\kappa_d\big\| \sum_{j=1}^J\Gamma_j\otimes x_j\otimes g_j\big\|_d.$$
\end{lem}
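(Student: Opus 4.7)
The plan is to partition the index set, apply the scalar case (Theorem \ref{key thm}) block-by-block, and then realize the whole operator as a pairwise orthogonal sum modulo $(\mathcal{L}_{d/2,\infty})_0$, at which point Lemma \ref{bs dirsum lemma} and Lemma \ref{bs sep lemma} finish the job.

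Concretely, the hypothesis on $\{g_j\}$ lets us partition $\{1,\ldots,J\}=\bigsqcup_{k=1}^K I_k$ so that $g_j=h_k$ for all $j\in I_k$, where $h_1,\ldots,h_K$ are distinct and satisfy $h_kh_l=0$ for $k\ne l.$ Set
$$T_k:=\sum_{j\in I_k}\Gamma_j\otimes x_j\in M_n(\mathbb{C})\otimes\mathcal{S}(\qr),\quad S_k:=({\rm id}\otimes\pi_1)(T_k),\quad A_k:={\rm id}\otimes\pi_2(h_k)(1-\Delta_\theta)^{-\frac12}.$$
Since $\pi_2(h_k)$ commutes with $(1-\Delta_\theta)^{-\frac12},$ $A_k=A_k^\ast$; moreover $A_kA_l=0$ for $k\ne l$ because $\pi_2(h_kh_l)=0.$ With these notations the operator in question is $W=\sum_k S_kA_k,$ and Theorem \ref{key thm} yields
$$\lim_{t\to\infty}t^{\frac1d}\mu(t,S_kA_k)=\kappa_d\|T_k\|_{L_d(M_n(\mathbb{C})\bar\otimes L_\infty(\qr))}\|h_k\|_{L_d(\mathbb{S}^{d-1})}=:a_k.$$

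To convert $W$ into a pairwise orthogonal sum, fix an orthonormal basis $\{e_0,e_1,\ldots,e_K\}$ of $\mathbb{C}^{K+1}$ and isometries $U_k\colon\mathbb{C}^n\otimes L_2(\mathbb{R}^d)\to\mathbb{C}^n\otimes L_2(\mathbb{R}^d)\otimes\mathbb{C}^{K+1},$ $\xi\mapsto\xi\otimes e_k$ ($0\le k\le K$). Set $\widehat W_k:=U_kS_kA_kU_0^\ast.$ A direct calculation using $U_k^\ast U_l=\delta_{kl}$ and $A_kA_l=0$ ($k\ne l$) gives
$$\widehat W_k^\ast\widehat W_l=\delta_{kl}U_0A_kS_k^\ast S_kA_kU_0^\ast,\qquad \widehat W_k\widehat W_l^\ast=U_kS_kA_kA_lS_l^\ast U_l^\ast=0\ (k\ne l),$$
so $\{\widehat W_k\}$ are pairwise orthogonal. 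As each $U_k$ is an isometry, $\mu(t,\widehat W_k)=\mu(t,S_kA_k),$ and Lemma \ref{bs dirsum lemma} yields
$$\lim_{t\to\infty}t^{\frac1d}\mu\Big(t,\sum_{k=1}^K\widehat W_k\Big)=\Big(\sum_{k=1}^K a_k^d\Big)^{\frac1d}.$$

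The main technical step is to compare $W^\ast W$ with $(\sum_k\widehat W_k)^\ast(\sum_k\widehat W_k)=U_0(\sum_k A_kS_k^\ast S_kA_k)U_0^\ast.$ Their difference is
$$R:=W^\ast W-\sum_{k=1}^K A_kS_k^\ast S_kA_k=\sum_{k\ne l}A_lS_l^\ast S_kA_k.$$
Expanding $S_l^\ast S_k$ into the finitely many matrix-valued summands $\Gamma_{j_1}^\ast\Gamma_{j_2}\otimes\pi_1(x_{j_1}^\ast x_{j_2}),$ each off-diagonal term reduces to operators of the form ${\rm id}\otimes a_l\pi_1(y)a_k,$ where $a_k=\pi_2(h_k)(1-\Delta_\theta)^{-\frac12}$ and $y\in\mathcal{S}(\qr).$ Using $a_la_k=0,$
$$a_l\pi_1(y)a_k=a_l[\pi_1(y),a_k].$$
By Corollary \ref{coro1}, $[\pi_1(y),a_k]\in(\mathcal{L}_{d,\infty})_0,$ while $a_l\in\mathcal{L}_{d,\infty}$ by Lemma \ref{arbitrary index for Bessel}. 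The H\"older-type inclusion $(\mathcal{L}_{d,\infty})_0\cdot\mathcal{L}_{d,\infty}\subset(\mathcal{L}_{d/2,\infty})_0$ together with Lemma \ref{matrix factor ignored} then gives $R\in(\mathcal{L}_{d/2,\infty})_0.$

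The estimate on $R$ is the crux of the argument. Once it is established, Lemma \ref{bs sep lemma} applied in $\mathcal{L}_{d/2,\infty}$ yields
$$\lim_{t\to\infty}t^{\frac2d}\mu(t,W^\ast W)=\lim_{t\to\infty}t^{\frac2d}\mu\Big(t,\sum_k A_kS_k^\ast S_kA_k\Big)=\Big(\sum_{k=1}^Ka_k^d\Big)^{\frac2d},$$
and taking square roots gives $\lim_{t\to\infty}t^{\frac1d}\mu(t,W)=(\sum_k a_k^d)^{\frac1d}.$ Finally, since the $h_k$ have pairwise disjoint supports,
$$\sum_{k=1}^K\|T_k\|_d^d\|h_k\|_d^d=\Big\|\sum_{k=1}^KT_k\otimes h_k\Big\|_d^d=\Big\|\sum_{j=1}^J\Gamma_j\otimes x_j\otimes g_j\Big\|_d^d,$$
which is the desired identity. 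The anticipated obstacle is precisely the verification that $R\in(\mathcal{L}_{d/2,\infty})_0$; every other step is an application of results already proved in earlier sections.
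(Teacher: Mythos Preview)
Your overall strategy---partition by the distinct $h_k$, apply Theorem \ref{key thm} blockwise, exploit $A_kA_l=0$, and close with Lemmas \ref{bs dirsum lemma} and \ref{bs sep lemma}---is the same as the paper's. The auxiliary $\widehat W_k$ construction is not strictly needed (the operators $|S_kA_k|^2=A_kS_k^\ast S_kA_k$ are already pairwise orthogonal, so Lemma \ref{bs dirsum lemma} applies directly with $p=d/2$), but it is harmless.

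There is, however, a genuine error in the treatment of $R$. You write $a_l\pi_1(y)a_k=a_l[\pi_1(y),a_k]$ and then assert ``$a_l\in\mathcal{L}_{d,\infty}$ by Lemma \ref{arbitrary index for Bessel}.'' This is false: $a_l=\pi_2(h_l)(1-\Delta_\theta)^{-1/2}$ is a multiplication operator by a bounded non-vanishing function and is not even compact; Lemma \ref{arbitrary index for Bessel} needs a factor $\pi_1(x)$ with $x\in\mathcal{S}(\qr)$, which $a_l$ lacks. As written, your argument only places each cross term in $(\mathcal{L}_{d,\infty})_0$, not in $(\mathcal{L}_{d/2,\infty})_0$, so the application of Lemma \ref{bs sep lemma} at the $d/2$ level is unjustified.

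The gap is repairable. One route is to invoke Proposition \ref{factorisable} to write $y=y_1y_2$ and expand
\[
a_l\pi_1(y_1)\pi_1(y_2)a_k=[a_l,\pi_1(y_1)]\cdot a_k\pi_1(y_2)+[a_l,\pi_1(y_1)]\cdot[\pi_1(y_2),a_k]+\pi_1(y_1)a_l\cdot[\pi_1(y_2),a_k],
\]
using $a_la_k=0$; now each term pairs an $(\mathcal{L}_{d,\infty})_0$ commutator with a genuine $\mathcal{L}_{d,\infty}$ factor of the form $\pi_1(\cdot)a_\bullet$ or $a_\bullet\pi_1(\cdot)$, and Lemma \ref{arbitrary index for Bessel} applies legitimately. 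The paper sidesteps the cross terms altogether: it first passes (in your notation) from $\sum_kS_kA_k$ to $\sum_kA_kS_k$, where $\big|\sum_kA_kS_k\big|^2=\sum_k|A_kS_k|^2$ holds exactly because $A_kA_l=0$, so the only correction is the single-index estimate $|S_kA_k|^2-|A_kS_k|^2\in(\mathcal{L}_{d/2,\infty})_0$, for which the required $\mathcal{L}_{d,\infty}$ factor $S_kA_k$ is immediately available.
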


\begin{proof} Define equivalence relation on $\{1,\cdots,J\}$ by setting $j_1\sim j_2$ if $g_{j_1}=g_{j_2}.$ Split $\{1,\cdots,J\}$ into equivalence classes $\{C_k\}_{k=1}^K.$ Fix $m_k\in C_k,$ $1\leq k\leq K,$ and set
$$A_k=\sum_{j\in C_k}\Gamma_j\otimes\pi_1(x_j),\quad B_k=\mathrm{id}\otimes\pi_2(g_{m_k})(1-\Delta_{\theta})^{-\frac12}.$$
For each $1\leq k\leq K,$ we have
\small{$$A_kB_k=\sum_{j\in C_k}\Gamma_j \otimes\pi_1(x_j)\pi_2(g_{m_k})(1-\Delta_{\theta})^{-\frac12},\quad B_kA_k=\sum_{j\in C_k}\Gamma_j \otimes\pi_2(g_{m_k})(1-\Delta_{\theta})^{-\frac12}\pi_1(x_j).$$}
In particular, we may write
$\sum_{j=1}^J\Gamma_j\otimes\pi_1(x_j)\pi_2(g_j) (1-\Delta_{\theta})^{-\frac12}=\sum_{k=1}^KA_kB_k.$
By Theorem \ref{key thm}, for every $1\leq k\leq K,$
$$\lim_{t\to\infty}t^{\frac1d}\mu(t,A_kB_k)=\kappa_d\Big\|\sum_{j\in C_k}\Gamma_j\otimes x_j\Big\|_{d}\|g_{m_k}\|_{d}.$$
Hence,
$$\lim_{t\to\infty}t^{\frac2d}\mu(t,|A_kB_k|^2)=\kappa^2_d\Big\|\sum_{j\in C_k}\Gamma_j\otimes x_j\Big\|_{d}^2\|g_{m_k}\|_{d}^2.$$
By construction, the  operators $\{|A_kB_k|^2\}_{k=1}^K$ are pairwise orthogonal. Indeed, for $1\leq k\neq l\leq K$ we have
$
|A_kB_k|^2|A_lB_l|^2=B^{\ast}_kA_k^{\ast}A_kB_kB^{\ast}_lA_l^{\ast}A_lB_l=0$ since $ B_kB^{\ast}_l=\mathrm{id}\otimes\pi_2(g_{m_k}g_{m,l})(1-\Delta_{\theta})^{-1}=0.
$
Now applying Lemma \ref{bs dirsum lemma} for $p=\frac{d}{2}$ gives
\begin{equation}\label{equality1}
\begin{split}
	\lim_{t\to\infty}t^{\frac2d}\mu(t,\sum_{k=1}^K|A_kB_k|^2)
	&=\Big(\sum_{k=1}^K\Big(\kappa^2_d\Big\|\sum_{j\in C_k}\Gamma_j\otimes x_j\Big\|_{d}^2\|g_{m_k}\|_{d}^2\Big)^{\frac{d}{2}}\Big)^{\frac2d}\\
	&=\kappa^2_d\Big(\sum_{k=1}^K\Big\|\sum_{j\in C_k}\Gamma_j\otimes x_j\Big\|_{d}^d\|g_{m_k}\|_{d}^d\Big)^{\frac2d}\\
	&=\kappa^2_d\Big\|\sum_{k=1}^K\Big(\sum_{j\in C_k}\Gamma_j\otimes x_j\Big)\otimes g_{m_k}\Big\|_{d}^2=\kappa^2_d\Big\| \sum_{j=1}^J\Gamma_j\otimes x_j\otimes g_j \Big\|_{d}^2.
\end{split}
\end{equation}
In the third equality, we used the fact that  $\{g_{m_k}\}_{1\leq k\leq K}$ are pairwise disjointly supported.
Note that 
$$[A_k,B_k]=\sum_{j\in C_k}\Gamma_{j}\otimes [\pi_1(x_j),\pi_2(g_{m_k})(1-\Delta_{\theta})^{-\frac12}].$$
So by Corollary \ref{coro1} we have
\begin{equation}\label{equality2}
[A_k,B_k]\in(\mathcal{L}_{d,\infty})_0.
\end{equation}
By Lemma \ref{arbitrary index for Bessel} we have $A_kB_k\in\mathcal{L}_{d,\infty}.$ It then follows from the H\"{o}lder inequality that
$$|A_kB_k|^2-|B_kA_k|^2=(A_kB_k)^{\ast}[B_k,A_k]+[A_k,B_k]^{\ast}(A_kB_k)\in (\mathcal{L}_{\frac{d}{2},\infty})_0.$$
Summing over $1\leq k\leq K,$ we conclude that
\begin{equation}\label{estimate3}
\sum_{k=1}^K|A_kB_k|^2-\sum_{k=1}^K|B_kA_k|^2\in(\mathcal{L}_{\frac{d}{2},\infty})_0.
\end{equation}
Combining \eqref{equality1},  \eqref{estimate3} and Lemma \ref{bs sep lemma}, we arrive at $$\lim_{t\to\infty}t^{\frac2d}\mu(t,\sum_{k=1}^K|B_kA_k|^2)=\kappa^2_d\big\| \sum_{j=1}^J\Gamma_j\otimes x_j\otimes g_j \big\|_{d}^2.$$
By the pairwise orthogonality of $\{B_k\}_{k=1}^{K},$ we have
$|\sum_{k=1}^KB_kA_k|^2=\sum_{k=1}^K|B_kA_k|^2.$
In particular,
$$\mu^2(t,\sum_{k=1}^KB_kA_k)=\mu(t,\sum_{k=1}^K|B_kA_k|^2),\quad t>0.$$
Therefore,
$$\lim_{t\to\infty}t^{\frac2d}\mu^2(t,\sum_{k=1}^KB_kA_k)=\kappa^2_d\big\| \sum_{j=1}^J\Gamma_j\otimes x_j\otimes g_j \big\|_{d}^2.$$
That is,
$$
\lim_{t\to\infty}t^{\frac1d}\mu(t,\sum_{k=1}^KB_kA_k)=\kappa_d\big\| \sum_{j=1}^J\Gamma_j\otimes x_j\otimes g_j \big\|_{d}.
$$
Since by \eqref{equality2} we have
$$\sum_{k=1}^KA_kB_k=\sum_{k=1}^KB_kA_k+\sum_{k=1}^K[A_k,B_k]\in\sum_{k=1}^KB_kA_k+(\mathcal{L}_{d,\infty})_0,$$
the result now follows from Lemma \ref{bs sep lemma}.
\end{proof}

For general case, we shall borrow a splitting trick from \cite{SXZ2023}.
\begin{proof}[Proof of Theorem \ref{key thm plane}] For every $K\in\mathbb{N},$ let $(A_{K,k})_{k=1}^K\subset\mathbb{S}^{d-1}$ be pairwise disjoint open sets whose union is $\mathbb{S}^{d-1}$ (up to a set of measure $0$). We ask that
$$\max_{1\leq k\leq K}{\rm diam}(A_{K,k})\to0,\quad K\to\infty.$$	
Let $(\phi_{K,k})_{1\leq k\leq K}\subset C(\mathbb{S}^{d-1})$ be such that
\begin{enumerate}[(i)]
\item $0\leq \phi_{K,k}\leq 1$ for every $1\leq k\leq K;$
\item $\phi_{K,k}$ is supported at $A_{K,k}$ for $1\leq k\leq K;$
\item $ \displaystyle    \sum_{k=1}^Km(\{\phi_{K,k}\neq1\})\leq\frac1K\,$.
\end{enumerate}	
This can be done by taking $\phi_{K,k}\in C_c^{\infty}(A_{K,k})$ such that $\phi_{K,k}=1$ on an open subset $B_{K,k}\subseteq A_{K,k}$ with $m(B_{K,k}\backslash A_{K,k})\leq K^{-2}$ and then extending each $\phi_{K,k}$ from $A_{K,k}$ to $\mathbb{S}^{d-1}.$
Denote for brevity $T=\sum_{j=1}^J\Gamma_j\otimes\pi_1(x_j)\pi_2(g_j) (1-\Delta_{\theta})^{-\frac12},$ and set
$$g_{j,K,k}=\frac1{m(A_{K,k})}\int_{A_{K,k}}g_j,\quad g_{j,K}=\sum_{k=1}^Kg_{j,K,k}\phi_{K,k},$$
$$T_K=\sum_{j=1}^J\Gamma_j\otimes\pi_1(x_j)\pi_2(g_{j,K}) (1-\Delta_{\theta})^{-\frac12},\quad\phi_K=\sum_{k=1}^K\phi_{K,k}.$$
We have
$T(\mathrm{id}\otimes \pi_2(\phi_K))-T_K=\sum_{j=1}^J\Gamma_j\otimes\pi_1(x_j)(1-\Delta_{\theta})^{-\frac12}\pi_2(g_j\phi_K-g_{j,K}).$
By Lemma \ref{arbitrary index for Bessel}, $\pi_1(x_j)(1-\Delta_{\theta})^{-\frac12}\in\mathcal{L}_{d,\infty}$ for all $1\leq j\leq J.$ Hence, it follows from the quasi-triangle and H\"older inequalities, and Lemma \ref{matrix factor ignored} that
\begin{equation*}
\begin{split}
	&\|T(\mathrm{id}\otimes\pi_2(\phi_K))-T_K\|_{d,\infty}\\
	&\lesssim\sum_{j=1}^J\|\Gamma_j\|_d\|\pi_1(x_j)(1-\Delta_{\theta})^{-\frac12}\|_{d,\infty}\cdot\|\pi_2(g_j\phi_K-g_{j,K})\|_\infty\\
	&\lesssim \max_{1\leq j\leq J}\|g_j\phi_K-g_{j,K}\|_{\infty}.
\end{split}
\end{equation*} 
Denote for any $t>0$ the modulus of continuity
$$\omega(t,g)=\sup_{\substack{s_1,s_2\in\mathbb{S}^{d-1}\\ {\rm dist}(s_1,s_2)\leq t}}|g(s_1)-g(s_2)|.$$
This implies that for any open set $A\subseteq\mathbb{S}^{d-1},$ one has
$ |g(t)-g(s)|\leq \omega({\rm diam}(A),g),$ for all $s,t\in A.$
In particular, if $g\in C(\mathbb{S}^{d-1}),$ then $\lim_{t\to0}\omega(t,g)=0.$
A direct computation shows
$$\begin{aligned}
\|g_j\phi_K-g_{j,K}\|_{\infty} &=\|\sum_{k=1}^K(g_j-g_{j,K,k})\phi_{K,k}\|_{\infty}\\&=\|\sum_{k=1}^K\frac{1}{m(A_{K,k})}\int_{A_{K,k}}g_j-g_j(s)ds\cdot\phi_{K,k}\|_{\infty}\\
&\leq\|\sum_{k=1}^K\frac{1}{m(A_{K,k})}\int_{A_{K,k}}\omega\big(\max_{1\leq k'\leq K}{\rm diam}(A_{K,k'}),g_j\big)ds\cdot\phi_{K,k}\|_{\infty}
\\&\leq \omega\big(\max_{1\leq l\leq K}{\rm diam}(A_{K,k'}),g_j\big)\|\sum_{k=1}^K\phi_{K,k}\|_{\infty}\\
&\leq \omega\big(\max_{1\leq k'\leq K}{\rm diam}(A_{K,k'}),g_j\big)\to0,\quad K\to\infty.
\end{aligned}$$
Therefore,
\begin{equation}\label{ktt eq0}
\|T (\mathrm{id}\otimes\pi_2(\phi_K))-T_K\|_{d,\infty}\to0,\quad K\to\infty.
\end{equation}
Next, we write
$T(\mathrm{id}\otimes \pi_2(\phi_K))-T=\sum_{j=1}^J\Gamma_j\otimes\pi_1(x_j)\pi_2(\phi_K-1) (1-\Delta_{\theta})^{-\frac12}\pi_2(g_j).$
For any $(V_j)_{1\leq j\leq J}\subset(\mathcal{L}_{d,\infty})_0(L_2(\mathbb{R}^d)),$ it is clear that
$$ \sum_{j=1}^J\Gamma_j\otimes V_j\pi_2(g_j)\in(\mathcal{L}_{d,\infty})_0(\mathbb{C}^n\otimes L_2(\mathbb{R}^d)).$$ 
Thus,
\begin{equation*}
\begin{split}
	&{\rm dist}_{\mathcal{L}_{d,\infty}}(T (\mathrm{id}\otimes \pi_2(\phi_K))-T,(\mathcal{L}_{d,\infty})_0)\\
	&\leq \Big\|\sum_{j=1}^J\Gamma_j\otimes\pi_1(x_j)\pi_2(\phi_K-1) (1-\Delta_{\theta})^{-\frac12}\pi_2(g_j)-\sum_{j=1}^J\Gamma_j\otimes V_j\pi_2(g_j)\Big\|_{d,\infty}\\
	&\lesssim \sum_{j=1}^J\big\|\Gamma_j\big\|_d\cdot\big\|\pi_1(x_j)\pi_2(\phi_K-1) (1-\Delta_{\theta})^{-\frac12}-V_j\big\|_{d,\infty}\cdot\big\|\pi_2(g_j)\big\|_{\infty}\\
	&\lesssim\sum_{j=1}^J\|\pi_1(x_j)\pi_2(\phi_K-1) (1-\Delta_{\theta})^{-\frac12}-V_j\|_{d,\infty}.
\end{split}
\end{equation*}
Taking infimum over all $(V_j)_{1\leq j\leq J}\subset(\mathcal{L}_{d,\infty})_0,$ we arrive at
$$\begin{aligned}
&{\rm dist}_{\mathcal{L}_{d,\infty}}(T (\mathrm{id}\otimes \pi_2(\phi_K))-T,(\mathcal{L}_{d,\infty})_0)\\
&\lesssim\sum_{j=1}^{J}{\rm dist}_{\mathcal{L}_{d,\infty}}( \pi_1(x_j)\pi_2(\phi_K-1) (1-\Delta_{\theta})^{-\frac12},(\mathcal{L}_{d,\infty})_0).
\end{aligned}$$
By the distance formula \eqref{distance formula} and Theorem \ref{key thm}, we have
$${\rm dist}_{\mathcal{L}_{d,\infty}}( \pi_1(x_j)\pi_2(\phi_K-1) (1-\Delta_{\theta})^{-\frac12},(\mathcal{L}_{d,\infty})_0)=\kappa_d \|x_j\|_d\|\phi_K-1\|_d.$$
Consequently,
${\rm dist}_{\mathcal{L}_{d,\infty}}(T(\mathrm{id}\otimes  \pi_2(\phi_K))-T,(\mathcal{L}_{d,\infty})_0) \lesssim \|\phi_K-1\|_d.$
Hence,
\begin{equation}\label{ktt eq1}
{\rm dist}_{\mathcal{L}_{d,\infty}}(T (\mathrm{id}\otimes\pi_2(\phi_K))-T,(\mathcal{L}_{d,\infty})_0)\to0,\quad K\to\infty.
\end{equation}
Combining \eqref{ktt eq0} and \eqref{ktt eq1}, we obtain
\begin{equation}\label{ktt eq2}
{\rm dist}_{\mathcal{L}_{d,\infty}}(T-T_K,(\mathcal{L}_{d,\infty})_0)\to0,\quad K\to\infty.
\end{equation}
Now write
$$T_K=\sum_{j=1}^J\sum_{k=1}^Kg_{j,K,k}\Gamma_j\otimes \pi_1(x_j)\pi_2(\phi_{K,k}) (1-\Delta_{\theta})^{-\frac12}.$$
This goes back to the case of Lemma \ref{key thm disjoint plane}, and therefore
$$\lim_{t\to\infty}t^{\frac1d}\mu(t,T_K)=\kappa_d\|\sum_{j=1}^J\sum_{k=1}^Kg_{j,K,k}\Gamma_j\otimes x_j\otimes\phi_{K,k}\|_d,\quad K\in\mathbb{N}.$$
Since also \eqref{ktt eq2} holds, then applying Lemma \ref{another limit lemma} gives
$$\lim_{t\to\infty}t^{\frac1d}\mu(t,T)=\kappa_d\lim_{K\to\infty}\|\sum_{j=1}^J\Gamma_j\otimes x_j\otimes g_{j,K}\|_d.
$$
Noting that by the previous arguments,
$$\begin{aligned}
\|g_j-g_{j,K}\|_d&\leq m(\mathbb{S}^{d-1})^{\frac1d}\|g_j-g_{j,K}\|_\infty\\&\leq m(\mathbb{S}^{d-1})^{\frac1d}\Big(\|g_j-g_j\phi_K\|_{\infty}+\|g_j\phi_K-g_{j,K}\|_{\infty}\Big)\\
&\leq m(\mathbb{S}^{d-1})^{\frac1d}\Big(\|g_j\|_{\infty}\|1-\phi_K\|_{\infty}+\|g_j\phi_K-g_{j,K}\|_{\infty}\Big)\to0
\end{aligned}$$
as $K\to\infty,$
We conclude that
$$\sum_{j=1}^J\Gamma_j\otimes x_j\otimes g_{j,K}\to \sum_{j=1}^J\Gamma_j\otimes x_j\otimes g_j,\quad K\to\infty,$$
in $L_d(M_{n}(\mathbb{C})\bar{\otimes}L_{\infty}(\mathbb{R}_\theta^d)\bar{\otimes}L_{\infty}(\mathbb{S}^{d-1})).$ The result follows immediately.
\end{proof}	
\subsection{Proof of Theorem \ref{main1} }\label{sec-proof of main result1 and semi-norm rmk}
In this subsection, we will use Theorem \ref{key thm plane} to prove Theorem \ref{main1}.

Let $x\in\mathcal{S}(\qr),$ and set
$$\mathcal{A}=\sum_{j=1}^d\gamma_j\otimes A_j,\quad A_j=\pi_1(\partial_j x)-\sum_{k=1}^d\pi_2(\textbf{s}_j \textbf{s}_k)\pi_1(\partial_k x) ,\quad 1\leq j\leq d.$$
Here, each $\mathbf{s}_j\in C(\mathbb{S}^{d-1})$ is defined by setting $\mathbf{s}_j:(s_1,\cdots,s_d)\mapsto s_j.$

The next crucial fact is established in \cite[p.533]{MSX2020}.
\begin{lem}\label{msx 63 lemma}
If $x\in \mathcal{S}(\qr),$ then $\qd x-\mathcal{A}(1+\mathcal{D}^2)^{-\frac12}\in\mathcal{L}_{\frac{d}{2},\infty}.$
\end{lem}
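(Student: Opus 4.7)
The approach is a symbolic expansion of $[R_j,\pi_1(x)]$ based on $R_j=D_j(-\Delta_\theta)^{-1/2}$ and the spectral identity $(-\Delta_\theta)^{1/2}=\sum_k D_k R_k$, which extracts the principal term $A_j(1-\Delta_\theta)^{-1/2}$ and leaves a remainder in $\mathcal{L}_{d/2,\infty}$ by Theorem~\ref{commutator-PSD} and Lemma~\ref{arbitrary index for Bessel}. Since $\mathrm{sgn}(\mathcal{D})=\sum_j\gamma_j\otimes R_j$ with $R_j=\pi_2(\mathbf{s}_j)$ and $(1+\mathcal{D}^2)^{-1/2}=1\otimes(1-\Delta_\theta)^{-1/2}$, it suffices to show, for each $j$,
\[
[R_j,\pi_1(x)] - \pi_1(\partial_j x)(1-\Delta_\theta)^{-1/2} + \sum_{k=1}^{d} R_j R_k\,\pi_1(\partial_k x)(1-\Delta_\theta)^{-1/2} \in \mathcal{L}_{d/2,\infty}.
\]

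First I would apply the Leibniz rule to $R_j=D_j(-\Delta_\theta)^{-1/2}$ and to $[(-\Delta_\theta)^{-1/2},\pi_1(x)]=-(-\Delta_\theta)^{-1/2}[(-\Delta_\theta)^{1/2},\pi_1(x)](-\Delta_\theta)^{-1/2}$, then expand $[(-\Delta_\theta)^{1/2},\pi_1(x)]$ using $(-\Delta_\theta)^{1/2}=\sum_k D_k R_k$ and another Leibniz step. Using $R_j D_k = D_k R_j$ and $D_k(-\Delta_\theta)^{-1/2}=R_k$, and collecting, one arrives at an identity
\[
[R_j,\pi_1(x)] = \pi_1(\partial_j x)(-\Delta_\theta)^{-1/2} - \sum_k R_j R_k\,\pi_1(\partial_k x)(-\Delta_\theta)^{-1/2} + E_j,
\]
where $E_j$ consists of (i) ``first-order'' terms $R_j[\pi_1(\partial_k x),R_k](-\Delta_\theta)^{-1/2}$ arising from swapping $\pi_1(\partial_k x)$ past $R_k$, and (ii) ``second-order'' commutators $R_j R_k[(-\Delta_\theta)^{1/2},[R_k,\pi_1(x)]](-\Delta_\theta)^{-1/2}$ arising from moving $(-\Delta_\theta)^{1/2}$ past $[R_k,\pi_1(x)]$.

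Each remainder is placed in $\mathcal{L}_{d/2,\infty}$ by Theorem~\ref{commutator-PSD} with $g=\mathbf{s}_k\in C^{\infty}(\mathbb{S}^{d-1})$: for (i), the choice $\alpha=0,\beta=1$ gives $[\pi_1(\partial_k x),R_k](1-\Delta_\theta)^{-1/2}\in\mathcal{L}_{d/2,\infty}$; for (ii) one applies Theorem~\ref{commutator-PSD} iteratively, using that $[R_k,\pi_1(x)](1-\Delta_\theta)^{-1/2}\in\mathcal{L}_{d/2,\infty}$ and that a further commutator with $(-\Delta_\theta)^{1/2}$ paired with $(-\Delta_\theta)^{-1/2}$ gains the needed additional order. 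The remaining task is to replace $(-\Delta_\theta)^{-1/2}$ by $(1-\Delta_\theta)^{-1/2}$ at the principal level. For this I would use the identity
\[
(-\Delta_\theta)^{-1/2} - (1-\Delta_\theta)^{-1/2} = (-\Delta_\theta)^{-1/2}(1-\Delta_\theta)^{-1/2}\bigl((1-\Delta_\theta)^{1/2}+(-\Delta_\theta)^{1/2}\bigr)^{-1}
\]
together with the factorisation $a=a_1 a_2$ from Proposition~\ref{factorisable}; each occurrence of $\pi_1(a)\bigl((-\Delta_\theta)^{-1/2}-(1-\Delta_\theta)^{-1/2}\bigr)$ then reduces, after appropriate commutations (whose errors are absorbed by the same machinery), to a product of two factors---$\pi_1(a_1)(-\Delta_\theta)^{-1/2}$ and $\pi_1(a_2)(1-\Delta_\theta)^{-1/2}$---each in $\mathcal{L}_{d,\infty}$ by Lemma~\ref{arbitrary index for Bessel}, the bounded multiplier $\bigl((1-\Delta_\theta)^{1/2}+(-\Delta_\theta)^{1/2}\bigr)^{-1}$ being harmless.

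The main obstacle is the unboundedness of $(-\Delta_\theta)^{-1/2}$ at the origin: its symbol $|t|^{-1}$ is not bounded, so it cannot be compared to $(1-\Delta_\theta)^{-1/2}$ as a bounded perturbation. Every occurrence of $(-\Delta_\theta)^{-1/2}$ in the expansion must therefore be paired with a smoothing factor---either $(1-\Delta_\theta)^{-1/2}$ or an element of $\pi_1(\mathcal{S}(\qr))$---before a Schatten--Lorentz estimate becomes available. Proposition~\ref{factorisable} and the difference identity above are the key algebraic tools that make such pairings possible; the iteration of Theorem~\ref{commutator-PSD} for the second-order commutators is otherwise routine, and no genuinely new analytic input is required.
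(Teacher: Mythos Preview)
The paper does not actually prove this lemma: it simply records that ``the next crucial fact is established in \cite[p.~533]{MSX2020}'' and cites the earlier McDonald--Sukochev--Xiong paper. So there is no in-paper argument to compare against; you are supplying what the author chose to outsource.

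Your strategy---expand $[R_j,\pi_1(x)]$ via $R_j=D_j(-\Delta_\theta)^{-1/2}$ and $(-\Delta_\theta)^{1/2}=\sum_k R_kD_k$, peel off the principal symbol $A_j(1-\Delta_\theta)^{-1/2}$, and push the remainders into $\mathcal{L}_{d/2,\infty}$ using the commutator estimates of Section~\ref{sec-commutator}---is indeed the standard route and is in line with what the cited reference does. The overall architecture is sound.

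That said, two steps in your sketch are underspecified and, as written, do not close. First, for the term (ii) you invoke Theorem~\ref{commutator-PSD} ``iteratively'' to control $[(-\Delta_\theta)^{1/2},[R_k,\pi_1(x)]](-\Delta_\theta)^{-1/2}$, but that theorem only handles commutators of $\pi_1(y)$ with $y\in\mathcal{S}(\qr)$ against $\pi_2(g)(1-\Delta_\theta)^{\alpha/2}$; the inner object $[R_k,\pi_1(x)]$ is not of this form, so a direct appeal fails. The fix is to use the Jacobi identity $[(-\Delta_\theta)^{1/2},[R_k,\pi_1(x)]]=[R_k,[(-\Delta_\theta)^{1/2},\pi_1(x)]]$ (since $R_k$ commutes with $(-\Delta_\theta)^{1/2}$), and then expand $[(-\Delta_\theta)^{1/2},\pi_1(x)]$ again in terms of $\pi_1(\partial_k x)$ and lower-order pieces---this brings you back into the scope of Lemma~\ref{commutator-PSD alfa0 small beta rk} and Lemma~\ref{arbitrary index for Bessel}, but it is an extra layer you should spell out. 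Second, your replacement of $(-\Delta_\theta)^{-1/2}$ by $(1-\Delta_\theta)^{-1/2}$ is more delicate than you indicate: the factor $\pi_1(a_1)(-\Delta_\theta)^{-1/2}$ you produce via factorisation is not covered by Lemma~\ref{arbitrary index for Bessel} (which only treats $(1-\Delta_\theta)^{-\beta/2}$), and $(-\Delta_\theta)^{-1/2}$ is genuinely unbounded near the origin. The clean way around both issues is to work from the start with $\mathcal{D}(1+\mathcal{D}^2)^{-1/2}$ in place of $\mathrm{sgn}(\mathcal{D})$: the difference $\mathrm{sgn}(\mathcal{D})-\mathcal{D}(1+\mathcal{D}^2)^{-1/2}$ has symbol of order $-2$, so its commutator with $1\otimes x$ lands directly in $\mathcal{L}_{d/2,\infty}$, and all subsequent manipulations involve only $(1-\Delta_\theta)^{-1/2}$, avoiding the origin singularity entirely. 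This is in fact how the argument on \cite[p.~533]{MSX2020} is organised.
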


\begin{proof}[Proof of Theorem \ref{main1}] Assume firstly that $x\in\mathcal{S}(\qr),$ and let $\mathcal{A}$ be defined as above. 
Note that Corollary \ref{coro1} we have
\small{$$
\begin{aligned}
(\gamma_j\otimes1)\mathcal{A}(1+\mathcal{D}^2)^{-\frac12}&=\gamma_j\otimes \pi_1(\partial_j x)(1-\Delta_{\theta})^{-\frac12} -\sum_{k=1}^d\gamma_j\otimes \pi_2(\textbf{s}_j \textbf{s}_k)\pi_1(\partial_k x)(1-\Delta_{\theta})^{-\frac12}\\
&=\gamma_j\otimes \pi_1(\partial_j x)(1-\Delta_{\theta})^{-\frac12} -\sum_{k=1}^d\gamma_j\otimes \pi_1(\partial_k x)\pi_2(\textbf{s}_j \textbf{s}_k)(1-\Delta_{\theta})^{-\frac12}
\end{aligned}
$$}
 modulo $(\mathcal{L}_{d,\infty})_0.$
Thus, by Theorem \ref{key thm plane} and equation \eqref{equivalet seminorm}, we obtain
$$\begin{aligned}
\lim_{t\to\infty}\mu(t,\mathcal{A}(1+\mathcal{D}^2)^{-\frac12})&=\kappa_d\Big\|\sum_{j=1}^d\gamma_j\otimes\partial_jx\otimes1-\sum_{j,k=1}^{d}\gamma_j\otimes\partial_kx\otimes\mathbf{s}_j\textbf{s}_k\Big\|_d\\
&=\kappa_d|||x|||_{\dot{W}^1_d(\qr)}.
\end{aligned}$$
Hence, by Lemmas \ref{bs sep lemma} and \ref{msx 63 lemma}, we deduce that
$
\lim_{t\to\infty}t^{\frac1d} \mu(t,\qd x)=\kappa_d|||x|||_{\dot{W}^1_d(\qr)},$ for $ x\in\mathcal{S}(\qr).
$	
Suppose now $x\in\dot{W}^{1}_d(\qr).$ We may find (see \cite[Theorem 4.8, Proposition 4.9]{MSX2023}) a sequence $\{x_n\}_{n=1}^\infty\subset\mathcal{S}(\qr)$ such that $\|x_n-x\|_{\dot{W}^1_d(\qr)}\to0$ and $
\|\qd x_n -\qd x\|_{d,\infty}\to0$, as$ n\to\infty.$ By the preceding paragraph, we have
$\lim_{t\to\infty}t^{\frac1d} \mu(t,\qd x_n)=\kappa_d|||x_n|||_{\dot{W}^1_d(\qr)},$ for all $n\geq1.$	
Appealing to Lemma \ref{another limit lemma}, we arrive at
$$\lim_{t\to\infty}t^{\frac1d} \mu(t,\qd x)=\kappa_d\lim_{n\to\infty}|||x_n|||_{\dot{W}^1_d(\qr)}.$$ Since semi-norms $|||\cdot|||_{\dot{W}^1_d(\qr)}$ and $\|\cdot\|_{\dot{W}^1_d(\qr)}$ are equivalent (see Section \ref{equivalence}), the right hand side is exactly $\kappa_d|||x|||_{\dot{W}^1_d(\qr)}.$ This completes the proof.
\end{proof}	
\subsection{Proof of Theorem \ref{main2}}\label{sec-proof of main2}In this subsection, we will deduce Theorem \ref{main2} from Theorem \ref{key thm plane}. In order to apply Theorem \ref{key thm plane}, we need the following equivalences of rearrangement of products.  
\begin{lem}\label{rearrange}
Let $\{g_j\}_{j=1}^N\subset C(\mathbb{S}^{d-1}).$ 
\begin{enumerate}[(i)]
\item \label{cpt} If $\{x_j\}_{j=1}^N\subset C_0(\qr)+\mathbb{C},$  then
\small{$$
	\prod_{j=1}^N\pi_1(x_j)\pi_2(g_j)  -\pi_1\big(\prod_{j=1}^N x_j\big)\pi_2\big(\prod_{j=1}^Ng_j\big)\in \mathcal{K}.
	$$}
\item\label{ld0} If $\{x_j\}_{j=1}^N\subset \mathcal{S}(\qr)+\mathbb{C},$ then
\small{$$
	\big(\prod_{j=1}^N\pi_1(x_j)\pi_2(g_j)\big)(1-\Delta_{\theta})^{-\frac12}-\pi_1\big(\prod_{j=1}^N x_j\big)\pi_2\big(\prod_{j=1}^Ng_j\big)(1-\Delta_{\theta})^{-\frac12}\in (\mathcal{L}_{d,\infty})_0$$.}
\end{enumerate}
\end{lem}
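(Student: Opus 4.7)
The plan is to prove both parts by induction on $N$ via a telescoping rearrangement that brings each mixed product into canonical form with all $\pi_1$-factors preceding all $\pi_2$-factors. Once in this form, the $\ast$-homomorphism property of $\pi_1$ and $\pi_2$ collapses $\prod_j \pi_1(x_j)=\pi_1(\prod_j x_j)$ and $\prod_j \pi_2(g_j)=\pi_2(\prod_j g_j)$, giving the claimed identity. Each swap $\pi_2(h)\pi_1(x)\to\pi_1(x)\pi_2(h)$ produces an error term involving the commutator $[\pi_2(h),\pi_1(x)]$, which must be controlled in the appropriate ideal. Throughout, I split $x_j=y_j+c_j$ into its ``smooth'' part $y_j$ and a scalar $c_j\in\mathbb{C}$; scalars commute with everything, so all error terms only involve commutators with the $y_j$'s, for which the commutator estimates of Section 3 apply.

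For (i), the key claim is that $[\pi_2(h),\pi_1(y)]\in\mathcal{K}$ for every $y\in C_0(\qr)$ and $h\in C(\mathbb{S}^{d-1})$. When $y\in\mathcal{S}(\qr)$ and $h\in C^\infty(\mathbb{S}^{d-1})$, Lemma \ref{commutator-PSD alfa0} with $\beta=0$ gives $[\pi_1(y),\pi_2(h)]\in\mathcal{L}_{d,\infty}\subset\mathcal{K}$. By Proposition \ref{density}, $\mathcal{S}(\qr)$ is norm-dense in $C_0(\qr)$, and $C^\infty(\mathbb{S}^{d-1})$ is norm-dense in $C(\mathbb{S}^{d-1})$; since $\pi_1,\pi_2$ are norm-contractive and $\mathcal{K}$ is norm-closed, a joint approximation argument extends the compactness of the commutator to all $y\in C_0(\qr)$ and $h\in C(\mathbb{S}^{d-1})$. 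A straightforward induction on $N$ then rearranges the product modulo $\mathcal{K}$.

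For (ii), I carry the refined induction hypothesis that for every $0\leq k\leq N$,
$$\prod_{j=1}^N \pi_1(x_j)\pi_2(g_j)\,(1-\Delta_\theta)^{-\frac12} \equiv \Bigl(\prod_{j=1}^k \pi_1(x_j)\Bigr)\Bigl(\prod_{j=1}^k \pi_2(g_j)\Bigr)\Bigl(\prod_{j=k+1}^N \pi_1(x_j)\pi_2(g_j)\Bigr)(1-\Delta_\theta)^{-\frac12}$$
modulo $(\mathcal{L}_{d,\infty})_0$. In the step $k\to k+1$, with $h=\prod_{j=1}^k g_j$, swapping $\pi_2(h)$ past $\pi_1(x_{k+1})$ produces an error of the shape
$$E = \Bigl(\prod_{j=1}^k\pi_1(x_j)\Bigr)\,[\pi_1(x_{k+1}),\pi_2(h)]\,S\,(1-\Delta_\theta)^{-\frac12},$$
where $S=\pi_2(g_{k+1})\prod_{j=k+2}^N\pi_1(x_j)\pi_2(g_j)$ is bounded. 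The crucial step is to move the resolvent next to the commutator by writing $S(1-\Delta_\theta)^{-\frac12}=(1-\Delta_\theta)^{-\frac12}S+[S,(1-\Delta_\theta)^{-\frac12}]$. The first summand yields $[\pi_1(x_{k+1}),\pi_2(h)](1-\Delta_\theta)^{-\frac12}\in(\mathcal{L}_{d,\infty})_0$ by Corollary \ref{coro1}, and multiplying by bounded operators preserves this. For the second summand, the Leibniz rule combined with the exact commutation of each $\pi_2(g_j)$ with $(1-\Delta_\theta)^{-\frac12}$ (both are multiplication operators on $L_2(\mathbb{R}^d)$) reduces $[S,(1-\Delta_\theta)^{-\frac12}]$ to a sum of terms each containing a single factor $[\pi_1(x_j),(1-\Delta_\theta)^{-\frac12}]\in\mathcal{L}_{d/2,\infty}$ (by \eqref{commutator without g} with $\alpha=-1,\beta=0$), so the whole summand lies in $\mathcal{L}_{d/2,\infty}\subset(\mathcal{L}_{d,\infty})_0$.

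The main obstacle is combinatorial bookkeeping: correctly tracking the position of the resolvent $(1-\Delta_\theta)^{-\frac12}$ through each telescoping swap, and expanding $[S,(1-\Delta_\theta)^{-\frac12}]$ by Leibniz into terms to which exactly one of \eqref{commutator without g} or Corollary \ref{coro1} applies. The splitting $x_j=y_j+c_j$ is a minor but necessary preliminary, since the estimates of Section 3 are stated for Schwartz elements and must be applied only to the non-scalar piece.
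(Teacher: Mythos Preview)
Your proposal is correct for both parts. Part (i) proceeds exactly as in the cited references. For part (ii) your route differs from the paper's in the choice of telescoping. The paper writes the difference directly as a sum of terms of the shape
\[
(\text{bounded prefix})\cdot\pi_1(x_j)\,[\pi_2(g_j),\pi_1(x_{j+1}\cdots x_N)]\,\pi_2(g_{j+1})\cdots\pi_2(g_N),
\]
so that only $\pi_2$-factors follow the commutator; since these commute with $(1-\Delta_{\theta})^{-1/2}$, the resolvent slides next to the commutator for free and Corollary~\ref{coro1} finishes the job in one stroke. Your step-by-step rearrangement instead leaves a mixed tail $S$ after the commutator, forcing the extra split $S(1-\Delta_{\theta})^{-1/2}=(1-\Delta_{\theta})^{-1/2}S+[S,(1-\Delta_{\theta})^{-1/2}]$ and a Leibniz expansion of the latter via \eqref{commutator without g}. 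This works, but it is more laborious: the paper's telescoping is precisely engineered so that this second mechanism is never needed. What your version buys is that each commutator involves only a single $x_{k+1}$ rather than a product $x_{j+1}\cdots x_N$, though in practice this makes no difference since $\mathcal{S}(\qr)+\mathbb{C}$ is an algebra.
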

\begin{proof}
The proof of the first statement is extremely identical to that of \cite[Lemma 5.1]{SXZ2023} and \cite[Lemma 4.18]{FSYZ2024}, and hence it is omitted. The second statement can be handled similarly, the key point is Theorem \ref{commutator-PSD} and the following identity
\small{\begin{equation*}
		\begin{split}
			&\prod_{j=1}^N\pi_1(x_j)\pi_2(g_j)  -\pi_1\big(\prod_{j=1}^N x_j\big)\pi_2\big(\prod_{j=1}^Ng_j\big)\\
			&=\sum_{j=1}^N\pi_1(x_1)\pi_2(g_1)\cdots\pi_1(x_{j-1})\pi_2(g_{j-1})\pi_1(x_j)[\pi_2(g_j),\pi_1(x_{j+1})]\pi_2(g_{j+1})\cdots\pi_2(g_N).
		\end{split}
\end{equation*}}
\end{proof}

We are finally in position to prove our second main result.
\begin{proof}[Proof of Theorem \ref{main2}]
Recall that by assumption, $T$ is in the $C^{\ast}$-algebra generated by $\pi_{1}(C_0(\qr)+\mathbb{C})$ and $\pi_2(C(\mathbb{R}^d)),$ satisfying $T=T\pi_1(Z)$ for some $Z\in\mathcal{S}(\qr).$
Since by Proposition \ref{density} the algebra $\mathcal{S}(\qr)$ is norm-dense in $C_0(\qr),$ we may find a sequence $\{T_n\}_{n=1}^\infty$ in the algebra generated by $\pi_1(\mathcal{S}(\qr)+\mathbb{C})$ and $\pi_2(C(\mathbb{S}^{d-1}))$ such that $T_n\to T$ in the operator norm. Each $T_n$ can be expressed as
$$T_n=\sum_{k=1}^{k_n}   \prod_{j=1}^{j_n}\pi_1(x_{j,k,n})\pi_2(g_{j,k,n}),$$
with $x_{j,k,n}\in\mathcal{S}(\qr)+\mathbb{C}$ and $g_{j,k,n}\in C(\mathbb{S}^{d-1}).$
Set
$$y_{k,n}=\big(\prod_{j=1}^{j_n} x_{j,k,n}\big)\cdot Z,\quad h_{k,n}=\prod_{j=1}^{j_n} g_{j,k,n},\quad S_n=\sum_{k=1}^{k_n}\pi_1(y_{k,n})\pi_2(h_{k,n}).$$
It follows from Lemma \ref{rearrange} that
\begin{equation}\label{equivalence modulo compact operators}
T_n\pi_1(Z)-S_n\in\mathcal{K}
\end{equation}
and that
$
T_n\pi_1(Z)(1-\Delta_{\theta})^{-\frac12}-S_n(1-\Delta_{\theta})^{-\frac12}\in (\mathcal{L}_{d,\infty})_0.
$
Since ${\rm sym}(S_n)=\sum_{k=1}^{k_n}y_{k,n}\otimes h_{k,n},$ it follows from Lemma \ref{bs sep lemma} and Theorem \ref{key thm plane} that
$$
\begin{aligned}
\lim_{t\to\infty}t^{\frac1d}\mu(t,T_n\pi_1(Z)(1-\Delta_{\theta})^{-\frac12})&=\lim_{t\to\infty}t^{\frac1d} \mu(t,S_n(1-\Delta_{\theta})^{-\frac12})\\
&=\kappa_d\big\|\sum_{k=1}^{k_n}y_{k,n}\otimes h_{k,n}\big\|_d=\kappa_d\|{\rm sym}(S_n)\|_d.
\end{aligned}
$$
Note that by \eqref{equivalence modulo compact operators} and property \eqref{compact vanishing}, we have
${\rm sym}(S_n)={\rm sym}(T_n\pi_1(Z)).$
Therefore,
\begin{equation*}
\lim_{t\to\infty}t^{\frac1d}\mu(t,T_n\pi_1(Z)(1-\Delta_{\theta})^{-\frac12})=\kappa_d\|{\rm sym}(T_n\pi_1(Z))\|_d,\quad n\geq1. 
\end{equation*}
By Lemma \ref{arbitrary index for Bessel}, $\pi_1(Z)(1-\Delta_{\theta})^{-\frac12}\in\mathcal{L}_{d,\infty}.$ Since $T_n\to T$ in the operator norm, it is clear that
$T_n\pi_1(Z)(1-\Delta_{\theta})^{-\frac12}\to T\pi_1(Z)(1-\Delta_{\theta})^{-\frac12}=T(1-\Delta_{\theta})^{-\frac12}$ in $\mathcal{L}_{d,\infty}.$ Hence, the conditions of Lemma \ref{another limit lemma} are satisfied. Applying Lemma \ref{another limit lemma}, the following limits exist and are equal:
$$\lim_{t\to\infty}t^{\frac1d}\mu(t,T(1-\Delta_{\theta})^{-\frac12})=\kappa_d\cdot\lim_{n\to\infty}\|{\rm sym}(T_n\pi_1(Z))\|_d.$$
Noting that, $\mathrm{sym}$ is a (norm) continuous $\ast$-homomorphism, and $\mathrm{sym}(\pi_1(Z))=Z\otimes 1\in L_d(L_\infty(\qr)\bar{\otimes}L_\infty(\mathbb{S}^{d-1})),$ we obtain
$${\rm sym}(T_n\pi_1(Z))\to{\rm sym}(T\pi_1(Z))={\rm sym}(T)\quad \mbox{in} \quad L_d(L_\infty(\qr)\bar{\otimes}L_\infty(\mathbb{S}^{d-1})).$$
This completes the proof of Theorem \ref{main2}.
\end{proof}

\subsection{Equivalence of semi-norms on $\dot{W}_d^1(\qr)$}\label{equivalence}
\begin{lem}\label{easy lemma} Let $\{T_j\}_{j=1}^d\subset\mathcal{L}_d.$ We have
$\|\sum_{j=1}^d\gamma_j\otimes T_j\|_d\geq\sqrt{2}\max_{1\leq j\leq d}\|T_j\|_d.$
\end{lem}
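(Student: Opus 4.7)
The plan is to exploit the Clifford anti-commutation relations via a conjugation trick to isolate a single summand. Fix any index $j_0 \in \{1,\ldots,d\}$ and set $T := \sum_{j=1}^d \gamma_j \otimes T_j$. Since $\gamma_{j_0}^2 = 1$ and $\gamma_{j_0}\gamma_j\gamma_{j_0} = -\gamma_j$ for $j \neq j_0$, a direct computation yields
\begin{equation*}
(\gamma_{j_0} \otimes 1)\, T \,(\gamma_{j_0} \otimes 1) = \gamma_{j_0}\otimes T_{j_0} - \sum_{j \neq j_0} \gamma_j \otimes T_j,
\end{equation*}
so that averaging gives the key identity
\begin{equation*}
T + (\gamma_{j_0}\otimes 1) T (\gamma_{j_0}\otimes 1) = 2\, \gamma_{j_0} \otimes T_{j_0}.
\end{equation*}

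Next, I would apply the $d$-norm to both sides. Since $\gamma_{j_0}$ is a self-adjoint unitary, conjugation preserves singular values, hence $\|(\gamma_{j_0}\otimes 1)T(\gamma_{j_0}\otimes 1)\|_d = \|T\|_d$, and by the triangle inequality
\begin{equation*}
2 \|\gamma_{j_0} \otimes T_{j_0}\|_d \;\leq\; 2 \|T\|_d.
\end{equation*}
The tensor product factorization of the Schatten norm gives $\|\gamma_{j_0} \otimes T_{j_0}\|_d = \|\gamma_{j_0}\|_d \cdot \|T_{j_0}\|_d$.

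It remains to compute $\|\gamma_{j_0}\|_d$ in $M_{N_d}(\mathbb{C})$ equipped with the standard trace. Because $\gamma_{j_0}$ is a unitary on $\mathbb{C}^{N_d}$, all its singular values equal $1$, so $\|\gamma_{j_0}\|_d = N_d^{1/d}$. Now recall that, under the running hypothesis $\det(\theta)\neq 0$, the integer $d$ is even, say $d = 2k$; then $N_d = 2^{\lfloor d/2\rfloor} = 2^k$ and hence $N_d^{1/d} = 2^{k/(2k)} = \sqrt{2}$. Combining these observations yields $\|T\|_d \geq \sqrt{2}\,\|T_{j_0}\|_d$, and since $j_0$ was arbitrary, taking the maximum over $j_0$ completes the proof. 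There is no substantial obstacle here; the only subtle point is the arithmetic identity $N_d^{1/d} = \sqrt{2}$, which relies precisely on the parity of $d$ forced by the non-degeneracy of $\theta$.
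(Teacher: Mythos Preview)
Your proof is correct and is essentially the same as the paper's: both isolate a single summand via the Clifford anti-commutation relations and then compute $N_d^{1/d}=\sqrt{2}$ using the parity of $d$. The only cosmetic difference is that the paper writes the anticommutator identity $(\gamma_k\otimes 1)T+T(\gamma_k\otimes 1)=2(\mathrm{id}\otimes T_k)$ directly, whereas you obtain the equivalent identity $T+(\gamma_{j_0}\otimes 1)T(\gamma_{j_0}\otimes 1)=2(\gamma_{j_0}\otimes T_{j_0})$ by conjugation; these differ by a left multiplication by the unitary $\gamma_{j_0}\otimes 1$.
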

\begin{proof}
Indeed, by the anti-commutation relations of the $\gamma_j^{,}$s, we have
$$
2(\mathrm{id}\otimes T_k)=(\gamma_k\otimes1)\cdot\Big(\sum_{j=1}^{d}\gamma_j\otimes T_j\Big)+\Big(\sum_{j=1}^{d}\gamma_j\otimes T_j\Big)\cdot(\gamma_k\otimes1),\quad 1\leq k\leq d,
$$
it follows that
$ \sqrt{2}\|T_k\|_d=\|\mathrm{id}\otimes T_k\|_d\leq\|\sum_{j=1}^{d}\gamma_j\otimes T_j\|_d,$ for all 
$1\leq k\leq d.$ 
\end{proof}
In what follows, set $\omega_d=\frac{2\pi^{\frac{d}{2}}}{\Gamma(\frac{d}{2})},$ i.e., the area of unit sphere $\mathbb{S}^{d-1}.$
\begin{prop}\label{equivalence of seminorms}
There exist positive constants $c_d,C_d$ such that
$$c_d\|x\|_{\dot{W}_d^1(\qr)}\leq |||x|||_{\dot{W}_d^1(\qr)}\leq C_d\|x\|_{\dot{W}_d^1(\qr)},\quad \forall x\in \dot{W}_d^1(\qr).$$ 
\end{prop}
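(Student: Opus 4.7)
The plan is to prove the two inequalities separately, each via a short computation. Write $T_j := \partial_j x \otimes 1 - \sum_{k=1}^d \partial_k x \otimes \mathbf{s}_k \mathbf{s}_j \in L_d(L_\infty(\qr) \bar\otimes L_\infty(\mathbb{S}^{d-1}))$, so that $|||x|||_{\dot W^1_d(\qr)} = \bigl\|\sum_{j=1}^d \gamma_j \otimes T_j\bigr\|_{L_d}$.

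For the upper bound, I apply the triangle inequality in $L_d$ together with the multiplicativity $\|A\otimes B\|_{L_d} = \|A\|_{L_d}\|B\|_{L_d}$ for the product trace. Since $|\gamma_j|=1$ gives $\mathrm{tr}(|\gamma_j|^d)=N_d$ and $\|1\|_{L_d(\mathbb{S}^{d-1})} = \omega_d^{1/d}$, $\|\mathbf{s}_k\mathbf{s}_j\|_{L_d(\mathbb{S}^{d-1})} \leq \omega_d^{1/d}$, we get
\[
|||x||| \;\leq\; \sum_{j=1}^d \|\gamma_j \otimes T_j\|_{L_d} \;\leq\; N_d^{1/d}\omega_d^{1/d}\sum_{j=1}^d\Bigl(\|\partial_j x\|_{L_d(\qr)} + \sum_{k=1}^d\|\partial_k x\|_{L_d(\qr)}\Bigr) \;=\; (d+1)N_d^{1/d}\omega_d^{1/d}\,\|x\|_{\dot W^1_d(\qr)},
\]
which gives the claim with $C_d = (d+1)N_d^{1/d}\omega_d^{1/d}$.

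For the lower bound, Lemma \ref{easy lemma} immediately yields $\|T_j\|_{L_d} \leq \frac{1}{\sqrt 2}\,|||x|||$ for each $1 \leq j \leq d$. The key observation is that the operation $\mathrm{id}\otimes\int_{\mathbb{S}^{d-1}}:L_d(L_\infty(\qr)\bar\otimes L_\infty(\mathbb{S}^{d-1})) \to L_d(\qr)$ is bounded with norm at most $\omega_d^{1-1/d}$ (by H\"older, viewing elements as $L_d(\qr)$-valued functions on the sphere). Applying it to $T_j$ and using the elementary identity $\int_{\mathbb{S}^{d-1}} \mathbf{s}_k \mathbf{s}_j\,d\mathbf{s} = \delta_{jk}\,\omega_d/d$, I obtain
\[
\Bigl(\mathrm{id}\otimes\int\Bigr)T_j \;=\; \omega_d\,\partial_j x - \frac{\omega_d}{d}\,\partial_j x \;=\; \frac{(d-1)\omega_d}{d}\,\partial_j x.
\]
Combining these two bounds gives $\|\partial_j x\|_{L_d(\qr)} \leq \frac{d}{(d-1)\omega_d^{1/d}}\|T_j\|_{L_d} \leq \frac{d}{(d-1)\omega_d^{1/d}\sqrt 2}\,|||x|||$, and summing over $j$ yields the desired lower bound with $c_d = \frac{(d-1)\omega_d^{1/d}\sqrt 2}{d^2}$.

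There is no real obstacle: both directions are routine once one notices that the $T_j$ encode the tangential components of $\nabla x$ on the sphere (in particular $\sum_j \mathbf{s}_j T_j = 0$), so that integrating in the sphere variable projects $T_j$ onto a nonzero multiple of $\partial_j x$. The only small point to handle carefully is the proper identification of norms in the triple tensor product, for which the product-trace multiplicativity of the $L_d$-norm is exactly what is needed.
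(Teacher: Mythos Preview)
Your proof is correct and follows essentially the same approach as the paper: the upper bound via the triangle inequality and multiplicativity of the tensor $L_d$-norm, and the lower bound via Lemma~\ref{easy lemma} followed by integration over the sphere using $\int_{\mathbb{S}^{d-1}}\mathbf{s}_k\mathbf{s}_j\,d\mathbf{s}=\delta_{jk}\,\omega_d/d$. Your constants even agree with the paper's (note $N_d^{1/d}=\sqrt{2}$ since $d$ is even under the standing assumption $\det\theta\neq0$).
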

\begin{proof}
By equation \eqref{equivalet seminorm} and the triangle inequality, we have
\begin{equation*}
\begin{split}
	|||x|||_{\dot{W}_d^1(\qr)}&\leq\sqrt{2}\sum_{j=1}^d\big\|\partial_jx\otimes 1-\sum_{k=1}^d\partial_kx\otimes \textbf{s}_k\textbf{s}_j\big\|_{L_d(L_{\infty}(\mathbb{R}_\theta^d)\bar{\otimes} L_\infty(\mathbb{S}^{d-1}))}\\
	&\leq\sqrt{2}\Big(\sum_{j=1}^d\big\|\partial_j x\|_{L_d(\mathbb{R}_\theta^d)}\cdot\omega^{\frac1d}_d+\sum_{k,j=1}^d\big\|\partial_k x\|_{L_d(\mathbb{R}_\theta^d)}\|\textbf{s}_k\textbf{s}_j\|_{L_d(\mathbb{S}^{d-1})}\Big)\\
	&\leq\sqrt{2}\sum_{j=1}^d\big\|\partial_j x\|_{L_d(\mathbb{R}_\theta^d)}\omega^{\frac1d}_d(1+d)=\sqrt{2}\omega^{\frac1d}_d(1+d)\|x\|_{\dot{W}^{1}_d(\qr)}.
\end{split}
\end{equation*}
This gives the upper estimate. Next, by Lemma \ref{easy lemma} we have
$$
\begin{aligned}
|||x|||_{\dot{W}_d^1(\qr)}
&\geq\sqrt{2}\max_{1\leq j\leq d}\big\|\partial_jx\otimes 1-\sum_{k=1}^d\partial_kx\otimes \textbf{s}_k\textbf{s}_j\big\|_{L_d(L_{\infty}(\mathbb{R}_\theta^d)\bar{\otimes} L_\infty(\mathbb{S}^{d-1}))}
\\&=\sqrt{2}\max_{1\leq j\leq d}\big\|\partial_j x-\textbf{s}_j\sum_{k=1}^d \textbf{s}_k  \partial_k x\big\|_{L_d(\mathbb{S}^{d-1},L_d(\qr))}.
\end{aligned}
$$
Observe that for all $f\in L_d(\mathbb{S}^{d-1},L_d(\qr)),$ one has
$$ \|f\|_{L_d(\mathbb{S}^{d-1},L_d(\qr))}:=\Big(\int_{\mathbb{S}^{d-1}}\big\|f(s)\big\|_{L_d(\qr)}^d ds\Big)^{\frac1d}\geq\omega_d^{\frac1d-1}\Big\|\int_{\mathbb{S}^{d-1}}f(s) ds\Big\|_{L_d(\qr)}.$$
Based on this observation, we proceed as
\begin{equation*}
\begin{split}
	|||x|||_{\dot{W}_d^1(\qr)}
	&\geq \sqrt{2}\omega_d^{\frac1d-1}\max_{1\leq j\leq d}\big\|\int_{\mathbb{S}^{d-1}}\Big(\partial_j x-\textbf{s}_j\sum_{k=1}^d \textbf{s}_k  \partial_k x\Big)ds\big\|_{L_d(\qr)}\\
	&=\sqrt{2}\omega_d^{\frac1d-1}\max_{1\leq j\leq d}\big\|\partial_j x\big\|_{L_d(\qr)}\cdot\int_{\mathbb{S}^{d-1}}(1-\textbf{s}_j^2)ds\\
	&=\sqrt{2}\omega_d^{\frac1d}(1-\frac1d)\max_{1\leq j\leq d}\big\|\partial_j x\big\|_{L_d(\qr)}\geq \sqrt{2}\frac{d-1}{d^2}\omega_d^{\frac1d}\|x\|_{\dot{W}^{1}_d(\qr)}.
\end{split}
\end{equation*}
Finally, setting $c_d=\sqrt{2}\frac{d-1}{d^2}\omega_d^{\frac1d}$ and $C_d=\sqrt{2}\omega^{\frac1d}_d(1+d)$ completes the proof.
\end{proof}

\section*{Declarations}

\textbf{Conflict of interest} The author declares that there is no conflict of interest.

\noindent\textbf{Data availability} No data was used for the research described in this article.

\bibliography{sn-bibliography}

\end{document}